\numberwithin{equation}{section}
\theoremstyle{plain}
\newtheorem{Th}{Theorem}[section]
\newtheorem{Lemma}[Th]{Lemma}
\newtheorem{Cor}[Th]{Corollary}
\newtheorem{Prop}[Th]{Proposition}
 \theoremstyle{definition}
\newtheorem{Def}[Th]{Definition}
\newtheorem{Rem}[Th]{Remark}
\newtheorem{Prob}[Th]{Problem}
\newtheorem{Ex}[Th]{Example}
\newcommand{\xyFact}{\operatorname{xyFact}}
\newcommand{\xzFact}{\operatorname{xzFact}}
\newcommand{\yzFact}{\operatorname{yzFact}}
\newcommand{\TF}{\operatorname{TF}}
\newcommand{\id}{\operatorname{id}}
\newcommand{\mul}{\operatorname{mul}}
\newcommand{\wt}{\operatorname{wt}}
\newcommand{\zwt}{\operatorname{wt_{z}}}
\newcommand{\St}{\operatorname{St}}
\newcommand{\type}{\operatorname{type}}
\newcommand{\LC}{\operatorname{LC_z}}
\newcommand{\Ess}{\rm{Ess}}
\newcommand{\Sym}{\mathfrak{S}}
\newcommand{\Val}{\operatorname{Val}}
\newcommand{\length}{\operatorname{length}}
\newcommand{\last}{\operatorname{last}}
\newcommand{\ParSeq}{\operatorname{ParSeq}}
\newcommand{\code}{\operatorname{code}}
\newcommand{\Z}{\mathbb{Z}}
\newcommand{\z}{\mathbf{z}}
\newcommand{\x}{\mathbf{x}}
\newcommand{\y}{\mathbf{y}}
\newcommand{\wdown}{w^{\downarrow}}
\begin{document}
\title
[Schubert polynomials, the TASEP, and  evil-avoiding
permutations]
{Schubert polynomials, the inhomogeneous TASEP, and evil-avoiding
permutations}
\author{Donghyun Kim}
\address{Department of Mathematics, University of California, Berkeley, CA 94720}
\email{donghyun\_kim@berkeley.edu}%
\author{Lauren K. Williams}%
\address{Department of Mathematics,
            Harvard University,
            Cambridge, MA
            USA
}
\email{williams@math.harvard.edu}

\begin{abstract}
Consider a lattice of n sites arranged
around a ring, with the $n$ sites occupied by 
particles of weights $\{1,2,\ldots,n\}$; the possible
arrangements of particles in sites thus corresponds to the 
$n!$ permutations in $S_n$.
The \emph{inhomogeneous totally asymmetric simple
exclusion process} (or TASEP) is a Markov chain 
on $S_n$,
in which two adjacent particles of weights $i<j$
swap places at rate $x_i - y_{n+1-j}$ if the particle of weight $j$ is to the right of the particle of weight $i$.  (Otherwise nothing happens.)  
When $y_i=0$ for all $i$, the stationary
distribution was conjecturally linked to Schubert polynomials
\cite{LW}, and explicit formulas for steady 
state probabilities were subsequently given in terms
of multiline queues \cite{AL, AM}.
In the case of general $y_i$,  Cantini \cite{C} showed that $n$ of the 
$n!$ states have probabilities proportional to double
Schubert polynomials.  In this paper
we introduce the class of \emph{evil-avoiding permutations},
which are the permutations avoiding the patterns
$2413, 4132, 4213$ and $3214$.
We show that 
there are $\frac{(2+\sqrt{2})^{n-1}+(2-\sqrt{2})^{n-1}}{2}$
evil-avoiding permutations in $S_n$, and for each 
evil-avoiding permutation $w$, we give an explicit formula
for the steady 
state probability $\psi_w$ as a product
of double Schubert polynomials. (Conjecturally all 
other probabilities are proportional to a positive
sum of at least two Schubert polynomials.) 
When $y_i=0$ for all $i$, 
	we  give multiline queue formulas for the $\mathbf{z}$-deformed steady 
	state probabilities, and use this to prove the 
	monomial factor conjecture from \cite{LW}.
Finally, we show that the Schubert polynomials arising in our formulas are flagged Schur functions, and we give a bijection in this case between 
multiline queues and semistandard Young tableaux.  
\end{abstract}

\maketitle
\setcounter{tocdepth}{1}
\tableofcontents

\section{Introduction}
In recent years, there has been a lot of work 
on interacting particle models such as the 
\emph{asymmetric simple exclusion process} (ASEP),
a model in which particles hop on a one-dimensional
lattice subject to the condition that at most one particle
may occupy a given site.  The ASEP on a 
one-dimensional lattice with open boundaries has 
been linked to Askey-Wilson polynomials and Koornwinder
polynomials \cite{USW, CW1, C2, CW2}, 
while the ASEP on a ring has been linked to 
Macdonald polynomials \cite{CGW, CMW}.
The \emph{inhomogeneous totally asymmetric simple exclusion 
process} (TASEP) is a variant of the exclusion process 
on the ring in which the hopping rate depends on the weight of 
the particles.  In this paper we build on works of 
Lam-Williams \cite{LW}, Ayyer-Linusson \cite{AL},
and especially Cantini \cite{C} to give formulas for 
many steady state probabilities of the inhomogeneous TASEP on a ring in terms of Schubert polynomials.

\begin{Def}\label{def:TASEP}
Consider a lattice with $n$ sites arranged in a ring.
Let $\St(n)$ denote the $n!$ labelings
of the lattice by distinct numbers $1,2,\ldots,n$, where each number $i$ is called a 
\emph{particle of weight $i$}.  
The \emph{inhomogeneous TASEP on a ring of size $n$} is a Markov chain with state space $\St(n)$
where at each time $t$ a swap of two adjacent particles may occur: a particle of weight $i$ on the left swaps its position with a particle of weight $j$ on the right with transition rate $r_{i,j}$ given by:
$$
r_{i,j} = 
\begin{cases}  
x_i-y_{n+1-j} \text{ if $i<j$} \\
0 \text{ otherwise.}
\end{cases}
$$
\end{Def}

 In what follows, we will identify each state with a permutation in $S_n$.   Following \cite{LW, C}, we multiply all steady state probabilities for $\St(n)$ by the same constant, obtaining 
 ``unnormalized'' steady state probabilities  $\psi_w$, so that 
  \begin{equation}\label{eq:normalization}
 \psi_{123 \ldots n} = \prod_{1 \leq i<j\leq n} (x_i-y_{n+1-j})^{j-i-1} \in 
	  \Z[\x; \y],
 \end{equation}
	  where $\x$ and $\y$ denote 
	  $x_1,\ldots,x_{n-1}$ and $y_1,\ldots, y_{n-1}.$
  \cref{fig:S3} shows the state diagram and
 unnormalized steady state probabilities for $n=3$.  
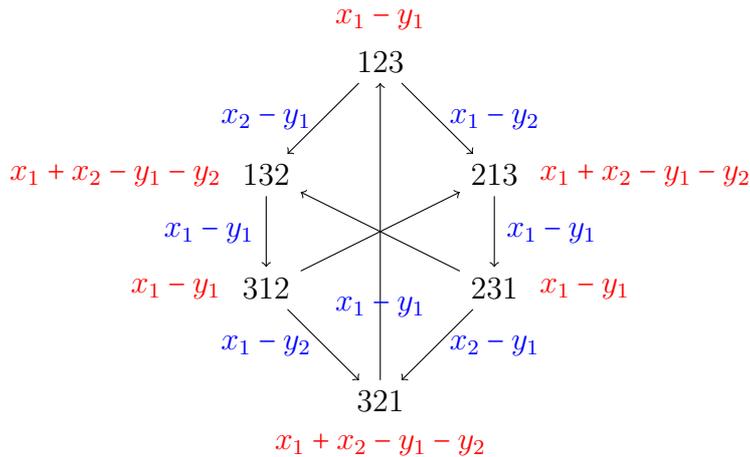
\begin{figure}[h]
\begin{center}

\begin{tikzpicture}[scale=1.5]
\node (0) [label={[red]below: $x_1+x_2-y_1-y_2$}] at (0,0) {$321$};
\node (12)  [label={[red]right: $x_1+x_2-y_1-y_2$}] at (1,2) {$213$};
\node (21)  [label={[red]left: $x_1+x_2-y_1-y_2$}] at (-1,2) {$132$};
\node (1) [label={[red]left: $x_1-y_1$}] at (-1,1) {$312$};
\node (2) [label={[red]right: $x_1-y_1$}] at (1,1) {$231$};
\node (121) [label={[red]above: $x_1-y_1$}] at (0,3) {$123$};

\draw[->] (0) -- node[near start,blue]{$x_1-y_1$} (121);
\draw[->](1)--node[near start,sloped, above=0.5pt,blue]{}(12);
\draw[->](2)--node[near start,sloped, above=0.5pt,blue]{} (21);
\draw[->](121)--node[left=0.5pt,blue]{$x_2-y_1$}(21);
\draw[->](121)--node[right=0.5pt,blue]{$x_1-y_2$}(12);
\draw[->](21)--node[left=0.5pt,blue]{$x_1-y_1$}(1);
\draw[->](12)--node[right=0.5pt,blue]{$x_1-y_1$}(2);
\draw[->](1) -- node[left=0.5pt,blue]{$x_1-y_2$}(0);
\draw[->](2)--node[right=0.5pt,blue]{$x_2-y_1$}(0);
\end{tikzpicture}
\caption{The state diagram for the inhomogeneous TASEP on $\St(3)$, with transition rates shown in blue, and unnormalized steady state probabilities $\psi_w$ in red.
Though not shown, the transition rate $312\to 213$ is $x_2-y_1$ and the 
transition rate $231\to 132$ is $x_1-y_2$.}
\label{fig:S3}
\end{center}
\end{figure}

In the case that $y_i=0$, Lam and Williams \cite{LW} 
studied this model\footnote{However the convention of \cite{LW} was slightly different; it corresponds to labeling states by the inverse of the permutations we use here.}
and conjectured that after a suitable normalization, each 
steady state probability $\psi_w$ can be written as a monomial
factor times a positive sum of Schubert polynomials, see 
\cref{table:1} and \cref{table:2}.  
They also gave an explicit formula for the monomial
factor, and conjectured that under certain conditions on 
$w$,  $\psi_w$ is a multiple of 
a particular Schubert polynomial. Subsequently  
Ayyer and Linusson \cite{AL} gave a conjectural combinatorial
formula for the stationary distribution 
in terms of \emph{multiline queues}, which was proved by 
Arita and Mallick \cite{AM}.
 In \cite{C}, Cantini introduced the version of 
 the model given in \cref{def:TASEP}\footnote{We
 note that in \cite{C}, the rate $r_{i,j}$  was $x_i-y_{j}$ rather than $x_i-y_{n+1-j}$ as we use in \cref{def:TASEP}.}
 with $y_i$ general, and, after introducing a further set of 
 spectral parameters $z_i$, gave a series of 
 \emph{exchange equations} relating the components $\psi_w(\z)$ of 
 the \emph{$\z$-deformed} stationary distribution.  This allowed him to give explicit formulas for the steady state probabilities for $n$ of the 
$n!$ states as products of double
Schubert polynomials. 

\begin{table}[h!]
\centering
\begin{tabular}{|c c |}
    \hline
    State $w$ & Probability $\psi_w$\\
    \hline 
    1234 & $(x_1-y_1)^2 (x_1-y_2)(x_2-y_1)$\\
    1324 & $(x_1-y_1) \Sym_{1432}$\\
    1342 & $(x_1-y_1)(x_2-y_1) \Sym_{1423}$\\
    1423 & $(x_1-y_1)(x_1-y_2)(x_2-y_1) \Sym_{1243}$\\
    1243 & $(x_1-y_2)(x_1-y_1) \Sym_{1342}$\\
    1432 & $\Sym_{1423} \Sym_{1342}$\\ 
    \hline
    \end{tabular}
    \caption{The unnormalized steady state probabilities for $n=4$.}
    \label{table:1}
    \end{table}

In this paper we build on \cite{C, AL, AM} to strengthen the connection 
between 
steady state probabilities
and Schubert polynomials: in particular, we give
a formula for $\psi_w$ as a ``trivial factor'' times a product of (double) Schubert polynomials whenever $w$ is \emph{evil-avoiding}, that is, it
avoids the patterns
$2413, 4132, 4213$ and $3214$.\footnote{We call these permutations \emph{evil-avoiding} because if one 
replaces $i$ by $1$, $e$ by $2$, $l$ by $3$, and $v$ by $4$, 
then \emph{evil} and its anagrams \emph{vile, veil} and \emph{leiv} become the four patterns $2413, 4132, 4213$ and $3214$.  (Leiv is a name of Norwegian origin meaning ``heir.'')}  
We show that 
there are $\frac{(2+\sqrt{2})^{n-1}+(2-\sqrt{2})^{n-1}}{2}$
evil-avoiding permutations in $S_n$, so this gives a 
substantial generalization of the previous result
\cite{C} linking probabilities to Schubert polynomials.
In the case that $y_i=0$ for all $i$,
we also give a formula for the $\mathbf{z}$-deformed steady state probabilities 
$\psi_w(\z)$ in terms of multiline queues, generalizing  the 
result of Arita and Mallick; we then use this result to 
prove the 
monomial factor conjecture from \cite{LW}.  
Finally, we show that the Schubert polynomials that arise in our formulas are flagged Schur functions, and give a bijection in this case between multiline queues and semistandard Young tableaux.

\begin{center}
\begin{table}[h!]
    \begin{tabular}{|c c |}
    \hline
    State $w$ & Probability $\psi_w$\\
    \hline 
    12345 & $\mathbf{x}^{(6,3,1)}$\\
    12354 & $\mathbf{x}^{(5,2,0)} \Sym_{13452}$\\
    12435 & $\mathbf{x}^{(4,1,0)} \Sym_{14532}$\\
    12453 & $\mathbf{x}^{(4,1,1)} \Sym_{14523}$\\
    12534 & $\mathbf{x}^{(5,2,1)} \Sym_{12453}$\\
    12543 & $\mathbf{x}^{(3,0,0)} \Sym_{14523} \Sym_{13452}$\\
    13245 & $\mathbf{x}^{(3,1,1)} \Sym_{15423}$\\
    13254 & $\mathbf{x}^{(2,0,0)} \Sym_{15423} \Sym_{13452}$\\
    13425 & $\mathbf{x}^{(3,2,1)} \Sym_{15243}$\\
    13452 & $\mathbf{x}^{(3,3,1)} \Sym_{15234}$\\
    13524 & $\mathbf{x}^{(2,1,0)} (\Sym_{164325}+\Sym_{25431})$\\
    13542 & $\mathbf{x}^{(2,2,0)} \Sym_{15234} \Sym_{13452}$\\
    14235 & $\mathbf{x}^{(4,2,0)} \Sym_{13542}$\\
    14253 & $\mathbf{x}^{(4,2,1)} \Sym_{12543}$\\
    14325 & $\mathbf{x}^{(1,0,0)} 
(\Sym_{1753246}+\Sym_{265314}+\Sym_{2743156}+\Sym_{356214}+\Sym_{364215}+\Sym_{365124})$\\
    14352 & $\mathbf{x}^{(1,1,0)} \Sym_{15234} \Sym_{14532}$\\
    14523 & $\mathbf{x}^{(4,3,1)} \Sym_{12534}$\\
    14532 & $\mathbf{x}^{(1,1,1)} \Sym_{15234} \Sym_{14523}$\\
    15234 & $\mathbf{x}^{(5,3,1)} \Sym_{12354}$\\
    15243 & $\mathbf{x}^{(3,1,0)}(\Sym_{146325}+\Sym_{24531})$\\
    15324 & $\mathbf{x}^{(2,1,1)} (\Sym_{15432}+\Sym_{164235})$\\
    15342 & $\mathbf{x}^{(2,2,1)} \Sym_{15234}\Sym_{12453}$\\
    15423 & $\mathbf{x}^{(3,2,0)} \Sym_{12534}\Sym_{13452}$\\
    15432 & $\Sym_{15234} \Sym_{14523}\Sym_{13452}$\\
    \hline\end{tabular}
    \caption{The unnormalized steady state probabilities for $n=5$, when
    each $y_i=0$.  In the table, $\mathbf{x}^{(a,b,c)}$ denotes $x_1^a x_2^b x_3^c$.}
    \label{table:2}
    \end{table}
\end{center}

In order to state our main results, we need a few definitions.
First, we say that two states $w$ and $w'$ are \emph{equivalent}, and write
$w\sim w'$, if one state is a cyclic shift of the other, e.g.   
$(w_1,\ldots,w_n) \sim (w_2,\ldots,w_n,w_1)$.  Because of the cyclic symmetry inherent in the definition of the TASEP on a ring, it is clear that the probabilities of states $w$ and $w'$ are equal whenever $w\sim w'$.  
We will therefore often assume, without loss of generality, 
that $w_1=1$.  Note that up to cyclic shift, $\St(n)$ contains $(n-1)!$ states.

In \cref{MFC} below we describe the 
monomial factors that appear in the steady state probabilities.
Suppose that $y_i=0$ for all $i$.

\begin{Def}\label{def:alpha}
Given $w=(w_1,\ldots,w_n)\in \St(n)$, let $r=w^{-1}(i+1)$ and $s=w^{-1}(i)$.  
Let $\alpha_i(w)$ be the number of integers greater than $i+1$ 
among $\{i+1=w_r, w_{r+1}, \ldots , w_s=i\}$, where the subscripts
are taken modulo $n$.  
Let $\eta(w)$ be the largest monomial that can be factored
out of $\psi_w$.   
\end{Def}

\begin{Th}\label{MFC} \cite[Conjecture 2]{LW}
Let $y_i=0$ for all $i$.  
For $w\in \St(n)$, we have
$$\eta(w) = \prod_{i=1}^{n-2} x_i^{\alpha_i(w)+\cdots+\alpha_{n-2}(w)}.$$
Moreover if two states $w, w'\in \St(n)$ have the same $\eta(w)=\eta(w')$, then $w$ and $w'$ are cyclically equivalent i.e $w \sim w'$.
\end{Th}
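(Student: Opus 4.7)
The plan is to use the multiline queue formula for the $\z$-deformed steady state probabilities $\psi_w(\z)$ (established earlier in the paper), which expresses $\psi_w(\z)$ as a weighted sum over multiline queues $Q$ projecting to $w$. After setting $\z = \mathbf{0}$, each $Q$ contributes a monomial in the $x_i$'s, so $\psi_w$ becomes a nonnegative integer combination of $x$-monomials and $\eta(w)$ is the greatest common monomial divisor of these contributions.

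To establish the formula for $\eta(w)$, I would prove two inequalities. For divisibility, I want to show that $\prod_{i=1}^{n-2} x_i^{\beta_i(w)}$ divides every multiline queue weight, where $\beta_i(w) := \alpha_i(w) + \cdots + \alpha_{n-2}(w)$. The idea is to associate, to each triple $(i, j, k)$ with $i \leq j \leq n-2$, $k \geq j+2$, and $k$ lying in the clockwise arc from $j+1$ to $j$ in $w$, a distinct forced $x_i$-factor in the weight of every such $Q$; since $\beta_i(w)$ counts these triples, this yields the divisibility. For the matching upper bound, I would exhibit a canonical (``minimal'') multiline queue $Q^{\star}$ projecting to $w$ whose weight has exactly $x_i^{\beta_i(w)}$ as its $x_i$-part, ruling out any strictly larger common power. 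The hardest step will be the lower bound, since it requires pinpointing which $x_i$-factors are forced in the multiline queue weight as a function of $w$, matching these forced factors bijectively with the combinatorial statistic $\beta_i(w)$, and doing so uniformly across all queues.

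For the cyclic-equivalence statement, from $\eta(w)$ we recover $\beta_i(w)$ and hence $\alpha_i(w) = \beta_i(w) - \beta_{i+1}(w)$ (with the convention $\beta_{n-1}(w) := 0$). It suffices to show the sequence $(\alpha_1(w), \ldots, \alpha_{n-2}(w))$ determines $w$ up to cyclic rotation, which I would prove by reverse induction on $i$. Starting from the unique cyclic arrangement of $\{n-1, n\}$, for $i = n-2, n-3, \ldots, 1$ I insert $i$ into the existing cycle on $\{i+1, \ldots, n\}$. As the insertion slot varies around the $n-i$ available positions beginning just clockwise of $i+1$, the number of elements of $\{i+2, \ldots, n\}$ lying in the clockwise arc from $i+1$ to the insertion point takes each value in $\{0, 1, \ldots, n-i-1\}$ exactly once. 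Hence $\alpha_i(w)$ pins down the insertion uniquely, and iterating down to $i = 1$ reconstructs $w$ up to cyclic rotation.
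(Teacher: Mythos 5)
Your overall architecture (the multiline queue formula turns $\psi_w$ into a positive sum of monomials, so $\eta(w)$ is the entrywise minimum of the exponent vectors of the queue weights; prove matching lower and upper bounds on each exponent; recover the cyclic class from the $\alpha$-vector by an insertion argument) is sound in outline, and your reconstruction of $w$ from $(\alpha_1,\ldots,\alpha_{n-2})$ is essentially the paper's \cref{permutation indexing} and \cref{lemma: construction of ST(N)}. But both halves of the exponent computation have problems. The upper bound as you state it is false: there is in general \emph{no} single multiline queue $Q^{\star}$ whose weight realizes the exponent $\beta_i(w)=\alpha_i(w)+\cdots+\alpha_{n-2}(w)$ in every variable $x_i$ simultaneously. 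Already for $w=1432$ one has $\eta(w)=1$ (all $\alpha_i=0$), while $\psi_{1432}=\Sym_{1423}\Sym_{1342}=h_2(x_1,x_2)\,e_2(x_1,x_2,x_3)$, so every monomial of $\psi_{1432}$ --- hence every queue weight --- contains $x_1$ or $x_2$; the minima in the two variables are attained by different queues. The correct statement is only that for each fixed $i$ \emph{some} queue attains the minimal $x_i$-exponent, and even that the paper does not obtain by an explicit construction: it first proves $x_1^{\beta_1(w)}\mid\mid\psi_w$ (\cref{prop: x1}) by induction on $n$ together with a contradiction argument using the exchange relations and the gluing construction of \cref{lem: MLQ MF}, and then transfers the statement from $x_1$ to $x_{i+1}$ by deleting the letters $1,\ldots,i$ from $w$ and re-inserting trivial bully paths (\cref{lemma: trivial bully path insertion}). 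Some substitute for this machinery is needed; ``exhibit a canonical minimal queue'' is not available.

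For the lower bound, your plan of matching each triple $(i,j,k)$ to a forced $x_i$-factor in every queue weight is only a plan: identifying which factors of $\wt(Q)$ are forced uniformly over all $Q\in MLQ(w)$ is precisely the hard combinatorial content, and you give no mechanism for it. The paper avoids this entirely with a short algebraic argument (\cref{prop:divides}): since $\pi_l(a,b)f=\frac{z_l(z_{l+1}-x_b)}{x_b}\cdot\frac{(1-s_l)f}{z_l-z_{l+1}}$ and every $\psi_w(\z)$ is a polynomial, each exchange move can destroy at most one factor of $x_b$; starting from the explicit monomial dividing $\psi_{(1,\ldots,n)}(\z)$ and following the word that builds the canonical representative $S(b_1,\ldots,b_{n-2})$ of the cyclic class, one reads off that $\prod_i x_i^{\beta_i(w)}$ divides $\psi_w(\z)$ and hence $\psi_w$. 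You should replace your combinatorial lower bound with an argument of this kind; as written, neither inequality is actually established.
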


\begin{Rem}
For the inverse map from $(\alpha_1(w),\ldots,\alpha_{n-2}(w))$ to a cyclic
	equivalence class $[w]\in \St(n)$, see \cref{permutation indexing} and \cref{lemma: construction of ST(N)}.
\end{Rem}

    See \cref{table:2} for examples.
We now introduce some definitions needed to characterize
the Schubert polynomial factors that appear in the 
probabilities $\psi_w$.

\begin{Def}\label{def:kGrassmannian}
Let $w=(w_1,\ldots,w_n)\in \St(n)$. 
We say that $w$ is a 
\emph{$k$-Grassmannian permutation}, 
and we write
$w\in \St(n,k)$ if: $w_1=1$; $w$ is \emph{evil-avoiding}, i.e. 
    $w$ avoids
    the patterns $2413$, $3214$, $4132$, and $4213$; and 
    $w$ has $k$ \emph{recoils}, that is, letters $a$ in $w$ such that $a+1$ appears to the left of $a$ in $w$.
    (Equivalently, $w^{-1}$ has exactly $k$ \emph{descents}.)
\end{Def}

\begin{Def}\label{def:bijpartition}
We associate to each $w\in\St(n,k)$ a sequence of partitions
$\Psi(w)=(\lambda^1,\ldots,\lambda^k)$ 
 as follows. Write the  code (cf. \cref{def:code}) of $w^{-1}$ as 
 $c(w^{-1}) = c = (c_1,\ldots,c_n);$
since $w^{-1}$ has $k$ descents, 
$c$ has $k$ \emph{descents} in positions we denote by
$a_1,\ldots,a_k$.  Set $a_0=0$.  For $1 \leq i \leq k$,
define
$\lambda^i = (n-{a_i})^{a_i} - (\underbrace{0,\ldots,0}_\text{$a_{i-1}$},c_{a_{i-1}+1},
c_{a_{i-1}+2},\ldots, c_{a_i}).$
\end{Def}
See \cref{table:3} for examples of the map $\Psi(w)$.

\begin{Def}\label{def:g} Given a positive integer $n$ and a partition $\lambda$ properly contained in a $\length(\lambda) \times (n-\length(\lambda))$ rectangle (we will later use the notation $\lambda\in\Val(n)$), we define an integer vector 
$g_n(\lambda)=(v_1,\ldots,v_n)$ of length $n$ as follows.  Write $\lambda=(\mu_1^{k_1},\ldots,\mu_l^{k_l})$ where $k_i>0$ and $\mu_1>\cdots>\mu_l$.
We start with $(v_1,\ldots,v_n)=(0,\ldots,0)$ and perform the following step for $i$ from $1$ to $l$.
\begin{itemize}
    \item (Step $i$) Set $v_{n-\mu_i}$ equal to $\mu_i$. Moving to the left, assign the value $\mu_i$ to the first $(k_i-1)$ entries which are zero.
\end{itemize}
\end{Def}

\begin{Rem}\label{rem:g}
Note that in Step 1, we set $v_{n-\mu_1},v_{n-\mu_1-1},\ldots,v_{n-\mu_1-k_1+1}$ equal to $\mu_1$.
\end{Rem}

\begin{Ex}
	We have
\begin{equation*}
	g_5((2,1,1))=(0,1,2,1,0), \hspace{.1cm}
	g_6((3,2,2,1))=(0,2,3,2,1,0), \hspace{.1cm}
     g_6((3,1,1))=(0,0,3,1,1,0).
\end{equation*}
For example, let $g_6((3,2,2,1))=(v_1,\ldots,v_6)$. In Step 1, we set $v_3=3$. In Step 2, we first set $v_4=2$, then moving to the left we set $v_2=2$. In Step 3, we set $v_5=1$.
\end{Ex}

The main result of this paper is
\cref{thm:main1}.  The definition of Schubert polynomial 
can be found in \cref{sec:background}.

\begin{Def}\label{def:xy}
We write $a\to b \to c$ if the letters $a, b, c$ appear in cyclic order in $w$.
So for example, if $w=1423$, we have that $1\to 2\to 3$ and $2\to 3\to 4$,
but it is not the case that $3 \to 2 \to 1$ or $4\to 3 \to 2$.
We then define
\begin{equation}\label{eq:xy}
   \xyFact(w)=\prod\limits_{i=1}^{n-2}\prod_{\substack{i+2 \leq k \leq n \\ i \to i+1 \to k}}(x_1-y_{n+1-k})\cdots(x_i-y_{n+1-k}). 
\end{equation}
\end{Def}

\begin{Th}\label{thm:main1}
Let $w\in \St(n,k)$  be a 
\emph{$k$-Grassmannian} permutation, as 
in \cref{def:kGrassmannian}, and write  $\Psi(w)=(\lambda^{1},\ldots,\lambda^{k})$. Then the (unnormalized) steady state probability is given by  
\begin{equation}\label{eq:maintheorem1}
    \psi_w=\xyFact(w)  \prod_{i=1}^k \Sym_{c^{-1}(g_n(\lambda^i))},
    \end{equation}
where $\Sym_{c^{-1}(g_n(\lambda^i))}$ is the double Schubert polynomial
associated to the permutation with  code
$g_n(\lambda^i)$, and $g_n$ is given by \cref{def:g}.
\end{Th}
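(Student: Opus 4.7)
The plan is to prove \cref{thm:main1} by induction on $k$, using Cantini's $\z$-deformed exchange equations from \cite{C} as the main mechanism; this is the same framework in which the $n$ base cases (states of the form $1,2,\ldots,j,\sigma,j+1,\ldots$ with $\sigma$ a single decreasing run) were established, and those cases are exactly the $k=1$ case of the statement. For $k=0$ the only evil-avoiding $k$-Grassmannian permutation is $w=12\cdots n$, and the formula reduces to the normalization \eqref{eq:normalization}: the product over Schubert polynomials is empty, while $\xyFact(12\cdots n)$ telescopes to $\prod_{i<j}(x_i-y_{n+1-j})^{j-i-1}$ because for each $k\ge i+2$ the cyclic condition $i\to i+1\to k$ is automatic in the identity.

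For the inductive step, given $w\in\St(n,k)$ with $k\ge 2$, I would peel off the last Schubert factor $\Sym_{c^{-1}(g_n(\lambda^k))}$ by applying a sequence of Cantini exchange equations in the spectral parameters $\z$ that moves the particles constituting the rightmost descent block $c_{a_{k-1}+1},\ldots,c_{a_k}$ of $c(w^{-1})$ into a configuration already covered by Cantini's $k=1$ result. Concretely, I would locate the block in $w$ corresponding to this last descent of $c(w^{-1})$ and apply the exchange equations to reduce $w$ to a permutation $w'$ whose code $c(w'^{-1})$ is obtained from $c(w^{-1})$ by zeroing out positions $a_{k-1}+1,\ldots,a_k$; by \cref{def:bijpartition}, one then has $\Psi(w')=(\lambda^1,\ldots,\lambda^{k-1})$ and $w'\in\St(n,k-1)$, to which the induction hypothesis applies. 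The evil-avoidance hypothesis is what guarantees that these blocks can be peeled one at a time: the rows of \cref{table:2} for $13524$, $14325$, $15243$, $15324$ show that each forbidden pattern creates an entanglement between two consecutive descent blocks that forces $\psi_w$ to be a nontrivial sum of Schubert polynomials, so the sequence of reductions works precisely when all four patterns are absent.

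Two combinatorial checks then need to be run to close the induction. First, I would verify that the exchange-equation reduction contributes the factor $\Sym_{c^{-1}(g_n(\lambda^k))}$ on the nose: this requires showing that the composition of transpositions used to peel the last block produces the Schubert polynomial whose code is $g_n(\lambda^k)$, which I would do by comparing the code recursion of \cref{def:g} with how the Lascoux--Sch\"utzenberger transition formula acts on $c(w^{-1})$ under the chosen exchange moves. Second, I would verify that the $\xyFact$ prefactor tracks correctly, i.e.\ that $\xyFact(w)/\xyFact(w')$ equals the $(x_i-y_{n+1-k})$-multiplier produced by the exchange equations; this is a local check on which triples $i\to i+1\to k$ appear cyclically in $w$ but not in $w'$, and it reduces to identifying the $k$ that sit inside the peeled block.

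The main obstacle will be the bookkeeping in the inductive step: choosing, for each evil-avoiding $w$, a canonical sequence of exchange moves that simultaneously (a) stays within $\St(n,k)$ and its subclasses, (b) yields the specific Schubert polynomial indexed by $c^{-1}(g_n(\lambda^k))$ rather than some other representative of its cohomology class, and (c) preserves the block structure of $\Psi$ on the remaining descents. I expect this to be cleanest if one orders the reductions from the rightmost descent inward and exploits the fact that, for evil-avoiding $w$, the cyclic intervals between the letters $i$ and $i+1$ for distinct values of $i$ are nested or disjoint rather than interleaved; once that structural observation is nailed down, the induction should telescope to give \eqref{eq:maintheorem1}.
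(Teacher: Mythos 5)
Your outline has the right top-level shape (induct on $k$, work in Cantini's $\z$-deformed setting), but there are two genuine gaps. First, you identify the base case with Cantini's already-established states $w(n,h)=(h,h-1,\ldots,1,h+1,\ldots,n)$, but those give only \emph{one} state per value of $k$ (see the Remark after \cref{prop:enumeration}), whereas the $k=1$ case of the theorem covers all of $\St(n,1)$ — e.g.\ $12$ of the $24$ states for $n=5$, including Schubert factors like $\Sym_{1432}$ that Cantini's corollary does not produce. The $k=1$ case is in fact the bulk of the work in the paper: it is proved by a separate induction on $|\lambda|$ (\cref{PROP:IGP TRUE}), which requires introducing the $\z$-Schubert polynomials of \cref{def:zSchubert}, tracking exactly how the trivial factor $\TF(w)$ changes under a single swap (\cref{Cor: IGP trivial factor}), and proving the divided-difference identities \cref{prop 115,prop 116}. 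Related to this, your proposal never addresses the basic type mismatch that makes the $\z$-Schubert polynomials necessary: Cantini's exchange operators $\pi_l$ are divided differences in the $\z$-variables, so they cannot by themselves ``produce'' a double Schubert polynomial $\Sym_{c^{-1}(g_n(\lambda^k))}(\x,\y)$, which contains no $\z$'s. One needs an intermediate object in $\z,\x,\y$ on which the $\pi_l$ act cleanly and whose $\z$-leading coefficient is the desired double Schubert polynomial (\cref{Lemma:leadingcoefficient}); invoking the Lascoux--Sch\"utzenberger transition formula on codes does not supply this bridge.

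Second, the inductive step you propose — peeling off the last descent block by a long chain of exchange moves and checking that the accumulated operators contribute $\Sym_{c^{-1}(g_n(\lambda^k))}$ ``on the nose'' — is exactly the bookkeeping the paper's proof is designed to avoid, and I do not see how to control how a product of $k$ shifted $\z$-Schubert factors transforms under such a chain. The paper instead gets from $k-1$ to $k$ with no further exchange-equation computations: by \cref{Prop: SS Decomposition}, evil-avoidance forces the cyclic shift $\sigma^{a_2}(w)$ to split as a direct sum $\bar{w}(\lambda^1;n)\oplus w'$, and Cantini's factorization $\psi_{u\oplus v}(\z)=\psi^1_u(\z)\psi^2_v(\z)$ together with the cross-ratio identities of \cref{trivial factor fraction equality} and \cref{psi fraction equality } combine the $k=1$ case (applied to $\bar{w}(\lambda^1;n)\oplus\id$) with the induction hypothesis (applied to $\id\oplus w'\in\St(n,k-1)$). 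If you want to salvage your route, you would need to either prove a multi-factor analogue of \cref{prop 115,prop 116} governing how $\partial^{\z}$ acts on products of shifted $\z$-Schubert polynomials, or adopt the direct-sum decomposition; the latter is where evil-avoidance actually enters the inductive step, not through the heuristic ``entanglement'' of consecutive blocks you describe.
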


In the case that each $y_i=0$, \cref{thm:main1} becomes
\cref{thm:main0} below.

\begin{Th}\label{thm:main0}
Let $w\in \St(n,k)$, and   
let $\Psi(w)=(\lambda^1,\ldots,\lambda^k)$.
Let
$\mu$ be the vector $\mu:=({n-1\choose 2}, {n-2 \choose 2},\ldots ,{2 \choose 2})-\sum\limits_{i=1}^k \lambda^i,$ 
where we view each 
partition $\lambda^i$ as 
a vector in $\Z_{\geq 0}^{n-2}$,  
adding trailing $0$'s if necessary. 
Then when each $y_i=0$,
the unnormalized steady state probability 
$\psi_w$ is given by 
$$\psi_w = \mathbf{x}^{\mu} \prod_{i=1}^k \Sym_{c^{-1}(g_n(\lambda^i))},$$
where $\Sym_{c^{-1}(g_n(\lambda^i))}$ is the Schubert polynomial
of the permutation with  code
$g_n(\lambda^i)$.

Equivalently, writing $\lambda^i = (\lambda_1^i, \lambda_2^i,\ldots)$, we have that 
$$\psi_w = \mathbf{x}^{\mu} \prod_{i=1}^k
s_{\lambda^i}(X_{n-\lambda_1^i}, X_{n-\lambda_2^i},\ldots),$$
where $s_{\lambda^i}(X_{n-\lambda_1^i}, X_{n-\lambda_2^i},\ldots)$ denotes the flagged
Schur polynomial associated to shape $\lambda^i$,
where the semistandard tableaux entries in row $j$ are bounded above by 
$n-\lambda_j^i$.  
\end{Th}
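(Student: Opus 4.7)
The plan is to obtain \cref{thm:main0} as the $y_i=0$ specialization of \cref{thm:main1}, combined with two independent ingredients developed elsewhere in the paper: the monomial factor conjecture (\cref{MFC}) and the flagged-Schur identification of the Schubert polynomials $\Sym_{c^{-1}(g_n(\lambda))}$.

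First I set $y_i=0$ in \eqref{eq:maintheorem1}. Each double Schubert polynomial $\Sym_{c^{-1}(g_n(\lambda^i))}$ specializes to the ordinary Schubert polynomial in $\mathbf{x}$, and
\[
\xyFact(w)\big|_{y=0} \;=\; \prod_{j=1}^{n-2} x_j^{\nu_j}, \qquad \nu_j \;:=\; \sum_{p=j}^{n-2} \bigl|\{k : p+2 \le k \le n,\ p \to p+1 \to k\}\bigr|.
\]
Comparing directly with \cref{def:alpha}, the inner cardinality is exactly $\alpha_p(w)$, hence $\nu_j = \alpha_j(w)+\cdots+\alpha_{n-2}(w)$, which matches the exponent of $x_j$ predicted by \cref{MFC}. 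In particular $\xyFact(w)\big|_{y=0}=\eta(w)$, so the first displayed formula of \cref{thm:main0} reduces to establishing the column-wise identity
\[
\alpha_j(w)+\cdots+\alpha_{n-2}(w) \;=\; \binom{n-j}{2}-\sum_{i=1}^{k}\lambda^i_j.
\]

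To prove this identity I would unpack \cref{def:bijpartition}: by construction each $\lambda^i$ sits inside the rectangle $(n-a_i)^{a_i}$ and the vector $(c_{a_{i-1}+1},\dots,c_{a_i})$ is exactly the complement of $\lambda^i$ inside the block of rows $a_{i-1}+1,\dots,a_i$. Summing over $i$, the quantity $\sum_i\lambda^i_j$ rewrites as a triangular count minus a tail sum of the code $c(w^{-1})$. Translating this code tail back into cyclic-order data on $w$ — using $w_1=1$ as an anchor — matches it termwise with $\alpha_j(w)+\cdots+\alpha_{n-2}(w)$. I would carry this out by descending induction on $j$ from $n-2$, tracking how a single recoil position $a$ contributes simultaneously to one column of some $\lambda^i$ and to the appropriate $\alpha_p$; the base case and the inductive step can both be checked by examining how the insertion of letter $n$ or the removal of the largest recoil perturbs both sides.

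For the equivalent flagged-Schur formulation, I would invoke the independent result of the paper that each permutation $c^{-1}(g_n(\lambda^i))$ is vexillary (i.e.\ $2143$-avoiding). A classical theorem of Wachs (equivalently Lascoux--Sch\"utzenberger) then expresses the Schubert polynomial of a vexillary permutation as a flagged Schur polynomial whose partition is the decreasing rearrangement of its code and whose flag is read off the permutation itself. The placement rule in \cref{def:g} — which sets $v_{n-\mu_i},v_{n-\mu_i-1},\dots$ equal to $\mu_i$ — is tailored so that the flag in row $j$ is exactly $n-\lambda_j$, yielding $s_{\lambda^i}(X_{n-\lambda^i_1},X_{n-\lambda^i_2},\ldots)$ as claimed.

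The hard part is the column-wise identity bridging $\alpha$-statistics on $w$ with partition column sums coming from $\Psi(w)$. Both sides are clean combinatorially, but the route between them passes through the somewhat indirect definition of $\Psi$ and the anchoring convention $w_1=1$; the bookkeeping for recoils with multiple successive descents is where the main effort lies. Once this identity is verified, the specialization is routine and the flagged-Schur equivalence follows at once from vexillarity.
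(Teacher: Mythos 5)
Your overall route is the paper's: specialize \cref{thm:main1} at $y_i=0$, observe that $\xyFact(w)$ degenerates to a monomial, and obtain the flagged-Schur form from the vexillarity of $c^{-1}(g_n(\lambda^i))$ (this is exactly \cref{lem:flag}, \cref{th:flag} and \cref{cor:flaggedSchur}). Two remarks. First, the appeal to \cref{MFC} buys you nothing: knowing that $\xyFact(w)\vert_{y=0}$ equals the maximal monomial factor $\eta(w)$ does not identify it with $\mathbf{x}^{\mu}$, so you are still left with exactly the column-wise identity you state; I would drop the MFC reference (it is proved much later by an independent multiline-queue argument) to avoid any appearance of circularity. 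Second, the column-wise identity --- the only piece of real content in the specialization, and the step you leave as a sketch --- does not need the descending induction and insertion/removal bookkeeping you propose; it falls out of \cref{prop:bij} by a short telescoping computation. Writing $c=c(w^{-1})$ and $f_i=\lambda^i_1$, one checks directly from \cref{def:alpha} that
\[
\alpha_p(w)\;=\;c_p-c_{p+1}-1+(n-p)\cdot\bigl[\,p\ \text{is not a recoil of}\ w\,\bigr],
\]
that the recoils of $w$ are precisely $\{\,n-f_1,\dots,n-f_k\,\}$, and that for $n-f_{i_0-1}<j\le n-f_{i_0}$ one has $c_j=f_{i_0}-\lambda^{i_0}_j$, while $\lambda^i_j=0$ for $i<i_0$ and $\lambda^i_j=f_i$ for $i>i_0$. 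Summing the display over $p=j,\dots,n-2$ telescopes to $\sum_{p\ge j}\alpha_p(w)=\binom{n-j}{2}+c_j-\sum_{i:\,f_i\le n-j}f_i=\binom{n-j}{2}-\sum_{i}\lambda^i_j$, which is the identity you need. With that computation written out, your argument is complete and coincides with the paper's derivation of \cref{thm:main0} from \cref{thm:main1}.
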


\begin{Rem}
	It follows from \cite[Theorem 7.2]{AS} that if we set each $y_i=0$, 
	the \emph{partition function}
	$Z_n = \sum_{w\in S_n} \psi_w$ is equal to 
	$\prod_{i=1}^n h_{n-i}(x_1, x_2,\ldots, x_{i-1}, x_i, x_i),$
	where $h_i$ is the homogeneous symmetric polynomial.
	Note that $Z_n$  has 
	$\prod_{i=0}^{n} {n \choose i}$ terms.
	However, if the $y_i$ are not zero, then $Z_n$ does not factor.
\end{Rem}

We illustrate \cref{thm:main0} in \cref{table:3} in the case that $n=5$.
The quantity $s(w)$ is defined in \cref{def:shiftingvector}.

\begin{center}
\begin{table}[h!]
    \begin{tabular}{|c c c c c|}
    \hline
k &     $w\in \St(5,k)$ & $\Psi(w)$ & probability $\psi_w$ & $s(w)$\\
    \hline 
0 &    12345 & $\emptyset$ & $\x^{(6,3,1)}$ & $(0)$\\
\hline 
 1&   12354 & $(1,1,1)$ & $\x^{(5,2,0)} \Sym_{13452}$ & $(0)$\\
   1& 12435 & $(2,2,1)$ & $\x^{(4,1,0)} \Sym_{14532}$ & $(0)$\\
    1& 12453 & $(2,2)$ & $\x^{(4,1,1)} \Sym_{14523}$ &$(0)$\\
    1& 12534 & $(1,1)$ & $\x^{(5,2,1)} \Sym_{12453}$ & $(0)$\\
    1& 13245 & $(3,2)$ & $\x^{(3,1,1)} \Sym_{15423}$ & $(0)$\\
    1& 13425 & $(3,1)$& $ \x^{(3,2,1)} \Sym_{15243}$ &$(0)$\\
    1& 13452 & $(3)$ & $\x^{ (3,3,1)} \Sym_{15234}$ & $(0)$\\
    1& 14235 & $(2,1,1)$ & $\x^{(4,2,0)}  \Sym_{13542}$ & $(0)$\\
    1& 14253 & $(2,1)$ & $ \x^{(4,2,1)} \Sym_{12543}$ & $(0)$\\
    1& 14523 & $(2)$ & $ \x^{(4,3,1)} \Sym_{12534}$ & $(0)$\\
    1& 15234 & $(1)$ & $\x^{(5,3,1)} \Sym_{12354}$ & $(0)$\\
        \hline 
    2& 12543 & $(2,2), (1,1,1)$& $\x^{(3,0,0)} \Sym_{14523} \Sym_{13452}$ & $(0,-1)$\\
    2& 13254 & $(3,2), (1,1,1)$ & $\x^{(2,0,0)} \Sym_{15423} \Sym_{13452}$ & $(0,0)$\\
    2& 13542 & $(3), (1,1,1)$ & $\x^{(2,2,0)} \Sym_{15234} \Sym_{13452}$ & $(0,-1)$\\
    2& 14352 & $(3), (2,2,1)$ & $\x^{(1,1,0)} \Sym_{15234} \Sym_{14532}$ & $(0,-1)$\\  
    2& 14532 & $(3), (2,2)$ & $\x^{(1,1,1)} \Sym_{15234} \Sym_{14523}$ & $(0,-1)$\\
    2& 15342 & $(3), (1,1)$ & $\x^{(2,2,1)} \Sym_{15234}\Sym_{12453}$ & $(0,-1)$\\
    2& 15423 & $(2),(1,1,1)$ & $\x^{(3,2,0)} \Sym_{12534}\Sym_{13452}$ & $(0,-2)$\\
    \hline 
3 &    15432 & $(3), (2,2), (1,1,1)$ & $\Sym_{15234} \Sym_{14523} \Sym_{13452}$ & $(0,-1,-2)$\\
    \hline\end{tabular}
    \caption{Special states $w\in\St(5,k)$, the corresponding sequences of partitions $\Psi(w)$,
	 the steady state probabilities $\psi_w$, and vectors $s(w)$. }
    \label{table:3}
    \end{table}
\end{center}

\begin{Prop}\label{prop:enumeration}
The number of evil-avoiding permutation in 
$S_n$ satisfies the recurrence	
$e(1)=1, e(2)=2, 
e(n)=4 e(n-1)-2e(n-2)$ for $n \geq 3$, and is given 
by \begin{equation}
    \label{eq:a}
e(n)=\frac{(2+\sqrt{2})^{n-1}+(2-\sqrt{2})^{n-1}}{2}.
\end{equation}
This sequence begins as $1, 2, 6, 20, 68, 232$, and 
	occurs in \cite{Sloane}
	as sequence A006012.
The cardinalities 
	$|\St(n,k)|$ also occur in \cite{Sloane} as sequence 
A331969.
\end{Prop}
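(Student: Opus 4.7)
The plan is to split the argument into (i) a combinatorial proof of the recurrence $e(n)=4e(n-1)-2e(n-2)$, and (ii) a routine algebraic derivation of the closed form from this recurrence. The initial values $e(1)=1$ and $e(2)=2$ are immediate, since every permutation in $S_1$ or $S_2$ vacuously avoids every length-$4$ pattern.

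For (i), my approach is to classify evil-avoiding permutations by conditioning on a local structural feature. The most promising choice is the position of the largest letter $n$: if $n$ sits at position $k$ in $w\in S_n$, then the constraints imposed by the four forbidden patterns decouple. The suffix $w_{k+1}\cdots w_n$ must simultaneously avoid $132$ and $213$ (otherwise $w$ contains $4132$ or $4213$), and such sequences are well known to be counted by $2^{n-k-1}$. The prefix $w_1\cdots w_{k-1}$ must avoid $321$ (otherwise $w$ contains $3214$). Finally, the cross condition ruling out $2413$ with $n$ in the second slot says that there is no $i<k$ and no $k<j_1<j_2$ with $w_{j_1}<w_i<w_{j_2}$. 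Combined with the evil-avoidance of the underlying permutation on $\{1,\ldots,n-1\}$ obtained by deleting $n$, this characterizes evil-avoiding $w$ completely. I would then sum the resulting count over $k$ and simplify. As a complementary approach, I would also try the insertion method: starting from an evil-avoiding $w'\in S_{n-1}$, determine how many positions admit inserting $n$ to produce an evil-avoiding $w$; the goal is to show that the generic count contributes $4e(n-1)$, while a controlled family of borderline configurations (in bijection with the evil-avoiders of $S_{n-2}$) must be subtracted, yielding the $-2e(n-2)$ correction.

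For (ii), the recurrence $e(n)=4e(n-1)-2e(n-2)$ has characteristic polynomial $t^2-4t+2$ with roots $2\pm\sqrt{2}$, so $e(n)=A(2+\sqrt{2})^{n-1}+B(2-\sqrt{2})^{n-1}$, and fitting $e(1)=1$ and $e(2)=2$ forces $A=B=\tfrac12$, yielding \eqref{eq:a}. The identifications with OEIS sequences A006012 and A331969 then follow by matching initial terms — the first directly from the formula, the second by computing $|\St(n,k)|$ for small $n,k$ using the structural characterization above, refined to track the conditions $w_1=1$ and the number of recoils. The main obstacle is step (i): the coefficient $-2e(n-2)$ is the delicate part, and whichever decomposition one chooses, isolating the precise family of borderline configurations counted by $2e(n-2)$ — and exhibiting the sub-bijection with $S_{n-2}$ evil-avoiders that makes this count transparent — is where most of the work will lie.
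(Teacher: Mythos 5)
Your part (ii) is fine: the characteristic-root computation and the fitting of the initial conditions are routine, and your classification of which pattern occurrences can involve the letter $n$ (namely $n$ must play the role of the ``$4$'', giving the three local conditions: prefix avoids $321$, suffix avoids $\{132,213\}$, no cross-$2413$) is correct and is a legitimate starting point. The problem is that part (i) — the recurrence $e(n)=4e(n-1)-2e(n-2)$, which is the entire content of the proposition — is never actually established, and both of your proposed routes have obstacles beyond being ``where most of the work will lie.'' In the first route, the decomposition by the position of $n$ does not factor: the cross-$2413$ condition and the requirement that the deletion of $n$ be evil-avoiding couple the prefix and suffix through their interleaved values, so the count is not a product of a $321$-avoidance count, a $2^{n-k-1}$, and an $e(n-1)$-type term, and ``sum over $k$ and simplify'' is exactly the step you have not supplied. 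In the second route the premise is false as stated: the number of valid insertion positions for $n$ into an evil-avoiding $w'\in S_{n-1}$ is not generically $4$. For $w'=\mathrm{id}_{n-1}$ every one of the $n$ slots is valid (the prefix is increasing, the suffix is increasing, and no cross-$2413$ can form), so a single permutation already contributes more than $4$; the correct average $e(n)/e(n-1)\to 2+\sqrt 2$ emerges only after a nontrivial redistribution. A recurrence with a negative coefficient of this type requires a labelled succession rule (generating tree) in which the number of children depends on a statistic of $w'$, and identifying that statistic and its evolution is the real theorem.

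For comparison, the paper does not induct on permutations at all: it first biject­s $\St(n,k)$ with sequences of partitions $\ParSeq(n,k)$ (Proposition \ref{prop:bij}), proves $|\ParSeq(n,k)|=2|\ParSeq(n-1,k)|+\sum_{i=k+1}^{n-1}|\ParSeq(i,k-1)|$ by exhibiting explicit injections whose images partition $\ParSeq(n,k)$, and then sums over $k$ and telescopes to get $e(n)=4e(n-1)-2e(n-2)$. The parameter $k$ (the number of recoils) is precisely the extra statistic your insertion argument is missing. Two smaller points: the identification with A331969 is not proved by ``matching initial terms'' — the paper matches the closed formula $T(n,k)$ of \eqref{eq:Tnk} against that entry's generating function after verifying $|\St(n,k)|=T(n,k)$ for all $n,k$ by induction; and you should record explicitly the (easy but necessary) fact that prepending $1$ to an evil-avoiding permutation of $\{2,\dots,n\}$ preserves evil-avoidance, since the count $e(n)$ is related to $\sum_k|\St(n+1,k)|$ only through that observation.
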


\begin{Rem}
Let $w(n,h): = (h, h-1,\ldots,2,1,h+1,h+2,\ldots,n)\in \St(n).$
In \cite[Corollary 16]{C}, Cantini gives a formula for the steady 
state probability of state $w(n,h)$, as a trivial factor times a product of certain (double) Schubert polynomials.  Our main result generalizes this one.
For example,
for $n=4$ and $n=5$, \cite[Corollary 16]{C} 
gives a formula for the probabilities of 
	states $(1,2,3,4)$, $(1,3,4,2)$, $(1,4,3,2)$,
	and the probabilities of states
$(1,2,3,4,5), (1,3,4,5,2), (1,4,5,3,2)$, and $(1,5,4,3,2).$
Meanwhile, \cref{thm:main0} gives a formula for all six states when $n=4$ (see \cref{table:1}) and $20$ of the $24$ states when $n=5$.
Using \eqref{eq:a}, we see that asymptotically
	\cref{thm:main0} 
gives a formula for roughly $\frac{(2+\sqrt{2})^{n-1}}{2}$ 
out of the 
$(n-1)!$ states of $\St(n).$

Note also that the Schubert polynomials that occur in the formulas of \cite{C} are all of the form $\Sym_{\sigma(a,n)}$, where 
$\sigma(a,n)$ denotes the permutation $(1, a+1, a+2,\ldots, n, 2, 3, \ldots,a).$  However, many of the Schubert polynomials arising as (factors) of steady probabilities are not of this form.  Already we see for $n=4$ the Schubert polynomials $\Sym_{1432}$ and $\Sym_{1243}$, which are not of this form.
\end{Rem}

This paper is organized as follows.
In \cref{sec:background} we provide background
on partitions, permutations, and Schubert polynomials.  
In \cref{sec:evil-avoiding} we explore the 
combinatorics of evil-avoiding
and $k$-Grassmannian permutations.
In \cref{sec:Cantini} we present  Cantini's results giving
recursive formulas for
  \emph{$\mathbf{z}$-deformed} steady state probabilities of the inhomogeneous TASEP.
In \cref{sec:z} we state  
\cref{thm:maintechnical}, which says that
for $w\in \St(n,k)$, the $\mathbf{z}$-deformed steady state probability $\psi_w(\mathbf{z})$
is equal to a ``trivial factor'' times a product of \emph{$\mathbf{z}$-Schubert polynomials} -- 
certain polynomials which reduce to double Schubert polynomials when 
$\mathbf{z}=\infty$ (as we prove in \cref{sec:zproperties}).
\cref{section:mainproof} is devoted to the proof of 
\cref{thm:maintechnical}, 
which in turn implies 
\cref{thm:main1} and \cref{thm:main0}. 
In \cref{sec:MLQ} we recall Arita-Mallick's formula
for steady state probabilities in terms of \emph{multiline queues} (when $y_i=0$); we then 
show that when $w^{-1}$ is a Grassmann permutation, the multiline queues of type $w$
are in bijection with semistandard tableaux.
In \cref{sec:zMLQ} we 
prove a multiline queue formula
for $\mathbf{z}$-deformed steady state probabilities (when $y_i=0$), which generalizes
Arita-Mallick's result.
Using this formula  we
prove \cref{MFC} (the monomial factor conjecture) in \cref{sec:monomial}.
\cref{sec:future} presents a few remarks on future directions, and 
 \cref{sec:technical} is an appendix containing some technical results which are used
in the  proof of \cref{thm:maintechnical}.

\textsc{Acknowledgements:}
The authors would like to thank the referee for their helpful comments,
which improved the exposition of this paper.
L.W. would like to thank Allen Knutson for interesting
discussions, and 
 would like to acknowledge the support of the National Science Foundation
under agreements No.\ DMS-1854316 and No.\ DMS-1854512, as well as the support of the
Radcliffe Institute for Advanced Study at Harvard University,
 where some of this work ``took place'' (virtually).  Any opinions,
findings and conclusions or recommendations expressed in this material
are those of the authors and do not necessarily reflect the
views of the National Science Foundation.

\section{Background on partitions, permutations and Schubert polynomials}\label{sec:background}

We let $S_n$ denote the symmetric group on $n$ letters, which is a Coxeter group 
generated by the simple reflections $s_1,\ldots, s_{n-1}$, where $s_i$ is the simple 
transposition exchanging $i$ and $i+1$.  We let 
$w_0=(n,n-1,\ldots, 2,1)$ denote the longest permutation.

For $1 \leq i < n$, we have the \emph{divided difference operator} $\partial_i$ which acts on polynomials 
$P(x_1,\ldots,x_n)$ as follows:
$$(\partial_i P)(x_1,\ldots,x_n) = \frac{P(\ldots, x_i, x_{i+1},\ldots ) - P(\ldots,x_{i+1},x_i,\ldots)}{x_i-x_{i+1}}.$$
If $s_{i_1}\cdots s_{i_m}$ is a reduced expression for a permutation $w$, then
$\partial_{i_1} \cdots \partial_{i_m}$ depends only on $w$, so we denote this operator
by $\partial_w$.  

\begin{Def}
Let $\mathbf{x}=(x_1,\ldots,x_n)$ and $\mathbf{y}=(y_1,\ldots,y_n)$ be two sets of variables,
and let $$\Delta(\mathbf{x},\mathbf{y}) = \prod_{i+j \leq n} (x_i - y_j).$$
To each permutation $w\in S_n$ we associate the \emph{double Schubert polynomial}
$$\Sym_w(\mathbf{x},\mathbf{y}) = \partial_{w^{-1} w_0} \Delta(\mathbf{x},\mathbf{y}),$$
where the \emph{divided difference operator} acts on the $x$-variables.
\end{Def}

\begin{Def}\label{def:partition}
A \emph{partition} $\lambda = (\lambda_1,\ldots,\lambda_r)$ is a weakly 
decreasing sequence of positive integers.
We set $\length(\lambda):=r$ and call it the \emph{length} of $\lambda$.
We let $\mul(\lambda)$ be the multiplicity of the largest part $\lambda_1$ in $\lambda$,
and let $\lambda_{\last}$ denote the smallest part of $\lambda$.
\end{Def}
For example, if $\lambda = (6,6,4,3,3)$ then $\length(\lambda)=5$, $\mul(\lambda)=2$,
and $\lambda_{\last} = 3$.

\begin{Def}\label{def:code}
%
	Given a permutation $w\in S_n$, 
	its \emph{(Lehmer) code} is the vector 
	$c(w)=(v_1,\ldots,v_n)$ where $v_i = |\{j>i \ \vert \ w_j<w_i\}|$.
If $(v_1,\ldots,v_n)$ is the code of a permutation, we let $c^{-1}(v_1,\ldots,v_n)$ denote the permutation with  code  $(v_1,\ldots,v_n)$. The 
	\emph{shape} $\lambda(w)$ of $w$ is the
	partition obtained by sorting the components of the code.
\end{Def}

\begin{Ex} If $w = (1,3,5,4,2)$, then $c(w)=(0,1,2,1,0)$ and 
$\lambda(w) = (2,1,1)$.
\end{Ex}

The following is well-known.
\begin{Lemma}\label{lem:code0}
 Given a vector $(c_1,\ldots,c_n)\in \Z_{\geq 0}^n$, there exists a permutation $w\in S_n$ such that $c(w)=(c_1,\ldots,c_n)$ if and only if $c_i+i\leq n$ for all $1\leq i \leq n$.
\end{Lemma}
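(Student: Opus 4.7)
The plan is to prove the two implications directly, with the ``only if'' direction being essentially a counting observation and the ``if'' direction being a greedy construction of $w$ from its prescribed code.

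For necessity, suppose $c(w)=(c_1,\ldots,c_n)$. Then by \cref{def:code} the integer $c_i$ counts a subset of the index set $\{j : j>i\}\subseteq\{1,\ldots,n\}$, which has exactly $n-i$ elements. Hence $c_i\le n-i$, i.e., $c_i+i\le n$.

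For sufficiency, given a vector $(c_1,\ldots,c_n)$ satisfying $c_i+i\le n$, I would build a permutation $w\in S_n$ by the following greedy procedure. Set $R_1:=\{1,2,\ldots,n\}$. For $i=1,2,\ldots,n$ in order, note that $|R_i|=n-i+1$, so the hypothesis $c_i\le n-i$ gives $c_i+1\le |R_i|$; this lets us \emph{define} $w_i$ to be the $(c_i+1)$-th smallest element of $R_i$, and then set $R_{i+1}:=R_i\setminus\{w_i\}$. Since each element of $\{1,\ldots,n\}$ is used exactly once, $w=(w_1,\ldots,w_n)$ is a permutation.

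It remains to check that $c(w)=(c_1,\ldots,c_n)$. Fix $i$. By construction there are exactly $c_i$ elements of $R_i$ that are strictly smaller than $w_i$; since $R_i=\{w_i,w_{i+1},\ldots,w_n\}$, these $c_i$ elements must occur among $w_{i+1},\ldots,w_n$. Thus $|\{j>i : w_j<w_i\}|=c_i$, which is exactly the defining condition for the Lehmer code. The only subtle point in the argument is verifying that this greedy procedure actually outputs a permutation with the prescribed code, and this is handled uniformly by tracking the sets $R_i$; the hypothesis $c_i+i\le n$ is used precisely to guarantee that each step of the greedy procedure is well-defined.
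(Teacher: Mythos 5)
Your proof is correct. The paper states this lemma without proof (it is labeled ``well-known''), and your argument is the standard one: the necessity direction is the counting bound $c_i\le|\{j:j>i\}|=n-i$, and the sufficiency direction is the usual greedy construction via the Lehmer-code bijection, with the key point --- that $R_i=\{w_i,\ldots,w_n\}$ has exactly $c_i$ elements below $w_i$, all occurring in positions $j>i$ --- correctly identified and justified.
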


\begin{Def}
We say that a permutation $w$ is \emph{vexillary} if and only if there does not 
exist a sequence $i<j<k<\ell$ such that $w(j)<w(i)<w(\ell)<w(k)$.  Such a permutation
is also called \emph{$2143$-avoiding.}
\end{Def}

\begin{Def}\label{def:flag}
The \emph{flag} of a vexillary permutation $w$ is defined in terms of 
	its code $c(w)$ as follows.  If $c_i(w) \neq 0$, let $e_i$ be the greatest integer $j\geq i$ such that 
$c_j(w) \geq c_i(w)$.  The flag $\phi(w)$ is then the sequence of integers $e_i$, ordered to be increasing.
\end{Def}

\begin{Def}\label{def:flaggedtableaux}
Let $X_i$ denote the family of indeterminates $x_1,\ldots,x_i$.  
For $d=(d_1,\ldots,d_n)$ a weakly increasing sequence of $n$ integers, we define $SSYT(\lambda,d)$ to be the set of semistandard tableaux $T$ with shape $\lambda$ for which the entries in the $i$th row are bounded above by $d_i$.
We also define the \emph{flagged Schur function} 
$$s_{\lambda}(X_{d_1},\ldots,X_{d_n}) = \sum_{T\in SSYT(\lambda,d)} \mathbf{x}^{\type(T)}.$$
\end{Def}

\begin{Rem}
	There are also flagged \emph{double} Schur functions, which can be defined in terms of 
	tableaux or via a Jacobi-Trudi type formula \cite[Section 2.6.5]{Manivel}. 
	Below we use Lascoux's multi-Schur function notation, see \cite[Definition 2.6.4]{Manivel}.
\end{Rem}

\begin{Th}\cite[Corollary 2.6.10]{Manivel}\label{th:flag}
If $w$ is a vexillary permutation with shape $\lambda(w)$ and with flags $\phi(w) = (f_1,\ldots,f_m)$ and $\phi(w^{-1})=(g_1,\ldots,g_m)$, then we have
$$\Sym_w(\mathbf{x}; \mathbf{y}) = s_{{\lambda}(w)}(X_{f_1}-Y_{g_m},\ldots,X_{f_m}-Y_{g_1}),$$
i.e. the double Schubert polynomial of $w$ is a flagged double Schur polynomial.
\end{Th}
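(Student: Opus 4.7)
Since this is stated as a direct quotation from Manivel's book, the authors will presumably just invoke it. Nevertheless, the plan for a direct proof is to interpolate between the dominant case, where the double Schubert polynomial has an explicit product formula, and the general vexillary case, using divided differences.

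The first step is the dominant case: if $w$ has $c(w) = \lambda(w)$ already a partition (equivalently, $w$ is $132$-avoiding), then the Rothe diagram $D(w)$ is a Young diagram and one has the product formula $\Sym_w(\mathbf{x};\mathbf{y}) = \prod_{(i,j)\in D(w)}(x_i - y_j)$. Viewed in Lascoux's multi-Schur notation, this is already a flagged double Schur function for the shape $\lambda(w)$, with flags that can be read off directly from the row and column extents of $D(w)$; a short computation shows these match $\phi(w)$ and $\phi(w^{-1})$ from \cref{def:flag} in the dominant case.

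Second, I would show that every vexillary $w$ can be connected to a dominant permutation with the same shape $\lambda(w)$ by a sequence of right multiplications by simple transpositions $s_i$ at positions where $c_i(w) < c_{i+1}(w)$. Because $w$ is $2143$-avoiding, at each such step the shape $\lambda(w)$ is preserved while exactly one entry of the flag is incremented. The iteration of Schubert polynomials via divided differences then gives $\Sym_w = \partial_{i_1}\cdots \partial_{i_r}\Sym_{w_{\mathrm{dom}}}$, and the technical lemma needed is that on the multi-Schur function side, applying such a $\partial_i$ to a flagged double Schur function of shape $\lambda$ produces another flagged double Schur function of the same shape $\lambda$ with a single flag index advanced. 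This is the Jacobi--Trudi/Lascoux transposition identity, and it can be proved either by a determinantal expansion of the multi-Schur function and direct calculation of $\partial_i$, or by the tableau definition of \cref{def:flaggedtableaux} together with a bijective argument on pairs of tableaux differing in a single cell of row $i$.

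Iterating this one-step rule from the dominant starting point yields a flagged double Schur function whose shape is $\lambda(w)$ and whose flags are obtained by performing the reverse rearrangement on the code of $w$ and $w^{-1}$. The main obstacle is the bookkeeping in this last identification: one has to verify that the flag produced by the iteration equals the flag defined in \cref{def:flag} as $e_i = \max\{j \geq i : c_j(w) \geq c_i(w)\}$, independently of the choice of reduction path. This path-independence follows, via the braid and commutation relations among divided differences, from the vexillary hypothesis, which guarantees that the code rearrangements encountered along any reduction path all refine the same ``block structure'' of $c(w)$ that determines $\phi(w)$.
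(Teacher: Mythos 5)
First, note that the paper offers no proof of this statement at all: it is quoted verbatim from Manivel [Corollary 2.6.10] and simply invoked, so the only thing to compare your sketch against is the standard textbook argument, whose skeleton (dominant base case plus a divided-difference induction) you have correctly identified.

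However, your second step contains a genuine error that would break the induction. You claim that right-multiplying by $s_i$ at a code-ascent preserves the shape $\lambda(w)$, and that the key lemma is ``applying such a $\partial_i$ to a flagged double Schur function of shape $\lambda$ produces another flagged double Schur function of the same shape $\lambda$ with a single flag index advanced.'' This is impossible on degree grounds: $s_{\lambda}(X_{f_1}-Y_{g_m},\ldots)$ is homogeneous of total degree $|\lambda|$, while $\partial_i$ lowers degree by one, so the output cannot again have shape $\lambda$ unless it is zero. The correct one-step lemma (Manivel's Lemma 2.6.6) removes a box from the shape \emph{and} advances the corresponding flag: $\partial_i$ sends a multi-Schur function of shape $\mu$ to one of shape $\mu-\epsilon_k$. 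Consequently, the dominant permutation $w_{\mathrm{dom}}$ at the top of your chain does \emph{not} have shape $\lambda(w)$ --- it has $|\lambda(w)|+r$ boxes, where $r$ is the number of divided differences applied --- and indeed no dominant permutation of shape $\lambda(w)$ can be reached from $w$ by divided differences, since both would have Coxeter length $|\lambda(w)|$ while each $\partial_i$ changes length by one. A concrete check: $w=1432$ is vexillary with code $(0,2,1,0)$ and shape $(2,1)$, and the chain at code-ascents gives $\Sym_{1432}=\partial_1\partial_2\Sym_{4312}$ with $4312$ dominant of shape $(3,2)$, not $(2,1)$; the shape shrinks by one box at each step. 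Your argument can be repaired by replacing the one-step lemma with the correct box-removing version and redoing the bookkeeping so that the shape and flags of $w$ emerge only at the \emph{end} of the chain, which is exactly what Manivel's proof does; as written, though, the induction does not close.
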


\section{Combinatorics of evil-avoiding
and $k$-Grassmannian permutations}\label{sec:evil-avoiding}

In this section we study
the special states of our Markov chain whose probabilities are proportional to products of Schubert polynomials.  
Recall our definition of $k$-Grassmannian permutations
$\St(n,k)$
from \cref{def:kGrassmannian}. As we will see, the set $\St(n,k)$ is in bijection with a certain set $\ParSeq(n,k)$ of sequences $(\lambda^1,\ldots,\lambda^k)$ of $k$ partitions.
Recall that our main result (see \cref{thm:main1}) states that the probability of each state
in $\St(n,k)$ is proportional to a product of $k$ Schubert polynomials, which are determined by the corresponding 
sequence of partitions.

\begin{Rem}
Note that $w$ contains a pattern $p$ if and only if $w^{-1}$ contains the pattern $p^{-1}$.  So $w$ is evil-avoiding if and only if $w^{-1}$ avoids $3142$, $2431$, $3241$, and $3214$.
\end{Rem}

We say that a sequence $(w_1,w_2,\ldots,w_n)\in \Z^n$ has
a \emph{descent} in position $j$ if $w_j>w_{j+1}$.

\begin{Prop}\label{prop:pandemic}
Let $c=(c_1,c_2,\ldots,c_n)$ be the code of $w\in S_n$.  
Then $w$ avoids
the patterns $3142$, $3214$, $2431$, and $3241$ (equivalently, $w^{-1}$ is 
evil-avoiding) if and only if 
for each descent position $j$, if there is a $b\leq j$ such that:
\begin{itemize}
    \item
$w_b<w_{b+1} < \cdots < w_j$ and $0<c_b<n-j$, and $b$ is maximal with these properties,
\end{itemize}
then we must have $c_{j+1} = c_{j+2} = \cdots = c_{j+c_b}=0$.
\end{Prop}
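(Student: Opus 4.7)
The plan is to prove both implications by contrapositive, performing a case analysis that translates between occurrences of the four patterns in $w$ and local features of the Lehmer code. A useful preliminary observation is that, since $w_b < w_{b+1} < \cdots < w_j$, the entries at positions $b+1,\ldots,j$ contribute nothing to $c_b$, so $c_b = |\{k > j : w_k < w_b\}|$. Consequently the hypothesis $0 < c_b < n-j$ produces both a position $p > j$ with $w_p < w_b$ and a position $q > j$ with $w_q > w_b$, while $c_{j+r} > 0$ produces $m > j+r$ with $w_m < w_{j+r}$. These pieces of data, together with $w_{b+1}$ (when $b < j$) or $w_{j+1}$ (when $b = j$), will furnish the four positions exhibiting one of the forbidden patterns.

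For the forward direction, I assume the code condition fails and split on $b < j$ versus $b = j$. When $b < j$, the maximality of $b$ forces $c_{b+1} \geq n-j$: since any $k > j$ with $w_k < w_b$ also satisfies $w_k < w_{b+1}$, the possibility $c_{b+1}=0$ is excluded, so the other maximality obstruction must apply; hence \emph{every} entry after position $j$ is less than $w_{b+1}$. Splitting further on $w_{j+r}$ versus $w_b$: when $w_b < w_{j+r} < w_{b+1}$, a counting argument (using $c_b \geq r$ and the fact that $j+r$ itself is not among the $c_b$ ``small'' positions) produces $p > j+r$ with $w_p < w_b$, giving a $2431$ at $(b, b+1, j+r, p)$; when $w_{j+r} < w_b$, a position $q$ with $w_b < w_q < w_{b+1}$ yields a $2431$ at $(b, b+1, q, m)$ if $q < m$, and a $3214$ at $(b, j+r, m, q)$ if $q > m$. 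When $b = j$, the descent $w_j > w_{j+1}$ replaces the ascending pair $w_b < w_{b+1}$: splitting again on $w_{j+r}$ versus $w_j$ and, within each sub-case, on the relative order of $q$, $j+r$, and $m$ (together with the sub-sub-case $j < q < j+r$, which requires further splitting on $w_{j+r}$ versus $w_{j+1}$), each situation yields one of $3241$, $3142$, or $3214$.

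For the reverse direction, I assume $w$ contains one of the patterns at $i_1 < i_2 < i_3 < i_4$ and reverse-engineer $(j, b, r)$. The descent $j$ is taken to be the end of the ascending run containing $i_1$, so $i_1 \leq j < i_2$; the index $b$ is taken to be the maximal index $\leq j$ satisfying the ascending-run and $0 < c_b < n-j$ conditions, which exists because $w_{i_2}$ witnesses $c_b \geq 1$ for $b$ large enough that $w_b > w_{i_2}$, while $i_3$ or $i_4$ witnesses $c_b < n-j$. One then sets $j+r = i_2$ for the patterns $3214$, $2431$, $3241$ (where $c_{j+r} > 0$ is witnessed by a smaller entry among $\{w_{i_3}, w_{i_4}\}$) and $j+r = i_3$ for $3142$ (witnessed by $w_{i_4} < w_{i_3}$); the bound $r \leq c_b$ is verified by direct counting of the small values after position $j$.

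The main obstacle is the bookkeeping in the forward direction, particularly the sub-case $b = j$ with $j < q < j+r$: here the naive quadruple $(j, q, j+r, m)$ only produces the pattern $3421$, which is \emph{not} in our list, so one must instead use $(j, j+1, q, j+r)$ and split on whether $w_{j+r}$ is less than or greater than $w_{j+1}$ to land in $3241$ or $3142$. A parallel careful counting argument is needed on the reverse side to confirm $r \leq c_b$ for each of the four patterns and to pin down the correct maximal $b$, but once that $b$ is identified the verification is routine.
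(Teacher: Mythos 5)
Your forward direction is essentially sound and follows the same overall strategy as the paper's (a direct case analysis extracting one of the four patterns from the witnesses supplied by $0<c_b<n-j$ and $c_{j+r}\neq 0$); your identification and repair of the $b=j$, $j<q<j+r$ sub-case is correct, and the only caution I would add is that in the sub-case $w_{j+r}>w_b$ you must use the counted witness $p>j+r$ with $w_p<w_b$ rather than the witness $m$ of $c_{j+r}>0$, since $w_m$ could exceed $w_b$ and the quadruple would then realize $2143$, which is not on the list. The real problem is in your reverse direction. Setting $j+r=i_2$ and claiming that ``the bound $r\leq c_b$ is verified by direct counting of the small values after position $j$'' is false: nothing forces the positions strictly between $j$ and $i_2$ to carry values smaller than $w_b$. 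Concretely, take $w=(3,7,5,6,2,1,4)$, which contains $3214$ at positions $(i_1,i_2,i_3,i_4)=(1,5,6,7)$. Here $j=2$, the maximal admissible $b$ is $b=1$ with $c_b=2$, so the code condition concerns only $c_3$ and $c_4$; but $i_2-j=3>c_b$, so your designated index $c_{i_2}=c_5$ lies outside the window $\{j+1,\dots,j+c_b\}$ and witnesses nothing. (The condition does fail here, via $c_3\neq 0$, but your recipe does not detect it.) A similar issue occurs for $2431$, where the end $j$ of the ascending run containing $i_1$ can satisfy $j\geq i_2$, making ``$j+r=i_2$'' meaningless.

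The fix is the one the paper uses: do not try to name a specific $r$ with $c_{j+r}\neq 0$. Instead use the reformulation (Remark \ref{rem:order}) that $c_{j+1}=\cdots=c_{j+c_b}=0$ holds if and only if the $c_b$ letters smaller than $w_b$ occupy exactly the consecutive positions $j+1,\dots,j+c_b$, in increasing order. Each pattern then contradicts this directly: for $3214$ and $3241$ two letters smaller than $w_b$ appear after $j$ in decreasing order; for $2431$ the letter $w_{i_4}<w_b$ forces $i_3<i_4\leq j+c_b$, so $w_{i_3}$ would also have to be small and smaller than $w_{i_4}$, which it is not; for $3142$ the large letter $w_{i_3}$ sits between the two small letters $w_{i_2},w_{i_4}$ and hence inside the window, again a contradiction. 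With that replacement your reverse direction goes through; as written, it does not.
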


\begin{Rem}\label{rem:order}
Let $w=(w_1,\ldots,w_n)\in S_n$.  Note that having the $b$th entry of the code $c=\code(w)$ equal to  $c_b$ means that there are precisely $c_b$ letters less than $w_b$ which do not occur in positions $1$ through $b$.  The condition $c_{j+1}=c_{j+2} = \cdots = c_{j+c_b}=0$ means that these letters must occur in increasing order in positions $j+1,\ldots, j+c_b$.
\end{Rem}

\begin{Rem}
If $w\in \St(n,1)$  then
 $\code(w^{-1})$ has a unique descent.
This means
that $w^{-1}$ is a \emph{Grassmannian permutation}
 \cite[Definition 2.2.3]{Manivel}.
Equivalently, there is only one $a$ 
such that $a+1$ appears to the left of letter $a$ in $w$. Conversely, if $w$ is an inverse of a Grassmannian permutation that starts with 1, we have $w\in \St(n,1)$.
\end{Rem}

\begin{proof}[Proof of \cref{prop:pandemic}]
We start by showing that if $w$ fails to satisfy the condition  of \cref{prop:pandemic}, then it must contain one of the patterns 
$3142$, $3214$, $2431$, and $3241$. 
If $w$ fails to satisfy the condition of \cref{prop:pandemic},
then there is a descent position $j$ and a $b\leq j$ with 
$w_b < w_{b+1} <\cdots < w_j$ and $0<c_b<n-j$, and $b$ is maximal with 
this property, but we do not have 
$c_{j+1} = c_{j+2} = \cdots = c_{j+c_b} = 0.$
\begin{enumerate}
\item We have 
$w_b < w_{b+1} < \cdots <w_{j}>w_{{j+1}}$.  
\item By the definition of code, there must be $c_b$ letters $\mathcal{C}$ smaller than
$w_b$ which appear to the right of $w_b$; they must therefore appear
to the right of $w_{j}$.  
\item \label{third:bigger} Let $\ell\in [j,n]$ be minimal such that 
		$w_{\ell}>w_b$, which exists since  
	$c_b<n-j$.
\item \label{fourth} 
The fact that we do not have $c_{j+1} = c_{j+2} = \cdots = c_{j+c_b} = 0$
implies that the letter $w_{\ell}$ appears to the left of 
some letter $w_r\in \mathcal{C}$, i.e. $\ell<r$.
\end{enumerate} 

Let us first consider the case that $b=j$.  If $c_{b+1} \neq 0$
then $w_{b+1}$ is not the smallest letter to appear in positions
$[b+1,n]$, so there exists some $m>b+1$ such that $w_{b+1}>w_m$.  
Therefore the letters $\{w_b, w_{b+1}, w_m, w_{\ell}\}$ give either
an instance of the pattern $3214$ or of the pattern $3241$,
depending on whether $m<\ell$ or $\ell< m$.

If $b=j$ and $c_{b+1}=0$ then $w_{b+1}$ is the smallest letter
to appear in positions $[b+1,n]$.  
But then the letters $\{w_b, w_{b+1}, w_{\ell}, w_r\}$
form the pattern $3142$.

If $b<j$, the fact that $b$ is maximal such that $j+c_b<n$ implies
that $j+c_{b+1} \geq n$.  Since $w_{b+1} < \cdots < w_{j}$,
the $c_{b+1}$ letters which are less than $w_{b+1}$ and to the 
right of position $b+1$ must  lie 
in positions $[j+1,n]$.  But now since $c_{b+1} \geq n-j$, \emph{all} the 
letters in positions $[j+1,n]$ must be less than $w_{b+1}$.  In particular
the letter
$w_{\ell}$ defined in \eqref{third:bigger} must be less than $w_{b+1}.$
But now the letters $\{w_b, w_{b+1}, w_{\ell}, w_r\}$ form the pattern
$2431.$

Therefore we have shown that if $w$ avoids the patterns 
$3142$, $3214$, $2431$, and $3241$, then it satisfies the
condition  of \cref{prop:pandemic}.

In the other direction, suppose that $w$ contains the pattern 
$3214$.  Let $i<k<\ell<m$ denote the positions of the letters of this pattern.
Then $w_i>w_k$ implies that there exists some $j$ with $i \leq j < k$
such that $w_i< w_{i+1}< \cdots < w_j>w_{j+1}$.  We have 
$c_i\geq 2$ since $w_k$ and $w_{\ell}$ are both less than $w_i$.
And all of the $c_i$ letters less than $w_i$ which occur to the right of $w_i$
must occur in positions $[j+1,n]$.  Moreover, the $4$ in the pattern,
representing the letter $w_m>w_i$, occurs in a position in $[j+1,n]$.
Therefore $c_i<n-j$.  

Therefore there exists some $b$ with $i \leq b \leq j$ with 
$w_i \leq w_b <w_{b+1} < \cdots < w_{j}$ and $0<c_b<n-j$, and we choose
$b$ to be maximal with these properties.  We have that $w_b$ is 
greater than both $w_k$ and $w_{\ell}$.  But then it is impossible
for $c_{j+1} = c_{j+2} = \cdots = c_{j+c_b}=0$: by \cref{rem:order}
this means that the $c_b$ letters less than $w_b$ that appear to the right of $w_b$ must occur in 
increasing order in positions $j+1, j+2,\ldots, j+{c_{b}}$, but this is false
since $w_k$ and $w_{\ell}$ (the $2$ and $1$ of the pattern) occur in the 
wrong order.  Exactly the same argument holds if $w$ contains the pattern
$3241$.  

Nearly the same argument holds if $w$ contains the pattern $3142$.
Again let $i<k<\ell<m$ denote the positions of the letters of this pattern.
As before, since $w_i>w_k$, we can find $i\leq j<k$ such that 
$w_i<w_{i+1}<\cdots < w_j>w_{j+1}$, and we have $2\leq c_i<n-j$.
We then choose $b$ maximal with $i\leq b\leq j$ such that 
$0<c_b<n-j$.  But again using \cref{rem:order} we see it is impossible 
to have
$c_{j+1} = c_{j+2} = \cdots = c_{j+c_b}=0$ -- the $142$ in the pattern 
$3142$ (i.e. the letters in positions $k, \ell, m$) means that the letters less than $w_b$ do not occur in increasing order in consecutive positions. 

Finally, suppose that $w$ contains the pattern $2431$.  
Let $h<i<\ell<m$ denote the positions of the letters of this pattern.
Since $w_i>w_{\ell}$, there must be a descent position $j$
with $j<\ell$.  Let $j$ be minimal such that 
$w_h < w_{h+1} < \cdots < w_j>w_{j+1}$.  
Because $w_m<w_h$, we know that $c_h\geq 1$.  Moreover, the letters less 
than $w_h$ which are to the right of it must appear in positions $[j+1,n]$.
Because $w_{\ell}>w_h$, we know that $c_h<n-j$.
Therefore there exists some $b\geq h$ with 
$w_b<w_{b+1}<\cdots < w_j$ and $0<c_b<n-j$;  choose $b$ to be maximal
with this property.  But then by \cref{rem:order}, the $c_b$
letters less than $w_b$ which appear to the right of $w_b$ must appear in increasing order in positions $j+1,j+2,\ldots, j+c_b$. This is impossible,
since $w_{\ell}$ and $w_m$ lie weakly to the right of position $j+1$ but in the wrong order (since $w_{\ell}>w_m$).
This completes the proof.
\end{proof}

\begin{Def}\label{def:valid}
We say that a partition $\lambda$ is \emph{valid} for $n$
if $\lambda$ is contained in a 
	$\length(\lambda)\times(n-\length(\lambda))$ rectangle and is 
neither the empty partition nor the full 
	$\length(\lambda)\times(n-\length(\lambda))$ rectangle.
	Let $\Val(n) =\{\lambda \ \vert \ \lambda \text{ is valid for }n\}.$
\end{Def}
\begin{Rem}
	It is easy to show that $|\Val(n)|=2^{n-1}-(n-1)-1$. The elements of $\Val(n)$ are in bijection with Grassmannian permutations in $S_{n}$ that starts with 1. 
\end{Rem}

\begin{Def}\label{def:parseq}
For $1 \leq k \leq n-2$,  let $\ParSeq(n,k)$ denote the set of all sequences
of partitions $(\lambda^1,\ldots,\lambda^k)$ such that
each $\lambda^i$ is valid for $n$, and for all $1\leq i \leq k-1$:
\begin{itemize}
    \item if $\ell$ is the smallest part of $\lambda^i$, then
      the first $(n-\ell)$ parts of $\lambda^{i+1}$ are equal.
\end{itemize}
If $k=0$ then $\ParSeq(n,k)$ consists of one element, the 
empty sequence.
\end{Def}

\begin{Ex}
If $n=6$, then $((3),(2,2,2,1))$ and $((4,2),(1,1,1,1))$ lie in $\ParSeq(6,2)$ but 
$((3),(2,2,1,1))$ and $((4,2),(1,1,1)$ do not lie in $\ParSeq(6,2).$
\end{Ex}

\begin{Rem}
It follows from \cref{def:parseq} that if $(\lambda^1,\ldots,\lambda^k)\in \ParSeq(n,k)$, then for all
$1\leq i \leq k-1$:
\begin{itemize}
    \item the number of parts of $\lambda^i$ is less than the number of parts of $\lambda^{i+1}$
    \item every nonzero part of $\lambda^i$ is greater than every part of $\lambda^{i+1}.$ 
\end{itemize}
\end{Rem}

\begin{Rem}
Let $v(n)$ denote the vector $v(n):=({n-1 \choose 2}, {n-2 \choose 2},\ldots, {2 \choose 2}).$  Then it follows from \cref{def:parseq}
that $\mu:=v-\sum_{i=1}^k \lambda^i$ is a partition. 
(Here we think of each partition $\lambda^i$ as a vector in 
$\Z_{\geq 0}^{n-2}$ by adding extra parts equal to $0$ if necessary.) Note that the vector
$\mu$ appears in the steady steady probability formula
from \cref{thm:main0}.
\end{Rem}

\begin{Prop}\label{prop:bij}
The map $\Psi: \St(n,k) \to \ParSeq(n,k)$ 
(cf. \cref{def:bijpartition}) is well-defined and bijective.  
The inverse map $\Psi^{-1}: \ParSeq(n,k) \to \St(n,k)$ 
can be described as follows.
Let $(\lambda^1,\ldots,\lambda^k)\in \ParSeq(n,k)$, and 
let $(f_1,\ldots,f_k)$ be the sequence of first parts of 
$\lambda^1,\ldots, \lambda^k$, i.e. $f_i = \lambda^i_1.$
Then $((f_1^{n-f_1}-\lambda^1)+ (f_2^{n-f_2}-\lambda^2) + \cdots +
(f_k^{n-f_k} - \lambda^k))$ is the code of a permutation $w^{-1}$ of $w\in \St(n,k)$.  We define $\Psi^{-1}(\lambda^1,\ldots,\lambda^k)=w$.
\end{Prop}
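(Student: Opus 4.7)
The plan is to establish the bijection through a careful analysis of the code $c = c(w^{-1})$, using the basic fact that descents of the code $c(u)$ coincide with descents of the sequence $u$ (so that $c$ is weakly increasing on each ``block'' $[a_{i-1}+1, a_i]$ between its descents), together with the characterization of evil-avoiding permutations via their code from \cref{prop:pandemic}.

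For well-definedness of $\Psi$, fix $w \in \St(n,k)$ and write $c = c(w^{-1})$ with descents at $a_1 < \cdots < a_k$. The condition $w_1 = 1$ gives $c_1 = 0$. Then each $\lambda^i$ is weakly decreasing since its ``deficit from the rectangle'' $(0, \ldots, 0, c_{a_{i-1}+1}, \ldots, c_{a_i})$ is weakly increasing; its entries are nonnegative by the code bound $c_j + j \leq n$; the first entry equals $n - a_i > 0$ (using $c_1 = 0$ when $i = 1$); and the last entry $(n - a_i) - c_{a_i}$ is strictly less than $n - a_i$ since $c_{a_i} > c_{a_i + 1} \geq 0$. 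Hence $\lambda^i \in \Val(n)$. For the compatibility condition, a case analysis identifies the smallest part of $\lambda^i$ as $\ell = n - a_i - c_b$, where $b$ is the maximal index in $[a_{i-1}+1, a_i]$ with $0 < c_b < n - a_i$ (with $\ell = n - a_i$ if no such $b$ exists, making compatibility vacuous). The requirement ``first $(n - \ell)$ parts of $\lambda^{i+1}$ are equal'' then rewrites as $c_{a_i + 1} = \cdots = c_{a_i + c_b} = 0$, which is exactly the conclusion of \cref{prop:pandemic} applied to $w^{-1}$ at the descent $a_i$.

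For the inverse direction, given $(\lambda^1, \ldots, \lambda^k) \in \ParSeq(n, k)$, set $C := \sum_{i=1}^k (f_i^{n - f_i} - \lambda^i)$ and $a_i := n - f_i$. The strict monotonicities $f_1 > \cdots > f_k$ and $\length(\lambda^1) < \cdots < \length(\lambda^k)$ supplied by \cref{def:parseq} give $a_1 < \cdots < a_k < n$. A position-by-position analysis using the rectangle containments $\lambda^i \subseteq \length(\lambda^i) \times (n - \length(\lambda^i))$ shows that $C$ is a valid code in the sense of \cref{lem:code0}, with $C_1 = 0$ and descents precisely at $a_1, \ldots, a_k$ (each summand has a single descent at $n - f_i = a_i$, and the contributions at other positions are weakly increasing). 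The main obstacle is to check that $C$ satisfies the condition of \cref{prop:pandemic}: at each descent $a_i$, the $\ParSeq$ compatibility condition translates precisely into the required string of zeros $C_{a_i + 1} = \cdots = C_{a_i + C_b} = 0$ at the relevant maximal $b$, which ensures the resulting permutation $w$ is evil-avoiding.

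Finally, the two maps compose to the identity via the matching $a_i = n - f_i$: applying the inverse formula after $\Psi$ reassembles $c(w^{-1})$ block-by-block, and applying $\Psi$ after the inverse formula extracts the original partitions from the prescribed descent structure of $C$. The recurring technical subtlety throughout is the edge case $c_{a_i} = n - a_i$ (so that $\lambda^i$ has trailing zeros within its rectangle), but the $\ParSeq$ compatibility condition was designed precisely to handle these cases uniformly.
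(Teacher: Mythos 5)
Your proposal is correct and follows essentially the same route as the paper's proof: both directions hinge on \cref{prop:pandemic} applied to the block decomposition of $c(w^{-1})$ at its descents, with the $\ParSeq$ compatibility condition translating into the string of zeros $c_{a_i+1}=\cdots=c_{a_i+c_b}=0$ via the identification $\ell=(n-a_i)-c_b$ of the smallest part. The differences are only cosmetic (you make the $\Val(n)$ membership and the composition-to-identity check slightly more explicit than the paper does).
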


\begin{proof}
We first show that when we apply the map  $\Psi$, we obtain a
vector that satisfies the properties of \cref{def:parseq}.  
Write $\code(w^{-1})=c= (c_1,\ldots,c_n)$ and let $a_1,\ldots,a_k$ denote the 
positions of the descents of $c$.  We have $\lambda^{j}=(n-a_j)^{a_j}-(0,\ldots,0,c_{a_{j-1}+1},\ldots,c_{a_j}).$ If we take the maximal $b$ such that $a_{j-1}<b\leq a_j$ and $a_j+c_b<n$, then by \cref{prop:pandemic}, we have $c_{a_j+1} = c_{a_j+2} = \cdots = c_{a_j+c_b} = 0$. Since $\lambda^{j+1}=(n-a_{j+1})^{a_{j+1}}-(0,\ldots,0,c_{a_{j}+1},\ldots,c_{a_{j+1}}),$ t
he first $a_j+c_b$ parts of $\lambda^{j+1}$ are equal. Let $l$ be the smallest part of $\lambda_j$, then $l=(n-a_j)-c_b$. So the first $(n-l)=a_j+c_b$ parts of $\lambda^{j+1}$ are equal.

Now we show that when we apply the map $\Psi^{-1}$ to $(\lambda_1,\cdots,\lambda_k)\in\ParSeq(n,k)$, we get an element in $\St(n,k)$. Let $c=((f_1^{n-f_1}-\lambda^1)+ (f_2^{n-f_2}-\lambda^2) + \cdots +
(f_k^{n-f_k} -\lambda^k))$. Since the smallest part of $\lambda^{i}$ is no greater than $f_i$, the first $n-f_i$ parts of $\lambda_{i+1}$ are equal. So the first $n-f_i$ components of the vector $(f_{i+1}^{n-f_{i+1}}-\lambda_{i+1})$ are zero. We write $c=(c_1,\ldots,c_n)$ as follows
\begin{align*}
    c_j=\begin{cases*}f_1-\lambda^{1}_{j}, \hspace{5mm}\text{if $1\leq j\leq n-f_1$}\\
                      f_i-\lambda^{i}_j, \hspace{5mm}\text{if $n-f_{i-1}< j\leq n-f_i$, for $2\leq i\leq k$}\\
                      0, \hspace{15mm}\text{if $n-f_{k}< j$},
    \end{cases*}
\end{align*}
where we regard $\lambda^{i}_j=0$ if $j$ is bigger that the length of $\lambda^{i}$. 

We claim that $c_{n-f_i}>c_{n-f_i+1}$. If $\lambda^{i}_{n-f_i}=0$ then we have $c_{n-f_i}=f_i>f_{i+1}\geq f_{i+1}-\lambda^{i+1}_{n-f_i+1}=c_{n-f_i+1}.$ If  $\lambda^{i}_{n-f_i}>0$ then $\lambda^{i}_{n-f_i}<f_i$ since $\lambda^{i}\in\Val(n)$. The first $n-\lambda^{i}_{n-f_i}$ parts of $\lambda^{i+1}$ are equal so $\lambda^{i+1}_{n-f_i+1}=f_{i+1}$. Thus we have $c_{n-f_i}=f_i-\lambda^{i}_{n-f_i}>0=c_{n-f_i+1}$.

We see that the descents of $c$ are at $n-f_1,\ldots,n-f_k$. Now take the maximal $b$ such that $n-f_{i-1}<b\leq n-f_i$ and $n-f_i+c_b<n$. Then $c_b=f_i-l$ where $l$ is the smallest part of $\lambda^{i}$. So the first $n-l$ parts of $\lambda^{i+1}$ are equal which implies $c_{n-f_{i}+1}=\cdots=c_{n-f_{i}+c_b}$ as $n-f_{i}+c_b=n-l$. We conclude that $c$ is the code of $w^{-1}$ for some $w\in \St(n,k)$ by \cref{prop:pandemic}.
\end{proof}

\begin{Ex}
If $c=\code(w^{-1}) = (0,3,1,1,0)$ then $c$ has two descents in positions
$a_1=2$ and $a_2=4$.  We also have $a_0=0$.
	Therefore $\lambda^1 = (5-2)^2-(c_1,c_2) = (3,3)-(0,3)=(3)$, and 
	$\lambda^2 = (5-4)^4-(0,0,c_3,c_4) = (1,1,1,1)-(0,0,1,1)=(1,1)$, and so 
	$\Psi(w) = ((3), (1,1)).$
If $\code(w^{-1}) = (0,2,2,1,0)$ then $\Psi(w) = ((2), (1,1,1)).$
\end{Ex}

\begin{Rem}
While we have not found any previous works studying
evil-avoiding permutations, we note that the sequence 
$\{e(n)\}$ of cardinalities of evil-avoiding permutations
in $S_n$ has several other combinatorial interpretations
	listed in \cite{Sloane}:
\begin{itemize}
    \item 
$e(n)$ counts permutations $\pi\in S_n$ for which the pairs $(i, \pi(i))$ with $i < \pi(i)$, considered as closed intervals $[i+1,\pi(i)]$, do not overlap; equivalently, for each $i\in [n]$ there is at most one $j \leq i$ with $\pi(j) > i.$ 
\item 
$e(n)$ is the number of permutations on $[n]$ with no subsequence abcd such that (i) bc are adjacent in position and (ii) $\max(a,c) < \min(b,d)$. For example, the 4 permutations of $[4]$ not counted by $a(4)$ are $1324, 1423, 2314, 2413$.
\item 
$e(n)$ is the number of \emph{rectangular permutations} on $[n]$, i.e. those permutations
which avoid the four patterns $2413$, $2431$, $4213$, $4231$, see \cite{CFF}.
\end{itemize}

It would be interesting to find a bijection between 
the $k$-Grassmannian permutations in $S_n$ (where we let $k$ vary) 
and any of the above sets of permutations.
\end{Rem}

\begin{Prop}\label{prop:recurrence}
For $k \geq 1$, we have that $$|\ParSeq(n,k)| = 2 |\ParSeq(n-1,k)| + \sum_{i=k+1}^{n-1} |\ParSeq(i,k-1)|.$$
\end{Prop}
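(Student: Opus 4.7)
The plan is to prove the recurrence by constructing an explicit bijection
\[
\ParSeq(n,k)\;\longleftrightarrow\;\bigl(\{0,1\}\times\ParSeq(n-1,k)\bigr)\;\sqcup\;\bigsqcup_{i=k+1}^{n-1}\ParSeq(i,k-1),
\]
case-splitting on the structure of the final partition $\lambda^k$. Throughout, I will exploit the strict chain $\ell_1>\ell_2>\cdots>\ell_k$ of smallest parts that follows from Definition~\ref{def:parseq}. Three regimes arise: (I) $\ell_k\ge 2$; (II) $\lambda^k=1^m$ for some $m$; and (III) $\ell_k=1$ with $\lambda^k_1\ge 2$.

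In regime~(I), every $\ell_j\ge 2$, so the operation of subtracting $1$ from every part of every partition preserves all lengths. The identity $(n-1)-(\ell_j-1)=n-\ell_j$ shows that the equal-prefix constraint is preserved verbatim, and validity transports cleanly between $n$ and $n-1$ since non-fullness is shift-invariant. The resulting map is easily inverted by adding $1$ everywhere, giving a bijection with $\{0\}\times\ParSeq(n-1,k)$. For regime~(II), the equal-prefix constraint from $\lambda^{k-1}$ together with validity force $m\in[n-\ell_{k-1},\,n-2]$; setting $c=n-1-m$ and $i=m+1$, the map
\[
(\lambda^1,\ldots,\lambda^{k-1},1^m)\;\longmapsto\;\bigl(i,\;(\lambda^1-c\cdot\mathbf{1},\ldots,\lambda^{k-1}-c\cdot\mathbf{1})\bigr)
\]
is a bijection onto $\bigsqcup_{i=k+1}^{n-1}\ParSeq(i,k-1)$. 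The shift is chosen precisely so that $i-(\ell_j-c)=n-\ell_j$, again keeping internal constraints intact. To verify $i\ge k+1$, one checks $\ell_{k-1}\le n-k$: were $\ell_{k-1}=n-k+1$, then $\length(\lambda^{k-1})\le k-1$ combined with $\length(\lambda^{k-1})\ge k-1$ (strict increase of lengths) would force $\lambda^{k-1}$ to be the full $(k-1)\times(n-k+1)$ rectangle, contradicting validity.

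The main obstacle is regime~(III), which should biject with $\{1\}\times\ParSeq(n-1,k)$. Write $\lambda^k=(\mu,1^s)$ with $\mu$ nonempty of smallest part $\ge 2$ and $s\ge 1$. A naive subtract-$1$ map fails because the image $\mu-\mathbf{1}$ (after removing the resulting $0$'s) loses track of $s$, so different values of $s$ collide. The plan is to build a refined map encoding $s$ by keeping $s-1$ trailing $1$'s in the image, sending the sequence to $(\lambda^1-\mathbf{1},\ldots,\lambda^{k-1}-\mathbf{1},\,(\mu-\mathbf{1},1^{s-1}))$, coupled with a canonical tie-breaking rule disambiguating $1$'s arising from $\mu$'s $2$'s versus the appended ones; this rule can be read off from the constraint with $\lambda^{k-1}-\mathbf{1}$, which fixes the multiplicity of the leading part of the image. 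Verifying that this yields a well-defined bijection requires a case analysis handling the edge cases where $\mu$ is a constant rectangle (which could otherwise force the image to be full and hence invalid) and where the constraints with $\lambda^{k-1}$ saturate; these edge cases must be checked to correspond exactly to the elements of $\ParSeq(n-1,k)$ whose pre-image under the naive map is ambiguous. Combined with regime~(I), this yields exactly $2\,|\ParSeq(n-1,k)|$, completing the enumeration.
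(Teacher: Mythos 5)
Your trichotomy on the last partition $\lambda^k$ is count-correct, and regimes (I) and (II) are handled properly: the subtract-one map and the drop-$\lambda^k$-and-rescale map are genuine bijections onto $\ParSeq(n-1,k)$ and $\bigsqcup_{i=k+1}^{n-1}\ParSeq(i,k-1)$ respectively, and your verification that $i\ge k+1$ is correct. The gap is regime (III), which you yourself flag as the main obstacle and then only sketch; the refined map you propose is not a bijection, and the tie-breaking mechanism you suggest cannot repair it. Concretely, take $n=6$, $k=2$. The regime-(III) elements $((4),(2,2,2,1))$ and $((4),(2,2,1,1))$ of $\ParSeq(6,2)$ both have image $((3),(1,1,1))$ under your map (the first via $\mu=(2,2,2)$, $s=1$; the second via $\mu=(2,2)$, $s=2$), while $((3),(2,2,1))\in\ParSeq(5,2)$ has no preimage at all: a preimage would force $\lambda^2=(3,3,2,1)$ or $(3,3,1,1)$, neither of which fits in a $4\times 2$ rectangle and hence neither is valid for $6$. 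Your tie-breaking rule is to be "read off from the constraint with $\lambda^{k-1}-\mathbf{1}$," but the two colliding elements share the same $\lambda^{k-1}=(4)$, so that constraint is literally identical for both and cannot disambiguate them. The counts do balance (collisions are offset by missing targets), but matching them up is exactly the content of the bijection, and that is the part you have not supplied.

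For comparison, the paper sidesteps this difficulty by building injections in the opposite direction that modify the \emph{first} partition rather than the last: two maps $\ParSeq(n-1,k)\to\ParSeq(n,k)$ (duplicate the first part of every $\lambda^i$; respectively, do that for $i\ge 2$ while incrementing or prepending on $\lambda^1$) together with maps $\ParSeq(i,k-1)\to\ParSeq(n,k)$ that prepend the one-row partition $(i-1)$ and pad the rest, after which one checks the images partition $\ParSeq(n,k)$. If you wish to keep your last-partition decomposition, regime (III) needs a construction built from scratch rather than a patched subtract-one map.
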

\begin{proof}
To prove \cref{prop:recurrence}, we define two different injective maps 
$\Psi_i: \ParSeq(n-1,k)\to \ParSeq(n,k)$ for $i=1,2$ as well as a family
of injective maps $\Phi_{i,k,n}:\ParSeq(i,k-1) \to \ParSeq(n,k)$ for $k+1 \leq i \leq n-1.$  The statement then follows from the claim that every 
element of $\ParSeq(n,k)$ lies in the image of precisely one of these maps.

We define $\Psi_1: \ParSeq(n-1,k) \to \ParSeq(n,k)$ to be the map which
takes $(\lambda^1,\ldots,\lambda^k)$ to $(\mu^1,\ldots,\mu^k)$, where 
$\mu^i$ is obtained from $\lambda^i$ by duplicating its first part.
That is, if $\lambda^i = (\lambda_1^i, \lambda_2^i,\ldots, \lambda_r^i)$,
then $\mu^i = (\lambda_1^i, \lambda_1^i, \lambda_2^i, \ldots, \lambda_r^i)$.
So for example, $$\Psi_1((2),(1,1)) = ((2,2),(1,1,1)).$$

We define $\Psi_2: \ParSeq(n-1,k) \to \ParSeq(n,k)$ to be the map which
takes $(\lambda^1,\ldots,\lambda^k)$ to $(\mu^1,\ldots,\mu^k)$, where 
for $i \geq 2$, $\mu^i$ is obtained from $\lambda^i$ by duplicating its first part. For $i=1$, let $\lambda^i = (\lambda^i_1,\ldots, \lambda^i_r)$.  If all 
parts of $\lambda^i$ are equal, then we define $\mu^i = (\lambda_1^i+1,\lambda_1^i,\ldots, \lambda_r^i)$.  Otherwise, 
we define $\mu^i = (\lambda^i_1 +1,\lambda^i_2,\ldots, \lambda^i_r)$. 
So if $\lambda^1 = (\lambda_1^1, \lambda_2^1,\ldots, \lambda_r^1)$,
then $\mu^1 = (\lambda_1^1+1, \lambda_1^1, \lambda_2^1, \ldots, \lambda_r^1)$.
	So for example, $$\Psi_2((2),(1,1)) = ((3,2),(1,1,1))\qquad \text{ and } \qquad
	\Psi_2((3,2),(1,1,1)=(4,2),(1,1,1,1).$$

For $k+1 \leq i \leq n-1$, we define $\Phi_{i,k,n}: \ParSeq(i,k-1) \to \ParSeq(n,k)$ to be the map which takes $(\lambda^1,\ldots, \lambda^k)$
to $((i-1), \mu^1, \ldots, \mu^{k-1})$, where $\mu^j$ is obtained from 
$\lambda^j$ by duplicating the first part of $\lambda^j$ $n-i$ times.
That is, if $\lambda^j = (\lambda_1^j, \lambda_2^j,\ldots, \lambda_r^j)$,
then $\mu^j = (\underbrace{\lambda_1^j, \lambda_1^j, \ldots, \lambda_1^j}_\text{$n-i+1$}, \lambda_2^j, \ldots, \lambda_r^j)$.
So for example, we have that 
\begin{align*}\Phi_{4,2,5}((1)) &= ((3), (1,1)),\\
\Phi_{4,2,5}((2)) &= ((3),(2,2,)),\\
\Phi_{4,2,5}((1,1)) &= ((3), (1,1,1)),\\
	\Phi_{4,2,5}((2,1)) &= ((3),(2,2,1)), \text{ and }\\
	\Phi_{3,2,5}((1)) &= ((2),(1,1,1)).
\end{align*}
The above examples express the seven elements of $\ParSeq(5,2)$ as 
images of elements of $\ParSeq(4,2)$, $\ParSeq(4,1)$, and $\ParSeq(3,1)$.
\end{proof}

\begin{Cor}
Define the number $T(n,k)$ by 
\begin{equation}\label{eq:Tnk}
    T(n,k)=\sum\limits_{i=0}^{n-k-2} 2^{i}\binom{i+k-1}{k-1}\binom{n-2-i}{k}.
\end{equation}
(These numbers appear in  
	\cite[A331969]{Sloane}.)
Then we have $$|\St(n,k)|=|\ParSeq(n,k)| = T(n,k).$$
Equivalently, the number of evil-avoiding 
permutations $w$ in $S_{n-1}$ such that $w^{-1}$
has exactly $k$ descents is $T(n,k).$
\end{Cor}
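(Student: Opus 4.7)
The equality $|\St(n,k)|=|\ParSeq(n,k)|$ is the content of \cref{prop:bij}, and the ``equivalently'' statement follows from the fact that any $w\in\St(n,k)$ satisfies $w_1=1$: the map $w\mapsto(w_2-1,\ldots,w_n-1)$ is a bijection from $\St(n,k)$ onto the set of evil-avoiding permutations in $S_{n-1}$ whose inverse has $k$ descents. The descent count is preserved because $w^{-1}$ also begins with $1$ and therefore has no descent at position $1$, so descents of $w^{-1}$ correspond precisely to descents of the truncated permutation's inverse.

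For the main equality $|\ParSeq(n,k)|=T(n,k)$, my plan is to induct on $n$, showing that $T(n,k)$ satisfies the recurrence of \cref{prop:recurrence}, namely
\begin{equation*}
T(n,k) - 2\,T(n-1,k) \;=\; \sum_{j=k+1}^{n-1} T(j,k-1),
\end{equation*}
together with matching base cases. The base cases to verify are $T(n,0)=1=|\ParSeq(n,0)|$ (with the convention that only the $i=0$ term contributes to the sum), $T(n,1)=2^{n-1}-n=|\Val(n)|=|\ParSeq(n,1)|$ via the direct computation $\sum_{i=0}^{n-3}2^i(n-2-i)=2^{n-1}-n$, and $T(k+1,k)=0=|\ParSeq(k+1,k)|$ (the defining sum is empty).

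To verify the recurrence for $k\ge 2$, I would first reindex $2\,T(n-1,k)$ by shifting $i\mapsto i+1$ so that its summation range matches that of $T(n,k)$, and then apply Pascal's identity $\binom{i+k-1}{k-1}-\binom{i+k-2}{k-1}=\binom{i+k-2}{k-2}$ to obtain
\begin{equation*}
T(n,k)-2\,T(n-1,k) \;=\; \sum_{i=0}^{n-k-2} 2^i \binom{i+k-2}{k-2}\binom{n-2-i}{k}.
\end{equation*}
On the right-hand side of the recurrence, substituting the definition of $T(j,k-1)$ and swapping the order of summation converts the constraints $0\le i\le j-k-1$ and $k+1\le j\le n-1$ into $0\le i\le n-k-2$ and $i+k+1\le j\le n-1$, leaving the inner sum $\sum_{j=i+k+1}^{n-1}\binom{j-2-i}{k-1}$. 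The hockey-stick identity collapses this inner sum to $\binom{n-2-i}{k}$, and the two sides then agree termwise. The case $k=1$ is handled separately by verifying the identity $T(n,1)-2T(n-1,1)=n-2=\sum_{j=2}^{n-1}T(j,0)$ directly from the base case values.

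The main obstacle is the careful bookkeeping of the binomial indices, especially the edge behavior at small $k$ where $\binom{i+k-2}{k-2}$ requires delicate interpretation; but conceptually the proof reduces to one application of Pascal's rule and one application of the hockey-stick identity on top of the combinatorial recurrence established in \cref{prop:recurrence}.
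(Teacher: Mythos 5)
Your proposal is correct and follows essentially the same route as the paper: both reduce the claim to showing that $T(n,k)$ satisfies the recurrence of \cref{prop:recurrence} and then conclude by induction on $n$ together with \cref{prop:bij}. The only (cosmetic) difference is in how the recurrence for $T$ is verified — you establish the summed identity directly via Pascal's rule and the hockey-stick identity, whereas the paper takes one further difference in $n$ to telescope the sum away.
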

For example, $T(4,0)=1$, $T(4,1)=4$, and $T(4,2)=1$. Meanwhile the number of evil-avoiding
permutations in $S_3$ whose inverse has exactly $0$, $1$, and $2$ descents, respectively, is
$1$, $4$, and $1$.

\begin{proof} 
The formula \eqref{eq:Tnk} is equivalent to the 
generating function given in \href{https://oeis.org/A331969}{A331969}.
By \cref{prop:recurrence} and \cref{prop:bij}, it suffices 
to prove that \begin{equation*}
     T(n,k)=2T(n-1,k)+\sum\limits_{i=k+1}^{n-1}T(i,k-1).
\end{equation*}

A simple computation shows that 
\begin{equation*}
	T(n,k)-2T(n-1,k) 
	=\binom{n-2}{k}+2\sum\limits_{i=0}^{n-k-3} 2^{i+2}\binom{i+k-1}{k-2}\binom{n-3-i}{k}, \text{ which
	implies that }
\end{equation*}
\begin{equation*}
     (T(n,k)-2T(n-1,k))-(T(n-1,k)-2T(n-2,k)) 
	=T(n-1,k-1).
\end{equation*}
Now the proof follows from the induction on $n$.
\end{proof}

\begin{proof}[Proof of \cref{prop:enumeration}] Since $e(n)=\sum\limits_{k=0}^{n-2}|\St(n,k)|,$ by \cref{prop:recurrence} we have 
\begin{align*}
    e(n)=\sum\limits_{k=0}^{n-2}|\St(n,k)|=1+\sum\limits_{k=1}^{n-2}(2|\St(n-1,k)|+\sum\limits_{j=k+1}^{n-1}|\St(j,k-1)|)\\
    =(3\sum\limits_{k=0}^{n-3}|\St(n-1,k)|-1)+\sum\limits_{k=1}^{n-3}\sum\limits_{j=k+1}^{n-2}|\St(j,k-1)|\\=4e(n-1)-(\sum\limits_{k=0}^{n-3}|\St(n-1,k)|+1-\sum\limits_{k=1}^{n-3}\sum\limits_{j=k+1}^{n-2}|\St(j,k-1)|)\\
    =4e(n-1)-(1+\sum\limits_{k=1}^{n-3}(2|\St(n-2,k)|)=4e(n-1)-2e(n-2).
    \end{align*}

\end{proof}

\section{Cantini's $\mathbf{z}$-deformation of steady state probabilities}\label{sec:Cantini}

In \cite{C}, Cantini associated to each $w\in \St(n)$ a \emph{deformed steady state probability} $\psi_w(\mathbf{z})=\psi_w(z_1,\ldots,z_n)$  which recovers the usual steady state probability $\psi_w$ for $w$ in the inhomogeneous TASEP when ``$\mathbf{z}=\infty,$'' or in other words, when one 
reads off the coefficient $\LC(\psi_w(\mathbf{z}))$ of the largest monomial in $\mathbf{z}.$
In this section we 
define some operators on  states and  polynomials, and then recall some results of 
\cite{C}.  

Recall that indices
of states are considered modulo $n$, so that for $w=(w_1,\ldots,w_n) \in \St(n)$, 
we have $w_{n+1} = w_1$.  
In this section we will also consider the indices for variables $\mathbf{z}=(z_1,\ldots,z_n)$ modulo $n$.
\begin{Def}\label{def:operator}
For $1 \leq i \leq n-1$, the simple transposition $s_i$ acts on the state 
$w=(w_1,\ldots,w_n)\in \St(n)$ by  
$$s_i (w_1,\ldots,w_i, w_{i+1}, \ldots ,w_n) = (w_1,\ldots,w_{i+1},w_i,\ldots, w_n).$$ 
We also define the action of $s_n$  by 
$$s_n (w_1,w_2,\ldots,w_{n-1},w_n) = (w_n,w_2,\ldots,w_{n-1},w_1).$$
And the \emph{shift operator} $\sigma$ acts on the state $w$ by 
$$\sigma(w_1,\ldots, w_n) = (w_2,\ldots,w_n,w_{n+1}).$$

If $f$ is a multivariate polynomial
in $z_1,\ldots, z_n$, then $s_i$ acts on $f(z_1,\ldots,z_n)$ by permuting the variables, i.e.
$$s_i f(z_1,\ldots, z_i, z_{i+1},\ldots,z_n) = f(z_1,\ldots, z_{i+1},z_i,\ldots, z_n).$$
And if $\mathbf{z} = (z_1,z_2,z_3,\ldots)$ is an ordered set of variables, we 
let 
$$\sigma (\mathbf{z}) = (z_2, z_3, z_4,\ldots).$$

Given a polynomial
	$f(z_1,\dots,z_n) = \sum_{\mathbf{m}} c_{\mathbf{m}} 
	  \mathbf{z}^{\mathbf{m}}$, where 
	  $\mathbf{z}^{\mathbf{m}}:=
	     z_1^{m_1} \dots z_n^{m_n}$, we say that 
	     the \emph{degree} of the term 
	     $c_{\mathbf{m}} z_1^{m_1} \dots z_n^{m_n}$ is $m_1+ \dots +m_n$
	     and the degree of $f$ is the maximum degree achieved
	     by its terms.
If $f(z,\dots,z)$ has a unique  term
	     $c_{\mathbf{m}} z_1^{m_1} \dots z_n^{m_n}$
	of maximum degree, we say it has a \emph{unique leading term}, and we
define the 
	 \emph{leading coefficient} $\LC(f)$ to be the coefficient 
	$c_{\mathbf{m}}$.
\end{Def}
    For example, 
    $$\LC \left(\prod\limits_{1\leq i<j\leq n}(x_i-y_{n+1-j})^{j-i-1}\prod\limits_{i=1}^{n}\big(\prod\limits_{j=1}^{i-1}(z_i-x_j)\prod\limits_{j=i+1}^{n}(z_i-y_{n+1-j})\big) \right) = 
    \prod\limits_{1\leq i<j\leq n}(x_i-y_{n+1-j})^{j-i-1}.$$
\begin{Prop}\cite[Equations (2), (24), (27), (28), (34), Theorem 18]{C} 
\label{CantiniProp}
Given 
 $w\in \St(n)$, we define $\psi_w(\mathbf{z})=\psi_w(z_1,\ldots,z_n)$ recursively, via:
\begin{align*}
    \psi_{(1,2,\ldots,n)}(\z)&=\prod\limits_{1\leq i<j\leq n}(x_i-y_{n+1-j})^{j-i-1}\prod\limits_{i=1}^{n}\big(\prod\limits_{j=1}^{i-1}(z_i-x_j)\prod\limits_{j=i+1}^{n}(z_i-y_{n+1-j})\big),\\
    \psi_{s_l w}(\z)&=\pi_l(w_l,w_{l+1};n)\psi_w(\z) \hspace{5mm} \text{if $w_l>w_{l+1}$,}
\end{align*}
where $\pi_l(\beta,\alpha;n)$ is the \emph{isobaric divided
difference operator} defined by
\begin{equation*}
    \pi_l(\beta,\alpha;n)G(\textbf{z})=\frac{(z_l-y_{n+1-\beta})(z_{l+1}-x_\alpha)}{x_\alpha-y_{n+1-\beta}}\frac{G(\textbf{z})-s_l G(\textbf{z})}{z_l-z_{l+1}}.
\end{equation*}

Then we have that 
for cyclically equivalent states $w$ and $\sigma(w)$ in $St(n)$,
\begin{equation}\label{eq:cyclic}
	\psi_{\sigma(w)}(z_1,\ldots,z_n)=\psi_{w}(\sigma(z_{1},\ldots,z_{n})),
\end{equation}
where the indices on the $\mathbf{z}$-variables
are considered modulo $n$.

	Then $\psi_w(\z)$ is a polynomial in $\x, \y, \z$ with a 
	unique leading term (with respect to the $z$-variables).
	Its leading coefficient 
	$\LC(\psi_w(\mathbf{z}))$ (see 
	\cref{def:operator})
	is  
\begin{equation}\label{eq:LC}
\LC(\psi_w(\mathbf{z}))=\psi_w.
\end{equation}
Because of \eqref{eq:LC}, we refer to 
$\psi_w(\mathbf{z})$ as the \emph{deformed steady state probability}.
\end{Prop}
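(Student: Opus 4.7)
The statement collects several results from \cite{C}, so the plan is to follow Cantini's strategy, verifying each part in turn. First I would establish the base case directly: each bracketed factor $(z_i-x_j)$ or $(z_i-y_{n+1-j})$ has leading $\mathbf{z}$-monomial $z_i$, so $\psi_{(1,2,\ldots,n)}(\mathbf{z})$ has the unique leading monomial $\prod_{i=1}^n z_i^{n-1}$ with coefficient $\prod_{1\leq i<j\leq n}(x_i-y_{n+1-j})^{j-i-1}$, which by \eqref{eq:normalization} equals $\psi_{(1,2,\ldots,n)}$. Next I would check that $\pi_l$ preserves polynomiality: the numerator $G(\mathbf{z})-s_l G(\mathbf{z})$ vanishes at $z_l=z_{l+1}$ and is therefore divisible by $z_l-z_{l+1}$, while the prefactor $\frac{(z_l-y_{n+1-\beta})(z_{l+1}-x_\alpha)}{x_\alpha-y_{n+1-\beta}}$ is polynomial in $\mathbf{z}$ because its denominator is $\mathbf{z}$-independent.

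The main obstacle is well-definedness of $\psi_w(\mathbf{z})$: different chains of adjacent swaps reducing $(1,2,\ldots,n)$ to $w$ should yield the same polynomial. I would reduce this to verifying that the operators $\pi_l(\beta,\alpha;n)$ satisfy appropriate commutation and braid relations with the color labels $(\beta,\alpha)$ correctly updated along each swap; these are Yang--Baxter-type identities and can be checked by direct computation on two and three variables, tracking how labels are reshuffled. This gives $\psi_w(\mathbf{z})\in\Z[\x;\y;\z]$ for every $w\in\St(n)$.

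For the cyclic equivalence \eqref{eq:cyclic}, the observation is that $\sigma$ acting on states should translate to cyclically permuting the $\mathbf{z}$-variables on polynomials. Since the TASEP on a ring is manifestly $\sigma$-invariant, both $\psi_{\sigma(w)}(\mathbf{z})$ and $\psi_w(\sigma(\mathbf{z}))$ must satisfy the same recursion relative to cyclically shifted base data, so identifying them amounts to verifying the compatibility of the base polynomial and the $\pi_l$ operators with a simultaneous cyclic relabeling of positions and $\mathbf{z}$-variables. This is the Zamolodchikov--Faddeev compatibility Cantini establishes, and I would follow that route.

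Finally, for $\LC(\psi_w(\mathbf{z}))=\psi_w$, I would induct on the number of inversions of $w$ relative to $(1,2,\ldots,n)$; the base case is already done. For the inductive step, I would extract the leading $\mathbf{z}$-coefficient from $\pi_l(w_l,w_{l+1};n)G(\mathbf{z})$ when $G$ has a unique leading term: the divided difference lowers the $(z_l,z_{l+1})$-degree by one and the prefactor raises it by two, so $\pi_l G$ again has a unique leading term whose coefficient I would compute explicitly. The resulting identity on leading coefficients is exactly the local relation on stationary probabilities forced by the generator of the inhomogeneous TASEP at the edge $w\leftrightarrow s_l w$; together with the normalization \eqref{eq:normalization}, which uniquely characterizes the unnormalized stationary distribution $\{\psi_w\}$, this forces $\LC(\psi_w(\mathbf{z}))=\psi_w$.
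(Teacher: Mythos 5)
The paper cites Cantini \cite{C} for this proposition rather than proving it, and your outline follows Cantini's strategy in spirit (base case, polynomiality of $\pi_l$, braid/Yang--Baxter consistency, Zamolodchikov--Faddeev compatibility, induction on length for $\LC$). However, the leading-coefficient step contains a concrete error. You assert that the divided difference lowers the $(z_l,z_{l+1})$-degree by one while the prefactor raises it by two, for a net increase of one in $\z$-degree at each application of $\pi_l$. That cannot be right: the $\z$-degree of $\psi_w(\z)$ equals $n(n-1)$ for every $w$ (check the $n=3$ examples, or the Remark following the proposition giving the total degree $\binom{n}{3}+n(n-1)$ independent of $w$), so it does not grow with $\ell(w)$. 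The resolution is that the leading $\z$-monomial of $\psi_w(\z)$ is always $\prod_i z_i^{n-1}$, which is symmetric in every pair $(z_l,z_{l+1})$; hence the top-degree part of $G$ \emph{cancels} in $G-s_lG$, and the divided difference drops the degree by two, not one. Your degree arithmetic is the generic (non-symmetric) case, which does not apply here, and this symmetry of the leading term is the essential inductive invariant that keeps the degree fixed and the leading term unique.

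A related consequence is that extracting $\LC(\pi_l G)$ involves the degree-$(n(n-1)-1)$ part of $G$, not just the top piece, so the transformation of leading coefficients under $\pi_l$ is more structured than a one-line readoff; ``I would compute explicitly'' is hiding exactly the content of Cantini's argument. Finally, your closing step treats the identity at a single edge $w\leftrightarrow s_lw$ together with normalization as if it already pinned down the stationary distribution. Stationarity is a global system of balance equations; that Cantini's exchange relations, which are local in form, actually produce the stationary measure is a separate theorem (his Theorem~18), not an automatic consequence.
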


\begin{Rem}
For any $w\in S_n$, the total degree of 
$\psi_w(\mathbf{z})$ is ${n \choose 3} + n(n-1) = \frac{n(n-1)(n+4)}{6}$.
\end{Rem}

In what follows we will often omit $n$ in the operator $\pi_l(w_l,w_{l+1};n)$ and just write $\pi_l(w_l,w_{l+1})$ when $n$ is clear from the context.  
\begin{Ex}
For $n=3$, we have
\begin{align*}
	\psi_{(1,2,3)}(\z)&=(x_1-y_1)(z_1-y_2)(z_1-y_1)(z_2-x_1)(z_2-y_1)(z_3-x_1)(z_3-x_2),\\
    \psi_{(3,2,1)}(\z)&=\pi_3(3,1)\psi_{(1,2,3)}(\z)=(z_1-x_1) (z_2-x_1) (z_2-y_1) (z_3-y_1)\times\\ &((x_1+x_2-y_1-y_2)z_3z_1+(x_1x_2-y_1y_2)(z_3+z_1)-x_1 x_2 y_1 - x_1 x_2 y_2 +  x_1 y_1 y_2 + x_2 y_1 y_2)  
   ),\\
    \psi_{(2,3,1)}(\z)&=\pi_1(3,2)\psi_{(3,2,1)}(\z)=(x_1-y_1)(z_3-y_2)(z_3-y_1)(z_1-x_1)(z_1-y_1)(z_2-x_1)(z_2-x_2),\\
    \psi_{(3,1,2)}(\z)&=\pi_2(2,1)\psi_{(3,2,1)}(\z)=(x_1-y_1)(z_2-y_2)(z_2-y_1)(z_3-x_1)(z_3-y_1)(z_1-x_1)(z_1-x_2),\\
    \psi_{(1,3,2)}(\z)&=\pi_1(3,1)\psi_{(3,1,2)}(\z)=(z_2-x_1) (z_3-x_1) (z_3-y_1) (z_1-y_1)\times\\ &((x_1+x_2-y_1-y_2)z_1z_2+(x_1x_2-y_1y_2)(z_1+z_2)-x_1 x_2 y_1 - x_1 x_2 y_2 +  x_1 y_1 y_2 + x_2 y_1 y_2)  
   ),\\
    \psi_{(2,1,3)}(\z)&=\pi_2(3,1)\psi_{(2,3,1)}(\z)=(z_3-x_1) (z_1-x_1) (z_1-y_1) (z_2-y_1)\times\\ &((x_1+x_2-y_1-y_2)z_2z_3+(x_1x_2-y_1y_2)(z_2+z_3)-x_1 x_2 y_1 - x_1 x_2 y_2 +  x_1 y_1 y_2 + x_2 y_1 y_2)  
   ).
\end{align*}
Taking the leading coefficients with respect to $\mathbf{z}$-variables 
recovers the probabilities from \cref{fig:S3}, namely
\begin{align*}
    &\psi_{(1,2,3)}=\psi_{(2,3,1)}=\psi_{(3,1,2)}=x_1-y_1\\
    &\psi_{(1,3,2)}=\psi_{(2,1,3)}=\psi_{(3,2,1)}=x_1+x_2-y_1-y_2.
\end{align*}
\end{Ex}

\section{The $\mathbf{z}$-deformation of our main result and $\mathbf{z}$-Schubert polynomials}\label{sec:z}

In this section we present a $\mathbf{z}$-generalization of our main 
result (see \cref{thm:maintechnical}):  for 
$w\in \St(n,k)$, the $\mathbf{z}$-deformed steady state probability 
$\psi_w(\mathbf{z})$ is equal to a ``trivial factor'' $\TF(w)$ times a product of \emph{$\mathbf{z}$-Schubert polynomials} (see \cref{def:zSchubert}) -- 
certain polynomials in $\mathbf{x}, \mathbf{y}, \mathbf{z}$
which reduce to double Schubert polynomials when 
$\mathbf{z}=\infty$.


We note that the $\mathbf{z}$-Schubert polynomials 
$\Sym_{\lambda}^n(\mathbf{z}; \mathbf{x}; \mathbf{y})\in \mathbb{R}[\mathbf{z}; \mathbf{x}; \mathbf{y}]$ (where 
$\mathbf{z}=\{z_1,\ldots,z_{n}\}$,
$\mathbf{x}=\{x_1,\ldots,x_{n-1}\}$,
$\mathbf{y}=\{y_1,\ldots,y_{n-1}\}$)
are not defined
for any permutation but rather depend on a choice of positive integer $n$ and a partition $\lambda\in \Val(n)$.

Given $\lambda\in\Val(n)$, the polynomial $\Sym_{\lambda}^n(\mathbf{z}; \mathbf{x}; \mathbf{y})$ has the property
that:
\begin{itemize}
    \item when $\mathbf{z}=\infty$, this polynomial reduces
    to  the 
double Schubert polynomial of the permutation with  code
$g_n(\lambda)$, see \cref{Lemma:leadingcoefficient};
\item when we additionally
set each $y_i=0$, we obtain a flagged Schur polynomial
of shape $\lambda$, see \cref{cor:flaggedSchur}.
\end{itemize}

\begin{Def}\label{Def:key operators new}
Given ordered variables $\mathbf{x} = (x_1,x_2,\ldots)$, if $I$ is a set of positive integers 
we let $\x_{\hat{I}}$ denote $\x_{\hat{I}} = \x \setminus \{x_i \ \vert \ i\in I\}$, where we keep 
 the order on variables inherited from $\x$. We abuse notation and use  $\x_{\hat{i}}$ to denote $\x_{\hat{\{i\}}}$. 
\end{Def}
Recall from \cref{def:operator}
that if 
$\z=(z_1,z_2,\ldots)$ then 
$\sigma(\z)$ denotes $(z_2, z_3,\ldots)$.

\begin{Ex}
For $f(\mathbf{z};\mathbf{x};\mathbf{y})=x_2z_1z_2+z_1+z_2+x_1+x_2+x_3$, we have
\begin{align*}
    & f(\sigma^{2}(\mathbf{z});\mathbf{x};\mathbf{y})=x_2z_3z_4+z_3+z_4+x_1+x_2+x_3\\
    & f(\mathbf{z};\mathbf{x}_{\hat{1}};\mathbf{y})=x_3z_1z_2+z_1+z_2+x_2+x_3+x_4\\
    & f(\mathbf{z};\mathbf{x}_{\{\widehat{2,3}\}};\mathbf{y})=x_4z_1z_2+z_1+z_2+x_1+x_4+x_5.
\end{align*}
\end{Ex}

   \begin{Def}\label{def:zSchubert}
    Given a positive integer $n$ and a partition $\lambda\in\Val(n)$ 
	   (see \cref{def:valid}), we define the \emph{$\mathbf{z}$-Schubert polynomial} 
	   $\Sym_{\lambda}^n(\mathbf{z}; \mathbf{x}; \mathbf{y})$ 
	   recursively as follows:
	   $$\Sym_{\emptyset}^{n-1}(\z;\x;\y)=1,$$ and for $\lambda \neq \emptyset$, we define 
	   $\Sym_{\lambda}^n(\mathbf{z}; \mathbf{x}; \mathbf{y})$  to be 
    \begin{equation}\label{eq: def zschubert}
	    \partial_{n-\lambda_1-\mul(\lambda)}\cdots\partial_1 \big( \Sym_{\lambda'}^{n-1}(\sigma^{\lambda_1-\lambda_2+1} (\mathbf{z}); \x_{\hat{1}}; \y) \prod\limits_{l=1}^{n-\mul(\lambda)}(x_1-y_l)\prod\limits_{i=1}^{(\lambda_1-\lambda_2+1)}\prod\limits_{\substack{m=2}}^{n-\lambda_1-\mul(\lambda)+1}(z_i-x_m) \big),
    \end{equation}
	   where the divided difference operators 
    \emph{act on the $x$-variables}, and   
 where $\lambda'$ is the partition obtained by deleting the first part of $\lambda$. 
	   (If $\lambda'$ is empty we 
	   regard $\lambda_2=0$.)  
\end{Def}

\begin{Rem}
Using induction on $n$, it is straightforward to show that the only $\mathbf{z}$-variables appearing in 
	$\Sym_{\lambda}^{n}(\z;\x;\y)$ 
	are $z_1,\ldots,z_{\lambda_1+\length(\lambda)}$.  It is also straightforward to show that 
	$\Sym_{\lambda}^{n}(\z;\x;\y)$ has a unique leading term with 
	respect to the $z$-variables so that 
	$\LC(\Sym_{\lambda}^n(\z; \x; \y))$ is well-defined.
\end{Rem}
\begin{Rem}
For positive integers $r$ and $s$, Cantini introduced a polynomial that he denoted $\Sym^{r,s}$ in \cite{C}. 
In our notation this is the same as $\Sym^{r+s-1}_{((r-1)^{s-1})}(\z;\x;\y)$.
\end{Rem}
\begin{Ex}We have
\begin{align*}
    \Sym^3_{(1)}(\z; \x;\y)&=\partial_1\big((x_1-y_1)(x_1-y_2)(z_1-x_2)(z_2-x_2)\big)\\
    &=\frac{(x_1-y_1)(x_1-y_2)(z_1-x_2)(z_2-x_2)-(x_2-y_1)(x_2-y_2)(z_1-x_1)(z_2-x_1)}{x_1-x_2} \\&=z_1 z_2 (x_1+x_2-y_1-y_2) - (z_1+z_2)(x_1 x_2+y_1 y_2) + (x_1 x_2 (y_1+y_2) - y_1 y_2(x_1+x_2)).
\end{align*}
Observe that $\LC(\Sym^3_{(1)}(\z; \x;\y)) = x_1+x_2-y_1-y_2,$ the double Schubert polynomial $\Sym_{(1,3,2)}(\x, \y)$.  It also
appears as a unnormalized steady state probability in \cref{fig:S3}.
\end{Ex}

Recall the definition of 
$g_n(\lambda)$ from
 \cref{def:g}.

\begin{Prop}\label{Lemma:leadingcoefficient}
Fix $n$ and choose a partition $\lambda\in \Val(n)$.  Then 
the leading coefficient 
	$\LC(\Sym_{\lambda}^n(\z; \x; \y))$ of $\Sym_{\lambda}^n(\z; \x; \y)$ is the double Schubert polynomial $\Sym_{c^{-1}(g_n(\lambda))}$ of 
the permutation $c^{-1}(g_n(\lambda))\in S_n$ whose code is $g_n(\lambda).$
\end{Prop}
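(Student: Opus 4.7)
My plan is to induct on $\ell(\lambda)$, the number of parts of $\lambda$. The starting point is the recursion \eqref{eq: def zschubert}, which expresses $\Sym_\lambda^n(\z;\x;\y)$ as $\partial_a\cdots\partial_1 F$ (with $a = n-\lambda_1-\mul(\lambda)$), where $F$ is a product of three factors. The crucial first observation is that the two $\z$-dependent factors of $F$ involve \emph{disjoint} sets of $\z$-variables: $\Sym_{\lambda'}^{n-1}(\sigma^{\lambda_1-\lambda_2+1}(\z); \x_{\hat{1}}; \y)$ uses only $z_{\lambda_1-\lambda_2+2}, z_{\lambda_1-\lambda_2+3}, \ldots$, while $\prod_{i=1}^{\lambda_1-\lambda_2+1}\prod_{m=2}^{a+1}(z_i - x_m)$ uses only $z_1, \ldots, z_{\lambda_1-\lambda_2+1}$. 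Consequently the $\z$-leading monomial of $F$ is itself the product of the two individual leading monomials, and its coefficient factorizes: the second factor contributes $1$, while by the inductive hypothesis the first contributes $\Sym_{c^{-1}(g_{n-1}(\lambda'))}(\x_{\hat{1}}; \y)$. Since $\partial_a\cdots\partial_1$ acts only on $\x$-variables it preserves $\z$-degrees, so it commutes with extraction of the unique leading $\z$-term, yielding
\begin{equation*}
\LC(\Sym_\lambda^n(\z;\x;\y)) = \partial_a\cdots\partial_1 \Bigl(\Sym_{c^{-1}(g_{n-1}(\lambda'))}(\x_{\hat{1}}; \y)\cdot\prod_{l=1}^{n-\mul(\lambda)}(x_1-y_l)\Bigr).
\end{equation*}

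It remains to prove the Schubert-polynomial identity
\begin{equation*}
\Sym_{c^{-1}(g_n(\lambda))}(\x;\y) = \partial_a\cdots\partial_1 \Bigl(\Sym_{\pi'}(\x_{\hat{1}}; \y)\cdot\prod_{l=1}^{n-\mul(\lambda)}(x_1-y_l)\Bigr),
\end{equation*}
where $\pi' := c^{-1}(g_{n-1}(\lambda'))$. I plan to establish this in two stages. First I would prove a \emph{first-entry factorization} for double Schubert polynomials: if $v\in S_n$ has $v_1 = r$, then $\Sym_v(\x;\y) = \Sym_{v'}(\x_{\hat{1}}; \y)\cdot\prod_{l=1}^{r-1}(x_1-y_l)$, where $v'\in S_{n-1}$ is obtained by removing $r$ from position $1$ and decrementing every entry exceeding $r$. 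This identity can be checked either from the definition $\Sym_v = \partial_{v^{-1} w_0}\Delta$ or via pipe-dream combinatorics. Applying it to the permutation $u\in S_n$ whose code is $(n-\mul(\lambda),\, g_{n-1}(\lambda'))$ (so $u_1 = n-\mul(\lambda)+1$ and the reduction of $u$ recovers $\pi'$) identifies the bracketed polynomial above with $\Sym_u(\x;\y)$.

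Second, a direct reading of \cref{def:g} shows that $g_n(\lambda)$ is obtained from $g_{n-1}(\lambda')$ by inserting a single new entry equal to $\lambda_1$ at position $n-\lambda_1$, so $\pi := c^{-1}(g_n(\lambda))$ is obtained from $\pi'$ (viewed as a permutation of $\{1,\ldots,n-1\}$) by inserting the value $n$ at position $n-\lambda_1$. From this one verifies both that $u\cdot s_1 s_2\cdots s_a = \pi$ and that each successive right-multiplication strictly decreases length (because at each step the value $u_1 = n-\mul(\lambda)+1$ being moved rightward is strictly greater than its new neighbor -- this neighbor lies in the initial zero-run of the code of $\pi'$ and hence belongs to $\{1,2,\ldots,n-\mul(\lambda)\}$). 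Iterating $\partial_i\Sym_w = \Sym_{ws_i}$ (valid under length decrease) then gives $\partial_a\cdots\partial_1\Sym_u = \Sym_\pi$, closing the induction. The base case $\lambda = (b)$ is handled by the same argument together with the recognition that $\prod_{l=1}^{n-1}(x_1 - y_l) = \Sym_{(n,1,2,\ldots,n-1)}$, the dominant permutation with code $(n-1,0,\ldots,0)$.

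The main obstacle will be the first-entry factorization identity for double Schubert polynomials used in Stage~1: while easy to verify in individual cases and well known in the single-variable setting, a clean self-contained proof requires either careful manipulation of the divided-difference definition or invoking a combinatorial model. An appealing alternative would be to leverage \cref{th:flag}, since the Schubert polynomials appearing in this paper are flagged double Schur polynomials, for which the factorization is essentially visible from the flagged tableaux description; this reduces the identity to a bijective statement on semistandard tableaux.
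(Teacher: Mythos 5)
Your overall architecture coincides with the paper's proof of this proposition: induction on the number of parts, factoring the leading $\z$-coefficient of the recursion \eqref{eq: def zschubert} (your disjoint-$\z$-variables observation is a cleaner justification of $\LC(fg)=\LC(f)\LC(g)$ than the paper spells out), a factorization lemma that absorbs $\prod_{l=1}^{n-\mul(\lambda)}(x_1-y_l)$ into a single double Schubert polynomial indexed by the code $(n-\mul(\lambda),g_{n-1}(\lambda'))$, and finally iterated divided differences tracked through Lehmer codes (your Stage~2 is the paper's \cref{lem:code} together with the check $n-\mul(\lambda)\geq v_i+i$; your observation that $g_n(\lambda)$ is $g_{n-1}(\lambda')$ with $\lambda_1$ inserted at position $n-\lambda_1$ is correct).

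The genuine gap is the ``first-entry factorization'' of Stage~1, which is false as stated. Take $v=(1,3,2)$, so $r=1$ and $v'=(2,1)$: your identity would give $\Sym_{132}(\x;\y)=\Sym_{21}(\x_{\hat{1}};\y)=x_2-y_1$, whereas $\Sym_{132}(\x;\y)=x_1+x_2-y_1-y_2$. (The permutation $v=(1,4,2,3)$, computed in the paper's example after \cref{thm:rc}, gives another counterexample.) The correct statement is the paper's \cref{prop:doubleschubert property}, which carries the extra hypothesis that $c((v')^{-1})_m=0$ for all $m>r-1$, equivalently that $v'$ contains the increasing subsequence $r,r+1,\ldots,n-1$; this is what forces every rc-graph involved to avoid columns beyond $r-1$ and makes the ladder-move bijection (hence the factorization) work. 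So you must both add this hypothesis and then \emph{verify} it for $\pi'=c^{-1}(g_{n-1}(\lambda'))$ with $r-1=n-\mul(\lambda)$. That verification is a real step of the argument: from \cref{def:g} one reads off that $\pi'$ sends positions $n-\mul(\lambda')-\lambda_2,\ldots,n-1-\lambda_2$ to $n-\mul(\lambda'),\ldots,n-1$, so $c((\pi')^{-1})$ vanishes beyond position $n-\mul(\lambda')-1\leq n-\mul(\lambda)$. Your proposed fallback via \cref{th:flag} could also be made to work, but only after establishing that $c^{-1}(g_n(\lambda))$ is vexillary and computing its flags, which the paper does separately in \cref{lem:flag}.
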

We will prove \cref{Lemma:leadingcoefficient} in the
next section.

In \cref{thm:maintechnical} below we give a $\mathbf{z}$-deformation of our main result (\cref{thm:main1}); it says that for 
$w\in \St(n,k)$, the $\mathbf{z}$-deformed steady state probability 
$\psi_w(\mathbf{z})$ is equal to a ``trivial factor'' $\TF(w)$ times a product of $\mathbf{z}$-Schubert polynomials.  

Recall that we defined $\xyFact(w)$ in \cref{def:xy}. 
We also define
\begin{equation}\label{eq:xz}
\xzFact(w)=\prod_{i=1}^n \prod_{\substack{j \neq w_i \\ \min\{w_i, w_{i-1},w_{i-2},\ldots,j\}=j}} (z_i-x_j)
\end{equation} and 
\begin{equation}\label{eq:yz}
\yzFact(w)=\prod_{i=1}^n \prod_{\substack{j \neq n+1-w_i \\ \max\{w_i, w_{i+1},w_{i+2},\ldots,n+1-j\}=n+1-j}} (z_i-y_j).    
    \end{equation}
Finally we set 
\begin{equation}\label{eq:TF}
    \TF(w) = \xyFact(w) \xzFact(w) \yzFact(w).
\end{equation}

We associate to a partition sequence $(\lambda^1,\ldots,\lambda^k)$ 
a vector $(a_1,\ldots,a_k)$ as follows.

\begin{Def}\label{def:shiftingvector}
Let $(\lambda^1,\ldots,\lambda^k)$ be a sequence of partitions.  We 
denote the parts of $\lambda^j$ by $\lambda^j_1, \lambda^j_2$, etc.
We define $s((\lambda^1,\ldots,\lambda^k);n)=(a_1,\ldots,a_k)\in \Z^k$ by setting
$a_1=0$, and for each $2 \leq i \leq k$,
$$a_{i} = a_{i-1}+\lambda_1^{i-1} + \length(\lambda^{i-1}) - n.$$
If $w\in \St(n,k)$ such that $\Psi(w)=(\lambda^1,\ldots,\lambda^k)$, then abusing notation, we also refer to $s((\lambda^1,\ldots,\lambda^k);n)$ as 
$s(w)$.
\end{Def}
See \cref{table:3} for examples of $s(w)$.
The following theorem will be 
proved in \cref{section:mainproof}.

\begin{Th}\label{thm:maintechnical}
Let $w\in \St(n,k)$, and write  $\Psi(w)=(\lambda^{1},\ldots,\lambda^{k})$ and $s(w)=(a_1,\ldots,a_{k})$. Then we have 
\begin{equation}\label{eq:maintheorem}
    \psi_w(\z)=\TF(w)\prod_{i=1}^{k}\Sym_{\lambda^{i}}^{n}(\sigma^{a_i}(\z);\x;\y)
\end{equation}
where subscripts for $z$ variables are considered modular $n$.
\end{Th}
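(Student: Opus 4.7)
The plan is to prove \cref{thm:maintechnical} by induction on $n$. The base case is the identity permutation $w = (1,2,\ldots,n) \in \St(n,0)$, where $\Psi(w) = \emptyset$ so the product of $\z$-Schubert polynomials is empty, and (\ref{eq:maintheorem}) reduces to the claim $\psi_{(1,\ldots,n)}(\z) = \TF((1,\ldots,n))$. I would verify this identity directly by expanding the three factors $\xyFact$, $\xzFact$, $\yzFact$ according to \cref{def:xy}--\eqref{eq:yz} at $w = (1,2,\ldots,n)$ and matching them term by term with Cantini's explicit product formula for $\psi_{(1,\ldots,n)}(\z)$ stated in \cref{CantiniProp}. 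For the identity, $\xyFact$ collects the monomial factor $\prod(x_i-y_{n+1-j})^{j-i-1}$, while $\xzFact \cdot \yzFact$ reconstructs the double product over $z_i$.

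For the inductive step, I would exploit the recursive structure of $\ParSeq(n,k)$ provided by \cref{prop:recurrence}. Via the bijection $\Psi$, each $w \in \St(n,k)$ corresponds to a sequence $(\lambda^1,\ldots,\lambda^k)$ that lies in the image of exactly one of the injections $\Psi_1$, $\Psi_2$, or $\Phi_{i,k,n}$ (for some $i<n$); each injection comes from a smaller $\ParSeq$ (either $\ParSeq(n-1,k)$ or $\ParSeq(i,k-1)$), so its preimage corresponds to a $w' \in \St(n',k')$ for which $\psi_{w'}(\z)$ is known by the inductive hypothesis. The goal is then to produce a specific sequence of Cantini moves (cyclic shifts from (\ref{eq:cyclic}) composed with isobaric divided differences $\pi_l$) that transforms $\psi_{w'}(\z)$ into $\psi_w(\z)$, and to show that the corresponding transformation on the right-hand side of (\ref{eq:maintheorem}) matches.

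The main technical obstacle is reconciling two different kinds of operators that both feature in the formula: the isobaric divided differences $\pi_l(\beta,\alpha;n)$, which act on the $\z$-variables and are used to build $\psi_w(\z)$, versus the ordinary divided differences $\partial_i$ acting on the $\x$-variables, which appear in the recursive definition (\ref{eq: def zschubert}) of the $\z$-Schubert polynomials. To bridge these, I would prove (or extract from the technical appendix in \cref{sec:technical}) a family of commutation and evaluation lemmas describing how $\pi_l$ interacts with the trivial factor $\TF(w)$ and with each $\Sym^n_{\lambda^i}(\sigma^{a_i}(\z);\x;\y)$. Specifically, when $\pi_l$ is applied at a descent of $w$ corresponding to the boundary between two parts $\lambda^i$ and $\lambda^{i+1}$, the linear factors in $\TF$ (those of the form $(z_l-y_{n+1-w_l})$, $(z_{l+1}-x_\alpha)$) should combine with a $\z$-Schubert polynomial to produce precisely the image of a new $\z$-Schubert polynomial under one of its recursive unfoldings, with the shift vector $s(w)$ updating according to \cref{def:shiftingvector}.

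The hardest part will be the bookkeeping: tracking how the partition sequence $\Psi(w)$, the shift vector $s(w)$, the factorization of $\TF$, and the cyclic shifts of the $\z$-variables all transform in lockstep under a single Cantini move, and organizing the casework so that each of the three injection types $\Psi_1, \Psi_2, \Phi_{i,k,n}$ is handled uniformly. I would expect the \emph{evil-avoiding} hypothesis on $w$ (cf.\ \cref{prop:pandemic}) to enter crucially here: it guarantees that the code of $w^{-1}$ decomposes cleanly into blocks that correspond to the individual $\lambda^i$, which in turn is what allows the product formula (\ref{eq:maintheorem}) to factor over the $k$ descents of $c(w^{-1})$ rather than requiring cross terms of the kind visible in the non-evil-avoiding entries of \cref{table:2}.
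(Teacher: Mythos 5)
Your base case is fine and agrees with the paper, and your instinct that the technical appendix must mediate between the $\pi_l$ operators (acting on $\z$) and the $\partial_i$ in the definition of $\z$-Schubert polynomials (acting on $\x$) is exactly right. But the inductive step as you have set it up has a genuine gap. You propose to induct on $n$ through the enumeration recursion of \cref{prop:recurrence}, pulling $w$ back along one of the injections $\Psi_1,\Psi_2,\Phi_{i,k,n}$ to some $w'\in\St(n',k')$ with $n'<n$, and then transporting $\psi_{w'}(\z)$ to $\psi_w(\z)$ by ``Cantini moves.'' No such transport exists: every tool available from \cref{CantiniProp} --- the operators $\pi_l(\beta,\alpha;n)$ and the cyclic shift identity \eqref{eq:cyclic} --- relates two states of the \emph{same} chain on a ring of fixed size $n$. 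The polynomials $\psi_{w'}(\z)$ for $w'\in S_{n-1}$ live in a different variable set (and under a different normalization \eqref{eq:normalization}) than those for $S_n$, and the maps $\Psi_1,\Psi_2$ do not correspond to any permutation-level operation (such as direct sum with a fixed point) for which a cross-$n$ comparison of probabilities is available. So the recursion of \cref{prop:recurrence}, which the paper uses only for counting, cannot drive the induction.

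The paper instead fixes $n$ throughout and runs two nested inductions that never leave $S_n$. For $k=1$ it inducts on $|\lambda|$, relating $w(((\lambda_1)^{b},\tilde\lambda);n)$ to $w(((\lambda_1)^{b-1},\lambda_1-1,\tilde\lambda);n)$ --- both states of the same chain --- by a single $\pi_b$ or $\pi_1$ move, with the appendix lemmas converting that move into the recursive unfolding of $\Sym^n_\lambda(\z;\x;\y)$. For general $k$ it inducts on $k$ using the direct-sum decomposition $\sigma^{a_2}(w)=\bar w(\lambda^1;n)\oplus w'$ of \cref{Prop: SS Decomposition} (this is where evil-avoidance enters, as you anticipated) together with Cantini's multiplicativity of $\psi_{u\oplus v}(\z)$ for direct sums, which yields a cross-ratio identity letting one splice the $k=1$ answer for $\lambda^1$ onto the inductive answer for $(\lambda^2,\ldots,\lambda^k)$. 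To repair your argument you would need to replace the induction on $n$ by an induction carried out entirely inside $\St(n,\cdot)$, or else supply a new lemma relating the $\z$-deformed stationary measures of rings of different sizes; the latter is not in the paper and is not obviously true.
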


\section{Properties of $\mathbf{z}$-Schubert polynomials}\label{sec:zproperties}

\subsection{Double Schubert polynomials}
We review an algorithmic formula for computing double Schubert polynomials in terms of rc-graphs, based on \cite{BNB}.

\begin{Def}
Given a finite subset $D\subsetneq \{1,2,\ldots\} \times \{1,2,\ldots\}$ we define the \emph{weight} of $D$ to be 
\begin{equation*}
    \wt(D)(\x,\y)=\prod\limits_{(i,j)\in D}(x_i-y_j).
\end{equation*}
The \emph{initial diagram} of the permutation $w$ is
\begin{equation*}
    D_{in}(w) = \{(i,j) \ \vert \ 1 \leq j\leq c(w)_i\}.
\end{equation*}
\end{Def}

\begin{Def}
For a finite subset $D\subsetneq \{1,2,\ldots\} \times \{1,2,\ldots\}$, assume the following conditions are satisfied for some $i$ and $j$:
\begin{itemize}
    \item $(i,j)\in D, (i,j+1)\notin D$,
    \item $(i-m,j), (i-m,j+1) \notin D$ for some $0<m<i$,
    \item $(i-k,j), (i-k,j+1) \notin D$ for each $1\leq k<m$.
\end{itemize}
Then we define the \emph{ladder move} $L_{i,j}$ to be $L_{i,j}(D)=D\cup \{(i-m,j+1)\}\setminus \{(i,j)\}$.  We represent diagrams $D$ as above as collections of $+$'s, see \cref{fig:ladder move}.
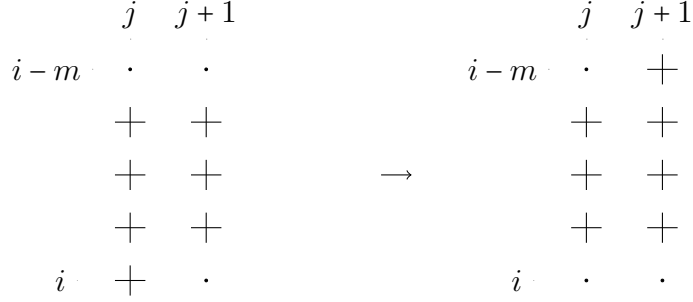
\begin{figure}\centering
\begin{tikzpicture}[scale=1]
\filldraw[black] (-0.7,0) circle (0.000001pt) node[anchor=east] {$i$};
\filldraw[black] (-0.5,2.8) circle (0.000001pt) node[anchor=east] {$i-m$};
\filldraw[black] (0,3.2) circle (0.000001pt) node[anchor=south] {$j$};
\filldraw[black] (1,3.2) circle (0.000001pt) node[anchor=south] {$j+1$};

\draw[-] (-0.2,0)--(0.2,0);
\draw[-] (0,-0.2)--(0,0.2);
\filldraw [black] (1,0) circle (0.5pt);

\draw[-] (-0.2,0.7)--(0.2,0.7);
\draw[-] (0,-0.2+0.7)--(0,0.2+0.7);

\draw[-] (1-0.2,0.7)--(1+0.2,0.7);
\draw[-] (1+0,-0.2+0.7)--(1+0,0.2+0.7);

\draw[-] (-0.2,1.4)--(0.2,1.4);
\draw[-] (0,-0.2+1.4)--(0,0.2+1.4);

\draw[-] (1-0.2,1.4)--(1+0.2,1.4);
\draw[-] (1,-0.2+1.4)--(1,0.2+1.4);

\draw[-] (-0.2,2.1)--(0.2,2.1);
\draw[-] (0,2.1-0.2)--(0,2.1+0.2);

\draw[-] (1-0.2,2.1)--(1.2,2.1);
\draw[-] (1,2.1-0.2)--(1,2.1+0.2);
\filldraw [black] (0,2.8) circle (0.5pt);
\filldraw [black] (1,2.8) circle (0.5pt);

\draw[->] (3.3,1.4)--(3.7,1.4);

\begin{scope}[shift={(6,0)}]
\filldraw[black] (-0.7,0) circle (0.000001pt) node[anchor=east] {$i$};
\filldraw[black] (-0.5,2.8) circle (0.000001pt) node[anchor=east] {$i-m$};
\filldraw[black] (0,3.2) circle (0.000001pt) node[anchor=south] {$j$};
\filldraw[black] (1,3.2) circle (0.000001pt) node[anchor=south] {$j+1$};

\filldraw [black] (0,0) circle (0.5pt);
\filldraw [black] (1,0) circle (0.5pt);

\draw[-] (-0.2,0.7)--(0.2,0.7);
\draw[-] (0,-0.2+0.7)--(0,0.2+0.7);

\draw[-] (1-0.2,0.7)--(1+0.2,0.7);
\draw[-] (1+0,-0.2+0.7)--(1+0,0.2+0.7);

\draw[-] (-0.2,1.4)--(0.2,1.4);
\draw[-] (0,-0.2+1.4)--(0,0.2+1.4);

\draw[-] (1-0.2,1.4)--(1+0.2,1.4);
\draw[-] (1,-0.2+1.4)--(1,0.2+1.4);

\draw[-] (-0.2,2.1)--(0.2,2.1);
\draw[-] (0,2.1-0.2)--(0,2.1+0.2);

\draw[-] (1-0.2,2.1)--(1.2,2.1);
\draw[-] (1,2.1-0.2)--(1,2.1+0.2);
\filldraw [black] (0,2.8) circle (0.5pt);

\draw[-] (1-0.2,2.8)--(1+0.2,2.8);
\draw[-] (1,2.8-0.2)--(1,3);
\end{scope}
\end{tikzpicture}
\caption{The figure shows the ladder move $L_{i,j}$.} \label{fig:ladder move}
\end{figure}
We also define $\mathcal{L}(D)$ to be the set of all $D'$ that can be obtained by applying ladder moves to $D$.
\end{Def}

Billey-Bergeron 
\cite[Theorem 3.7]{BNB} showed that $\mathcal{L}(D_{in}(w))$ gives the set of 
\emph{rc-graphs} for $w$.  
Combining this with the formula of Fomin-Kirillov \cite{FK}
produces the following formula for Schubert polynomials 
(see also \cite[Corollary 2.1.5]{KM}).
\begin{Th}\cite{FK}, \cite[Theorem 3.7 and Lemma 3.2]{BNB}\label{thm:rc}
Let $w$ be a permutation.
\begin{enumerate}
\item[(a)] We have $\Sym_w(\x,\y)=\sum\limits_{D'\in \mathcal{L}(D_{in}(w))}\wt(D')(\x,\y)$.
\item[(b)] The map sending $D'$ to its transpose $(D')^{t}$ is a bijection between $\mathcal{L}(D_{in}(w))$ and $\mathcal{L}(D_{in}(w^{-1}))$.
\end{enumerate}
\end{Th}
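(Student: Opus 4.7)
The plan is to prove (a) in two independent steps: first identify $\mathcal{L}(D_{in}(w))$ with the set $RC(w)$ of reduced pipe dreams (rc-graphs) for $w$, and then invoke the Fomin--Kirillov pipe-dream formula $\Sym_w(\x,\y)=\sum_{D\in RC(w)}\wt(D)(\x,\y)$. This cleanly splits the content into a combinatorial piece (ladder moves generate all rc-graphs) and an algebraic piece (the pipe-dream formula itself), which can be handled separately.

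For the combinatorial piece, I would first verify that $D_{in}(w)$ is itself a reduced pipe dream for $w$: reading its crossings row by row from right to left gives a reduced word whose associated wiring diagram recovers $w$, which is essentially the defining property of the Lehmer code $c(w)$. I would then check that every ladder move $L_{i,j}$ preserves both the underlying permutation and the reducedness of the pipe dream. The empty-cell conditions in the definition of $L_{i,j}$ guarantee that, in terms of reduced words, moving a crossing from $(i,j)$ to $(i-m,j+1)$ amounts to a sequence of commutations followed by a single braid relation, so reducedness and the product are preserved. The converse direction --- every element of $RC(w)$ lies in $\mathcal{L}(D_{in}(w))$ --- is Bergeron--Billey's main theorem and proceeds by induction on a height statistic such as the sum of row indices of crossings, showing that any non-bottom rc-graph admits a reverse ladder move. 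With $\mathcal{L}(D_{in}(w))=RC(w)$ in hand, the Fomin--Kirillov formula (itself proved by induction on Coxeter length, via compatibility with the divided difference operators $\partial_i$) immediately yields (a).

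For (b), transposition $D\mapsto D^t$ is a bijection on finite subsets of $\mathbb{Z}_{>0}\times\mathbb{Z}_{>0}$, and transposing a pipe dream swaps horizontal and vertical strands, hence turns a pipe dream for $w$ into one for $w^{-1}$. One verifies that a ladder move $L_{i,j}$ becomes a (symmetric) column ladder move under transposition, and since $RC(w^{-1})$ is characterized intrinsically and is also generated by ladder moves from $D_{in}(w^{-1})$, the two sets $\mathcal{L}(D_{in}(w))^t$ and $\mathcal{L}(D_{in}(w^{-1}))$ coincide. The main obstacle in the whole program is Bergeron--Billey's step --- showing that the restricted ladder moves (with their empty-cell side conditions) suffice to reach $D_{in}(w)$ from any rc-graph --- since one must exhibit a terminating reduction strategy in the presence of these local constraints; the remaining checks are all local and routine.
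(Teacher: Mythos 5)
Your proposal is correct and follows exactly the route the paper itself takes: the paper does not prove this theorem but simply combines the Bergeron--Billey result that ladder moves applied to $D_{in}(w)$ generate all rc-graphs for $w$ (their Theorem 3.7, with the transpose statement being their Lemma 3.2) with the Fomin--Kirillov pipe-dream formula. Your outline of how those two cited ingredients are themselves established (reducedness preserved under ladder moves, induction for surjectivity, transposition exchanging $w$ and $w^{-1}$) is the standard argument and is consistent with the sources.
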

\begin{Ex}
The diagrams  $\mathcal{L}(D_{in}(1,4,2,3))$ are shown in \cref{fig:doubleschubert computation}. So we have
\begin{equation*}
    \Sym_{(1,4,2,3)}(\x,\y)=(x_2-y_1)(x_2-y_2)+(x_2-y_1)(x_1-y_3)+(x_1-y_2)(x_1-y_3).
\end{equation*}
\end{Ex}
\begin{figure}[h]\centering
\begin{tikzpicture}[scale=1]
\filldraw[black] (-0.5,2.1) circle (0.000001pt) node[anchor=east] {$2$};
\filldraw[black] (-0.5,2.8) circle (0.000001pt) node[anchor=east] {$1$};
\filldraw[black] (0,3.2) circle (0.000001pt) node[anchor=south] {$1$};
\filldraw[black] (1,3.2) circle (0.000001pt) node[anchor=south] {$2$};
\filldraw[black] (2,3.2) circle (0.000001pt) node[anchor=south] {$3$};

\filldraw [black] (0,2.8) circle (0.5pt);
\filldraw [black] (1,2.8) circle (0.5pt);
\filldraw [black] (2,2.8) circle (0.5pt);

\draw[-] (-0.2,2.1)--(0.2,2.1);
\draw[-] (0,2.1-0.2)--(0,2.1+0.2);
\draw[-] (1-0.2,2.1)--(1.2,2.1);
\draw[-] (1,2.1-0.2)--(1,2.1+0.2);

\filldraw [black] (2,2.1) circle (0.5pt);
\draw[->] (2.7,2.45)--(3.3,2.45);
\draw[->] (7.7,2.45)--(8.3,2.45);

\filldraw[black] (0.9,1.6) circle (0.000001pt) node[anchor=north] {$D_{in}((1,4,2,3))$};

\begin{scope}[shift={(5,0)}]
\filldraw[black] (-0.5,2.1) circle (0.000001pt) node[anchor=east] {$2$};
\filldraw[black] (-0.5,2.8) circle (0.000001pt) node[anchor=east] {$1$};
\filldraw[black] (0,3.2) circle (0.000001pt) node[anchor=south] {$1$};
\filldraw[black] (1,3.2) circle (0.000001pt) node[anchor=south] {$2$};
\filldraw[black] (2,3.2) circle (0.000001pt) node[anchor=south] {$3$};

\filldraw [black] (0,2.8) circle (0.5pt);
\filldraw [black] (1,2.8) circle (0.5pt);
\draw[-] (2-0.2,2.8)--(2+0.2,2.8);
\draw[-] (2,2.8-0.2)--(2,2.8+0.2);

\draw[-] (-0.2,2.1)--(0.2,2.1);
\draw[-] (0,2.1-0.2)--(0,2.1+0.2);
\filldraw [black] (1,2.1) circle (0.5pt);
\filldraw [black] (2,2.1) circle (0.5pt);
\end{scope}
\begin{scope}[shift={(10,0)}]
\filldraw[black] (-0.5,2.1) circle (0.000001pt) node[anchor=east] {$2$};
\filldraw[black] (-0.5,2.8) circle (0.000001pt) node[anchor=east] {$1$};
\filldraw[black] (0,3.2) circle (0.000001pt) node[anchor=south] {$1$};
\filldraw[black] (1,3.2) circle (0.000001pt) node[anchor=south] {$2$};
\filldraw[black] (2,3.2) circle (0.000001pt) node[anchor=south] {$3$};

\filldraw [black] (0,2.8) circle (0.5pt);
\draw[-] (1-0.2,2.8)--(1+0.2,2.8);
\draw[-] (1,2.8-0.2)--(1,2.8+0.2);
\draw[-] (2-0.2,2.8)--(2+0.2,2.8);
\draw[-] (2,2.8-0.2)--(2,2.8+0.2);

\filldraw [black] (0,2.1) circle (0.5pt);
\filldraw [black] (1,2.1) circle (0.5pt);
\filldraw [black] (2,2.1) circle (0.5pt);
\end{scope}
\end{tikzpicture}
\caption{The three diagrams in $\mathcal{L}(D_{in}(1,4,2,3))$.}\label{fig:doubleschubert computation}
\end{figure}

Note that rc-graphs are in bijection with \emph{reduced pipe dreams}
(replace each $+$ with a crossing of two wires and each empty position with 
a pair of ``elbows'').

We will need the following result about linear factors of certain 
double Schubert polynomials.  If  $y_i=0$ for all $i$, 
\cref{prop:doubleschubert property} follows from \cite[Corollary 3.11]{BNB}.

\begin{Prop}\label{prop:doubleschubert property}
Let $w\in S_n$ be a permutation, and let 
$c(w)=(c_1,\ldots,c_n)$ and $c(w^{-1})=(\tilde c_1,\ldots, \tilde c_n)$ denote the codes of $w$ and $w^{-1}.$
Suppose there is some $l\geq 0$ such that 
$\tilde c_{m}=0$ for all $m>l$, and let $w'$ be the permutation
with code $c(w')=(l, c_1,\ldots, c_n)$ (whose existence follows from \cref{lem:code0}).  Then 
\begin{equation*}
    \Sym_{w'}(\x,\y)=\Sym_{w}(\x_{\hat{1}},\y)\prod\limits_{k=1}^{l}(x_1-y_k).
\end{equation*}
\end{Prop}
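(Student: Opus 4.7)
My plan is to prove this via the pipe dream (rc-graph) formula of \cref{thm:rc}. Under the hypothesis $\tilde c_m = 0$ for $m > l$, the initial diagram of $w'$ has a clean structure: $D_{in}(w') = R_1 \sqcup D^{\mathrm{sh}}$, where $R_1 = \{(1,1),\ldots,(1,l)\}$ fills the first $l$ columns of row $1$ and $D^{\mathrm{sh}} = \{(i+1,j) : (i,j) \in D_{in}(w)\}$ is $D_{in}(w)$ shifted down one row. The factorization will follow once I establish that the map $\Phi: \mathcal{L}(D_{in}(w)) \to \mathcal{L}(D_{in}(w'))$ sending $D \mapsto R_1 \cup \{(i+1,j) : (i,j) \in D\}$ is a bijection: summing weights then yields $\prod_{k=1}^{l}(x_1-y_k)$ from $R_1$ and $\sum_D \prod_{(i,j) \in D}(x_{i+1}-y_j) = \Sym_w(\x_{\hat 1}, \y)$ from the shifted part.

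The bijectivity of $\Phi$ rests on showing that every $D \in \mathcal{L}(D_{in}(w'))$ has row $1$ equal to $R_1$. That the $+$'s of $R_1$ are never removed is immediate, since a ladder move $L_{i,j}$ needs $i > m \geq 1$ so row-$1$ cells cannot be the source. The key point is that no extra $+$ ever enters row $1$, and here I would invoke the transposition bijection of \cref{thm:rc}(b). A direct computation from the Lehmer code definition, which uses the hypothesis crucially, shows that
\[
c(w'^{-1}) = (\tilde c_1+1,\,\tilde c_2+1,\,\ldots,\,\tilde c_l+1,\,0,\,0,\,\ldots,\,0),
\]
because $w'(1)=l+1$ and the trailing zeros in $c(w^{-1})$ force all entries past position $l$ to vanish. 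Consequently $D_{in}(w'^{-1})$ is supported in rows $1,\ldots,l$, so its column $1$ has entries only in those rows. Ladder moves never add a $+$ to column $1$ (they send $(i,j) \mapsto (i-m,j+1)$ with $j \geq 1$), hence the same is true for every element of $\mathcal{L}(D_{in}(w'^{-1}))$. Transposing, every $D \in \mathcal{L}(D_{in}(w'))$ has row-$1$ entries only in columns $\leq l$, which together with the preservation of $R_1$ forces row $1$ of $D$ to equal $R_1$ exactly.

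With row $1$ of $D$ pinned down, I would check that ladder moves on $D_{in}(w)$ correspond bijectively with ladder moves on diagrams in $\mathcal{L}(D_{in}(w'))$ having both source and target in rows $\geq 2$. For such a move on $w'$, the source $(i+1,j)$ and target $(i+1-m,j+1)$ both satisfy the row bound (using $m < i+1 - 1$ so $i+1-m \geq 2$), and all intermediate rows $i+1-k$ for $1 \leq k \leq m$ are $\geq 2$ as well. The emptiness conditions at those intermediate positions in $D_{in}(w')$ are equivalent to the corresponding conditions at positions $(i-k,j),(i-k,j+1)$ in $D_{in}(w)$, because rows $\geq 2$ of $\Phi(D)$ agree with $D$ shifted. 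Thus $\Phi$ is a bijection, and the weighted-sum computation gives $\Sym_{w'}(\x,\y) = \prod_{k=1}^{l}(x_1-y_k) \cdot \Sym_w(\x_{\hat 1}, \y)$.

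The main obstacle in this plan is the second step: ruling out stray $+$'s in row $1$ of elements of $\mathcal{L}(D_{in}(w'))$. Every other step is bookkeeping, but this one cannot be done directly from the ladder-move rules applied to $D_{in}(w')$ itself; the hypothesis $\tilde c_m = 0$ for $m > l$ only becomes visible after passing to $w'^{-1}$, where it manifests as the vanishing of column $1$ entries below row $l$.
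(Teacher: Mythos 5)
Your proof is correct and takes essentially the same approach as the paper: the rc-graph formula of \cref{thm:rc}, the transposition bijection, and the fact that ladder moves never increase the first coordinate together pin row $1$ of any element of $\mathcal{L}(D_{in}(w'))$ to $R_1$, after which the shift-by-one bijection and the weight factorization follow. Your explicit computation $c(w'^{-1}) = (\tilde c_1{+}1,\ldots,\tilde c_l{+}1,0,\ldots,0)$ is a welcome clarification of the paper's terse assertion that the column bound on $D_{in}(w')$ propagates to all of $\mathcal{L}(D_{in}(w'))$, a step which (as you note) really requires passing to $w'^{-1}$ rather than following formally from the corresponding statement for $w$.
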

\begin{Rem}\label{rem:codesequence}
The condition that 
$\tilde c_{m}=0$ for all $m>l$ is equivalent to the condition that $w$ has 
an increasing subsequence of the form $l+1, l+2,\ldots, n$.
\end{Rem}
\begin{proof}
We first claim that no $D' \in \mathcal{L}(D_{in}(w))$ contains 
a $+$ in a column greater than $l$, i.e. there is no $(i,j)\in D'$ with $j>l$. 
If there were such a $D'$, then we'd have $(j,i) \in (D')^{t}$.  But $D_{in}(w^{-1})$ does not have an element whose $x$-coordinate is bigger than $l$,
and ladder moves never
increase the $x$-coordinates of the $+$'s involved. 

By the claim, $D_{in}(w)$ does not contain 
a $+$ in a column greater than $l$, which implies the same is true for $D_{in}(w')$
and hence for $\mathcal{L}(D_{in}(w'))$.

Now we define a map $f: \mathcal{L}(D_{in}(w)) \rightarrow \mathcal{L}(D_{in}(w'))$ by $$f(D')= \{(i+1,j) \ \vert \ (i,j)\in D'\}\cup \{(1,1),(1,2),\ldots,(1,l)\}.$$
This map is clearly injective, and is well defined since $f(D_{in}(w))=D_{in}(w')$. 
We claim that $f$ is surjective.  Assume not. Then we can find $D_1, D_2 \in \mathcal{L}(D_{in}(w')$ such that $D_2=L_{i,j}(D_1)$, and $D_1$ is in the image of $f$ but $D_2$ is not. 

Clearly the only way that there would be a viable ladder move $L_{ij}$ on $D_1$
which does not have a counterpart for $f^{-1}(D_1)$ is if $L_{ij}$ adds a $+$
in row $1$, necessarily in some column $j>l$ (since the first component of $c(w')$ is $l$).
But we know that no diagram in $\mathcal{L}(D_{in}(w'))$ can have a $+$ in a column
greater than $l$.  
Therefore the map $f$ is surjective and hence bijective. 

We conclude that 
\begin{align*}
    \Sym_{w'}(\x,\y)&=\sum\limits_{D''\in \mathcal{L}(D_{in}(w'))}\wt(D'')(\x,\y)=\sum\limits_{D'\in \mathcal{L}(D_{in}(w))}\wt(f(D'))(\x,\y)\\
    &=\sum\limits_{D'\in \mathcal{L}(D_{in}(w))}\prod\limits_{k=1}^{l}(x_1-y_k) \ \wt(D')(\x_{\hat{1}},\y)\\
    &=\prod\limits_{k=1}^{l}(x_1-y_k)\sum\limits_{D'\in \mathcal{L}(D_{in}(w))}\wt(D')(\x_{\hat{1}},\y)=\prod\limits_{k=1}^{l}(x_1-y_k)\Sym_{w}(\x_{\hat{1}},\y).
\end{align*}
\end{proof}

\subsection{The proof of \cref{Lemma:leadingcoefficient}.}\label{sec:proof}

The following lemma is easy to verify.

\begin{Lemma}\label{lem:code}
Let $w\in S_n$ with code $c(w)=(v_1,\ldots,v_n)$. Then $w_i>w_{i+1}$ if and only if $v_i>v_{i+1}$.  In this case we have 
\begin{equation} \label{eq:divided0}
\partial_i \Sym_w = \Sym_{w s_i},
    \end{equation}
    and $c(w s_i)=(v'_1,\ldots,v'_n)$,
    where $v'_i = v_{i+1}$, $v'_{i+1}=v_i - 1$, and $v'_j=v_j$ for $j\notin \{i,i+1\}$.

If we iterate \eqref{eq:divided0}, we find that if  
$v_1 - i \geq v_{i+1}$ for all $1 \leq i \leq r$, then
\begin{equation}\label{eq:divided}
\partial_r \partial_{r-1} \cdots \partial_1 
\Sym_{c^{-1}(v_1,\ldots,v_n)}(\x,\y) = 
\Sym_{c^{-1}(v_2,v_3,\ldots,v_{r+1}, v_1-r, v_{r+2},\ldots, v_n)}(\x,\y).
\end{equation}
\end{Lemma}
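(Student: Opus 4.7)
The plan is to treat the three assertions in the lemma separately: the descent biconditional, the identity $\partial_i \Sym_w=\Sym_{ws_i}$ together with the explicit code of $ws_i$, and finally the iterated formula \eqref{eq:divided}. Each piece reduces to bookkeeping about the Lehmer code plus one standard fact about divided differences on double Schubert polynomials.

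For the biconditional $w_i>w_{i+1}\iff v_i>v_{i+1}$, I would directly compare the sets whose cardinalities define $v_i$ and $v_{i+1}$. Splitting $\{j>i:w_j<w_i\}$ according to whether $j=i+1$ or $j>i+1$ gives $v_i=[w_{i+1}<w_i]+|\{j>i+1:w_j<w_i\}|$. If $w_i>w_{i+1}$, the indicator equals $1$ and the remaining set contains $\{j>i+1:w_j<w_{i+1}\}$, so $v_i\geq 1+v_{i+1}$. If $w_i<w_{i+1}$, the indicator is $0$ and the inclusion reverses, giving $v_i\leq v_{i+1}$; hence the two descents are equivalent. For \eqref{eq:divided0} I would cite the standard fact that $\partial_i \Sym_w(\x,\y)=\Sym_{ws_i}(\x,\y)$ when $\ell(ws_i)<\ell(w)$, noting that this length condition is precisely $w_i>w_{i+1}$. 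For the code of $ws_i$, right-multiplication by $s_i$ swaps the values in positions $i$ and $i+1$ and fixes all others, so the counts $v_j'$ for $j\notin\{i,i+1\}$ are unchanged. A one-line computation gives $v_i'=v_{i+1}$ (since the new value at position $i+1$, namely $w_i$, exceeds the new value at position $i$, so position $i+1$ is no longer counted), and $v_{i+1}'=v_i-1$ (the count loses exactly the contribution of position $i+1$ that originally entered $v_i$).

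The iterated formula \eqref{eq:divided} then follows by induction on $r$. The inductive hypothesis says that after applying $\partial_{i-1}\cdots\partial_1$ the resulting Schubert polynomial is indexed by the code $(v_2,\ldots,v_i,v_1-(i-1),v_{i+1},\ldots,v_n)$; invoking the single-step identity once more produces the code $(v_2,\ldots,v_{i+1},v_1-i,v_{i+2},\ldots,v_n)$, completing the induction. The only subtle point, and the main place where care is needed, is verifying at each stage that the descent hypothesis of \eqref{eq:divided0} is met: after $i-1$ steps one needs a descent between positions $i$ and $i+1$ of the current code, i.e.\ $v_1-(i-1)>v_{i+1}$, which is equivalent to the stated integer inequality $v_1-i\geq v_{i+1}$. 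Aside from this translation, the argument is entirely routine.
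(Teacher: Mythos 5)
Your proof is correct and supplies exactly the routine verification the paper omits (it introduces the lemma only with ``The following lemma is easy to verify''): the set-splitting argument for the descent equivalence, the standard divided-difference identity $\partial_i\Sym_w=\Sym_{ws_i}$ under the length condition, the code bookkeeping for $ws_i$, and the induction for \eqref{eq:divided} are all accurate, including the key check that $v_1-(i-1)>v_{i+1}$ is equivalent to $v_1-i\geq v_{i+1}$. Nothing is missing.
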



We are now ready to prove \cref{Lemma:leadingcoefficient}.

\begin{proof}[Proof of \cref{Lemma:leadingcoefficient}] We use induction on the number of parts of $\lambda$. If $\lambda=(\ell)$ has one part, then the definition
of $\mathbf{z}$-Schubert polynomial plus \eqref{eq:divided} gives
\begin{align*}
    \LC(\Sym_{(\ell)}^{n}(\z;\x;\y))&=\partial_{n-\ell-1}\cdots \partial_1(\prod\limits_{l=1}^{n-1}(x_1-y_l))
    =\partial_{n-\ell-1}\cdots\partial_2 \partial_1(\Sym_{c^{-1}(n-1,0,\ldots,0)}(\x,\y))\\
    &=\partial_{n-\ell-1}\cdots\partial_2 \bigl(\Sym_{c^{-1}(0,n-2,0,\ldots,0)}(\x,\y)\bigr)=\Sym_{c^{-1}(0,\ldots,0,\ell,0,\ldots,0)}(\x,\y)\\
    &=\Sym_{c^{-1}(g_n((\ell)))}(\x,\y).
\end{align*}

Now let $\lambda=(\lambda_1,\ldots,\lambda_k)$ with $k>1$ and assume the statement holds for partitions with at most $k-1$ parts. Setting $\lambda'=(\lambda_2,\ldots,\lambda_k)$, we have
\begin{align*}
     \LC(\Sym_{\lambda}^{n}(\z;\x;\y))&=\partial_{n-\lambda_1-\mul(\lambda)}\cdots\partial_1 \bigl( \LC(\Sym_{\lambda'}^{n-1}(\sigma^{\lambda_1-\lambda_2+1}\z;\x_{\hat{1}};\y))\prod\limits_{l=1}^{n-\mul(\lambda)}(x_1-y_l)\bigr)\\&=
     \partial_{n-\lambda_1-\mul(\lambda)}\cdots\partial_1 \bigl( \Sym_{c^{-1}(g_{n-1}(\lambda'))}(\x_{\hat{1}},\y)\prod\limits_{l=1}^{n-\mul(\lambda)}(x_1-y_l)\bigr).
\end{align*}
Let $g_{n-1}(\lambda')=(v_1,\ldots,v_{n-1})$ and let $w=c^{-1}(v_1,\ldots,v_{n-1})\in S_{n-1}$. 
From \cref{rem:g} we have $v_i=\lambda'_1$ for $n-1-\lambda'_1 \leq i \leq n-\mul(\lambda')-\lambda'_1$. 
This plus \cref{def:g} 
implies that 
\begin{align*}
   w_{n-\mul(\lambda')-\lambda'_1}&=n-\mul(\lambda')\\
    &\ \vdots\\
     w_{n-2-\lambda'_1}&=n-2\\
      w_{n-1-\lambda'_1}&=n-1.
\end{align*}

It follows from \cref{rem:codesequence} that if we write $c(w^{-1})=(\tilde c_1,\ldots, \tilde c_{n-1})$, then $\tilde c_m =0$ for all $m > n-\mul(\lambda')-1$. 
Since $n-\mul(\lambda)\geq n-\mul(\lambda')-1$, 
we have $\tilde c_m=0$ for all $m > n-\mul(\lambda)$, and \cref{prop:doubleschubert property} implies that 
\begin{align*}
    \Sym_{c^{-1}(v_1,\ldots,v_{n-1})}(\x_{\hat{1}},\y)\prod\limits_{l=1}^{n-\mul(\lambda)}(x_1-y_l)=\Sym_{c^{-1}(n-\mul(\lambda),v_1,\ldots,v_{n-1})}(\x,\y).
\end{align*}
Therefore we have that 
\begin{equation*}
     \LC(\Sym_{\lambda}^{n}(\z;\x;\y))=
     \partial_{n-\lambda_1-\mul(\lambda)}\cdots\partial_1 \bigl(\Sym_{c^{-1}(n-\mul(\lambda),v_1,\ldots,v_{n-1})}(\x,\y)\bigr).
\end{equation*}
We will apply \eqref{eq:divided}, but need to first check that 
$n-\mul(\lambda_1) \geq v_i+i$ for $1 \leq i \leq n-\lambda_1-\mul(\lambda).$  To see this,
note that since $(v_1,\ldots,v_{n-1}) = g_{n-1}(\lambda')$, each $v_i \leq \lambda_1$.
But then $v_i+i \leq \lambda_1 + n-\lambda_1-\mul(\lambda) = n-\mul(\lambda),$ as desired.
Applying \eqref{eq:divided}  gives 
\begin{equation*}
     \LC(\Sym_{\lambda}^{n}(\z;\x;\y))=\Sym_{c^{-1}(v_1,\ldots,v_{n-\lambda_1-\mul(\lambda)},\lambda_1,v_{n-\lambda_1-\mul(\lambda)+1},\ldots,v_{n-1})}(\x,\y)=\Sym_{c^{-1}(g_n(\lambda))}(\x,\y).
\end{equation*}
\end{proof}

\subsection{$\mathbf{z}$-Schubert polynomials and flagged Schur polynomials}

We relate $\mathbf{z}$-Schubert polynomials $\Sym_{\lambda}^{n}(\z;\x;\y)$ with flagged Schur polynomials (\cref{cor:flaggedSchur}). We start with \cref{lem:easy} which is immediate from the definition of $g_n(\lambda)$.

\begin{Lemma}\label{lem:easy}
Let $n$ be a positive integer and $\lambda$ be a partition of length at most $n-2$. 
\begin{itemize}
    \item The first and last components of $g_n(\lambda)$ are always zero.
\item If we order the entries of $g_n(\lambda)$ in weakly decreasing order, we obtain
$\lambda$.  
\end{itemize}
\end{Lemma}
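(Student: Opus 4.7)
The plan is to read off both bullet points directly from the inductive construction of $g_n(\lambda) = (v_1, \ldots, v_n)$ in \cref{def:g}.

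The statement $v_n = 0$ is immediate: in Step $i$ we write only at position $n - \mu_i$ and at zero entries strictly to its left. Since each $\mu_i \geq 1$, every such position is at most $n - 1$, so position $n$ is never written to.

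For $v_1 = 0$, I would introduce the running total $T_i := k_1 + \cdots + k_i$ of positions filled through Step $i$ and observe that these $T_i$ positions all lie in $\{1, \ldots, n - \mu_i\}$: for $j \leq i$ we have $\mu_j \geq \mu_i$, so Step $j$ only writes at positions $\leq n - \mu_j \leq n - \mu_i$. Hence position $1$ becomes filled during Step $i$ if and only if every entry of $\{1, \ldots, n - \mu_i\}$ is filled, equivalently $T_i = n - \mu_i$; so the claim $v_1 = 0$ reduces to the bound $T_i + \mu_i \leq n - 1$ for every $i$. To establish this, I would split on whether $\mu_1 < n - r$ or $\mu_1 = n - r$, where $r := \length(\lambda)$. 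In the first case, all $\mu_i \leq n - r - 1$ and $T_i \leq r$, so $T_i + \mu_i \leq n - 1$ directly. In the second case, the fact that $\lambda \in \Val(n)$ is not the full $r \times (n - r)$ rectangle forces $\mu_l < n - r$, giving $T_l + \mu_l = r + \mu_l \leq n - 1$, while for $i < l$ we have $T_i \leq r - 1$ and $\mu_i \leq n - r$, so again $T_i + \mu_i \leq n - 1$.

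The second bullet point is essentially a bookkeeping remark: Step $i$ introduces exactly $k_i$ entries, each equal to $\mu_i$, and these are the only nonzero entries it creates. Thus the multiset of entries of $g_n(\lambda)$ is $\{\mu_1^{k_1}, \ldots, \mu_l^{k_l}\}$ together with the appropriate number of zeros, and sorting in weakly decreasing order recovers $\lambda$ padded by trailing zeros.

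The only real obstacle is extracting both halves of the valid condition --- that $\lambda$ fits in the $r \times (n - r)$ rectangle and is not the full rectangle --- in the case analysis for $T_i + \mu_i \leq n - 1$; the rest is direct bookkeeping from \cref{def:g}.
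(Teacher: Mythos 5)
Your proof is correct and simply writes out in full the verification that the paper treats as ``immediate from the definition of $g_n(\lambda)$'' (no proof is actually given in the paper). One remark in your favor: your argument for $v_1=0$ correctly invokes the proper-containment condition $\lambda\in\Val(n)$ built into \cref{def:g}, and this is genuinely needed rather than a convenience --- for the full rectangle, e.g.\ $\lambda=(3,3)$ with $n=5$, one gets $g_5((3,3))=(3,3,0,0,0)$, so the lemma's literal hypothesis ``length at most $n-2$'' would not suffice on its own.
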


\begin{Lemma}\label{lem:flag}
Let $\lambda$ be a partition of length $\leq (n-2)$, and write 
$\lambda = (\mu_1^{k_1}, \ldots, \mu_l^{k_l})$.  Let $w$ be the permutation whose code is 
$g_n(\lambda)$.  Then $w$ is vexillary, and if we write the flag of $w$ and $w^{-1}$
(cf. \cref{def:flag})
as $(f_1^{k_1},\ldots,f_l^{k_l})$ (where $f_1\leq \cdots \leq f_l$) and $(g_1^{b_1},\ldots,g_l^{b_l})$, respectively, we have that 
$$\{(f_1,g_l), (f_2, g_{l-1}),\ldots, (f_l, g_1)\} = \{(n-\mu_1, n-k_1), (n-\mu_2, n-k_1-k_2),\ldots, (n-\mu_l, n-k_1-\cdots-k_l)\}.$$
\end{Lemma}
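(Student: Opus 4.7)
My plan is to prove the three assertions of the lemma — vexillarity of $w$, the shape of $\phi(w)$, and the shape of $\phi(w^{-1})$ — in that order. Throughout let $P_p \subseteq [n]$ be the set of positions where $c(w) = g_n(\lambda)$ takes the value $\mu_p$, so $|P_p| = k_p$ and $\max(P_p) = n - \mu_p$. The central structural tool I will use is the following \emph{filling observation}, immediate from \cref{def:g}: for every $i \in P_p$ the interval $[i, n - \mu_p]$ is entirely contained in $P_1 \cup \cdots \cup P_p$, because Step $p$ begins at the anchor $n - \mu_p$ and sweeps leftward, collecting every remaining zero position encountered until $k_p$ have been taken. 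A consequence is a clean description of $w$ itself: reading positions left to right, $w$ is a shuffle of $l + 1$ increasing sequences of consecutive values — the color-$0$ block $\{1, \ldots, n - \length(\lambda)\}$ occupying the zero positions of $c(w)$, and for each $p \in [1, l]$ the color-$p$ block $\{n - k_1 - \cdots - k_p + 1, \ldots, n - k_1 - \cdots - k_{p-1}\}$ occupying the positions $P_p$.

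For vexillarity I argue by contradiction. Suppose $w$ contains a $2143$-pattern at $i < j < k < \ell$. Since within each color values are placed in position-increasing order, each of the inversions $(i,j)$ and $(k, \ell)$ must cross color classes with the higher-valued color on the left; a short check then shows that $w_i, w_\ell, w_k$ lie in nonzero colors whose indices $c_i, c_\ell, c_k$ satisfy $c_k < c_\ell \leq c_i$, while the color of $w_j$ is either $0$ or has index strictly larger than $c_i$. Since $c_\ell \leq c_i$ gives $\mu_{c_\ell} \geq \mu_{c_i}$, we have $\ell \leq n - \mu_{c_\ell} \leq n - \mu_{c_i}$, so the filling observation applied at level $c_i$ forces $j \in [i, n - \mu_{c_i}] \subseteq P_1 \cup \cdots \cup P_{c_i}$. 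But the description of the color of $w_j$ puts $j$ outside $P_1 \cup \cdots \cup P_{c_i}$ in both cases (either $j$ is a zero position, or $j \in P_{c(w_j)}$ with $c(w_j) > c_i$), the desired contradiction.

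For the flag of $w$, if $c_i(w) = \mu_p$ then $e_i = \max\{m \geq i : c_m(w) \geq \mu_p\} = \max([i,n] \cap (P_1 \cup \cdots \cup P_p))$, and since $\max(P_q) = n - \mu_q$ is strictly increasing in $q$ and $i \leq n - \mu_p$, this equals $n - \mu_p$. Hence $\phi(w) = ((n-\mu_1)^{k_1}, \ldots, (n-\mu_l)^{k_l})$ and $f_p = n - \mu_p$ with multiplicity $k_p$. For the flag of $w^{-1}$ I observe that $w^{-1}$ decomposes in position order into $l+1$ increasing blocks: block $0$ at positions $[1, n - \length(\lambda)]$ listing the zero positions of $c(w)$ in order, and for $q \in [1, l]$ block $q$ at positions $[n - k_1 - \cdots - k_q + 1, n - k_1 - \cdots - k_{q-1}]$ listing $P_q$ in increasing order. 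Explicit formulas for $c_m(w^{-1})$ (namely $z_m - m$ on block $0$, where $z_m$ is the $m$-th zero position, and $|(P_1 \cup \cdots \cup P_{q-1}) \cap [1, w^{-1}(m) - 1]|$ on block $q \geq 1$) show $c(w^{-1})$ is weakly increasing within each block, so every $e_m$ is the right endpoint of some block; block $1$ has $c(w^{-1}) \equiv 0$ and contributes nothing, leaving $\{n - k_1 - \cdots - k_s : 1 \leq s \leq l\}$ as the only candidates. A direct count yields $c_{n - k_1 - \cdots - k_s}(w^{-1}) = k_1 + \cdots + k_s$ together with $c_j(w^{-1}) < k_1 + \cdots + k_s$ for all $j > n - k_1 - \cdots - k_s$, so each candidate is actually realized. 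Sorting gives $g_{l+1-i} = n - (k_1 + \cdots + k_i)$, which pairs with $f_i = n - \mu_i$ to produce exactly the collection asserted in the lemma.

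The main obstacle I anticipate is the vexillarity case analysis, specifically the observation that $c_\ell \leq c_i$ (even when $c_\ell = c_i$, so $w_i$ and $w_\ell$ share a color block) lets one apply the filling observation at level $c_i$ rather than $c_\ell$; once this is set up correctly, both the flag computations reduce to bookkeeping driven by the filling rule and the block decomposition of $w^{-1}$.
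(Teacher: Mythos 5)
Your proof is correct, but it takes a genuinely different route from the paper. The paper's proof is essentially a two-line appeal to standard machinery: it asserts that the essential set of $w$ (the set of southeast corners of the diagram) is $\{(n-\mu_1,n-k_1),(n-\mu_2,n-k_1-k_2),\ldots\}$, observes that this set lies on a SW--NE oriented curve, and then cites \cite[Proposition 2.2.8]{Manivel} for vexillarity and \cite[Exercise 2.2.11]{Manivel} (or ``the definition of the flag in terms of the code'') for the flag identities. You instead work entirely from the code: your filling observation and the resulting description of $w$ as a shuffle of $l+1$ increasing consecutive-value blocks is exactly the right structural handle, your $2143$-avoidance argument is a correct direct verification (the key step $c_k<c_\ell\leq c_i$ forcing $j\in P_1\cup\cdots\cup P_{c_i}$ checks out, including the boundary case $c_\ell=c_i$), and your computations of $c(w^{-1})$ on each block, of the right-endpoint candidates for $e_m$, and of $c_{n-k_1-\cdots-k_s}(w^{-1})=k_1+\cdots+k_s$ are all accurate (for $s=l$ one uses that position $n$ is a zero of $g_n(\lambda)$, per \cref{lem:easy}). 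What your approach buys is self-containedness: it supplies the details that the paper delegates to Manivel or leaves as ``we see that,'' at the cost of being considerably longer. What the paper's approach buys is brevity and a conceptual placement of the lemma inside the essential-set formalism, where the pairing of row and column indices of the essential set with the flags of $w$ and $w^{-1}$ is a known general fact rather than something to be recomputed.
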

\begin{proof}
Recall that the \emph{essential set} of a diagram $D(w)$ is the set of southeast corners of the diagram, that is,
$$\Ess(w) = \{(i,j) \in D(w) \ \vert \ (i+1,j), (i,j+1), (i+1,j+1) \notin D(w)\}.$$
Writing $\lambda=(\mu_1^{k_1},\ldots,\mu_l^{k_l})$ as 
in \cref{def:g}, we see that the essential set of $w$ is 
$$\Ess(w) =\{(n-\mu_1, n-k_1), (n-\mu_2, n-k_1-k_2), (n-\mu_3, n-k_1-k_2-k_3),\ldots\}.$$  This essential set lies
on a piecewise linear curve always oriented SW-NE, so by \cite[Proposition 2.2.8]{Manivel}, the permutation 
$w$ must be vexillary.

The claim about the flags follows immediately from the 
definition of the flag in terms of the code, or alternatively from 
\cite[Exercise 2.2.11]{Manivel}.
\end{proof}

\begin{Cor}\label{cor:flaggedSchur}
Write $\lambda = (\mu_1^{k_1}, \mu_2^{k_2},\ldots, \mu_l^{k_l}).$  Then 
$\LC(\Sym_{\lambda}^{n}(\z;\x;\y))$ equals the flagged double Schur polynomial
$s_{\lambda}(\underbrace{X_{n-\mu_1}-Y_{n-k_1}}_\text{$k_1$}, \underbrace{X_{n-\mu_2}-Y_{n-k_1-k_2}}_\text{$k_2$},\ldots,
\underbrace{X_{n-\mu_l}-Y_{n-k_1-\cdots -k_l}}_\text{$k_l$}).$

And when we set $y_i=0$ for all $i$, $\LC(\Sym_{\lambda}^{n}(\z;\x;\y))$ equals the flagged Schur polynomial
$s_{\lambda}(X_{n-\lambda_1}, X_{n-\lambda_2},\ldots).$
\end{Cor}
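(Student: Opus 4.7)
The proof assembles three previously established ingredients in a short chain.

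First, I would apply \cref{Lemma:leadingcoefficient} to rewrite
\[
\LC(\Sym_{\lambda}^{n}(\z;\x;\y)) = \Sym_{c^{-1}(g_n(\lambda))}(\x,\y),
\]
reducing the problem to identifying a specific (ordinary) double Schubert polynomial. Next, I would invoke \cref{lem:flag}, which asserts that the permutation $w := c^{-1}(g_n(\lambda))$ is vexillary and supplies its flags $\phi(w)$ and $\phi(w^{-1})$. Since $w$ has shape $\lambda(w) = \lambda$ (this follows from \cref{lem:easy}, as sorting $g_n(\lambda)$ yields $\lambda$), we are now in position to apply \cref{th:flag}, expressing $\Sym_w(\x,\y)$ as a flagged double Schur polynomial.

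The main task — and the step requiring care — is matching the flag data from \cref{lem:flag} with the indexing convention of \cref{th:flag} to produce exactly the form stated in the corollary. Writing $\lambda = (\mu_1^{k_1},\ldots,\mu_l^{k_l})$ with $\mu_1 > \cdots > \mu_l$, \cref{lem:flag} tells us the flag of $w$ is $(f_1^{k_1},\ldots,f_l^{k_l})$ with $f_i = n-\mu_i$ (ordered increasingly, which is consistent since $\mu_1 > \cdots > \mu_l$), and that the pairing in \cref{th:flag} sends $f_i = n-\mu_i$ to $g_{l+1-i} = n-k_1-\cdots-k_i$. Substituting into the flagged double Schur expression from \cref{th:flag} and grouping the $k_i$ entries associated to rows of length $\mu_i$, we obtain
\[
\Sym_w(\x,\y) = s_{\lambda}\bigl(\underbrace{X_{n-\mu_1}-Y_{n-k_1}}_{k_1},\,\underbrace{X_{n-\mu_2}-Y_{n-k_1-k_2}}_{k_2},\,\ldots,\,\underbrace{X_{n-\mu_l}-Y_{n-k_1-\cdots-k_l}}_{k_l}\bigr),
\]
which is the first claim.

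Finally, for the second claim, setting $y_i = 0$ kills all the $Y_j$ in the flagged double Schur polynomial, leaving the ordinary flagged Schur polynomial $s_\lambda(X_{n-\mu_1}, \ldots, X_{n-\mu_l})$ with the $i$-th block of $k_i$ identical flags, which is precisely $s_{\lambda}(X_{n-\lambda_1}, X_{n-\lambda_2},\ldots)$ once we re-index by the individual parts $\lambda_j$ of $\lambda$. The only real obstacle is a careful bookkeeping check that the pairing of $X$- and $Y$-flags coming from \cref{lem:flag} matches the reverse-ordered convention in \cref{th:flag}, but this is a direct verification from the definitions.
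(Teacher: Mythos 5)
Your proposal is correct and follows exactly the paper's route: the paper's proof is the one-line citation of \cref{lem:flag}, \cref{Lemma:leadingcoefficient}, and \cref{th:flag}, which is precisely the chain you assemble. Your additional bookkeeping verification of how the flags from \cref{lem:flag} match the reverse-ordered convention in \cref{th:flag} is a correct elaboration of what the paper leaves implicit.
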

\begin{proof}
This follows from \cref{lem:flag}, \cref{Lemma:leadingcoefficient} and \cref{th:flag}. 
\end{proof}


\begin{Ex}
The permutation $13542$ has  code $(0,1,2,1,0)=g_5((2,1,1))$.  Therefore
for $\lambda=(2,1,1)$, we have that $$\LC(\Sym_{(2,1,1)}^{5}(\z;\x;\y))=\Sym_{13542}(\mathbf{x},\mathbf{y})=s_{(2,1,1)}(X_3-Y_4,X_4-Y_2,X_4-Y_2).$$
\end{Ex}

\section{Proof of \cref{thm:maintechnical}}\label{section:mainproof}

In this section we prove \cref{thm:maintechnical}, which in turn implies 
\cref{thm:main1} and \cref{thm:main0}. Our strategy will be to prove 
\cref{thm:maintechnical} first in the case of $w\in \St(n,1)$,
and then use induction on $k$ to prove it for $w\in \St(n,k)$.

We note that in this section, the divided difference operators
\emph{act on the $\mathbf{z}$-variables.} 
For brevity we will often denote the $\mathbf{z}$-Schubert polynomial $\Sym_{\lambda}^{n}(\sigma^a(\z);\x;\y)$ with shifted $\mathbf{z}$-variables by $\Sym_{\lambda}^{n}(\sigma^a(\z))$.  As in the previous section, the subscripts for $\mathbf{z}$-variables are considered modulo $n$.

\begin{Def}\label{Def: IGP Consturction}
For a partition $\lambda\in\Val(n)$, we identify it with the lattice path 
$L(\lambda;n)$ cutting out the Young diagram that takes unit steps south and east from the upper right corner $(\lambda_1, n-\lambda_1)$ to the lower left corner $(0,0)$ of the rectangle. Label the vertical steps from the top to bottom with numbers $1$ through $n-\lambda_1$. Then label the horizontal steps from the right to the left with numbers $n-\lambda_1+1$ through $n$. We define $w(\lambda;n)$ to be the permutation of length $n$ obtained by reading off the numbers along the lattice path. 
\end{Def} 
See \cref{fig00} for an example.  Clearly $w(\lambda;n)\in \St(n,1)$.  As we will see in \cref{Prop:IGP property and trivial factor} (1),  $\Psi(w(\lambda; n)) = (\lambda)$.

\begin{figure}[h]\centering
\begin{tikzpicture}[scale=0.8]

\draw[-] (0,0) --(6,0);
\draw[-] (0,1) --(6,1);
\draw[-] (0,2) --(6,2);
\draw[-] (0,3) --(6,3);
\draw[-] (0,4) --(6,4);
\draw[-] (0,5) --(6,5);
\draw[-] (0,6) --(6,6);
\draw[-] (6,0) --(6,7);
\draw[-] (0,0) --(0,7);
\draw[-] (1,0) --(1,7);
\draw[-] (2,0) --(2,7);
\draw[-] (3,0) --(3,7);
\draw[-] (4,0) --(4,7);
\draw[-] (5,0) --(5,7);
\draw[-] (0,7) --(6,7);

\draw[-,blue,thick] (6,7) --(6,5);
\draw[-,blue,thick] (6,5) --(4,5);
\draw[-,blue,thick] (4,5) --(4,3);
\draw[-,blue,thick] (4,3) --(2,3);
\draw[-,blue,thick] (2,3) --(2,1);
\draw[-,blue,thick] (2,1) --(0,1);
\draw[-,blue,thick] (0,1) --(0,0);

\filldraw[black] (6,6.5) circle (0.000001pt) node[anchor=west] {1};
\filldraw[black] (6,5.5) circle (0.000001pt) node[anchor=west] {2};
\filldraw[black] (4,4.5) circle (0.000001pt) node[anchor=west] {3};
\filldraw[black] (4,3.5) circle (0.000001pt) node[anchor=west] {4};
\filldraw[black] (2,2.5) circle (0.000001pt) node[anchor=west] {5};
\filldraw[black] (2,1.5) circle (0.000001pt) node[anchor=west] {6};
\filldraw[black] (0,0.5) circle (0.000001pt) node[anchor=west] {7};

\filldraw[red] (5.5,5) circle (0.000001pt) node[anchor=south] {8};
\filldraw[red] (4.5,5) circle (0.000001pt) node[anchor=south] {9};
\filldraw[red] (3.5,3) circle (0.000001pt) node[anchor=south] {10};
\filldraw[red] (2.5,3) circle (0.000001pt) node[anchor=south] {11};
\filldraw[red] (1.5,1) circle (0.000001pt) node[anchor=south] {12};
\filldraw[red] (0.5,1) circle (0.000001pt) node[anchor=south] {13};

\end{tikzpicture}
\caption{For
$\lambda=(6,6,4,4,2,2)\in \Val(13)$, we can read off 
    $w(\lambda;13)=(1,2,8,9,3,4,10,11,5,6,12,13,7)$ from the lattice path $L(\lambda;13)$ cutting out the Young
    diagram.} \label{fig00}
\end{figure}
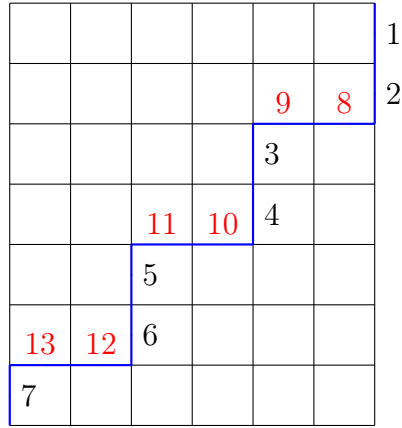

Our first goal is to analyze the trivial factor $\TF(w)$ for $w=w(\lambda;n)$ (\cref{Cor: IGP trivial factor}).  This will help us prove \cref{thm:maintechnical} in the case of $w\in \St(n,1)$.  We start by refining the quantities
introduced in \eqref{eq:xy}, \eqref{eq:xz}, and \eqref{eq:yz}.

\begin{Def}
Fix a positive integer $n$ and choose $w\in\St(n)$.  We define 
\begin{align*}
   \xyFact(w;i)&=\prod_{\substack{i+1<k \leq n \\ i \to i+1 \to k}}(x_1-y_{n+1-k})\cdots(x_i-y_{n+1-k}) \qquad \text{ for }1 \leq i \leq n-2\\
\xzFact(w;i)&= \prod_{\substack{j \neq w_i \\ \min\{w_i, w_{i-1},w_{i-2},\ldots,j\}=j}} (z_i-x_j) \quad \text{ for }1 \leq i \leq n\\
\yzFact(w;i)&= \prod_{\substack{j \neq n+1-w_i \\ \max\{w_i, w_{i+1},w_{i+2},\ldots,n+1-j\}=n+1-j}} (z_i-y_j) \quad \text{ for }1 \leq i \leq n.    
\end{align*}
    \end{Def}
    Clearly we have that 
    $$\xyFact(w)=\prod\limits_{i=1}^{n-2}\xyFact(w;i), \xzFact(w)=\prod\limits_{i=1}^{n}\xzFact(w;i), \yzFact(w)=\prod\limits_{i=1}^n \yzFact(w;i).$$

\begin{Prop}\label{Prop:IGP property and trivial factor}
Let $\lambda\in\Val(n)$ and $w=(w_1,\ldots,w_n)=w(\lambda;n)$.  
Recall that $w\in \St(n,1)$.  The following statements hold:
\begin{enumerate}
    \item We have that $\Psi(w(\lambda; n)) = (\lambda)$.  Equivalently, $c(w^{-1})=\lambda_1^{n-\lambda_1}-\lambda$,
    where we regard the vectors on the right-hand side as vectors of length $n$ by adding trailing $0$'s. 
    \item Suppose that $w_i$ lies on a vertical step of $L(\lambda;n)$. Let $A$ be the set of numbers on the horizontal steps below $w_i$ and $B$ be the set of numbers on the vertical steps that are on the {\bf same} vertical line as $w_i$ and below $w_i$. We have
    \begin{align*}
        \xzFact(w;i)&=\prod\limits_{k=1}^{w_i-1}(z_i-x_k)\\
        \yzFact(w;i)&=\prod\limits_{k\in A \cup B}(z_i-y_{n+1-k}).
    \end{align*}
    \item Suppose $w_i$ lies on a horizontal step of $L(\lambda;n)$. Let $C$ be the set of numbers on the vertical steps above $w_i$ and $D$ be the set of numbers on the horizontal steps that are on the {\bf same} horizontal line of $w_i$ and to the right of $w_i$. We have
    \begin{align*}
        \xzFact(w;i)&=\prod\limits_{k\in C\cup D}(z_i-x_k)\\
        \yzFact(w;i)&=\prod\limits_{k=w_i+1}^{n}(z_i-y_{n+1-k}).
    \end{align*}
\end{enumerate}
\end{Prop}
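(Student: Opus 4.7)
The plan is to establish the three statements in order, using the following structural features of the lattice-path description of $w=w(\lambda;n)$: vertical-step labels $\{1,\ldots,n-\lambda_1\}$ increase top-to-bottom, horizontal-step labels $\{n-\lambda_1+1,\ldots,n\}$ increase right-to-left, and therefore every vertical label is strictly less than every horizontal label. Within a single row, horizontals are read in strictly increasing label order, and across consecutive rows the horizontal labels continue to strictly increase.

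For (1), I would compute the code $c(w^{-1})$ directly from the path: $c(w^{-1})_j$ counts the entries exceeding $j$ which appear before $j$ in $w$. If $j$ is a horizontal label, every larger horizontal label sits further left in its row and is therefore read later, and no vertical label exceeds $j$, so $c(w^{-1})_j=0$. If $j$ is a vertical label, larger verticals lie below $j$ in the path (read later) so only horizontals read before vertical step $j$ contribute; these are precisely the $\lambda_1-\lambda_j$ horizontal steps by which the path has traveled from $x=\lambda_1$ to $x=\lambda_j$ by the time it reaches vertical step $j$. Hence $c(w^{-1})=\lambda_1^{n-\lambda_1}-\lambda$ (padded with zeros). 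Since $\lambda\in\Val(n)$, this code has its unique descent at position $n-\lambda_1$, and unpacking the definition of $\Psi$ yields $\Psi(w)=(\lambda)$.

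For (2) and (3), I would reinterpret the definitions: the valid $j$ in $\xzFact(w;i)$ are exactly the \emph{new minima} encountered reading $w$ backwards from position $i$ cyclically, and under the substitution $k=n+1-j$ the valid terms in $\yzFact(w;i)$ correspond to \emph{new maxima} read forwards. The vertical$<$horizontal inequality does the main work: once a vertical has been seen going backward, no subsequent horizontal can lower the running minimum; symmetrically, once a horizontal has been seen going forward no subsequent vertical can raise the running maximum. For (2), with $w_i$ on a vertical step of label $j$, reading backwards encounters the verticals above $w_i$ in strictly decreasing label order $j-1,j-2,\ldots,1$ (each a new minimum), while the horizontals interleaved between them are blocked; hence $\xzFact(w;i)=\prod_{k=1}^{w_i-1}(z_i-x_k)$. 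Reading forwards, the remaining verticals in $w_i$'s run (the set $B$) contribute new maxima in increasing label order, after which every horizontal in each subsequent row is a new maximum by the monotonicity of horizontal labels within and across rows (giving exactly the set $A$), and all later verticals are shadowed. Part (3) is the dual statement: for $w_i$ horizontal of label $j$, reading backwards produces the horizontals to its right in its own row (labels $j-1,j-2,\ldots$, forming $D$) followed by all verticals above in the path (forming $C$), while reading forwards produces the labels $j+1,j+2,\ldots,n$, which are exactly the horizontals read after $w_i$.

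The chief pitfall to watch for is the cyclic wrap-around. Because $w_1=1$ is the global minimum, the backward walk cannot produce any new minima after returning to position $1$; because every forward walk eventually reaches a horizontal step (of label $\ge n-\lambda_1+1$) that dominates every vertical, the forward walk cannot produce any new maxima from verticals after that point. Consequently a single pass along the path suffices in every case, and the cyclic indexing contributes nothing beyond what is accounted for by the sets $\{1,\ldots,w_i-1\}$, $A$, $B$, $C$, $D$ and $\{w_i+1,\ldots,n\}$ identified above.
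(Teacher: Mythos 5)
Your proof is correct and takes essentially the same route as the paper's: part (1) is the same direct computation of $c(w^{-1})$ from the lattice path, and for parts (2)--(3) the paper merely asserts that the new-minima/new-maxima reading of $\xzFact$ and $\yzFact$ ``clearly'' produces the sets $A\cup B$ and $C\cup D$, whereas you supply the monotonicity facts (vertical labels increase along the path, horizontal labels increase along the path, and every vertical label is smaller than every horizontal label) that make this clear. One caveat, inherited from the statement rather than introduced by your argument: if a vertical step $w_i$ occurs after the last horizontal step (possible when $\length(\lambda)<n-\lambda_1$), the forward walk meets no horizontal before wrapping around, so \emph{all} horizontal labels become new maxima even though none lies below $w_i$; this does not affect the paper, since the proposition is only ever invoked for steps in the initial vertical run and the first row of horizontal steps, but your assertion that ``the cyclic indexing contributes nothing'' is not literally true in that case.
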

\begin{proof}
(1) The numbers $n-\lambda_1+1$ through $n$ appear in increasing order in $w$ so $c(w^{-1})$ vanishes after the $(n-\lambda_1)$st component. 
For $1 \leq k \leq n-\lambda_1$, let $w_i$ be the letter on the $k$th vertical step of $L(\lambda;n)$.  Then there are $\lambda_1-\lambda_k$ numbers bigger than $w_i$ in $w_1,\ldots,w_{i-1}$ (where we regard $\lambda_k=0$ if $k>\length(\lambda)$. Thus the $k$th component of $c(w^{-1})$ is $\lambda_1-\lambda_k$, and 
$c(w^{-1})$ has a unique descent in position $n-\lambda_1$.  The fact that 
$\Psi(w(\lambda;n))=\lambda$ now follows from the definition of $\Psi$.

(2) The numbers $1$ through $w_i$ appear in increasing order in $w$ so we have $\xzFact(w;i)=\prod\limits_{k=1}^{w_i-1}(z_i-x_k)$. 
To compute $\yzFact(w;i)$, we need to find all letters $\ell$ which are maximum
among $\{w_i, w_{i+1},\ldots, \ell\}$ and for each one we pick up a factor of 
$(z_i-y_{n+1-\ell})$.  Clearly these letters are precisely the ones in $A\cup B$.


(3) The proof is similar to part (2). 
\end{proof}

\begin{Ex}\label{ex: IGP}
Let $\lambda=(6,6,4,4,2,2)\in \Val(13)$ as in \cref{fig00}.  We have
\begin{align*}
    w&=w(\lambda;13)=(1,2,8,9,3,4,10,11,5,6,12,13,7)\\
c(w^{-1})&=(0,0,2,2,4,4,6,0,0,0,0,0,0)\\
\lambda_1^{n-\lambda_1}-\lambda&=(0,0,2,2,4,4,6).
    \end{align*}
    For $w_5=3$, we have
    \begin{align*}
\xzFact(w;5)&=(z_5-x_1)(z_5-x_2)\\
\yzFact(w;5)&=(z_5-y_{10})(z_5-y_1)(z_5-y_2)(z_5-y_3)(z_5-y_4).
    \end{align*}
    
\end{Ex}

\begin{Prop}\label{Cor: IGP trivial factor}Let $\lambda\in\Val(n)$ with $\mul(\lambda)=b$. Write $\lambda=((\lambda_1)^{b},\tilde{\lambda})$ for some $\tilde{\lambda}$. Let $w=w(((\lambda_1)^{b},\tilde{\lambda});n)$ and $w'=w(((\lambda_1)^{b-1},\lambda_1-1,\tilde{\lambda});n)$. Then:
\begin{enumerate}
    \item If $b>1$, we have 
	    \begin{enumerate}
		    \item[(a)] $w'=s_b w$ and $w_b<w_{b+1}$, 
		    \item[(b)] as well as \begin{align*}
       \TF(s_b w)&=M_1(x_b-y_{\lambda_1})\prod\limits_{i=1}^{\mul((\lambda_1-1,\tilde{\lambda}))-1}(z_{b+1}-y_{n+1-b-i})\\
    \TF(w)&=M_1(z_b-y_{\lambda_1})(z_{b+1}-x_b)\prod\limits_{i=1}^{b-1}(z_i-y_{n+1-b})\prod\limits_{i=b+2}^{b+\lambda_1-\tilde{\lambda}_1}(z_i-x_{n+1-\lambda_1}).
\end{align*}
for some rational expression $M_1$ which
is symmetric in  $z_b$ and $z_{b+1}$.
	    \end{enumerate}
   \item If $b=1$, we have 
	   \begin{enumerate}
		   \item[(a)] $\sigma(w')=s_1 w$
		and $w_1<w_{2}$,
	\item[(b)] as well as  \begin{align*}
       \TF(s_1 w)&=M_2(x_1-y_{\lambda_1})\prod\limits_{i=1}^{n-\lambda_1}(z_1-x_i)\prod\limits_{i=1}^{\mul((\lambda_1-1,\tilde{\lambda}))-1}(z_2-y_{n-i})\\
    \TF(w)&=M_2(z_1-y_{\lambda_1})(z_2-x_1)\prod\limits_{i=3}^{1+\lambda_1-\tilde{\lambda}_1}(z_i-x_{n+1-\lambda_1}).
\end{align*}
for some rational expression $M_2$ which is symmetric in  $z_1$ and $z_{2}$.
	   \end{enumerate}
\end{enumerate}
\end{Prop}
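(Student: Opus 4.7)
The plan is to establish (a) by directly comparing the lattice paths $L(\lambda;n)$ and $L(\lambda';n)$, and (b) by applying the preceding Proposition~\ref{Prop:IGP property and trivial factor} to expand $\TF(w)$ and $\TF(w')$ as fully factored products and tracking how each factor changes under the corner flip from $\lambda$ to $\lambda'$.

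For (a): the Young diagrams of $\lambda$ and $\lambda'$ differ only by the last cell of row $b$, so $L(\lambda';n)$ is obtained from $L(\lambda;n)$ by a local ``corner flip'' exchanging one south step with an adjacent west step. For $b>1$ the label counts of vertical and horizontal steps are preserved, and the flip exactly swaps the reading labels at positions $b$ and $b+1$---namely the vertical label $b$ and the horizontal label $n-\lambda_1+1$---giving $w'=s_b w$ with $w_b=b<n-\lambda_1+1=w_{b+1}$, the inequality using validity $\lambda_1\le n-\length(\lambda)\le n-b$. For $b=1$ the flip occurs at the top and shrinks $\lambda_1$ by one; this inserts an extra vertical label $n-\lambda_1+1$ at the bottom-left of $L(\lambda';n)$ and shifts every horizontal label down by one. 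Reading off the resulting word and comparing with $s_1 w$ yields the claimed cyclic-shift identity, and $w_1=1<n-\lambda_1+1=w_2$ is then immediate.

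For (b): apply Proposition~\ref{Prop:IGP property and trivial factor} to both $w$ and $w'$ to write each $\TF$ as a product of explicit $(x_i-y_j),(z_i-x_j),(z_i-y_j)$ factors. Since $w$ and $w'$ differ (up to cyclic shift in the $b=1$ case) only by a swap of the adjacent letters $b$ and $n-\lambda_1+1$, most factors agree, and the discrepancies fall into three classes. First, the primary $(z_b-y_{\lambda_1})(z_{b+1}-x_b)$ arises at positions $b,b+1$, where the step changes its vertical/horizontal status between $w$ and $w'$. Second, secondary $(z_i-y_{n+1-b})$ corrections for $i<b$ and $(z_i-x_{n+1-\lambda_1})$ corrections for $b+2\le i\le b+\lambda_1-\tilde\lambda_1$ arise because the flipped vertical step now lives on column $\lambda_1-1$ (affecting the $B$-sets at earlier positions sharing the column) and the flipped horizontal step now lives one row higher (affecting the $D$-sets at later positions sharing the row). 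The analogous secondary contribution on the $\TF(s_b w)$ side is nontrivial only when $\tilde\lambda_1=\lambda_1-1$, which is exactly when $\mul((\lambda_1-1,\tilde\lambda))-1>0$, and yields the product $\prod_{i=1}^{\mul((\lambda_1-1,\tilde\lambda))-1}(z_{b+1}-y_{n+1-b-i})$. Third, the $\xyFact$ contribution changes only for cyclic patterns $i\to i+1\to k$ containing both swapped letters; enumerating these (with $k\ge i+2$) yields a single net factor $(x_b-y_{\lambda_1})$ in $\TF(s_b w)$. Finally, the residual $(z_b-\cdot)(z_{b+1}-\cdot)$ factors at positions $b,b+1$ (outside the primary factors already isolated) range over the same index sets on both sides and pair symmetrically in $z_b\leftrightarrow z_{b+1}$, placing them into the common factor $M_i$.

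The main obstacle is the secondary-correction enumeration: one must verify that exactly the position ranges $[1,b-1]$ and $[b+2,b+\lambda_1-\tilde\lambda_1]$ contribute the stated $y$- and $x$-corrections, with no other positions producing stray factors. A related subtlety appears in case $b=1$, where the cyclic shift $\sigma$ induces a global relabeling that produces the additional factor $\prod_{i=1}^{n-\lambda_1}(z_1-x_i)$ in $\TF(s_1 w)$; this must be tracked simultaneously through $\xzFact$ and $\xyFact$. A concrete example such as $\lambda=(6,6,4,4,2,2)$, $n=13$, $b=2$ serves as a useful sanity check: one verifies factor-by-factor that the primary, secondary, and $\xyFact$ changes assemble into $(z_2-y_6)(z_3-x_2)(z_1-y_{12})(z_4-x_8)/(x_2-y_6)$, in agreement with the stated formula.
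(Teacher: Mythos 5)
Your proposal is correct and follows essentially the same route as the paper: part (a) from the lattice-path description of $w(\lambda;n)$, and part (b) by expanding $\TF$ position-by-position via \cref{Prop:IGP property and trivial factor} and tracking exactly the primary factors at positions $b,b+1$, the secondary corrections on $[1,b-1]$ and $[b+2,b+\lambda_1-\tilde\lambda_1]$, the $\mul((\lambda_1-1,\tilde\lambda))$ product (nonempty precisely when $\tilde\lambda_1=\lambda_1-1$), and the $\xyFact$ ratio $1/(x_b-y_{\lambda_1})$. Your sanity-check ratio for $\lambda=(6,6,4,4,2,2)$, $n=13$ also agrees with the stated formula.
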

\begin{proof}
	Parts (a) are straightforward from \cref{Def: IGP Consturction}.

(1b) For $1\leq i\leq b-1$, $w_b=b$ is on the same vertical line with $w_i$ in $L(((\lambda_1)^{b},\tilde{\lambda});n)$ but $w'_{b+1}=b$ is not on the same vertical line with $w_i$ in $L(((\lambda_1)^{b-1},\lambda_1-1,\tilde{\lambda});n)$. By \cref{Prop:IGP property and trivial factor} (2), whenever
  $1\leq i\leq b-1$,
we have
\begin{align}\label{propeq1}
    \xzFact(w;i)\yzFact(w;i)&=\xzFact(w';i)\yzFact(w';i)(z_i-y_{n+1-b}).
\end{align}
For $b+2\leq i\leq b+\lambda_1-\tilde{\lambda}_1$, $w_{b+1}=n+1-\lambda_1$ is on the same horizontal line with $w_i$ in $L(((\lambda_1)^{b},\tilde{\lambda});n)$ but $w'_{b}=n+1-\lambda_1$ is not on the same horizontal line with $w_i$ in $L(((\lambda_1)^{b-1},\lambda_1-1,\tilde{\lambda});n)$. By \cref{Prop:IGP property and trivial factor} (3), whenever  $1\leq i\leq b-1$, we have
\begin{align}\label{propeq2}
\xzFact(w;i) \yzFact(w;i)(z_i-x_{n+1-\lambda_1})&=\xzFact(w';i)\yzFact(w';i). 
\end{align}
Note that $w'_{b+1},w'_{b+2},\ldots,w'_{b+\mul(\tilde{\lambda})+1}$ are on the same vertical line if and only if $\lambda_1-1=\tilde{\lambda}_1$. By \cref{Prop:IGP property and trivial factor} (2), we have
\begin{align}\label{propeq3}
    \xzFact(w;b)\yzFact(w;b)&=\prod\limits_{k=1}^{b-1}(z_b-x_k)\prod\limits_{k=1}^{\lambda_1}(z_b-y_k)\\
    \xzFact(w';b+1)\yzFact(w';b+1)&=\prod\limits_{k=1}^{b-1}(z_{b+1}-x_k)\prod\limits_{k=1}^{\lambda_1-1}(z_{b+1}-y_k)\prod\limits_{k=1}^{\mul((\lambda_1-1,\tilde{\lambda}))-1}(z_{b+1}-y_{n+1-b-k}).\nonumber
\end{align}
And by \cref{Prop:IGP property and trivial factor} (3), we have
\begin{align}\label{propeq4}
    \xzFact(w;b+1)\yzFact(w;b+1)&=\prod\limits_{k=1}^{b}(z_{b+1}-x_k)\prod\limits_{k=1}^{\lambda_1-1}(z_{b+1}-y_k)\\
    \xzFact(w';b)\yzFact(w';b)&=\prod\limits_{k=1}^{b-1}(z_{b}-x_k)\prod\limits_{k=1}^{\lambda_1-1}(z_{b}-y_k).\nonumber
\end{align}

For $w$, we have $b-1\rightarrow b\rightarrow w_{b+1}=n+1-\lambda_1$ while for $w'$ we have $b\rightarrow b+1\rightarrow w'_{b}=n+1-\lambda_1$. So we conclude
\begin{equation}\label{propeq5}
    \frac{\xyFact(w)}{\xyFact(w')}= \frac{\prod\limits_{k=1}^{b-1}(x_k-y_{\lambda_1})}{\prod\limits_{k=1}^{b}(x_k-y_{\lambda_1})}=\frac{1}{x_b-y_{\lambda_1}}.
\end{equation}
Combining \eqref{propeq1}, \eqref{propeq2}, \eqref{propeq3},\eqref{propeq4} and \eqref{propeq5} proves the argument.

(2b) The proof is similar to part (1b).
\end{proof}

\begin{Prop}\label{PROP:IGP TRUE}
\cref{thm:maintechnical} is true for $w\in \St(n,1)$.
\end{Prop}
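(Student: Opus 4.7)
The plan is to prove the identity $\psi_{w(\lambda;n)}(\z) = \TF(w(\lambda;n)) \cdot \Sym_\lambda^n(\z;\x;\y)$ by induction on $|\lambda|$. By Propositions \ref{prop:bij} and \ref{Prop:IGP property and trivial factor}(1), every $w \in \St(n,1)$ equals $w(\lambda;n)$ for a unique $\lambda\in\Val(n)$ with $\Psi(w)=(\lambda)$, and one reads off $s(w)=(0)$ from Definition \ref{def:shiftingvector}, so this is exactly what Theorem \ref{thm:maintechnical} asserts for $k=1$.

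For the base case $\lambda=(1)$, we have $w=(1,n,2,3,\ldots,n-1)$, and the identity can be verified directly: one obtains $\psi_w(\z)$ from $\psi_{(1,2,\ldots,n)}(\z)$ in Proposition \ref{CantiniProp} by composing a cyclic shift via \eqref{eq:cyclic} with a single application of an isobaric divided difference $\pi_l$, and then matches the resulting polynomial with $\TF(w)\cdot\Sym_{(1)}^n(\z)$, where $\Sym_{(1)}^n(\z)$ is computed from Definition \ref{def:zSchubert}.

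For the inductive step, write $\lambda=((\lambda_1)^b,\tilde\lambda)$ with $b=\mul(\lambda)$, set $\lambda^* = ((\lambda_1)^{b-1},\lambda_1-1,\tilde\lambda)$, and let $w'=w(\lambda^*;n)$; since $|\lambda^*|=|\lambda|-1$, the inductive hypothesis applies to $w'$. When $b>1$, Proposition \ref{Cor: IGP trivial factor}(1a) gives $w=s_b w'$ with $w'_b>w'_{b+1}$, so Cantini's recursion in Proposition \ref{CantiniProp} produces $\psi_w(\z)=\pi_b(w'_b,w'_{b+1};n)\,\psi_{w'}(\z)$. Plugging in the inductive hypothesis for $\psi_{w'}(\z)$ and the explicit formulas of Proposition \ref{Cor: IGP trivial factor}(1b) for $\TF(w)$ and $\TF(w')$, the symmetric factor $M_1$ passes through $\pi_b$, and one reduces to showing an identity of the form $\pi_b(A(\z)\Sym_{\lambda^*}^n(\z))=B(\z)\Sym_\lambda^n(\z)$ for explicit non-symmetric factors $A,B$. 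The case $b=1$ is analogous: from Proposition \ref{Cor: IGP trivial factor}(2a) we have $\sigma(w')=s_1 w$, so the cyclic identity \eqref{eq:cyclic} yields $\psi_{s_1 w}(\z)=\psi_{w'}(\sigma(\z))$, and then Cantini's recursion gives $\psi_w(\z)=\pi_1(w_2,w_1;n)\,\psi_{w'}(\sigma(\z))$; the symmetric factor $M_2$ pulls out of $\pi_1$, and one is left with the analogous identity after a shift of $\z$-variables.

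The main obstacle is proving the core divided-difference identity between $\Sym_\lambda^n(\z)$ and $\Sym_{\lambda^*}^n(\z)$ under the move $\lambda\leftrightarrow\lambda^*$. The difficulty is that this move (reducing one copy of $\lambda_1$ to $\lambda_1-1$) is \emph{not} the recursive move built into Definition \ref{def:zSchubert} (which strips $\lambda_1$ off entirely and decreases $n$), so the identity is not immediate from the definition. I expect to reduce it, by peeling the outermost divided differences $\partial_{n-\lambda_1-\mul(\lambda)}\cdots\partial_1$ in Definition \ref{def:zSchubert}, to a commutation statement between the $x$-side divided differences and the $z$-side isobaric operator $\pi_b$, invoking the technical lemmas from the appendix \ref{sec:technical}. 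Once this identity is in hand, the inductive step closes in both cases and the proposition follows.
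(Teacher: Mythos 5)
Your proposal follows essentially the same route as the paper: induction on $|\lambda|$, reduction of $\psi_{w}(\z)$ to $\psi_{w'}(\z)$ for $w'=w(((\lambda_1)^{b-1},\lambda_1-1,\tilde\lambda);n)$ via Cantini's isobaric operator $\pi_b$ (respectively $\pi_1$ after a cyclic shift when $b=1$), cancellation of the symmetric factor $M_1$ (resp.\ $M_2$) using the explicit trivial-factor comparison of \cref{Cor: IGP trivial factor}, and deferral of the remaining divided-difference identity to the appendix. The "core identity" you flag as the main obstacle is precisely \cref{prop 116} (for $b>1$) and \cref{prop 115} (for $b=1$), which are already stated and proved in \cref{sec:technical} (by interpolation in $z_1$ rather than by peeling divided differences), so your argument closes as written.
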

\begin{proof}
We use induction on $|\lambda|$ for $\Psi(w)=(\lambda)$. The base case $|\lambda|=0$ corresponds to the identity permutation in $\St(n,0)$. Take any $w\in \St(n,1)$ such that $\Psi(w)=(\lambda)$. Let $b=\mul(\lambda)$ and write $\lambda=((\lambda_1)^b,\tilde{\lambda})$ for some $\tilde{\lambda}$. Denoting $w'=w(((\lambda_1)^{b-1},\lambda_1-1,\tilde{\lambda});n)$, by the induction hypothesis we have $$\psi_{w'}(\z)=\TF(w')\Sym_{((\lambda_1)^{b-1},\lambda_1-1,\tilde{\lambda})}^{n}(\z).$$ 

We first prove the induction step when $b>1$. By \cref{Cor: IGP trivial factor} (1) and \cref{CantiniProp}, we have
\begin{equation}\label{ssss}
    \psi_w(\z)=\pi_b(w_{b+1},w_b)\psi_{w'}(\z)=\frac{(z_b-y_{\lambda_1})(z_{b+1}-x_b)}{x_b-y_{\lambda_1}}\partial_b \left(\TF(w')\Sym_{((\lambda_1)^{b-1},\lambda_1-1,\tilde{\lambda})}^{n}(\z)\right).
\end{equation}
By \cref{Cor: IGP trivial factor} (3) we can write \begin{align*}
       \TF(w')&=M_1(x_b-y_{\lambda_1})\prod\limits_{i=1}^{\mul((\lambda_1-1,\tilde{\lambda}))-1}(z_{b+1}-y_{n+1-b-i})\\
    \TF(w)&=M_1(z_b-y_{\lambda_1})(z_{b+1}-x_b)\prod\limits_{i=1}^{b-1}(z_i-y_{n+1-b})\prod\limits_{i=b+2}^{b+\lambda_1-\tilde{\lambda}_1}(z_i-x_{n+1-\lambda_1})
\end{align*}
for some $M_1$ that is symmetric in variables $z_b$ and $z_{b+1}$. Plugging this
	into \eqref{ssss} gives
\begin{equation*}
    \psi_w(\z)=M_1(z_b-y_{\lambda_1})(z_{b+1}-x_b)\partial_b\left(\Sym_{((\lambda_1)^{b-1},\lambda_1-1,\tilde{\lambda})}^{n}(\z)\prod\limits_{i=1}^{\mul((\lambda_1-1,\tilde{\lambda}))-1}(z_{b+1}-y_{n+1-b-i})\right)
\end{equation*}
	By \cref{prop 116} (whose proof is deferred to the appendix) we have 
\begin{align*}
    \psi_w(\z)&=M_1(z_b-y_{\lambda_1})(z_{b+1}-x_b)\left(\Sym_{((\lambda_1)^{b},\tilde{\lambda})}^{n}(\z)\prod\limits_{i=1}^{b-1}(z_i-y_{n+1-b})\prod\limits_{i=b+2}^{b+\lambda_1-\tilde{\lambda}_1}(z_i-x_{n+1-\lambda_1})\right)\\&=\TF(w)\Sym_{\lambda}^{n}(\z).
\end{align*}

Now consider the case $b=1$. By \cref{Cor: IGP trivial factor} (2) and \cref{CantiniProp}, we have
\begin{equation}\label{ssss1}
	\psi_w(\z)=\pi_1(w_{2},w_1)\psi_{\sigma(w')}(\z)=\frac{(z_1-y_{\lambda_1})(z_{2}-x_1)}{x_1-y_{\lambda_1}}\partial_1 \left(\TF(\sigma(w'))\Sym_{(\lambda_1-1,\tilde{\lambda})}^{n}(\sigma(\z))\right).
\end{equation}
By \cref{Cor: IGP trivial factor} (4) we can write \begin{align*}
	\TF( \sigma(w'))&=M_2(x_1-y_{\lambda_1})\prod\limits_{i=1}^{n-\lambda_1}(z_1-x_i)\prod\limits_{i=1}^{\mul((\lambda_1-1,\tilde{\lambda}))-1}(z_2-y_{n-i})\\
    \TF(w)&=M_2(z_1-y_{\lambda_1})(z_2-x_1)\prod\limits_{i=3}^{1+\lambda_1-\tilde{\lambda}_1}(z_i-x_{n+1-\lambda_1}).
\end{align*}
for some $M_2$ that is symmetric in variables $z_1$ and $z_{2}$. Plugging 
	this into \eqref{ssss1} gives
\begin{equation*}
    \psi_w(\z)=M_2(z_1-y_{\lambda_1})(z_{2}-x_1)\partial_1\left(\Sym_{(\lambda_1-1,\tilde{\lambda})}^{n}(\sigma(\z))\prod\limits_{i=1}^{n-\lambda_1}(z_1-x_i)\prod\limits_{i=1}^{\mul((\lambda_1-1,\tilde{\lambda}))-1}(z_2-y_{n-i})\right)
\end{equation*}
	By \cref{prop 115} (also from the appendix) we have 
\begin{align*}
    \psi_w(\z)&=M_2(z_1-y_{\lambda_1})(z_{2}-x_1)\left(\Sym_{(\lambda_1,\tilde{\lambda})}^{n}(\z)\prod\limits_{i=3}^{1+\lambda_1-\tilde{\lambda}_1}(z_i-x_{n+1-\lambda_1})\right)\\&=\TF(w)\Sym_{\lambda}^{n}(\z).
\end{align*}
\end{proof}
\begin{Def}\label{def:direct sum operator}
If $\pi\in S_m$ 
and $\sigma\in S_p$, the 
\emph{direct sum} $\pi \oplus \sigma\in S_{m+p}$ is the permutation defined by 
$(\pi\oplus \sigma)(i) = 
\begin{cases}
\pi(i) & \text{ if } 1 \leq i \leq m\\
\sigma(i-m)+m &\text{ if } m+1 \leq i \leq m+n.
\end{cases}$

For example, 
$(3,2,1) \oplus (3,1,2,5,4) = 
(3,2,1, 6,4,5,8,7).$
\end{Def}
The following lemma is easy to verify.
\begin{Lemma}\label{lem:onepartition}
Given $\lambda\in\Val(n)$, let $$u=(u_1,\ldots,u_n):=
	\sigma^{\length(\lambda)+\lambda_1-n}(w(\lambda;n)) .$$
Let $\bar{w}(\lambda;n):=(u_1,\ldots, u_{n-\lambda_{\last}})$.  Then $\bar{w}(\lambda;n) \in S_{n-\lambda_{\last}}$ and $u=\bar{w}(\lambda;n)\oplus \id_{\lambda_{\last}},$ where $\id_m$ is the identity permutation on $m$ letters.
\end{Lemma}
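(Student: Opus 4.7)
The plan is to unpack the construction of $w(\lambda;n)$ from the lattice path $L(\lambda;n)$ and track exactly where the labels $n-\lambda_{\last}+1,\ldots,n$ land after the shift. Writing $\lambda=(\mu_1^{k_1},\ldots,\mu_l^{k_l})$ with $\mu_1>\cdots>\mu_l>0$ and setting $t:=n-\lambda_1-\length(\lambda)\geq 0$ (so that $\length(\lambda)+\lambda_1-n=-t$), I would first verify that the boundary of the English-justified Young diagram of $\lambda$ sitting in the $\lambda_1\times(n-\lambda_1)$ rectangle produces a lattice path whose alternating vertical and horizontal runs have, in order, lengths $k_1,\ \mu_1-\mu_2,\ k_2,\ \mu_2-\mu_3,\ \ldots,\ k_l,\ \mu_l,\ t$. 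This is immediate from the staircase shape of $\lambda$: each horizontal run of length $\mu_{i-1}-\mu_i$ records the width drop between rows of length $\mu_{i-1}$ and those of length $\mu_i$, while the trailing vertical run of length $t$ accounts for the empty strip below the diagram (and is absent precisely when $t=0$).

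Next I would read off the labels. By \cref{Def: IGP Consturction}, the vertical labels $1,\ldots,n-\lambda_1$ are distributed top-to-bottom, hence fill the vertical runs as the consecutive blocks $\{1,\ldots,k_1\},\{k_1+1,\ldots,k_1+k_2\},\ldots,\{k_1+\cdots+k_{l-1}+1,\ldots,\length(\lambda)\},\{\length(\lambda)+1,\ldots,n-\lambda_1\}$. The horizontal labels $n-\lambda_1+1,\ldots,n$ are distributed right-to-left, so they fill the horizontal runs in the order one meets them when walking the path; the final (leftmost) horizontal run, of length $\mu_l=\lambda_{\last}$, therefore receives the top labels $n-\lambda_{\last}+1,n-\lambda_{\last}+2,\ldots,n$ in precisely that increasing order. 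Summing the lengths of all preceding runs, this block of labels occupies positions $\length(\lambda)+\lambda_1-\lambda_{\last}+1,\ldots,\length(\lambda)+\lambda_1$ of $w(\lambda;n)$.

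Finally, since $\sigma^{\length(\lambda)+\lambda_1-n}=\sigma^{-t}$ carries position $i$ to position $i+t$ (indices mod $n$), the block above is translated to positions $n-\lambda_{\last}+1,\ldots,n$ of $u$, giving $u_i=i$ throughout that range. The remaining entries of $u$ must then permute $\{1,\ldots,n-\lambda_{\last}\}$, which yields $\bar{w}(\lambda;n)\in S_{n-\lambda_{\last}}$ together with the direct-sum decomposition $u=\bar{w}(\lambda;n)\oplus\id_{\lambda_{\last}}$. The only real obstacle is disciplined index bookkeeping; no subtler idea appears necessary, consistent with the paper's remark that the lemma is easy to verify.
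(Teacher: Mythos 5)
Your proof is correct: the paper omits a proof entirely (it only remarks that the lemma "is easy to verify"), and your run-length decomposition of $L(\lambda;n)$, the identification of the leftmost horizontal run of length $\lambda_{\last}$ carrying the labels $n-\lambda_{\last}+1,\ldots,n$ in increasing order at positions $\length(\lambda)+\lambda_1-\lambda_{\last}+1,\ldots,\length(\lambda)+\lambda_1$, and the shift by $\sigma^{-t}$ placing them at positions $n-\lambda_{\last}+1,\ldots,n$ is exactly the intended direct verification. The bookkeeping checks out, including the edge cases $t=0$ and $l=1$.
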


\begin{Ex}
Let $n=5$ and $\lambda=(2,2)\in \Val(5)$, so that $\lambda_{\last}=2$.
Then $w(\lambda;5)=(1,2,4,5,3)$ and $\length(\lambda)+\lambda_1-n=-1$.
	We have $u=\sigma^{-1}(1,2,4,5,3)=(3,1,2,4,5)$, so 
$\bar{w}(\lambda;5)=(3,1,2)$.  We have $u=(3,1,2) \oplus \id_2$.
\end{Ex}

\begin{Prop}\label{Prop: SS Decomposition}
Let $w\in \St(n,k)$ for $k\geq 2$ with $\Psi(w)=(\lambda^1,\lambda^2,\ldots,\lambda^k)$ and 
	$s(w)=(a_1,\ldots,a_k)$. Then we can write $\sigma^{a_2}(w)=\bar{w}(\lambda^1;n)\oplus w'$ for some $w'$. If we let $\wdown:=\id_{n-\lambda^1_{\last}}\oplus w'$, 
then $\wdown\in \St(n,k-1)$ and $\Psi(\wdown)=(\lambda^2,\ldots,\lambda^k)$.
\end{Prop}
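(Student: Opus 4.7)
The plan is to attack the proposition in two stages: (I) establish the decomposition $\sigma^{a_2}(w) = \bar{w}(\lambda^1; n) \oplus w'$, and (II) identify $\wdown$ with $\Psi^{-1}(\lambda^2, \ldots, \lambda^k)$.

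For stage (I), the key input is a strengthening of the code-agreement statement implicit in \cref{prop:bij}: namely, $c(w^{-1})_j = c(w(\lambda^1; n)^{-1})_j$ for all $j \leq n - \lambda^1_{\last}$ (not merely for $j \leq n - f_1$). The piecewise formula from the proof of \cref{prop:bij} gives both codes equal $f_1 - \lambda^1_j$ on $j \leq n - f_1$ (where $f_1 = \lambda^1_1$); on the range $n - f_1 < j \leq n - \lambda^1_{\last}$ the code of $w(\lambda^1; n)^{-1}$ vanishes trivially, while $c(w^{-1})_j = f_2 - \lambda^2_j$ also vanishes because the $\ParSeq(n, k)$ definition forces the first $n - \lambda^1_{\last}$ parts of $\lambda^2$ to equal $\lambda^2_1 = f_2$. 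Standard code-to-permutation reconstruction then gives $w^{-1}(j) = w(\lambda^1; n)^{-1}(j)$ for all $j \leq n - \lambda^1_{\last}$, so the values $1, 2, \ldots, n - \lambda^1_{\last}$ sit at the same positions in $w$ and $w(\lambda^1; n)$. By \cref{lem:onepartition}, the complementary positions in $w(\lambda^1; n)$ (those holding $\{n - \lambda^1_{\last} + 1, \ldots, n\}$) are exactly $\{\length(\lambda^1) + \lambda^1_1 - \lambda^1_{\last} + 1, \ldots, \length(\lambda^1) + \lambda^1_1\}$, and the same must hold for $w$. The cyclic shift by $a_2 = \length(\lambda^1) + \lambda^1_1 - n$ sends this set to the last $\lambda^1_{\last}$ positions, so $\sigma^{a_2}(w)$ has its last $\lambda^1_{\last}$ entries in $\{n - \lambda^1_{\last} + 1, \ldots, n\}$ (defining $w' \in S_{\lambda^1_{\last}}$ after subtracting $n - \lambda^1_{\last}$), and its first $n - \lambda^1_{\last}$ entries agree with those of $\sigma^{a_2}(w(\lambda^1; n)) = \bar{w}(\lambda^1; n) \oplus \id_{\lambda^1_{\last}}$, producing $\bar{w}(\lambda^1; n)$.

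For stage (II), let $\tilde{w} := \Psi^{-1}(\lambda^2, \ldots, \lambda^k)$. The same $\ParSeq$-based vanishing argument shows $c(\tilde{w}^{-1})_j = 0$ for $j \leq n - \lambda^1_{\last}$, so $\tilde{w} = \id_{n - \lambda^1_{\last}} \oplus \tilde{w}'$ for some $\tilde{w}' \in S_{\lambda^1_{\last}}$, and it suffices to prove $w' = \tilde{w}'$. I would observe that $c(w^{-1})_j = c(\tilde{w}^{-1})_j$ for all $j > n - \lambda^1_{\last}$ (both depend only on $\lambda^2, \ldots, \lambda^k$ in this range), so the reconstruction algorithm runs identically on the two codes for $j > n - \lambda^1_{\last}$, with available-value sets $\{\length(\lambda^1) + \lambda^1_1 - \lambda^1_{\last} + 1, \ldots, \length(\lambda^1) + \lambda^1_1\}$ (for $w$) and $\{n - \lambda^1_{\last} + 1, \ldots, n\}$ (for $\tilde{w}$) that are order-isomorphic via the increasing bijection shifting by $n - \length(\lambda^1) - \lambda^1_1$. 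A simple induction on $j$ shows this order-isomorphism intertwines the two reconstructions, and after translating through the $\sigma^{a_2}$ shift and the subtraction-by-$(n - \lambda^1_{\last})$ in the definitions of $w'$ and $\tilde{w}'$, one gets $(w')^{-1} = (\tilde{w}')^{-1}$. Therefore $\wdown = \tilde{w} \in \St(n, k-1)$, and $\Psi(\wdown) = (\lambda^2, \ldots, \lambda^k)$ by bijectivity of $\Psi$ (\cref{prop:bij}). The principal obstacle is the extended code-agreement in stage (I); once it is in hand, \cref{lem:onepartition} and the order-isomorphism of available-value sets finish the job.
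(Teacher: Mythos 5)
Your proposal is correct and follows essentially the same route as the paper: both arguments use the decomposition $c(w^{-1})=\sum_i((\lambda^i_1)^{n-\lambda^i_1}-\lambda^i)$ together with the $\ParSeq$ vanishing condition to show the first $n-\lambda^1_{\last}$ code entries of $w^{-1}$ and $w(\lambda^1;n)^{-1}$ agree, deduce that the values $1,\ldots,n-\lambda^1_{\last}$ occupy the same positions, and then identify $c(\wdown^{-1})$ with $\sum_{i\geq 2}((\lambda^i_1)^{n-\lambda^i_1}-\lambda^i)$. The only cosmetic difference is in the last step, where you compare code-to-permutation reconstructions via an order-isomorphism of the remaining value sets, while the paper observes directly that the relevant code entries are unchanged when the small values are moved to the front.
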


\begin{Ex}
Let $w=(1,2,5,4,3)\in \St(5,2)$.  Then $\Psi(w)=(\lambda^1,\lambda^2)=
((2,2),(1,1,1))$ and $s(w)=(a_1,a_2)=(0,-1).$  From the previous example,
	$\bar{w}(\lambda;5)=(3,1,2)$.  We have $\sigma^{a_2}(w)=(3,1,2,5,4) = 
(3,1,2) \oplus w'$ where $w'=(2,1)$.  And we have 
$\wdown = \id_{n-\lambda^1_{\last}} \oplus w' = (1,2,3) \oplus (2,1) = (1,2,3,5,4)\in \St(5,1)$ with $\Psi(\wdown) = (\lambda^2) = ((1,1,1)).$
\end{Ex}

\begin{proof}
Note that $a_2=\length(\lambda^1)+\lambda^1_1-n.$
We have $c(w^{-1})=((\lambda^1_1)^{n-\lambda^1_1}-\lambda^1)+((\lambda^2_1)^{n-\lambda^2_1}-\lambda^2)+\cdots+((\lambda^k_1)^{n-\lambda^k_1}-\lambda^k)$ and by the definition of $\ParSeq(n,k)$ we know that the first $(n-\lambda^1_{\last})$ parts of $((\lambda^i_1)^{n-\lambda^i_1}-\lambda^i)$ are zero for $2\leq i\leq k$. So the first $(n-\lambda^1_{\last})$ parts of $c(w^{-1})$ equal $((\lambda^1_1)^{n-\lambda^1_1}-\lambda^1)$ which is the same as $c((w(\lambda^1;n))^{-1})$ by \cref{Prop:IGP property and trivial factor} (1). So the positions of the numbers $1$ through  $(n-\lambda^1_{\last})$ are the same in $w$ and $w(\lambda^1;n)$. Since taking the first $(n-\lambda^1_{\last})$ parts of 
	$\sigma^{\length(\lambda^1)+\lambda^1_1-n}(w(\lambda^1;n))$ 
	gives $\bar{w}(\lambda;n)$, we conclude taking the first $(n-\lambda^1_{\last})$ parts of 
	$\sigma^{\length(\lambda^1)+\lambda^1_1-n}(w)$
	also gives $\bar{w}(\lambda;n)$.

Note that the $i$-th component of $c(w^{-1})$ counts 
	the number of $w_j>i$ for $1\leq j\leq w^{-1}(i)$. Thus $c(w^{-1})$ and 
	$c(\sigma^{\length(\lambda^1)+\lambda^1_1-n}(w)$
	coincide after the 
	$(n-\lambda^1_{\last})$-th component as taking the numbers smaller than $(n-\lambda^1_{\last})$ to the front does not affect the code of its inverse after $(n-\lambda^1_{\last})$-th component. Thus 
	$c((\id_{n-\lambda^1_{\last}}\oplus w')^{-1})$ coincides with $c(w^{-1})$ after the $(n-\lambda^1_{\last})$ component and the first $(n-\lambda^1_{\last})$ parts are zero as the permutation starts with $\id_{n-\lambda^1_{\last}}$. We conclude    $c((\id_{n-\lambda^1_{\last}}\oplus w')^{-1})=((\lambda^2_1)^{n-\lambda^2_1}-\lambda^2)+\cdots+((\lambda^k_1)^{n-\lambda^k_1}-\lambda^k)$ and we are done.
\end{proof}
\begin{Lemma}\label{trivial factor fraction equality}
Let $u$ and $u'$ be permutations in $S_n$ and $w$ and $w'$ be permutations in $S_{m}$. We have
\begin{equation*}
    \frac{\TF(u'\oplus w)}{\TF(u\oplus w)}=\frac{\TF(u'\oplus w')}{\TF(u\oplus w')}.
\end{equation*}
\end{Lemma}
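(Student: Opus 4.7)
The plan is to prove the stronger factorization
\[
\TF(u\oplus w) \;=\; G(u;n,m)\cdot H(w;n,m)
\]
for suitable polynomial expressions $G$ depending only on $u$ (and on $n,m$) and $H$ depending only on $w$ (and on $n,m$). The lemma follows instantly: both sides of the desired equality then reduce to $G(u';n,m)/G(u;n,m)$, which is independent of the second summand. The key structural observation is that in $v:=u\oplus w$, the values $1,\dots,n$ occupy positions $1,\dots,n$ in the order given by $u$, while the values $n+1,\dots,n+m$ occupy positions $n+1,\dots,n+m$ in the order given by $w$ (shifted by $n$). Consequently, cyclic orders of triples contained entirely in one block agree with the corresponding cyclic orders in $u$ or in $w$, and backward-minimum/forward-maximum conditions imposed across the block boundary are trivialized because values of the lower block are uniformly smaller than values of the upper block.

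For $\xzFact(u\oplus w)$, I would partition the product over pairs $(i,j)$ into four cases: (i) $i\le n$ and $j\le n$, where the backward-running-minimum condition reduces to that in $u$ (any $w$-values passed during a cyclic wrap-around exceed $n\ge j$ and impose no constraint); (ii) $i>n$ and $j>n$, which reduces to the condition in $w$; (iii) $i\le n$ and $j>n$, which never contributes since the running minimum already sits at $v_i\le n<j$; (iv) $i>n$ and $j\le n$, where the contribution condition is that $j$ be smaller than each of $u_{p+1},\dots,u_n$ for $p=u^{-1}(j)$, a condition depending only on $u$ and independent of $i$. Cases (i) and (iv) combine into a $u$-only factor, and case (ii) is a $w$-only factor. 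A completely symmetric argument, with forward-running-maxima in place of backward-running-minima, yields the corresponding factorization of $\yzFact(u\oplus w)$.

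For $\xyFact(u\oplus w)$, I would stratify the triples $(i,i+1,k)$ with $i\to i+1\to k$ cyclically in $v$ into: (A) $k\le n$, where the cyclic condition reduces to cyclic order in $u$; (B) $i+1\le n<k$, where the condition reduces to the linear inequality $u^{-1}(i)<u^{-1}(i+1)$ in $u$; (C) $i=n$ and $k\ge n+2$, where the condition reduces to $w^{-1}(1)<w^{-1}(k-n)$ in $w$; (D) $i>n$, where the condition reduces to cyclic order in $w$. In each case the condition depends on $u$ alone or on $w$ alone, so $\xyFact(u\oplus w)$ splits into a $u$-factor times a $w$-factor. Multiplying the three factorizations gives $\TF(u\oplus w)=G(u;n,m)\cdot H(w;n,m)$, as required. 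The main bookkeeping challenge will be case (C) of $\xyFact$, where the triple straddles the join of the two blocks; one must carefully trace the cyclic path through the join and verify that the result depends only on $w$, regardless of whether the position of $n$ inside $u$ is terminal or interior.
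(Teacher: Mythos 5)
Your proposal is correct and follows essentially the same route as the paper: both arguments split each of $\xyFact$, $\xzFact$, $\yzFact$ according to whether the relevant positions/values lie in the $u$-block or the $w$-block, and exploit the fact that all values in the lower block are uniformly smaller than those in the upper block to show each local factor depends only on $u$ or only on $w$. You package this as a full factorization $\TF(u\oplus w)=G(u;n,m)\,H(w;n,m)$, whereas the paper only records the per-factor "independence" identities needed for the ratio, but the underlying case analysis is the same.
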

\begin{proof}
It is enough to show the following three equations
\begin{align}
     \label{tftrivialfactor1}\frac{\xyFact(u'\oplus w)}{\xyFact(u\oplus w)}=\frac{\xyFact(u'\oplus w')}{\xyFact(u\oplus w')}\\
     \label{tftrivialfactor2} \frac{\xzFact(u'\oplus w)}{\xzFact(u\oplus w)}=\frac{\xzFact(u'\oplus w')}{\xzFact(u\oplus w')}\\
      \label{tftrivialfactor3} \frac{\yzFact(u'\oplus w)}{\yzFact(u\oplus w)}=\frac{\yzFact(u'\oplus w')}{\yzFact(u\oplus w')}.
\end{align}

For $1\leq i\leq n-1$ we have $\xyFact(u\oplus w;i)=\xyFact(u\oplus w';i)$ and $\xyFact(u'\oplus w;i)=\xyFact(u'\oplus w';i)$. For $n+1\leq i\leq n+m-1$ we have $\xyFact(u\oplus w;i)=\xyFact(u'\oplus w;i)$ and $\xyFact(u\oplus w';i)=\xyFact(u'\oplus w';i)$. And for $i=n$ we have $\xyFact(u\oplus w;i)=\xyFact(u'\oplus w;i)$ and $\xyFact(u\oplus w';i)=\xyFact(u'\oplus w';i)$. So the first equation \eqref{tftrivialfactor1} follows.

For $1\leq i\leq n$ we have $\xzFact(u\oplus w;i)=\xzFact(u\oplus w';i)$ and $\xzFact(u'\oplus w;i)=\xzFact(u'\oplus w';i)$. Now consider the case $n+1\leq i\leq n+m$. For $1\leq j\leq n$, $\xzFact(u\oplus w;i)$ has a factor $(z_i-x_j)$ for $\xzFact(u\oplus w';i)$ has a factor $(z_i-x_j)$. And the same is true for $\xzFact(u'\oplus w;i)$ and $\xzFact(u'\oplus w';i)$. For $n+1\leq j\leq n+m$ $\xzFact(u\oplus w;i)$ has a factor $(z_i-x_j)$ for $\xzFact(u'\oplus w;i)$ has a factor $(z_i-x_j)$. And the same is true for $\xzFact(u\oplus w';i)$ and $\xzFact(u'\oplus w';i)$. So the second equation \eqref{tftrivialfactor2} follows.

The proof for \eqref{tftrivialfactor3} is similar to the proof for \eqref{tftrivialfactor2}.
\end{proof}

\begin{Prop}\cite[Theorem 20]{C} Let $u\in S_n$ and $w\in S_m$. Then we can write 
\begin{equation*}
    \psi_{u\oplus w}(\z)=\psi^{1}_{u}(\z)\psi^{2}_{w}(\z)
\end{equation*}
where $\psi^{1}_{u}(\z)$ (respectively $\psi^{2}_{w}(\z)$) depends only on $u$ (respectively $w$).\footnote{The result stated in 
\cite[Theorem 20]{C} concerns the skew sum of the permutations
$u$ and $w$, not the direct sum; however, the direct sum of $u$ and $w$ is a cyclic rotation of the skew sum of $w$ and $u$, so the result we've stated follows.}
\end{Prop}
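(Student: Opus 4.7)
The plan is to reduce the statement to Cantini's Theorem~20 in \cite{C}, which (per the footnote) establishes the analogous factorization for the \emph{skew sum} of two permutations, and then transfer the result to the direct sum via the cyclic symmetry \eqref{eq:cyclic}.

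Concretely, for $u \in S_n$ and $w \in S_m$ I would first observe the combinatorial identity $\sigma^n(u \oplus w) = w \ominus u$, where $w \ominus u := (w_1+n, \ldots, w_m+n, u_1, \ldots, u_n)$ denotes the skew sum that places the $w$-block (with letters shifted up by $n$) before the $u$-block, and $\sigma$ is the cyclic shift of \cref{def:operator} acting on sequences of length $n+m$. A direct check shows that $\sigma^n$ moves the first $n$ entries $u_1, \ldots, u_n$ of $u \oplus w$ cyclically to the end, producing exactly $w \ominus u$.

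Applying Cantini's Theorem~20 to $w \ominus u$ produces a factorization $\psi_{w \ominus u}(\z) = \varphi^1_w(\z)\,\varphi^2_u(\z)$, with the first factor depending only on $w$ and the second only on $u$. Iterating \eqref{eq:cyclic} then yields
\[
\psi_{u \oplus w}(\z) \;=\; \psi_{w \ominus u}\bigl(\sigma^{-n}(\z)\bigr) \;=\; \varphi^1_w\bigl(\sigma^{-n}(\z)\bigr)\,\varphi^2_u\bigl(\sigma^{-n}(\z)\bigr),
\]
where the $z$-indices are taken modulo $n+m$. Setting $\psi^1_u(\z) := \varphi^2_u(\sigma^{-n}(\z))$ and $\psi^2_w(\z) := \varphi^1_w(\sigma^{-n}(\z))$ then completes the proof.

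The main obstacle will be the bookkeeping: verifying the identity $\sigma^n(u \oplus w) = w \ominus u$ under Cantini's precise convention for the skew sum (which of the two blocks gets shifted up), and confirming the direction of the cyclic shift in \eqref{eq:cyclic} on $(n+m)$-tuples. As a self-contained alternative, I could instead prove the proposition by induction on $\ell(u) + \ell(w)$: the base case $u = \id_n$, $w = \id_m$ would be verified by inspecting the explicit formula for $\psi_{\id_{n+m}}(\z)$ in \cref{CantiniProp}, whose $z$-dependent double products $\prod_i\prod_{j<i}(z_i - x_j)$ and $\prod_i\prod_{j>i}(z_i - y_{n+m+1-j})$ split cleanly according to whether the index $i$ is $\leq n$ or $> n$; and the inductive step would exploit that the operators $\pi_l$ with $l < n$ and with $l > n$ act on disjoint subsets of the $\x$, $\y$, $\z$ variables (the former involves $z_1, \ldots, z_n$, some $x_1, \ldots, x_{n-1}$, and some $y_{m+1}, \ldots, y_{n+m-1}$; the latter involves the complementary variables), so each $\pi_l$ modifies exactly one of the two factors and passes transparently through the other. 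Since $u \oplus w$ admits a reduced expression in $S_{n+m}$ using only simple reflections $s_l$ with $l \in \{1, \ldots, n-1\} \cup \{n+1, \ldots, n+m-1\}$, neither the boundary swap $s_n$ nor the cyclic swap $s_{n+m}$ is ever required, and the factored form is preserved at every step.
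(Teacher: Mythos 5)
Your primary argument---rewriting $\sigma^n(u\oplus w)$ as the skew sum $w\ominus u$, invoking Cantini's Theorem~20 for skew sums, and transferring the factorization back through \eqref{eq:cyclic}---is exactly the justification the paper itself gives in the footnote to this proposition, and the bookkeeping you flag (direction of the shift, which block gets its letters raised) does check out. The self-contained inductive alternative is not needed, and as sketched it would require care: the recursion of \cref{CantiniProp} generates permutations from the identity by first applying the cyclic operator $\pi_{n+m}$ (the identity has no descents at positions $l<n+m$), so one cannot reach $u\oplus w$ using only the $\pi_l$ with $l\notin\{n,\,n+m\}$.
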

\begin{Cor}\label{psi fraction equality }
Let $u$ and $u'$ be permutations in $S_n$ and $w$ and $w'$ be permutations in $S_{m}$. We have
\begin{equation*}
    \frac{\psi_{u'\oplus w}(\z)}{\psi_{u\oplus w}(\z)}=\frac{\psi_{u'\oplus w'}(\z)}{\psi_{u\oplus w'}(\z)}.
\end{equation*}
\end{Cor}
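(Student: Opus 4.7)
The corollary is an immediate consequence of the preceding Proposition attributed to Cantini, so my proof plan is essentially a one-line computation plus a note about well-definedness.

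The approach is as follows. By Cantini's factorization, I can write
\begin{equation*}
\psi_{u\oplus w}(\z) = \psi^1_u(\z)\,\psi^2_w(\z), \qquad \psi_{u'\oplus w}(\z) = \psi^1_{u'}(\z)\,\psi^2_w(\z),
\end{equation*}
and analogously with $w$ replaced by $w'$. Forming the ratio on the left-hand side, the common factor $\psi^2_w(\z)$ cancels:
\begin{equation*}
\frac{\psi_{u'\oplus w}(\z)}{\psi_{u\oplus w}(\z)} = \frac{\psi^1_{u'}(\z)\,\psi^2_w(\z)}{\psi^1_u(\z)\,\psi^2_w(\z)} = \frac{\psi^1_{u'}(\z)}{\psi^1_u(\z)}.
\end{equation*}
The same computation applied to the right-hand side, with $\psi^2_{w'}(\z)$ cancelling, yields $\psi^1_{u'}(\z)/\psi^1_u(\z)$ as well, so the two ratios are equal.

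The only subtlety worth flagging is that the identities involve rational expressions, so in principle I should justify that the denominators $\psi_{u\oplus w}(\z)$ and $\psi_{u\oplus w'}(\z)$ are nonzero as elements of $\mathbb{Z}[\x;\y;\z]$; this holds because each $\psi_\bullet(\z)$ is a nonzero polynomial (it has a unique leading term by \cref{CantiniProp}), and Cantini's factorization guarantees the same for $\psi^1_u(\z)$ and $\psi^2_{w},\psi^2_{w'}$ individually. I expect no genuine obstacle: once the Proposition is invoked, the corollary is purely formal cancellation, and the statement is really just the abstract fact that if a bivariate function factors as $F(u,w) = A(u)B(w)$, then the ratio $F(u',w)/F(u,w)$ is independent of $w$.
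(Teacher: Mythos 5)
Your argument is exactly the one the paper intends: the corollary is stated immediately after Cantini's factorization $\psi_{u\oplus w}(\z)=\psi^1_u(\z)\psi^2_w(\z)$ precisely so that it follows by cancelling the common factor, and the paper gives no further proof. Your extra remark about nonvanishing of the denominators is a reasonable (if unstated in the paper) bit of care, but the proof is correct and matches the intended route.
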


\textit{Proof of \cref{thm:maintechnical}.} We prove \cref{thm:maintechnical} for $w\in\St(n,k)$ using induction on $k$. 
\cref{thm:maintechnical} holds for $k=1$ by \cref{PROP:IGP TRUE}. Suppose the theorem holds for all elements in $\St(n,k-1)$.  

We now consider
 $w\in \St(n,k)$.
Let $\Psi(w)=(\lambda^1,\lambda^2,\ldots,\lambda^k)$ and $s(w)=(a_1,a_2,\ldots,a_k)$. By \cref{Prop: SS Decomposition}, we have $\sigma^{a_2}(w)=\bar{w}(\lambda^1;n)\oplus w'$ for some $w'$.  Moreover, if we set $\wdown := \id_{n-\lambda^1_{\last}}\oplus w'$, we have that 
$\wdown\in \St(n,k-1)$ and $\Psi(\wdown)=(\lambda^2,\ldots,\lambda^k)$. Using \cref{def:shiftingvector} it is easy to see that  $s(\wdown)=(0,a_3-a_2,\ldots,a_k-a_2)$. By the induction hypothesis we have
\begin{equation}\label{eqinmain1}
    \psi_{\id_{n-\lambda^1_{\last}}\oplus w'}(\z)=\TF(\id_{n-\lambda^1_{\last}}\oplus w')\prod\limits_{i=2}^{k}\Sym_{\lambda^i}^n(\sigma^{a_i-a_2}(\z)).
\end{equation}
By \cref{lem:onepartition}, and using the fact that 
$a_2=\length(\lambda^1)+\lambda_1^1-n$, we have that 
$\bar{w}(\lambda^1;n)\oplus \id_{\lambda^1_{\last}}=
\sigma^{a_2}(w(\lambda^1;n))$, where $w(\lambda^1;n)\in \St(n,1)$.  Therefore
the induction hypothesis and \eqref{eq:cyclic} implies that
\begin{equation}\label{eqinmain2}
    \psi_{\bar{w}(\lambda^1;n)\oplus \id_{\lambda^1_{\last}}}(\z)=\TF(\bar{w}(\lambda^1;n)\oplus \id_{\lambda^1_{\last}})\ \Sym_{\lambda^1}^{n}(\sigma^{a_1-a_2}(\z)).
\end{equation}

By \cref{psi fraction equality } we have
\begin{equation*}
    \psi_{\bar{w}(\lambda^1;n)\oplus w'}(\z)=
    \frac{\psi_{\bar{w}(\lambda^1;n)\oplus \id_{\lambda^1_{\last}}}(\z) \ \psi_{\id_{\lambda^1_{n-\last}}\oplus w'}(\z)}{\psi_{\id_{n-\lambda^1_{\last}}\oplus \id_{\lambda^1_{\last}}}(\z)}
\end{equation*}
Plugging this into \eqref{eqinmain1} and \eqref{eqinmain2} and using \cref{trivial factor fraction equality} gives 
\begin{align*}
    \psi_{\bar{w}(\lambda^1;n)\oplus w'}(\z)&=\frac{\psi_{\bar{w}(\lambda^1;n)\oplus \id_{\lambda^1_{\last}}}(\z)\ \psi_{\id_{\lambda^1_{n-\last}}\oplus w'}(\z)}{\psi_{\id_{n-\lambda^1_{\last}}\oplus \id_{\lambda^1_{\last}}}(\z)}\\&=\frac{\TF(\bar{w}(\lambda^1;n)\oplus \id_{\lambda^1_{\last}})\TF(\id_{\lambda^1_{n-\last}}\oplus w')}{\TF(\id_{n-\lambda^1_{\last}}\oplus \id_{\lambda^1_{\last}})}\prod\limits_{i=1}^{k}\Sym_{\lambda^i}^n(\sigma^{a_i-a_2}(\z))\\&=\TF(\bar{w}(\lambda^1;n)\oplus w')\prod\limits_{i=1}^{k}\Sym_{\lambda^i}^n(\sigma^{a_i-a_2}(\z)).
\end{align*}
Now using \eqref{eq:cyclic} and cyclically shifting $\mathbf{z}$-variables completes the proof.

\section{Multiline queues and steady state probabilities}\label{sec:MLQ}

When each $y_i=0$, there is a combinatorial 
formula for the steady state probabilities of the inhomogeneous TASEP 
in terms of 
the \emph{multiline queues} of Ferrari and Martin \cite{FM}.
This result (\cref{thm:MLQ}) was conjectured in \cite{AL} and proved in \cite{AM}.
In this section we show 
in \cref{Thm: bijection}
that if $w^{-1}$ is a Grassmann permutation, then
the multiline queues of type $w$ are 
in bijection with certain collections of nonintersecting paths,
which in turn are in bijection with semistandard tableaux.

We note that in the case that $w=w_0, s_1 w_0$ or $s_2 w_0$,
multiline queues were related to nonintersecting paths in 
\cite[Section 3]{AasLinusson18}.   And in the case
that all particles have types $0, 1, 2$, nonintersecting paths
were used to give explicit determinantal
formulas for steady state probabilities in \cite[Theorem 2.6]{Man17}.
Multiline queues were also connected to tableaux via nonintersecting paths
in \cite{AGS}, though the weights on queues  there were different from 
ours.

\begin{Def} \label{def:MLQ}
	Fix positive integers $L$ and $n$. A \emph{multiline queue}  $Q$
	of content $\mathbf{m} = (m_1,\ldots, m_L)$ is an $L\times n$ array in which there are $m_1+ \cdots + m_i$ balls in row $i$ for $1\leq i \leq L$.
	We label the rows $1,\ldots, L$ from top to bottom.  

	Given such an array,
	there is a \emph{bully path labeling}
	procedure which assigns a label to each 
	ball.  We start by assigning each ball in row $1$ the label $1$.
	Then we consider the leftmost of these balls (call it $b$)
	and construct its
	\emph{$1$-bully path}: match $b$ to the first unmatched
	ball $b'$ in row $2$ 
	which is encountered when one looks directly below $b$ then travels
	right in row $2$ (wrapping around if necessary); then match $b'$ 
	to the first ball $b''$ in row $3$ which is encountered when 
	one looks below $b'$ then travels right as before.
	Continuing, we obtain the $1$-bully path for $b$; we label
	all matched balls by $1$.  
	We then construct $1$-bully paths for each other ball in row $1$
	(considering them from left to right).
	We then assign the label $2$ to 
	all unmatched balls
	in row $2$ (considered left to right), 
	and similarly construct their \emph{$2$-bully paths}.
	Continuing in this way gives the \emph{bully path labeling}
	to all balls in $Q$, see \cref{fig0}.

	\begin{figure}[h]\centering
\begin{tikzpicture}[scale=0.5]
\draw[-] (-1,1) --(-2,1);
\draw[-] (-2.2,1) --(-3.2,1);
\draw[-] (-3.4,1) --(-4.4,1);
\draw[-] (-4.6,1) --(-5.6,1);
\draw[-] (-5.8,1) --(-6.8,1);
\draw[-] (-7,1) --(-8,1);
\draw[-] (-8.2,1) --(-9.2,1);
\draw[-] (-9.4,1) --(-10.4,1);

\begin{scope}[shift={(0,1.5)}]
\draw[-] (-1,1) --(-2,1);
\draw[-] (-2.2,1) --(-3.2,1);
\draw[-] (-3.4,1) --(-4.4,1);
\draw[-] (-4.6,1) --(-5.6,1);
\draw[-] (-5.8,1) --(-6.8,1);
\draw[-] (-7,1) --(-8,1);
\draw[-] (-8.2,1) --(-9.2,1);
\draw[-] (-9.4,1) --(-10.4,1);

\draw (-3.9,1.7) circle (0.5);
\filldraw[black] (-3.9,1.2) circle (0.000001pt) node[anchor=south] {$2$};
\draw (-5.1,1.7) circle (0.5);
\filldraw[black] (-5.1,1.2) circle (0.000001pt) node[anchor=south] {$2$};
\draw (-6.3,1.7) circle (0.5);
\filldraw[black] (-6.3,1.2) circle (0.000001pt) node[anchor=south] {$1$};
\draw (-8.7,1.7) circle (0.5);
\filldraw[black] (-8.7,1.2) circle (0.000001pt) node[anchor=south] {$2$};

\end{scope}

\begin{scope}[shift={(0,3)}]
\draw[-] (-1,1) --(-2,1);
\draw[-] (-2.2,1) --(-3.2,1);
\draw[-] (-3.4,1) --(-4.4,1);
\draw[-] (-4.6,1) --(-5.6,1);
\draw[-] (-5.8,1) --(-6.8,1);
\draw[-] (-7,1) --(-8,1);
\draw[-] (-8.2,1) --(-9.2,1);
\draw[-] (-9.4,1) --(-10.4,1);

\draw (-7.5,1.7) circle (0.5);
\filldraw[black] (-7.5,1.2) circle (0.000001pt) node[anchor=south] {$1$};

\end{scope}
\draw[red] (-7.5,4.2)-- (-7.5,3);
\draw[red] (-7.5,3)-- (-6.75,3);

\draw[red] (-6.3,2.7)-- (-6.3,1.5);
\draw[red] (-6.3,1.5)-- (-4.35,1.5);

\draw[blue] (-8.7,2.7)-- (-8.7,2.2);

\draw[blue] (-5.1,2.7)-- (-5.1,1.8);
\draw[blue] (-5.1,1.8)-- (-3.15,1.8);

\draw[blue] (-3.6,2.8)-- (-3.6,2);
\draw[blue] (-3.6,2)-- (-1.95,2);

\draw (-1.5,1.7) circle (0.5);
\filldraw[black] (-1.5,1.2) circle (0.000001pt) node[anchor=south] {$2$};
\draw (-2.7,1.7) circle (0.5);
\filldraw[black] (-2.7,1.2) circle (0.000001pt) node[anchor=south] {$2$};
\draw (-3.9,1.7) circle (0.5);
\filldraw[black] (-3.9,1.2) circle (0.000001pt) node[anchor=south] {$1$};
\draw (-8.7,1.7) circle (0.5);
\filldraw[black] (-8.7,1.2) circle (0.000001pt) node[anchor=south] {$2$};
\draw (-9.9,1.7) circle (0.5);
\filldraw[black] (-9.9,1.2) circle (0.000001pt) node[anchor=south] {$3$};
\end{tikzpicture}
\caption{A multiline queue of type $(2,2,1,4,4,4,2,3)$.} \label{fig0}
\end{figure}
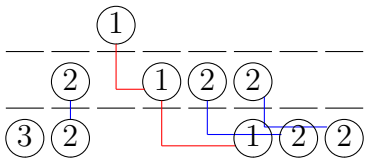

After completing the bully path projection for $Q$, let $w=(w_1,\ldots,w_n)$ be the labeling of the balls read from right to the left in row $L$ (where a vacancy is denoted by $L+1$). 
This will be a composition (not necessarily a permutation).  We say that $Q$ is a multiline queue of \emph{type $w$} and let $MLQ(w)$ denote the set of multiline queues of type $w$. 
We also define the \emph{type} of row $r$ in $Q$ to be 
the labeling of the balls in row $r$ read from right to left 
(where a vacancy is denoted by $r+1$).

 A vacancy in $Q$ is called \emph{$i$-covered}
 if it is traversed by an $i$-bully path, but not  by an 
 $i'$-bully path for $i'<i$. A \textit{trivial bully path} is 
 a bully path that goes straight down from its starting point.
\end{Def}


\begin{Def}\label{def: weight of a multiline queue}
Given an $L\times n$ multiline queue $Q$, let 
$v_r$ be the number of vacancies in row $r$ and 
$z_{r,i}$ be the number of $i$-covered vacancies in row $r$. 
Let $V_i=\sum\limits_{j=i+1}^{L}v_j$.
	The \emph{weight} of the multiline queue $Q$ is defined by
\begin{equation*}
    \wt(Q)=\prod\limits_{i=1}^{L-1}x^{V_i}_i\prod\limits_{1\leq i<r\leq L} \left(\frac{x_r}{x_i}\right)^{z_{r,i}}.
\end{equation*}
\end{Def}
\begin{Ex}
The multiline queue $Q$ in \cref{fig0} is of type $(2,2,1,4,4,4,2,3)$. Its first  and second rows have types $(2,2,2,2,2,1,2,2)$ and $(3,3,2,2,1,3,2,3)$, respectively. There is one $1$-covered vacancy  in the second row and two $1$-covered
	vacancies
 in the third row. There is no $2$-covered vacancy. The weight of $Q$ is 
\begin{equation*}
    \wt(Q)=x^{4+3}_1 x^{3}_2 (\frac{x_2}{x_1}) (\frac{x_3}{x_1})^2=x^{4}_1x^{4}_2x^{2}_3.
\end{equation*}
\end{Ex}

The next theorem was conjectured in \cite{AL} and proved in \cite{AM}.
It holds for the inhomogeneous TASEP on a ring where 
$y_i=0$ for all $i$, and the weights of particles can be 
any positive numbers (with repeats allowed).
\begin{Th}\label{thm:MLQ} \cite{AM}
	Let $w=(w_1,\ldots,w_n)$ be a composition, and 
	consider the inhomogeneous TASEP on a ring (with $y_i=0$ for all $i$)
	whose states are all compositions obtained
by permuting the parts of $w$.
of $w$.  
	Then the (unnormalized) 
	steady state probability $\psi_w$ can be expressed as a 
	weight-generating functions for multiline queues, that is, 
\begin{equation*}
\psi_w=\sum\limits_{Q\in MLQ(w)}\wt(Q).    
\end{equation*}
\end{Th}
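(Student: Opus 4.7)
The plan is to verify that $\widetilde{\psi}_w := \sum_{Q \in MLQ(w)} \wt(Q)$ satisfies the stationary master equation of the inhomogeneous TASEP on a ring at $y_i = 0$. Together with uniqueness of the stationary distribution (up to a scalar) and a normalization check against the sorted configuration, this identifies $\widetilde{\psi}_w$ with $\psi_w$. First I would set up the master equation carefully: interpreting indices cyclically, a state $v$ transitions to $w$ precisely when $v = s_j w$ for some $j$ with $w_j > w_{j+1}$, at rate $x_{w_{j+1}}$; and $w$ itself leaves at each ascent, contributing rate $x_{w_j}$ for each $j$ with $w_j < w_{j+1}$. Stationarity then reads
\[
\sum_{j \,:\, w_j > w_{j+1}} x_{w_{j+1}}\,\widetilde{\psi}_{s_j w} \;=\; \Bigl(\sum_{j \,:\, w_j < w_{j+1}} x_{w_j}\Bigr)\widetilde{\psi}_w.
\]
As a sanity check I would verify that for $w$ sorted, the right-hand side vanishes, the only MLQ of type $w$ consists of trivial bully paths, and the single resulting monomial matches $\psi_{(1,\dots,n)}$ from \eqref{eq:normalization} at $y_i=0$.

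The next step is to proceed by induction on the number of distinct particle weights $L$ (equivalently, the number of rows of the MLQ). The base case $L=1$ is immediate. For the inductive step, I would use the Ferrari--Martin decomposition of an $L$-row MLQ as an $(L-1)$-row MLQ on top plus a bottom row whose bully-path labels are induced from above. This lets us write $\widetilde{\psi}_w$ as an integral over the top $(L-1)$-row configuration of a ``bottom-row contribution,'' and reduces the master equation for $w$ to a compatibility statement between the bottom-row insertion and the inductive stationarity of the projected process.

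The heart of the argument is a local bijective step: for each descent $w_j > w_{j+1}$ in the bottom-row type, swapping the balls at positions $j$ and $j+1$ together with a minimal rerouting of the two bully paths terminating there produces an MLQ of type $s_j w$. I would show that the weight ratio under this local move equals exactly $x_{w_{j+1}}/x_{w_j}$, so summing the bijection over all descent positions, all MLQs, and grouping by the target MLQ of type $s_j w$ reassembles the master equation term by term. Together with a normalization check fixing the constant, this completes the identification $\widetilde{\psi}_w = \psi_w$.

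The main obstacle is that $\wt(Q) = \prod_i x_i^{V_i}\prod_{i<r}(x_r/x_i)^{z_{r,i}}$ is a global invariant: the $z_{r,i}$ counts depend on the entire bully-path forest, so a swap of two bottom-row balls can alter cover-counts in many upper rows at once. To tame this I would group MLQs by their bully-path skeleton above the bottom row and integrate out the bottom-row choice row-by-row, invoking the inductive hypothesis on the $(L-1)$-row projection at the last step. A useful auxiliary lemma would assert that the weight change under a single bully-path crossing factors cleanly as a product of local $x_r/x_i$ contributions supported at the rows where the two paths actually cross, which pins the net ratio to $x_{w_{j+1}}/x_{w_j}$ regardless of the upstream geometry. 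Establishing this factorization, and choosing the grouping under which it interacts correctly with the induction, is where I expect the real work to lie.
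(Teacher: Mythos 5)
Your global strategy (verify the stationary master equation for $\widetilde\psi_w$, invoke uniqueness of the stationary measure on the sector of rearrangements of $w$, and fix the constant against the sorted state) is sound, and both your master equation and your normalization check are correct. The fatal gap is the mechanism you propose for verifying the master equation: a ``local bijective step'' sending an MLQ of type $w$ to one of type $s_jw$ with weight ratio exactly $x_{w_{j+1}}/x_{w_j}$. No such bijection can exist in general, because $\widetilde\psi_{s_jw}/\widetilde\psi_w$ is not a monomial and the sets $MLQ(w)$ and $MLQ(s_jw)$ need not even have the same cardinality. Already for $n=3$: $MLQ(2,1,3)$ has two elements, of weights $x_1$ and $x_2$, while $MLQ(1,2,3)$ has a single element of weight $x_1$; the balance $x_1\psi_{(1,2,3)}+x_2\psi_{(3,1,2)}=x_1\psi_{(2,1,3)}$ holds only after summing over \emph{all} descents, not descent by descent. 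The undeformed master equation simply does not decompose into one identity per descent, so no grouping by bully-path skeleton will produce the per-descent weight-ratio statement you are after; the cancellation between different descent terms is essential, and your auxiliary ``crossing factorization'' lemma is aimed at an identity that is false.

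This is exactly why the paper passes to Cantini's $\mathbf z$-deformation: the deformed generating functions \emph{do} satisfy one relation per descent, namely the exchange equation \eqref{eq:exchange1}, in which $F_{s_iw}(\z)$ is obtained from $F_w(\z)$ by an isobaric divided-difference operator in $z_i,z_{i+1}$ rather than by a weight-preserving bijection. The paper proves the stronger \cref{theorem: zweight} by precisely the induction on the number of rows that you envision, writing $F_w(\z)=\sum_u F_w^u F_u(\z)$ over the possible types $u$ of the second-to-bottom row (your ``bottom-row contribution''), and then verifying the exchange equations through a case analysis of the transfer coefficients $F_w^u$ (\cref{Lemma: fuw analysis1}--\cref{Lemma: fuw analysis4} and \cref{prop:almost}); \cref{thm:MLQ} then follows by taking leading coefficients in $\z$. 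To salvage your plan you would need either to adopt the $\z$-deformation so that per-descent relations become available, or to abandon the bijective step entirely and prove stationarity by a lumping argument on a Markov chain defined on the multiline queues themselves, which is the Ferrari--Martin/Arita--Mallick route.
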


\cref{Thm: bijection} below says that there is 
a bijection between multiline queues associated to Grassmann permutations
and flagged semistandard tableau  
(see \cref{def:flaggedtableaux}).  
After giving some preparatory lemmas,
we state the bijection in 
\cref{def:bijection} and illustrate it in 
\cref{ex:bij}.
Note that \cref{Thm: bijection}  gives a new
proof of 
\cref{thm:main0} when $k=1$.

\begin{Th}\label{Thm: bijection}
Given $\lambda\in\Val(n)$, let $d=(n-\lambda_1,n-\lambda_2,\ldots,n-\lambda_{\length(\lambda)})$. Then there is a bijection $f: MLQ(w(\lambda;n))
	\rightarrow SSYT(\lambda,d)$ 
	such that the number of $i$-covered vacancies in row $r$ of $Q\in MLQ(w(\lambda;n))$ equals the number of $r$'s in row $i$ of $f(Q)$.
\end{Th}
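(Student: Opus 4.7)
The plan is to factor the bijection $f$ through a family of non-intersecting lattice paths, following the approach alluded to immediately before the statement of the theorem. Given $Q \in MLQ(w(\lambda;n))$, I would associate to $Q$ a collection $\mathcal{P}(Q) = (P_1, \ldots, P_{\length(\lambda)})$ of lattice paths, one for each nontrivial bully path. Specifically, the $i$-bully path of $Q$ threads from a ball in row $i$ through rows $i+1,\ldots,L=n-1$, and its sequence of column positions can be recorded as a lattice path using a horizontal (rightward) step for each vacancy traversed in its current row and a vertical (downward) step for each row transition.

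First, I would analyze the structure of $MLQ(w(\lambda;n))$ imposed by the lattice path $L(\lambda;n)$. The key observations are: (i) only bully paths with labels $1,\ldots,\length(\lambda)$ can be nontrivial, since the ``vertical-step labels'' $1,\ldots,n-\lambda_1$ and the ``horizontal-step labels'' $n-\lambda_1+1,\ldots,n$ in $w(\lambda;n)$ form a rigid staircase pattern in row $L$; and (ii) the $i$-bully path must accumulate exactly $\lambda_i$ horizontal shifts in total in order for the projection to row $L$ to match the prescribed type. These observations use \cref{Prop:IGP property and trivial factor} and a direct inspection of the staircase. The sources and sinks of the $P_i$ are then determined by the positions of labels in $w(\lambda;n)$, and the flag bound $d_i = n-\lambda_i$ on row $i$ of the tableau comes from the fact that horizontal shifts of $P_i$ in rows strictly greater than $n-\lambda_i$ would collide with the already-fixed horizontal-step labels.

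Next, I would establish that the paths in $\mathcal{P}(Q)$ are pairwise non-intersecting. If two paths $P_i$ and $P_{i+1}$ shared a vertex, this would force the $i$-bully and $(i+1)$-bully paths in $Q$ to share a ball, contradicting the uniqueness of the bully-path labeling. With non-intersection in hand, the standard encoding of non-intersecting lattice paths as flagged semistandard tableaux yields the bijection: the rows $r$ in which $P_i$ takes a rightward step become, in weakly increasing order, the entries of row $i$ of $f(Q)$. By construction, the number of rightward steps of $P_i$ in row $r$ equals both the number of $r$'s in row $i$ of $f(Q)$ and the number of $i$-covered vacancies in row $r$ of $Q$, which is the desired statistic. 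Column-strictness of $f(Q)$ is then equivalent to the non-crossing property of $\mathcal{P}(Q)$.

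Bijectivity follows by running the construction in reverse: given $T \in SSYT(\lambda,d)$, the entries of row $i$ of $T$ specify the rightward-step rows of a path $P_i$, and the resulting non-intersecting family reconstructs $Q$ row-by-row from the top down. The main obstacle is the rigorous verification of non-intersection for $\mathcal{P}(Q)$, since the cyclic wrap-around allowed in the bully-path definition complicates planar non-crossing arguments; this will require careful tracking of the relative column positions of the balls labelled $i$ and $i+1$ in each row and an inductive argument on row number, using the staircase structure of $w(\lambda;n)$ to rule out wrap-around collisions between adjacent $P_i$ and $P_{i+1}$.
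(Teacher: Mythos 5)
Your overall route is the same as the paper's: encode $Q\in MLQ(w(\lambda;n))$ as a family of non-intersecting lattice paths whose horizontal-step rows become the tableau entries, with the flag bound $n-\lambda_i$ coming from the forced trivial (vertical) tail of the $i$-bully path. However, the one step you actually assert as a proof is wrong. You claim that if $P_i$ and $P_{i+1}$ shared a vertex, the $i$- and $(i+1)$-bully paths would have to share a ball. Distinct bully paths may not share balls, but they routinely traverse the \emph{same vacancy} — this is precisely why \cref{def:MLQ} defines an $i$-covered vacancy as one traversed by an $i$-bully path but by no $i'$-bully path with $i'<i$. So a shared vertex in the lattice-path picture yields no contradiction, and this argument does not establish non-intersection.

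The correct argument, which you gesture at in your final paragraph but do not carry out, is the paper's \cref{Lemma: non intersecting non wrapping}: one shows by downward induction on the row index that in every row the balls labeled $1,\ldots,n-\lambda_1$ occur in increasing order read right to left, which simultaneously forces non-crossing and rules out wrap-around. This in turn rests on \cref{Lemma: MLQ basic} and \cref{Cor: MLQ TRIVIALPATH} (the rigidity of the staircase forcing trivial bottom segments), and on the endpoint computation of \cref{Lemma:identify with non-intersecting lattice paths} pinning each $P_i$ to start at $(\lambda_1+i,i)$ — a fact you assert but do not justify. Since these lemmas constitute essentially all of the content of the theorem, the proposal as written identifies the right strategy but leaves the substantive verification undone, and the only self-contained argument it offers for the key non-intersection claim is invalid.
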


\begin{Lemma}\label{Lemma: MLQ basic}
	Given an $L \times n$ multiline queue $Q$, let 
	$w^{(r)}=\{w^{(r)}_i\}$ be 
		the type of row $r$ in $Q$, where the subscript $i$ in 
	$w^{(r)}_i$ refers to the $i$th column of $Q$, read right to left. Suppose that 
	there exist $a<b\leq L' \leq L$
	and $i, j$ in $\{1,2,\ldots,n\}$ such that:
\begin{align*}
    w^{(L)}_i&=a\\
    w^{(r)}_j&=b, \hspace{2mm} \text{for $ L'\leq r\leq L$}\\
    w^{(r)}_k&\neq a,  \hspace{2mm} \text{for $ L'-1\leq r\leq L$ and $k\in \{i+1,\ldots,j-1\}$}
\end{align*}
where $\{i+1,\ldots,j-1\}$ denotes $\{i+1,\ldots,n,1,\ldots,j-1\}$ if $j<i$. 
Then $w^{(r)}_i=a$ for $ L'-1\leq r\leq L$.  See 
\cref{fig:lemmaQ}.
\end{Lemma}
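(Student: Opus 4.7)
The plan is to prove this by downward induction on $r$, showing that the $a$-bully path passing through $(L,i)$ in fact occupies column $i$ in every row from $L'-1$ up through $L$. Let $p_r$ denote the column in row $r$ of the unique $a$-bully path that matches the ball at $(L,i)$; the base case $p_L = i$ is exactly the hypothesis $w^{(L)}_i = a$, and the inductive step will be to show $p_{r-1} = i$ whenever $p_r = i$ and $r \in [L',L]$.

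The main technical ingredient I would establish first is the following: no $a$-bully path can occupy (in its row-$r$ trajectory) a cell containing a ball labeled $b$ with $b > a$. The reason is that bully paths are processed in increasing order of label, so at the moment the $a$-bully paths are being constructed, every ball destined to receive label $b > a$ is still unmatched. If the row-$r$ segment of some $a$-bully path swept past such a position without matching, it would in fact match the unmatched ball there, forcing its label to be $a$ rather than $b$ --- a contradiction. In particular, combining with hypothesis (2), the $a$-bully path's row-$r$ segment avoids column $j$ for each $r \in [L',L]$.

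For the inductive step, recall that the bully path segment into row $r$ goes vertically from $(r-1, p_{r-1})$ down to $(r, p_{r-1})$ and then travels in the bully path flow direction along row $r$ until it encounters its first unmatched ball, which by hypothesis is at column $i$. In the indexing of \cref{def:MLQ} (positions read right to left, flow physically rightward), the row-$r$ trajectory visits exactly the cyclic arc running \emph{opposite} to $\{i+1,\ldots,j-1\}$ from $p_{r-1}$ to $i$. Case-analyzing $p_{r-1}$:
\begin{itemize}
\item If $p_{r-1} = i$, we are done.
\item If $p_{r-1} \in \{i+1,\ldots,j-1\}$, then $w^{(r-1)}_{p_{r-1}} = a$ contradicts hypothesis (3), which applies since $r-1 \ge L'-1$.
\item If $p_{r-1}$ lies in the complementary cyclic arc $\{j, j+1, \ldots, i-1\}$, then the row-$r$ trajectory of the bully path necessarily passes through column $j$; but $w^{(r)}_j = b > a$ by hypothesis (2) (valid since $r \ge L'$), contradicting the technical ingredient above.
\end{itemize}
Thus $p_{r-1} = i$, which closes the induction and gives $w^{(r)}_i = a$ for all $r \in [L'-1, L]$.

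The main subtlety --- and where I would spend the most care --- is pinning down the cyclic geometry: one must verify that the bully path flow direction, combined with the convention for the arc $\{i+1,\ldots,j-1\}$, really does make hypotheses (2) and (3) jointly cover all of $\{1,\ldots,n\} \setminus \{i\}$ as possible locations for $p_{r-1}$. Once this partition is confirmed (and \cref{fig:lemmaQ} should make it visually evident), the rest of the argument is essentially automatic from the processing order of bully paths; no deeper combinatorial tools are required.
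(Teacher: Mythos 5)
Your proof is correct and follows essentially the same route as the paper's: a downward induction on the row index $r$, where the inductive step rules out $w^{(r-1)}_i\neq a$ by observing that the parent of the ball at $(r,i)$ would then have to sit in a column forcing the $a$-bully path's row-$r$ segment to cross the ball labeled $b>a$ in column $j$, which is impossible since bully paths are built in increasing order of label. The paper phrases the contradiction as "the $a$-bully path traverses the ball labeled by $b>a$" and notes $b\le r$ guarantees a ball (not a vacancy) is present there; your "technical ingredient" is exactly this observation, made slightly more explicit.
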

\begin{proof}
By assumption the statement holds for $r=L$.  We prove it for $r=L-1, L-2,\ldots, L'-1$ in that order. Assume we have $w^{(r)}_i=a$ for some $r$ between $L'$ and $L$. If $w^{(r-1)}_i\neq a$ then we have $w^{(r-1)}_k\neq a$ for $k$ in $\{i,\ldots,j-1\}$. So the $a$-bully path going to the ball $w^{(r)}_i=a$ traverses the position $w^{(r)}_j=b$. Since $b\leq r$, the position $w^{(r)}_j$ is not a vacancy. It is a contradiction since the $a$-bully path traverses the ball labeled by $b>a$. Thus we have $w^{(r-1)}_i=a$.   
\end{proof}

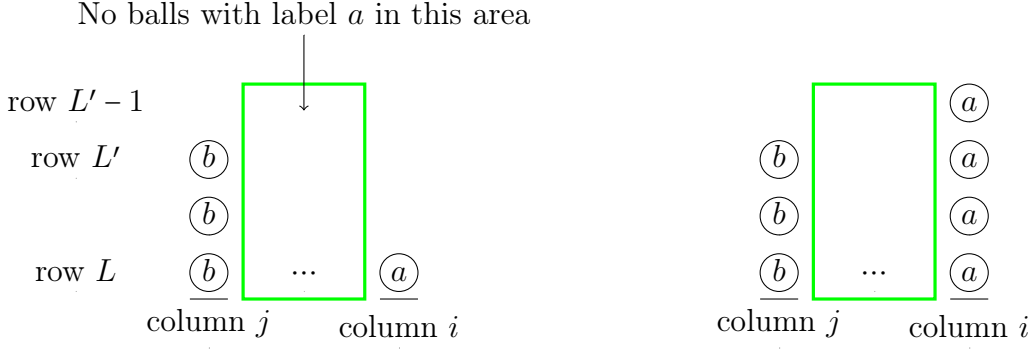
\begin{figure}[h]\centering
\begin{tikzpicture}[scale=0.5]

\draw[-] (-1,1) --(-2,1);
\draw (-1.5,1.7) circle (0.5);
\filldraw[black] (-1.5,1.2) circle (0.000001pt) node[anchor=south] {$b$};

\filldraw[black] (-5,1.2) circle (0.000001pt) node[anchor=south] {row $L$};
\filldraw[black] (-5,4.2) circle (0.000001pt) node[anchor=south] {row $L'$};
\filldraw[black] (-5,5.7) circle (0.000001pt) node[anchor=south] {row $L'-1$};

\filldraw[black] (1,1.2) circle (0.000001pt) node[anchor=south] {$\cdots$};

\draw[green, very thick] (-0.6,1) rectangle (2.6,6.7);

\begin{scope}[shift={(5,0)}]
\draw[-] (-1,1) --(-2,1);
\draw (-1.5,1.7) circle (0.5);
\filldraw[black] (-1.5,1.2) circle (0.000001pt) node[anchor=south] {$a$};
\end{scope}

\begin{scope}[shift={(0,1.5)}]
\draw (-1.5,1.7) circle (0.5);
\filldraw[black] (-1.5,1.2) circle (0.000001pt) node[anchor=south] {$b$};
\end{scope}

\begin{scope}[shift={(0,3)}]
\draw (-1.5,1.7) circle (0.5);
\filldraw[black] (-1.5,1.2) circle (0.000001pt) node[anchor=south] {$b$};
\end{scope}

\begin{scope}[shift={(5,-1.5)}]

\filldraw[black] (-1.5,1.2) circle (0.000001pt) node[anchor=south] {column $i$};
\end{scope}

\begin{scope}[shift={(0,-1.5)}]

\filldraw[black] (-1.5,1.2) circle (0.000001pt) node[anchor=south] {column $j$};
\end{scope}

\filldraw[black] (1,8) circle (0.000001pt) node[anchor=south] {No balls with label $a$ in this area};

\draw[->] (1,8)--(1,6);

\begin{scope}[shift={(15,0)}]
\draw[-] (-1,1) --(-2,1);
\draw (-1.5,1.7) circle (0.5);
\filldraw[black] (-1.5,1.2) circle (0.000001pt) node[anchor=south] {$b$};

\filldraw[black] (1,1.2) circle (0.000001pt) node[anchor=south] {$\cdots$};

\draw[green, very thick] (-0.6,1) rectangle (2.6,6.7);

\begin{scope}[shift={(5,0)}]
\draw[-] (-1,1) --(-2,1);
\draw (-1.5,1.7) circle (0.5);
\filldraw[black] (-1.5,1.2) circle (0.000001pt) node[anchor=south] {$a$};
\end{scope}

\begin{scope}[shift={(5,1.5)}]
\draw (-1.5,1.7) circle (0.5);
\filldraw[black] (-1.5,1.2) circle (0.000001pt) node[anchor=south] {$a$};
\end{scope}

\begin{scope}[shift={(5,3)}]
\draw (-1.5,1.7) circle (0.5);
\filldraw[black] (-1.5,1.2) circle (0.000001pt) node[anchor=south] {$a$};
\end{scope}

\begin{scope}[shift={(5,4.5)}]
\draw (-1.5,1.7) circle (0.5);
\filldraw[black] (-1.5,1.2) circle (0.000001pt) node[anchor=south] {$a$};
\end{scope}

\begin{scope}[shift={(0,1.5)}]
\draw (-1.5,1.7) circle (0.5);
\filldraw[black] (-1.5,1.2) circle (0.000001pt) node[anchor=south] {$b$};
\end{scope}

\begin{scope}[shift={(0,3)}]
\draw (-1.5,1.7) circle (0.5);
\filldraw[black] (-1.5,1.2) circle (0.000001pt) node[anchor=south] {$b$};
\end{scope}

\begin{scope}[shift={(5,-1.5)}]

\filldraw[black] (-1.5,1.2) circle (0.000001pt) node[anchor=south] {column $i$};
\end{scope}

\begin{scope}[shift={(0,-1.5)}]

\filldraw[black] (-1.5,1.2) circle (0.000001pt) node[anchor=south] {column $j$};
\end{scope}

\end{scope}

\end{tikzpicture}
\caption{The diagrams at the left and right of this figure
illustrate the hypothesis and conclusion
of \cref{Lemma: MLQ basic}.}
\label{fig:lemmaQ}
\end{figure}

Recall the definition of $w(\lambda;n)$ from \cref{Def: IGP Consturction}.
\cref{Cor: MLQ TRIVIALPATH}
 shows that the bottom part of 
many of the bully paths 
in $Q\in MLQ(w(\lambda;n))$ will be \emph{trivial} (a column of balls with the same label, as in 
\cref{fig:Applying Coroallry 8.8}),
which in turn will help us show that
the corresponding tableau we associate to $Q$ has the correct number
of rows.

\begin{Cor}\label{Cor: MLQ TRIVIALPATH}
For $\lambda\in\Val(n)$ and $Q\in MLQ(w(\lambda;n))$, let 
$w^{(r)}=\{w^{(r)}_i\}$ be 
the type of row $r$ in $Q$, where the subscript $i$ in 
$w^{(r)}_i$ refers to the $i$th column, read right to left.
Choose any $1 \leq i \leq n$.  

If $1\leq w^{(n-1)}_{i}\leq \length(\lambda)$, then  
	  $w^{(r)}_{i}=w^{(n-1)}_{i}$ for $ n-\lambda_{w^{(n-1)}_{i}}\leq r\leq n-1$.   
	  In other words, the height of the trivial part
	  of the bully path in column $i$ will be $\lambda_{w_i^{(n-1)}}$.

	If $\length(\lambda)+1\leq w^{(n-1)}_{i}\leq n$, then 
  $w^{(r)}_{i}=w^{(n-1)}_{i}$ for $w^{(n-1)}_{i}\leq r\leq n-1$. 
	 In other words, the height of the trivial part
	  of the bully path in column $i$ will be $n-w_i^{(n-1)}$.
\end{Cor}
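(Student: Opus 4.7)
The plan is to iterate \cref{Lemma: MLQ basic} with carefully chosen witness columns, performing a simultaneous induction on the row index $r$ decreasing from $n-1$ that establishes all corollary-claimed positions at each row in parallel. The key structural input is the block decomposition of the labels of $w(\lambda;n)$ induced by the lattice path $L(\lambda;n)$: immediately after the vertical label $j$ in $w(\lambda;n)$, one encounters $\lambda_j - \lambda_{j+1}$ consecutive horizontal labels followed by vertical label $j+1$, with an analogous block structure inside horizontal runs. This gives precise control over the labels in the MLQ columns adjacent to any given column $i$.

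For each column $i$ with $w^{(n-1)}_i = j$ and each target row $r$ in the corollary's range, I apply \cref{Lemma: MLQ basic} with $a=j$, $L=n-1$, $L'=r+1$, and a witness column $j'$ chosen so that $b := w^{(n-1)}_{j'} > j$. In the generic case when $w^{(n-1)}_{i-1} > j$---for instance when $i$ corresponds to a vertical label in case 1, or to the interior of a label block in case 2---one takes $j' = i-1$; at block boundaries (for example at the last horizontal label of a horizontal run, where $w^{(n-1)}_{i-1}$ is a smaller vertical label), one instead takes a nearby $j'$ with $w^{(n-1)}_{j'} > j$ whose existence is guaranteed by the block structure. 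In every case, the trivial path of $b$, already established by the inductive hypothesis applied to column $j'$, extends far enough to provide the witness condition $w^{(r')}_{j'} = b$ for $r' \in [L', L]$.

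The main technical obstacle is verifying the ``no $a$ in the range'' hypothesis at the newly extended row $r' = r$. For $r' > r$ this is immediate from the outer inductive hypothesis, which confines label $j$ to column $i$. At $r'=r$ the verification exploits the simultaneity of the induction: by the corollary applied at row $r$ to the other classes $k\neq j$ whose trivial paths reach $r$ (all established in parallel), every column in the forbidden cyclic range $\{i+1, \ldots, j'-1\}$ is pinned to a specific class different from $j$ or is a vacancy column, leaving class $j$ with no alternative location. Any remaining candidate is ruled out by the same bully-walk contradiction used in the proof of \cref{Lemma: MLQ basic}: if class $j$ at row $r$ sat at a forbidden column, the class-$j$ walk to its pinned position at row $r+1$ would have to pass through a column occupied by a class $\geq j+1$ (fixed at row $r+1$ by the inductive hypothesis), forcing the walk to stop early and contradicting its endpoint at column $i$.
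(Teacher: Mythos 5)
Your overall strategy --- iterate \cref{Lemma: MLQ basic} with nearby witness columns, using the block structure of $w(\lambda;n)$ along the lattice path --- is the right one and matches the paper's in spirit, but there is a concrete error in the choice of witness that breaks the argument as written. You take the witness column to be $j'=i-1$. Under the stated convention (columns indexed right to left, bully paths travelling physically right, i.e.\ toward \emph{decreasing} column index mod $n$), a guard at column $i-1$ is \emph{downstream} of column $i$: any bully path scanning toward column $i$ reaches $i$ before it could ever reach $i-1$, so the guard intercepts nothing. Equivalently, with $j'=i-1$ the forbidden cyclic range $\{i+1,\dots,j'-1\}$ of \cref{Lemma: MLQ basic} becomes $\{i+1,\dots,i-2\}$, i.e.\ all but two columns of the ring, and verifying ``no $a$ in that range at row $r$'' is essentially the conclusion you are trying to prove. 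The witness must sit at index $i+1$ (or some $k>i+1$), where the next letter of $w(\lambda;n)$ is always \emph{larger} --- note that your ``generic case $w^{(n-1)}_{i-1}>j$'' already fails for ordinary vertical labels (e.g.\ the label $2$ in $w((3,3,2,1);9)=(1,2,7,3,8,4,9,5,6)$ is preceded by $1$), whereas $w^{(n-1)}_{i+1}>w^{(n-1)}_i$ holds for every vertical label. This is exactly the paper's choice, and with it the forbidden range is empty (adjacent witness) or a short run of consecutive vertical labels (the case of a horizontal label whose successor is several columns away).

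A secondary issue is the organization as a ``simultaneous'' induction on the row index $r$: to verify the hypothesis of \cref{Lemma: MLQ basic} at the new row $r$ for column $i$ you invoke the claims at row $r$ for the columns in the forbidden range, which are being established ``in parallel'' at the same step. This is circular unless you specify an order of treatment within each row step --- and the order that breaks the circularity (do the columns with adjacent witnesses and empty forbidden ranges first, then the columns guarding horizontal labels) is precisely the paper's induction on the column index $i$ from largest to smallest, which makes the row induction redundant. The paper's proof inducts once on $i$, applies \cref{Lemma: MLQ basic} for all relevant rows at once with witness $i+1$ (or the nearest column $k$ carrying the label $w^{(n-1)}_i+1$), and checks the nonempty forbidden range in the horizontal-label case using the already-established claims for the intermediate columns, whose trivial paths extend one row further down than the witness's. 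I would rewrite your argument in that form.
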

\begin{proof}

Note that if $w^{(n-1)}_i=n,n-1$ or $\lambda_{w^{(n-1)}_i}=1$ then the claim is vacuous. There are several cases to consider.

\begin{enumerate}
	\item[(1)] Suppose $1\leq i\leq \lambda_1+\length(\lambda)-1$. We use induction on $i$ starting from the largest number to the smallest. The base case $i= \lambda_1+\length(\lambda)-1$ is trivial since we have either $w^{(n-1)}_i=n-1$ or $\lambda_{w^{(n-1)}_{i}}=1$. Suppose the 
	desired statement holds  for $j$ such that $i< j\leq \lambda_1+\length(\lambda)-1$. 
		\begin{enumerate}
			\item[(a)]
				If $1\leq w^{(n-1)}_i\leq \length(\lambda)$ then we have either $1\leq w^{(n-1)}_{i+1}\leq \length(\lambda)$ with $w^{(n-1)}_{i+1}= w^{(n-1)}_i+1$ and $\lambda_{ w^{(n-1)}_i}=\lambda_{ w^{(n-1)}_{i+1}}$, or $\length(\lambda)+1\leq w^{(n-1)}_{i+1}\leq n-1$ with $w^{(n-1)}_{i+1}=n-\lambda_{w^{(n-1)}_{i}}+1$. Either way, the 
	induction hypothesis implies that $w^{(r)}_{i+1}=w^{(n-1)}_{i+1}$ for $n-\lambda_{w^{(n-1)}_i}+1\leq r\leq n-1$, and applying \cref{Lemma: MLQ basic} gives the claim for $i$. 
\item[(b)] If $\length(\lambda)+1\leq w^{(n-1)}_i\leq n-1$ then consider $k$ such that $w^{(n-1)}_k=w^{(n-1)}_i+1$. If $k=i+1$ then the claim follows from \cref{Lemma: MLQ basic} as before. If $k>i+1$ we have $1\leq w^{(n-1)}_l\leq \length(\lambda)$ and $n-\lambda_{w^{(n-1)}_l}+1=w^{(n-1)}_k$ for $i+1\leq l\leq k-1$. By the induction hypothesis, we have $w^{(r)}_l=w^{(n-1)}_l$ for $w^{(n-1)}_{i}\leq r\leq n-1$ and $i+1\leq l\leq k-1$. In particular, $w^{(r)}_l\neq w^{(n-1)}_i$ for $w^{(n-1)}_{i}\leq r\leq n-1$ and $i+1\leq l\leq k-1$. Now applying \cref{Lemma: MLQ basic} gives the claim for $i$.
		\end{enumerate}
	\item[(2)] If 
 $i=\lambda_1+\length(\lambda)$, the desired statement is trivial.  
	\item[(3)] If  $\lambda_1+\length(\lambda)+1\leq i\leq n$, then the 
	proof is similar to that of Case 1.
\end{enumerate}
\end{proof}

\begin{Ex}
Consider $\lambda=(3,3,2,1)\in \Val(9)$ and $w(\lambda;9)=(1,2,7,3,8,4,9,5,6)$. For  $Q\in MLQ(w(\lambda;9))$, \cref{Cor: MLQ TRIVIALPATH} implies that the bottom portions of 
many of the $i$-bully paths are trivial (end with a sequence of 
	vertical steps), as shown 
	in \cref{fig:Applying Coroallry 8.8}.
\end{Ex}

\begin{figure}[h]\centering
\begin{tikzpicture}[scale=0.5]

\draw[-] (-1,1) --(-2,1);
\draw[-] (-2.2,1) --(-3.2,1);
\draw[-] (-3.4,1) --(-4.4,1);
\draw[-] (-4.6,1) --(-5.6,1);
\draw[-] (-5.8,1) --(-6.8,1);
\draw[-] (-7,1) --(-8,1);
\draw[-] (-8.2,1) --(-9.2,1);
\draw[-] (-9.4,1) --(-10.4,1);
\draw[-] (-10.6,1) --(-11.6,1);

\draw (-1.5,1.7) circle (0.5);
\filldraw[black] (-1.5,1.2) circle (0.000001pt) node[anchor=south] {$1$};

\draw (-2.7,1.7) circle (0.5);
\filldraw[black] (-2.7,1.2) circle (0.000001pt) node[anchor=south] {$2$};

\draw (-3.9,1.7) circle (0.5);
\filldraw[black] (-3.9,1.2) circle (0.000001pt) node[anchor=south] {$7$};

\draw (-5.1,1.7) circle (0.5);
\filldraw[black] (-5.1,1.2) circle (0.000001pt) node[anchor=south] {$3$};

\draw (-6.3,1.7) circle (0.5);
\filldraw[black] (-6.3,1.2) circle (0.000001pt) node[anchor=south] {$8$};

\draw (-7.5,1.7) circle (0.5);
\filldraw[black] (-7.5,1.2) circle (0.000001pt) node[anchor=south] {$4$};

\draw (-9.9,1.7) circle (0.5);
\filldraw[black] (-9.9,1.2) circle (0.000001pt) node[anchor=south] {$5$};

\draw (-11.1,1.7) circle (0.5);
\filldraw[black] (-11.1,1.2) circle (0.000001pt) node[anchor=south] {$6$};

\begin{scope}[shift={(0,1.5)}]
\draw (-1.5,1.7) circle (0.5);
\filldraw[black] (-1.5,1.2) circle (0.000001pt) node[anchor=south] {$1$};

\draw (-2.7,1.7) circle (0.5);
\filldraw[black] (-2.7,1.2) circle (0.000001pt) node[anchor=south] {$2$};

\draw (-3.9,1.7) circle (0.5);
\filldraw[black] (-3.9,1.2) circle (0.000001pt) node[anchor=south] {$7$};

\draw (-5.1,1.7) circle (0.5);
\filldraw[black] (-5.1,1.2) circle (0.000001pt) node[anchor=south] {$3$};

\draw (-9.9,1.7) circle (0.5);
\filldraw[black] (-9.9,1.2) circle (0.000001pt) node[anchor=south] {$5$};

\draw (-11.1,1.7) circle (0.5);
\filldraw[black] (-11.1,1.2) circle (0.000001pt) node[anchor=south] {$6$};
\end{scope}

\begin{scope}[shift={(0,3)}]
\draw (-1.5,1.7) circle (0.5);
\filldraw[black] (-1.5,1.2) circle (0.000001pt) node[anchor=south] {$1$};

\draw (-2.7,1.7) circle (0.5);
\filldraw[black] (-2.7,1.2) circle (0.000001pt) node[anchor=south] {$2$};

\draw (-9.9,1.7) circle (0.5);
\filldraw[black] (-9.9,1.2) circle (0.000001pt) node[anchor=south] {$5$};

\draw (-11.1,1.7) circle (0.5);
\filldraw[black] (-11.1,1.2) circle (0.000001pt) node[anchor=south] {$6$};
\end{scope}

\begin{scope}[shift={(0,4.5)}]
\draw (-9.9,1.7) circle (0.5);
\filldraw[black] (-9.9,1.2) circle (0.000001pt) node[anchor=south] {$5$};

\end{scope}

\draw[-][red] (-11.1,2.7) --(-11.1,2.2);
\draw[-][red] (-11.1,4.2) --(-11.1,3.7);

\draw[-][red] (-9.9,2.7) --(-9.9,2.2);
\draw[-][red] (-9.9,4.2) --(-9.9,3.7);
\draw[-][red] (-9.9,5.7) --(-9.9,5.2);

\draw[-][blue] (-5.1,2.7) --(-5.1,2.2);

\draw[-][blue] (-2.7,4.2) --(-2.7,3.7);
\draw[-][blue] (-2.7,2.7) --(-2.7,2.2);

\draw[-][red] (-3.9,2.7) --(-3.9,2.2);

\draw[-][blue] (-1.5,2.7+1.5) --(-1.5,2.2+1.5);
\draw[-][blue] (-1.5,2.7) --(-1.5,2.2);

\end{tikzpicture}
\caption{Part of a multiline queue in $MLQ(w(3,3,2,1);9)=MLQ(1,2,7,3,8,4,9,5,6)$.} 
\label{fig:Applying Coroallry 8.8}
\end{figure}
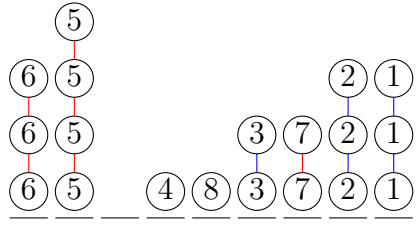

The following lemma is closely related to \cite[Lemma 5.12]{AGS}.  
\begin{Lemma}\label{Lemma: non intersecting non wrapping}
Fix $\lambda\in\Val(n)$ and $Q\in MLQ(w(\lambda;n))$, and consider row $r$
of $Q$.
If $n-\lambda_1 \leq r\leq n-1$ then the numbers 1 through $n-\lambda_1$ 
	occur in increasing order as we read the row type $w^{(r)}$
	(from right to left as usual). And if $r<n-\lambda_1$, the numbers 1 through $r$ occur in increasing order in $w^{(r)}$. 
It follows that the $i$-bully paths for $1\leq i\leq n-\lambda_1$ are 
non-intersecting and they do not wrap around.
\end{Lemma}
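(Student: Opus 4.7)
The plan is to establish the two statements about row types $w^{(r)}$ by reverse induction on $r$, starting from $r=n$; the third statement about the bully paths then follows as an immediate corollary. For the base case $r=n$, we have $w^{(n)}=w(\lambda;n)$ by definition. Since the vertical steps of the lattice path $L(\lambda;n)$ are labeled $1, 2, \ldots, n-\lambda_1$ from top to bottom and are encountered in that order as one traces the path from upper-right to lower-left, and since $w^{(n)}$ is indexed right to left, the labels $1,\ldots,n-\lambda_1$ appear in increasing order in $w^{(n)}$.

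For the inductive step I would split into two regimes. When $r\in[n-\lambda_1,n-1]$, applying the inductive hypothesis to $w^{(r+1)}$ and combining it with \cref{Cor: MLQ TRIVIALPATH}---which fixes the column of label $i\le\length(\lambda)$ throughout rows $n-\lambda_i,\ldots,n-1$ (respectively rows $i,\ldots,n-1$ for $\length(\lambda)<i\le n-\lambda_1$)---reduces matters to controlling the bully-path matching between rows $r$ and $r+1$ for the indices whose trivial range does not yet cover row $r$. When $r<n-\lambda_1$, only labels $1,\ldots,r$ appear in row $r$, and processing the bully paths in order $1,2,\ldots,r$ while using the inductive hypothesis on $w^{(r+1)}$ yields that these labels also occur in increasing right-to-left order in $w^{(r)}$.

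The third claim is then a direct corollary: if in every row the labels $1,\ldots,n-\lambda_1$ appear in the same left-to-right order (namely the order inherited from $w^{(n)}$), then the corresponding bully paths cannot cross, and moving weakly rightward between consecutive rows is precisely the assertion of non-wrapping. I expect the main obstacle to be rigorously ruling out wrap-around in the bully-path matching; this requires exploiting the specific structure of $w(\lambda;n)$, most notably that label $1$ always occupies the rightmost position in row $n$, which forces the $1$-bully path to anchor the rightmost column and prevents any of the first $n-\lambda_1$ bully paths from wrapping around.
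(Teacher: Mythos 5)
Your skeleton matches the paper's: downward induction on the row index, with the bottom row as base case (note it is row $n-1$, not row $n$ --- the array has $n-1$ rows since the largest label is $n$) and \cref{Cor: MLQ TRIVIALPATH} invoked to pin down certain columns. However, the inductive step is where all the content lives, and you never actually supply it. The argument that is needed is: if balls $1,\dots$ sit in row $r+1$ at columns $a_1<\cdots$, then the ball labeled $i$ in row $r$ must sit at some column $b_i$ with $a_i\le b_i<a_{i+1}$, because otherwise the $i$-bully path, travelling right from column $b_i$ in row $r+1$, would reach the ball that is destined to be labeled $i+1$ \emph{before} reaching ball $i$; that ball is still unmatched when the $i$-bully path is built (bully paths are constructed in increasing order of label), so the path would be forced to match it, contradicting its label. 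Saying the induction ``reduces matters to controlling the bully-path matching'' and that processing paths in order ``yields'' the conclusion is a restatement of the claim, not a proof of it.

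More seriously, your proposed mechanism for the part you correctly identify as the main obstacle --- ruling out wrap-around --- does not work. You want the ball labeled $1$ in the rightmost column to ``anchor'' that column and block the other paths. But by the time the $i$-bully path is constructed for $i\ge 2$, the ball labeled $1$ is already matched, and bully paths skip matched balls; so ball $1$ is no obstruction whatsoever to a wrapping $i$-bully path. (It is also only true for rows $r\ge n-\lambda_1$ that ball $1$ occupies column $1$.) The correct blocker is always an \emph{unmatched ball of larger label}: for $i<n-\lambda_1$ it is the ball labeled $i+1$ in row $r+1$, and for the extremal case $i=n-\lambda_1$ the paper uses \cref{Cor: MLQ TRIVIALPATH} to locate the ball labeled $n-\lambda_1+1$ in column $\mul(\lambda)+1$ of row $r+1$ and to force $b_{n-\lambda_1}\ge\mul(\lambda)+1$, so that a wrapping path would have to cross that larger-labeled, still-unmatched ball. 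Without replacing your ``anchor'' argument by this one, the non-wrapping claim is unproved.
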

\begin{proof}
We use  induction on $r$ starting from the largest number to the smallest. The base case $r=n-1$ is trivial. 

	First consider the case $n-\lambda_1\leq r<n-1$ and assume the statement holds for $(r+1)$. Then by the induction hypothesis, $w^{(r+1)}_{a_i}=i$ for $\leq i \leq n-\lambda_1$ and $a_1<\cdots<a_{n-\lambda_1}$. For $1\leq i\leq n-\lambda_1-1$, it is immediate that $w^{(r)}_{b_i}=i$ for some $a_i\leq b_i <a_{i+1}$, otherwise the $i$-bully path must 
	cross over the ball labeled $i+1$ in row $r+1$ and column $a_{i+1}$, which is a contradiction.
	It remains to show  $w^{(r)}_{b_{n-\lambda_1}}=n-\lambda_1$ for some $a_{n-\lambda_1}\leq b_{n-\lambda_1}$. By \cref{Cor: MLQ TRIVIALPATH} we have $w^{(r)}_{k}=k$ for $1\leq k\leq \mul(\lambda)$ and $w^{(r+1)}_{\mul(\lambda+1)}=n-\lambda_1+1$. Thus $b_{n-\lambda_1}\geq \mul(\lambda)+1$ and if $a_{n-\lambda_1}> b_{n-\lambda_1}$ then the $(n-\lambda_1)$-bully path must cross over the 
	ball labeled $n-\lambda_1+1$ in row $r+1$.

For the case $r<n-\lambda_1$, we use
induction on $r$, treating row $n-\lambda_1$ as our base case,
and decreasing $r$ by $1$ at each step.
	Assume the statement holds for row $(r+1)$.
	 Then by the induction hypothesis 
	we have positions $a_1<\cdots<a_{r+1}$
such that 
	$w^{(r+1)}_{a_i}=i$ for $1 \leq i \leq r+1$.
	For $1\leq i\leq r$, it is immediate that $w^{(r)}_{b_i}=i$ for some $a_i\leq b_i <a_{i+1}$, otherwise the 
	$i$-bully path must cross over the ball labeled $i+1$
	in row $r+1$. 
\end{proof}

\begin{Lemma}\label{Lemma:identify with non-intersecting lattice paths}
For $\lambda\in\Val(n)$, we can identify a multiline queue $Q\in MLQ(w(\lambda;n))$ with a set of non-intersecting lattice paths $\{P_1,\ldots,P_{\length(\lambda)}\}$ where $P_i$ is a lattice path from $(\lambda_1+i,i)$ to $(\lambda_1-\lambda_i+i,n-\lambda_i)$.
\end{Lemma}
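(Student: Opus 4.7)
My plan is to construct the lattice paths $P_i$ explicitly from the bully paths of $Q$, verify their endpoints, and establish non-intersection. I will use coordinates $(x,y)$ where $x$ is the column index measured from the right and $y$ is the row index; a bully path moving one row down in the queue then corresponds to a $+y$ step in the lattice, while moving physically right in the queue corresponds to a $-x$ step. For each $1 \leq i \leq \length(\lambda)$ let $p_i^{(r)}$ denote the column (from the right) of the label-$i$ ball in row $r$ of $Q$; in particular $p_i^{(n-1)} = j_i := \lambda_1 - \lambda_i + i$. Since $\length(\lambda) \leq n-\lambda_1$ for $\lambda \in \Val(n)$, all the indices $i$ under consideration lie in the regime of \cref{Lemma: non intersecting non wrapping}.

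First I would note that \cref{Lemma: non intersecting non wrapping} guarantees the $i$-bully path $B_i$ is non-wrapping, so $B_i$ is already a lattice path in $(x,y)$ from $(p_i^{(i)}, i)$ to $(j_i, n-1)$ built from $+y$ and $-x$ unit steps. \cref{Cor: MLQ TRIVIALPATH} then tells me the portion of $B_i$ in rows $n-\lambda_i, \ldots, n-1$ is a vertical segment at the constant column $j_i$, so truncating this tail gives a path $B_i'$ from $(p_i^{(i)}, i)$ to $(j_i, n-\lambda_i) = (\lambda_1 - \lambda_i + i, n-\lambda_i)$, which already matches the endpoint required for $P_i$. I then define $P_i$ by prepending a horizontal segment at height $y=i$ from $(\lambda_1+i, i)$ to $(p_i^{(i)}, i)$ and concatenating with $B_i'$.

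The main obstacle is to show this prepend is well-defined, i.e., that $p_i^{(i)} \leq \lambda_1 + i$. The plan is to establish the strictly increasing chain
\[
 p_1^{(1)} < p_2^{(2)} < \cdots < p_{n-\lambda_1}^{(n-\lambda_1)}
\]
inside $\{1, \ldots, n\}$. The ordering assertion of \cref{Lemma: non intersecting non wrapping} in row $k+1$ gives $p_{k+1}^{(k+1)} > p_k^{(k+1)}$, and the bully-path procedure forces the columns $p_k^{(k+1)}+1, \ldots, p_k^{(k)}$ in row $k+1$ to be vacancies (these are precisely the cells the $k$-bully path traverses horizontally when moving from row $k$ to row $k+1$, during which it cannot encounter any unmatched ball until reaching $p_k^{(k+1)}$). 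Hence the ball $p_{k+1}^{(k+1)}$ cannot fall in this interval, forcing $p_{k+1}^{(k+1)} > p_k^{(k)}$. A strict chain of length $n-\lambda_1$ inside $\{1,\ldots,n\}$ immediately yields $p_k^{(k)} \leq k + \lambda_1$ by pigeonhole, which covers $k = i \leq \length(\lambda) \leq n-\lambda_1$.

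Finally, non-intersection of $\{P_1, \ldots, P_{\length(\lambda)}\}$ follows easily: the bodies $B_i'$ are pairwise non-intersecting by \cref{Lemma: non intersecting non wrapping}, the prepended horizontal segments lie at pairwise distinct heights $y = 1, \ldots, \length(\lambda)$, and for $i \neq j$ the prepend of $P_i$ cannot meet the body $B_j'$: if $j > i$ then $B_j'$ lives at heights $y \geq j > i$, while if $j < i$ then at height $y=i$ the body $B_j'$ sits at column $p_j^{(i)} < p_i^{(i)}$ by the increasing-order assertion of \cref{Lemma: non intersecting non wrapping}, hence strictly to the right of the prepend interval $[p_i^{(i)}, \lambda_1 + i]$ of $P_i$.
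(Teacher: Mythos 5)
Your proposal is correct in substance and follows the same skeleton as the paper's proof --- truncate the trivial tails via \cref{Cor: MLQ TRIVIALPATH}, invoke \cref{Lemma: non intersecting non wrapping} for non-wrapping and non-intersection of the path bodies, and extend each path horizontally back to the anti-diagonal of starting points $(\lambda_1+i,i)$ --- but you establish the crucial bound $p_i^{(i)}\leq \lambda_1+i$ by a genuinely different argument. The paper anchors this bound at $i=\length(\lambda)$ by locating the trivial $(\length(\lambda)+1)$-bully path at column $\lambda_1+\length(\lambda)+1$ (again via \cref{Cor: MLQ TRIVIALPATH}), which the $\length(\lambda)$-bully path cannot cross, and then propagates the bound downward to smaller $i$ using non-intersection; you instead build the increasing chain $p_1^{(1)}<\cdots<p_{n-\lambda_1}^{(n-\lambda_1)}$ inside $\{1,\ldots,n\}$ and conclude by pigeonhole. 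Your route is arguably cleaner and more self-contained, since it replaces the somewhat delicate backward propagation with a single counting step. Two small points need tightening. First, the cells of row $k+1$ in columns $p_k^{(k+1)}+1,\ldots,p_k^{(k)}$ need not be vacancies a priori --- they could in principle hold balls already matched to smaller labels --- but your operative conclusion stands, since the ball that will receive label $k+1$ is still unmatched when the $k$-bully path is built, so it cannot lie in the traversed interval. Second, in the non-intersection argument for $j<i$, the body $B_j'$ occupies the whole column interval $[p_j^{(i)},p_j^{(i-1)}]$ at height $i$, not just the single column $p_j^{(i)}$; you need $p_j^{(i-1)}<p_i^{(i)}$, which follows from $p_j^{(i-1)}\leq p_{i-1}^{(i-1)}<p_i^{(i)}$ by \cref{Lemma: non intersecting non wrapping} applied in row $i-1$ together with your chain. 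With these two repairs the argument is complete.
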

\begin{proof}
Let $w^{(r)}$ be the type of row $r$ in $Q$ (read right to left) and we denote $(i,j)$ to be the position for $w^{(j)}_i$ in $Q$. For each $1\leq i\leq \length(\lambda)$, we extend $i$-bully path so that it starts from $(n,i)$, far left position in $Q$. With this setting, we first claim that $\length(\lambda)$-bully path passes the point $(\lambda_1+\length(\lambda),\length(\lambda))$. If $n=\lambda_1+\length(\lambda)$, there is nothing to prove. If $n>\lambda_1+\length(\lambda)$ then we have $w^{(n-1)}_{\lambda_1+\length(\lambda)+1}=\length(\lambda)+1$ so by \cref{Cor: MLQ TRIVIALPATH} we know $w^{(\length(\lambda)+1)}_{\lambda_1+\length(\lambda)+1}=\length(\lambda)+1$. Since $\length(\lambda)$-bully path cannot traverse the position $(\lambda_1+\length(\lambda)+1,\length(\lambda)+1)$, it must pass $(\lambda_1+\length(\lambda),\length(\lambda))$. 

By \cref{Lemma: non intersecting non wrapping}, $i$-bully paths for $1\leq i\leq \length(\lambda)$ are non-intersecting and $\length(\lambda)$-bully path passes the point $(\lambda_1+\length(\lambda),\length(\lambda))$. So $(\length(\lambda)-1)$-bully path must pass the point $(\lambda_1+\length(\lambda)-1,\length(\lambda))-1$. Repeating this we deduce each $i$-bully path must pass the position $(\lambda_1+i,i)$ for $1\leq i\leq \length(\lambda)$. From the definition of $w(\lambda;n)$ we have $w^{(n-1)}_{\lambda_1+i-\lambda_i}=i$ and by \cref{Cor: MLQ TRIVIALPATH} $i$-bully path only moves vertically from the position $(\lambda_1+i-\lambda_i,n-\lambda_i)$. So it is enough to assign a lattice path from $(\lambda_1+i,i)$ to $(\lambda_1+i-\lambda_i,n-\lambda_i)$ to decide $i$-bully path. 

Note that $i$-bully paths for $i>\length(\lambda)$ are trivial bully paths by \cref{Cor: MLQ TRIVIALPATH}. So we can identify $Q\in MLQ(w(\lambda;n))$ with a set of non-intersecting lattice paths \{$P_1,\ldots,P_{\length(\lambda)}\}$ where $P_i$ is a path from $(\lambda_1+i,i)$ to $(\lambda_1+i-\lambda_i,n-\lambda_i)$. 
\end{proof}

Now we can define the bijection whose existence was asserted in 
 \cref{Thm: bijection}. 
\begin{Def}\label{def:bijection}
Given $\lambda\in\Val(n)$, set $d=(n-\lambda_1,n-\lambda_2,\ldots,n-\lambda_{\length(\lambda)})$, and choose some
 $Q\in MLQ(w(\lambda;n))$.  We associate to $Q$ 
a set of non-intersecting lattice paths $\{P_1,\ldots,P_{\length(\lambda)}\}$ as in \cref{Lemma:identify with non-intersecting lattice paths}. 
	We then let $f(Q)$ denote the semistandard tableaux with 
	$\length(\lambda)$ rows, where the entries in the $i$th row
	record the row numbers of the horizontal steps of path $P_i$.
	Clearly the $i$th row has $\lambda_i$ entries which 
	are bounded by $n-\lambda_i$, so 
 $f(Q) \in 
SSYT(\lambda,d)$.
\end{Def}

\begin{figure}[h]\centering
\begin{tikzpicture}[scale=0.5]

\draw[-] (-1,1) --(-2,1);
\draw[-] (-2.2,1) --(-3.2,1);
\draw[-] (-3.4,1) --(-4.4,1);
\draw[-] (-4.6,1) --(-5.6,1);
\draw[-] (-5.8,1) --(-6.8,1);
\draw[-] (-7,1) --(-8,1);
\draw[-] (-8.2,1) --(-9.2,1);
\draw[-] (-9.4,1) --(-10.4,1);
\draw[-] (-10.6,1) --(-11.6,1);

\draw (-1.5,1.7) circle (0.5);
\filldraw[black] (-1.5,1.2) circle (0.000001pt) node[anchor=south] {$1$};

\draw (-2.7,1.7) circle (0.5);
\filldraw[black] (-2.7,1.2) circle (0.000001pt) node[anchor=south] {$2$};

\draw (-3.9,1.7) circle (0.5);
\filldraw[black] (-3.9,1.2) circle (0.000001pt) node[anchor=south] {$7$};

\draw (-5.1,1.7) circle (0.5);
\filldraw[black] (-5.1,1.2) circle (0.000001pt) node[anchor=south] {$3$};

\draw (-6.3,1.7) circle (0.5);
\filldraw[black] (-6.3,1.2) circle (0.000001pt) node[anchor=south] {$8$};

\draw (-7.5,1.7) circle (0.5);
\filldraw[black] (-7.5,1.2) circle (0.000001pt) node[anchor=south] {$4$};

\draw (-9.9,1.7) circle (0.5);
\filldraw[black] (-9.9,1.2) circle (0.000001pt) node[anchor=south] {$5$};

\draw (-11.1,1.7) circle (0.5);
\filldraw[black] (-11.1,1.2) circle (0.000001pt) node[anchor=south] {$6$};

\begin{scope}[shift={(0,1.5)}]
\draw (-1.5,1.7) circle (0.5);
\filldraw[black] (-1.5,1.2) circle (0.000001pt) node[anchor=south] {$1$};

\draw (-2.7,1.7) circle (0.5);
\filldraw[black] (-2.7,1.2) circle (0.000001pt) node[anchor=south] {$2$};

\draw (-3.9,1.7) circle (0.5);
\filldraw[black] (-3.9,1.2) circle (0.000001pt) node[anchor=south] {$7$};

\draw (-5.1,1.7) circle (0.5);
\filldraw[black] (-5.1,1.2) circle (0.000001pt) node[anchor=south] {$3$};

\draw (-7.5,1.7) circle (0.5);
\filldraw[black] (-7.5,1.2) circle (0.000001pt) node[anchor=south] {$4$};

\draw (-9.9,1.7) circle (0.5);
\filldraw[black] (-9.9,1.2) circle (0.000001pt) node[anchor=south] {$5$};

\draw (-11.1,1.7) circle (0.5);
\filldraw[black] (-11.1,1.2) circle (0.000001pt) node[anchor=south] {$6$};
\end{scope}

\begin{scope}[shift={(0,3)}]
\draw (-1.5,1.7) circle (0.5);
\filldraw[black] (-1.5,1.2) circle (0.000001pt) node[anchor=south] {$1$};

\draw (-2.7,1.7) circle (0.5);
\filldraw[black] (-2.7,1.2) circle (0.000001pt) node[anchor=south] {$2$};

\draw (-5.1,1.7) circle (0.5);
\filldraw[black] (-5.1,1.2) circle (0.000001pt) node[anchor=south] {$3$};

\draw (-7.5,1.7) circle (0.5);
\filldraw[black] (-7.5,1.2) circle (0.000001pt) node[anchor=south] {$4$};

\draw (-9.9,1.7) circle (0.5);
\filldraw[black] (-9.9,1.2) circle (0.000001pt) node[anchor=south] {$5$};

\draw (-11.1,1.7) circle (0.5);
\filldraw[black] (-11.1,1.2) circle (0.000001pt) node[anchor=south] {$6$};
\end{scope}

\begin{scope}[shift={(0,4.5)}]
\draw (-1.5,1.7) circle (0.5);
\filldraw[black] (-1.5,1.2) circle (0.000001pt) node[anchor=south] {$1$};
\draw (-3.9,1.7) circle (0.5);
\filldraw[black] (-3.9,1.2) circle (0.000001pt) node[anchor=south] {$2$};
\draw (-5.1,1.7) circle (0.5);
\filldraw[black] (-5.1,1.2) circle (0.000001pt) node[anchor=south] {$3$};
\draw (-8.7,1.7) circle (0.5);
\filldraw[black] (-8.7,1.2) circle (0.000001pt) node[anchor=south] {$4$};
\draw (-9.9,1.7) circle (0.5);
\filldraw[black] (-9.9,1.2) circle (0.000001pt) node[anchor=south] {$5$};

\end{scope}

\begin{scope}[shift={(0,6)}]
\draw (-1.5,1.7) circle (0.5);
\filldraw[black] (-1.5,1.2) circle (0.000001pt) node[anchor=south] {$1$};
\draw (-3.9,1.7) circle (0.5);
\filldraw[black] (-3.9,1.2) circle (0.000001pt) node[anchor=south] {$2$};
\draw (-5.1,1.7) circle (0.5);
\filldraw[black] (-5.1,1.2) circle (0.000001pt) node[anchor=south] {$3$};

\draw (-8.7,1.7) circle (0.5);
\filldraw[black] (-8.7,1.2) circle (0.000001pt) node[anchor=south] {$4$};

\end{scope}

\begin{scope}[shift={(0,7.5)}]
\draw (-2.7,1.7) circle (0.5);
\filldraw[black] (-2.7,1.2) circle (0.000001pt) node[anchor=south] {$1$};
\draw (-3.9,1.7) circle (0.5);
\filldraw[black] (-3.9,1.2) circle (0.000001pt) node[anchor=south] {$2$};

\draw (-6.3,1.7) circle (0.5);
\filldraw[black] (-6.3,1.2) circle (0.000001pt) node[anchor=south] {$3$};

\end{scope}

\begin{scope}[shift={(0,9)}]
\draw (-2.7,1.7) circle (0.5);
\filldraw[black] (-2.7,1.2) circle (0.000001pt) node[anchor=south] {$1$};
\draw (-5.1,1.7) circle (0.5);
\filldraw[black] (-5.1,1.2) circle (0.000001pt) node[anchor=south] {$2$};
\end{scope}

\begin{scope}[shift={(0,10.5)}]
\draw (-2.7,1.7) circle (0.5);
\filldraw[black] (-2.7,1.2) circle (0.000001pt) node[anchor=south] {$1$};
\end{scope}

\draw[-][red] (-11.1,2.7) --(-11.1,2.2);
\draw[-][red] (-11.1,4.2) --(-11.1,3.7);

\draw[-][red] (-9.9,2.7) --(-9.9,2.2);
\draw[-][red] (-9.9,4.2) --(-9.9,3.7);
\draw[-][red] (-9.9,5.7) --(-9.9,5.2);

\draw[-][blue] (-8.7,7.2) --(-8.7,6.7);
\draw[-][blue] (-8.7,5.7) --(-8.7,4.7);
\draw[-][blue] (-8.7,4.7) --(-8.0,4.7);
\draw[-][blue] (-7.5,2.7) --(-7.5,2.2);
\draw[-][blue] (-7.5,4.2) --(-7.5,3.7);

\draw[-][blue] (-7.5,9.2) --(-6.8,9.2);
\draw[-][blue] (-6.3,8.7) --(-6.3,7.7);
\draw[-][blue] (-6.3,7.7) --(-5.6,7.7);
\draw[-][blue] (-5.1,7.2) --(-5.1,6.7);
\draw[-][blue] (-5.1,5.7) --(-5.1,5.2);
\draw[-][blue] (-5.1,4.2) --(-5.1,3.7);
\draw[-][blue] (-5.1,2.7) --(-5.1,2.2);

\draw[-][blue] (-6.3,10.7) --(-5.6,10.7);
\draw[-][blue] (-5.1,10.2) --(-5.1,9.2);
\draw[-][blue] (-5.1,9.2) --(-4.4,9.2);
\draw[-][blue] (-3.9,8.7) --(-3.9,8.2);
\draw[-][blue] (-3.9,7.2) --(-3.9,6.7);
\draw[-][blue] (-3.9,5.7) --(-3.9,4.7);
\draw[-][blue] (-3.9,4.7) --(-3.2,4.7);
\draw[-][blue] (-2.7,4.2) --(-2.7,3.7);
\draw[-][blue] (-2.7,2.7) --(-2.7,2.2);
\draw[-][red] (-3.9,2.7) --(-3.9,2.2);

\draw[-][blue] (-5.1,12.2) --(-3.2,12.2);
\draw[-][blue] (-2.7,11.7) --(-2.7,11.2);
\draw[-][blue] (-2.7,10.2) --(-2.7,9.7);
\draw[-][blue] (-2.7,8.7) --(-2.7,7.7);
\draw[-][blue] (-2.7,7.7) --(-2,7.7);
\draw[-][blue] (-1.5,2.7+4.5) --(-1.5,2.2+4.5);
\draw[-][blue] (-1.5,2.7+3) --(-1.5,2.2+3);
\draw[-][blue] (-1.5,2.7+1.5) --(-1.5,2.2+1.5);
\draw[-][blue] (-1.5,2.7) --(-1.5,2.2);

\draw[green, very thick] (-9.3,7.1) rectangle (-8.1,8.3);
\begin{scope}[shift={(1.2,1.5)}]
\draw[green, very thick] (-9.3,7.1) rectangle (-8.1,8.3);
\end{scope}
\begin{scope}[shift={(2.4,3)}]
\draw[green, very thick] (-9.3,7.1) rectangle (-8.1,8.3);
\end{scope}
\begin{scope}[shift={(3.6,4.5)}]
\draw[green, very thick] (-9.3,7.1) rectangle (-8.1,8.3);
\end{scope}

\begin{scope}[shift={(1.2,-6)}]
\draw[green, very thick] (-9.3,7.1) rectangle (-8.1,8.3);
\end{scope}
\begin{scope}[shift={(3.6,-4.5)}]
\draw[green, very thick] (-9.3,7.1) rectangle (-8.1,8.3);
\end{scope}
\begin{scope}[shift={(6,-3)}]
\draw[green, very thick] (-9.3,7.1) rectangle (-8.1,8.3);
\end{scope}
\begin{scope}[shift={(7.2,-3)}]
\draw[green, very thick] (-9.3,7.1) rectangle (-8.1,8.3);
\end{scope}

\begin{scope}[shift={(7,0)}]
\draw[-] (0,2) --(2,2);
\draw[-] (2,2) --(2,10);
\draw[-] (0,4) --(4,4);
\draw[-] (4,4) --(4,10);
\draw[-] (0,6) --(6,6);
\draw[-] (0,8) --(6,8);
\draw[-] (6,6) --(6,10);
\draw[-] (6,10) --(0,10);
\draw[-] (0,10) --(0,2);

\filldraw[black] (1,2.5) circle (0.000001pt) node[anchor=south] {$6$};
\filldraw[black] (1,4.5) circle (0.000001pt) node[anchor=south] {$3$};
\filldraw[black] (1,6.5) circle (0.000001pt) node[anchor=south] {$2$};
\filldraw[black] (1,8.5) circle (0.000001pt) node[anchor=south] {$1$};

\filldraw[black] (3,4.5) circle (0.000001pt) node[anchor=south] {$4$};
\filldraw[black] (3,6.5) circle (0.000001pt) node[anchor=south] {$3$};
\filldraw[black] (3,8.5) circle (0.000001pt) node[anchor=south] {$1$};

\filldraw[black] (5,6.5) circle (0.000001pt) node[anchor=south] {$6$};
\filldraw[black] (5,8.5) circle (0.000001pt) node[anchor=south] {$4$};

\filldraw[black] (6,9) circle (0.000001pt) node[anchor=west] {$\leq 6$};
\filldraw[black] (6,7) circle (0.000001pt) node[anchor=west] {$\leq 6$};
\filldraw[black] (4,5) circle (0.000001pt) node[anchor=west] {$\leq 7$};
\filldraw[black] (2,3) circle (0.000001pt) node[anchor=west] {$\leq 8$};

\end{scope}

\end{tikzpicture}
\caption{The left figure shows the multiline queue $Q$ in $MLQ(w(\lambda;9))$ for $\lambda=(3,3,2,1)\in\Val(9)$. And the right figure shows the corresponding semistandard young tableau.} \label{fig:bij}
\end{figure}
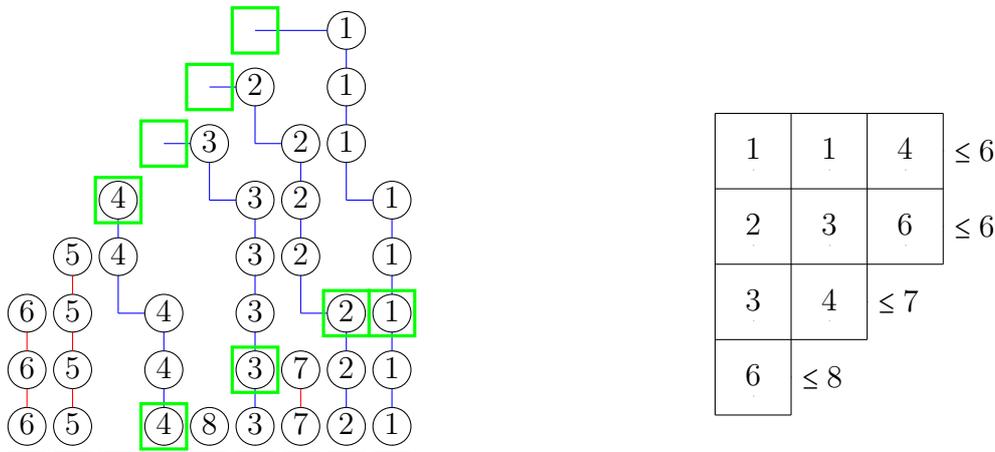

\begin{Ex}\label{ex:bij}
The left of \cref{fig:bij} shows a multiline queue $Q$ in $MLQ(w(\lambda;9))$ for $\lambda=(3,3,2,1)\in\Val(9)$. As in \cref{Lemma:identify with non-intersecting lattice paths} we associate a set of non-intersecting lattice paths $\{P_1,P_2,P_3,P_4\}$ from positions $(\lambda_1+i,i)$ to positions $(\lambda_1+i-\lambda_i,n-\lambda_i)$ (indicated with green boxes). 

The figure on the right shows the corresponding semistandard Young tableau 
	obtained by the map above. Note that the 1-bully path has horizontal steps in rows 1, 1 and 4, so we fill the first row of $\lambda$ with 1, 1 and 4. 
\end{Ex}

\begin{proof}[Proof of \cref{Thm: bijection}]
By \cref{Lemma:identify with non-intersecting lattice paths}, 
we can identify 
the bully paths of 
each multiline queue $Q\in MLQ(w(\lambda;n))$ with a collection of non-intersecting lattice paths $\{P_1,\ldots,P_{\length(\lambda)}\}$, where $P_i$ is a lattice path from $(\lambda_1+i,i)$ to $(\lambda_1-\lambda_i+i,n-\lambda_i)$; these lattice paths in turn get mapped 
	via \cref{def:bijection}
	to a semistandard tableaux
	in $SSYT(\lambda,d)$, 
	where $d=(n-\lambda_1,n-\lambda_2,\ldots,n-\lambda_{\length(\lambda)})$.   Clearly the non-intersecting condition on lattice paths
	corresponds to the strictly increasing condition on 
	columns of the corresponding semistandard tableaux.  The 
	map is also clearly invertible, so this shows that 
	the map is a bijection.
\end{proof}

We note that the fact that the map $f$ from \cref{Thm: bijection} is a bijection can also be deduced from 
the determinantal formula from \cite[Theorem 3.10]{AGS}; we thank the referee for pointing this 
out to us.

\section{A multiline queue formula for $\mathbf{z}$-deformed  probabilites}\label{sec:zMLQ}
Recall from  \cref{thm:MLQ} that when each $y_i=0$, there
is a combinatorial formula for the steady state probabilities $\psi_w$ in terms of {multiline queues}.  It is an  open problem to extend 
this result to the case that the $y_i$'s are general. 
In this section we continue to work in the case $y_i=0$,
but we give a generalization of \cref{thm:MLQ} that works
for the $\mathbf{z}$-deformed steady state probabilities.  As in the previous
 section, we allow our states $w$ to be \emph{compositions},
not just permutations.

\begin{Def}\label{def:Qir}
For a multiline queue $Q$, let $Q_{i,r}$ denote the position 
	in the $i$th column from the right and the  $r$th row from above.
	We assign a value $v(Q_{i,r})$ as follows:
\begin{align*}
	v(Q_{i,r})=\begin{cases*}0 &\text{if $Q_{i,r}$ has a ball}\\
		x_j &\text{if $Q_{i,r}$ is $j$-covered vacancy}\\
		x_{r} & \text{otherwise}.
    \end{cases*}
\end{align*}
	We define the \emph{$\mathbf{z}$-weight} $\zwt(Q)$ of $Q$ by
\begin{equation*}
    \zwt(Q)=\wt(Q)\prod\limits_{Q_{i,r}\in Q}(z_i-v(Q_{i,r})), 
\end{equation*}
where $\wt(Q)$  is given by \cref{def: weight of a multiline queue}.
And we let $$F_w(\z)=\sum\limits_{Q\in MLQ(w)}\zwt(Q)$$ be the $\mathbf{z}$-weight 
generating function for 
multiline queues of type $w$.
\end{Def}
\begin{Ex}
The multiline queue $Q$ in \cref{fig0} has  $\mathbf{z}$-weight
\begin{align*}
    \zwt(Q)=&\wt(Q)(z_1-x_1)(z_1-x_2)z_1(z_2-x_1)(z_2-x_2)z_2(z_3-x_1)z_3^{2}(z_4-x_1)^2 z_4\\&(z_5-x_1)^2 z_5(z_6-x_1)(z_6-x_3) z_6 (z_7-x_1)z_7^{2}(z_8-x_1)(z_8-x_2)z_8.
\end{align*}
\end{Ex}

\begin{Th}\label{theorem: zweight}
Let $w$ be a composition, and consider 
the inhomogeneous TASEP (with $y_i=0$ for all $i$) on the set of states obtained by permuting
the entries of $w$.  Then 
	the (unnormalized) $\mathbf{z}$-deformed steady 
state probability of state $w$ is given by  \begin{equation*}
    \psi_w(\z)=\sum\limits_{Q\in MLQ(w)}\zwt(Q).
\end{equation*}
\end{Th}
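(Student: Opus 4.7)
The plan is to verify that the generating function $F_w(\z) := \sum_{Q \in MLQ(w)} \zwt(Q)$ satisfies the same recursive characterization of $\psi_w(\z)$ (specialized to $y_i=0$) given in \cref{CantiniProp}: a base case at the sorted state $(1,2,\ldots,n)$, the exchange relation $\psi_{s_l w}(\z) = \pi_l(w_l, w_{l+1}; n)\,\psi_w(\z)$ whenever $w_l > w_{l+1}$ (indices taken cyclically, so that $l=n$ is included as a wrap-around swap), and cyclic compatibility $\psi_{\sigma(w)}(\z) = \psi_w(\sigma(\z))$. Because every composition is reachable from the sorted state by a sequence of cyclic descent swaps, an induction on the number of inversions will then yield $F_w(\z) = \psi_w(\z)$, and extracting leading coefficients in $\z$ recovers \cref{thm:MLQ}.

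The base case is direct. The unique multiline queue $Q_0$ of type $(1,2,\ldots,n)$ (seen as a composition with labels in $\{1,\dots,L{+}1\}$) is the staircase in which row $r$ is filled with balls in its leftmost $n-r$ columns and vacancies elsewhere; every bully path is trivial, no vacancy is $i$-covered, and substituting into \cref{def:Qir} yields a product that matches the $y_i=0$ specialization of Cantini's explicit formula for $\psi_{(1,2,\ldots,n)}(\z)$. Cyclic compatibility $F_{\sigma(w)}(\z)=F_w(\sigma(\z))$ is equally transparent: cyclically shifting all columns of $Q \in MLQ(w)$ produces an element of $MLQ(\sigma(w))$, and this bijection intertwines $\zwt$ with the shift of $\z$-variables column-by-column.

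The heart of the argument is the exchange relation. For $w_l = \beta > \alpha = w_{l+1}$ we must show
\[
F_{s_l w}(\z) \;=\; \frac{z_l(z_{l+1}-x_\alpha)}{x_\alpha}\cdot\frac{F_w(\z) - s_l F_w(\z)}{z_l - z_{l+1}},
\]
since with $y_i=0$ the operator $\pi_l(\beta,\alpha;n)$ collapses to this. My plan is to fix all data of the multiline queues outside columns $l$ and $l{+}1$ and group queues of types $w$ and $s_l w$ sharing the same external data into small \emph{clusters}. On each cluster, the local contribution to $\zwt$ from columns $l, l{+}1$ factors as a $(z_l,z_{l+1})$-symmetric prefactor times a low-degree polynomial in $z_l, z_{l+1}$; applying $\partial_l$ to the polynomial piece and multiplying by the rational prefactor $z_l(z_{l+1}-x_\alpha)/x_\alpha$ produces exactly the $\zwt$-contribution of the cluster's $MLQ(s_l w)$ representatives. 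A conceptually equivalent, and perhaps cleaner, route is to invoke the Yang--Baxter/$R$-matrix interpretation of Ferrari--Martin multiline queues, in which each row carries a rapidity $z_i$; the exchange relation is then the $RLL$ intertwining that permutes two rapidities, and Arita--Mallick's matrix-product argument at $\z = \infty$ is its leading-coefficient shadow.

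The principal obstacle is precisely this cluster analysis (or, equivalently, the $RLL$ check): one must describe explicitly how swapping the labels $\alpha, \beta$ in the bottom row reroutes bully paths in columns $l$ and $l{+}1$, verify that the rerouting is compatible with the labels recorded by $v(Q_{i,r})$ in \cref{def:Qir}, and confirm that the combinatorial weights interact with $\partial_l$ to produce the required rational prefactor $z_l(z_{l+1}-x_\alpha)/x_\alpha$. Once this local step is established, the base case and cyclic compatibility are routine checks.
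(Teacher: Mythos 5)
Your outer logic (characterize $\psi_w(\z)$ by the base case, cyclic compatibility, and the exchange relations, then verify that $F_w(\z)=\sum_{Q\in MLQ(w)}\zwt(Q)$ satisfies the same system) is the same as the paper's, and your base case and cyclic-shift checks are fine. The gap is in the one step you yourself identify as the heart of the argument. Your plan to ``fix all data of the multiline queues outside columns $l$ and $l+1$'' and pair queues of types $w$ and $s_lw$ within such clusters does not work, because the bully-path labeling is global: the label of a ball in the bottom row is determined by a path that typically traverses many columns (and may wrap around the ring), so swapping the two bottom-row labels $w_l>w_{l+1}$ forces a rerouting of the corresponding bully paths far outside columns $l,l+1$. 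In particular the set of $i$-covered vacancies, and hence the values $v(Q_{i,r})$ entering $\zwt$, change in other columns, so queues of type $w$ and $s_lw$ sharing ``the same external data'' generally do not exist, and $F_w(\z)$ does not factor as a $(z_l,z_{l+1})$-symmetric prefactor times a local polynomial. The alternative $RLL$/Yang--Baxter route is only gestured at, not carried out.

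The paper gets around exactly this obstruction by inducting on the number of rows (equivalently on $\max(w)$) rather than on inversions of the bottom row. It peels off the bottom row, writing $F_w(\z)=\sum_u F_w^u\,F_u(\z)$ where $u$ ranges over types of the penultimate row and $F_w^u=\zwt(Q)/\zwt(Q')$; the crucial point (\cref{lemma fuw formula}) is that this \emph{ratio} is a product over columns of the bottom row, so locality in columns $i,i+1$ does hold for $F_w^u$ even though it fails for $\zwt(Q)$. The exchange relation for $w$ is then deduced from a four-way comparison of $F_w^u,F_w^{s_iu},F_{s_iw}^u,F_{s_iw}^{s_iu}$ (\cref{Lemma: fuw analysis1}--\cref{Lemma: fuw analysis4}, \cref{prop:almost}) combined with the inductive hypothesis that \emph{both} exchange equations hold for $u$. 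Note also that you never state the companion relation $(1-s_i)F_w(\z)=0$ when $w_i=w_{i+1}$; since the theorem is for compositions, this relation is needed, and it enters unavoidably in the inductive step because the sum over $u$ includes types with $u_i=u_{i+1}$. To complete your proof you would need either to carry out the $RLL$ verification in full or to replace the cluster argument with something like the paper's row-peeling decomposition.
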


Clearly the leading coefficient of $\zwt(Q)$ in the 
$\mathbf{z}$-variables gives $\wt(Q)$. So 
 \cref{theorem: zweight}
directly implies 
\cref{thm:MLQ}. 

To prove Theorem \ref{theorem: zweight}, it is enough to show that $F_w(\z)$ satisfies the following exchange equations (see \cite[Equation (23)]{C}), where $s_i$ acts on the $\mathbf{z}$-variables. 
\begin{align}
	F_{s_i w}(\z)&=\frac{z_i(z_{i+1}-x_{w_{i+1}})}{x_{w_{i+1}}}\frac{(1-s_i)F_w(\z)}{z_i-z_{i+1}} \hspace{5mm}&\text{if $w_i>w_{i+1}$}
	\label{eq:exchange1}\\
	(1-s_i)F_w(\z)&=0 \hspace{5mm}&\text{if $w_i=w_{i+1}$}.\label{eq:exchange2}
\end{align}

\begin{Rem}
Equations \eqref{eq:exchange1} and \eqref{eq:exchange2} are 
obtained by expanding Equation (23) in \cite{C}, and 
\eqref{eq:exchange1} is a specialization of 
\cref{CantiniProp} when $y_i=0$.
We need \eqref{eq:exchange2} to deal with  the case that 
there is more than one particle of a given type.
(In \cite{C}, Cantini studied the case where there is at 
most one particle for each type so he included
	 \eqref{eq:exchange1} as \cite[Equation (27)]{C} but 
	 did not mention \eqref{eq:exchange2}.)
\end{Rem}

\begin{Rem}\label{rem:symmetry}
One can easily verify that if \eqref{eq:exchange1} holds, then 
	$F_w(\z)+F_{s_i w}(\z)$ is symmetric in $z_i$ and $z_{i+1}$,
	i.e. $(1-s_i)(F_w(\z)+F_{s_i w}(\z)) = 0$.
\end{Rem}

 Given a composition $w$, let $max(w)$  denote its largest part.
 Note that $F_w(\z)$ is a weighted sum over multiline queues with $(max(w)-1)$ rows. We will prove \cref{theorem: zweight}
by induction on the number of rows; our strategy is similar to the one used in 
\cite[Section 3]{CMW}.
 For the inductive step, we will view a multiline queue $Q$ with $L$ rows 
 as a multiline queue $Q'$ with $L-1$ rows (the restriction of $Q$ to rows $2$ through $L$)
 glued on top of a (generalized) multiline queue $Q_0$ with $2$ rows
 (the restriction of $Q$ to rows $1$ and $2$).

\begin{Def}
For compositions $u$ and $w$, we let $MLQ(u,w)$ denote the set of multiline queues $Q$ of type $w$ 
whose second row from the bottom has type $u$. We also define 
\begin{equation*}
    F_{w}^{u}=\frac{\zwt(Q)}{\zwt(Q')},
\end{equation*}
where $Q \in MLQ(u,w)$ and $Q'$ is the multiline queue obtained by
	deleting the bottom row of $Q$. It is easy to check that $F_{w}^{u}$ does not depend on the choice of $Q$ in $MLQ(u,w)$. If $MLQ(u,w)$ is empty then we set  $F_{w}^{u}=0$. 
\end{Def}
The following lemma follows directly from the definitions.
\begin{Lemma}\label{lemma fuw formula}
	Consider compositions $u=(u_1,\ldots,u_n)$ and $w=(w_1,\ldots,w_n)$ 
 such that $MLQ(u,w)$ is not empty. Pick $Q\in MLQ(u,w)$ and assume that $Q$ has $L$ rows and the number of vacancies in the last row is $t$. Then we have 
\begin{equation*}
    F_{w}^{u}=(x_1 x_2 \cdots x_L)^t  \prod\limits_{\text{$i: Q_{i,L}$ occupied}}z_i \prod\limits_{\text{$i: Q_{i,L}$  vacant}} \frac{z_i-v(Q_{i,L})}{v(Q_{i,L})}.
\end{equation*}
\end{Lemma}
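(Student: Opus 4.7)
The plan is a direct calculation from the definitions; since $Q'$ is obtained from $Q$ by deleting the bottom (row $L$) from $Q$, the ratio $\zwt(Q)/\zwt(Q')$ factors as a change in $\wt$ plus the product of $(z_i - v(Q_{i,L}))$ contributed by the new bottom row. The main task is to check that these factors rearrange into the claimed formula; there is no real obstacle, just bookkeeping with the definitions of $\wt$ and $v(Q_{i,r})$.

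First I would record what is contributed by the bottom row to $\prod_{Q_{i,r}} (z_i - v(Q_{i,r}))$. By \cref{def:Qir}, each occupied bottom-row position has $v(Q_{i,L})=0$ and contributes a factor of $z_i$, while each vacant bottom-row position contributes $(z_i - v(Q_{i,L}))$, where $v(Q_{i,L}) = x_j$ if the vacancy is $j$-covered (for some $j < L$) and $v(Q_{i,L}) = x_L$ otherwise. So the new $z$-factors are exactly
\begin{equation*}
\prod_{i:\,Q_{i,L}\text{ occupied}} z_i \;\cdot\; \prod_{i:\,Q_{i,L}\text{ vacant}} (z_i - v(Q_{i,L})).
\end{equation*}

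Next I would compute $\wt(Q)/\wt(Q')$ using \cref{def: weight of a multiline queue}. Writing $v_L = t$ and noting that the row vacancy counts $V_i$ for $Q$ exceed those $V_i'$ for $Q'$ by exactly $v_L$ when $i \leq L-2$, and $V_{L-1}=v_L$, the ratio of the first products is $(x_1 \cdots x_{L-1})^{t}$. The ratio of the cross products $\prod_{i<r\le L}(x_r/x_i)^{z_{r,i}}$ picks up precisely the pairs with $r=L$, contributing $\prod_{i<L}(x_L/x_i)^{z_{L,i}}$. Multiplying these together,
\begin{equation*}
\frac{\wt(Q)}{\wt(Q')} = (x_1\cdots x_{L-1})^{t}\,\prod_{i<L}\left(\frac{x_L}{x_i}\right)^{z_{L,i}}.
\end{equation*}

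Finally I would combine the two pieces and match the answer to the claimed formula. The only nontrivial identity is
\begin{equation*}
(x_1 \cdots x_{L-1})^{t}\,\prod_{i<L}\left(\frac{x_L}{x_i}\right)^{z_{L,i}} \;=\; \frac{(x_1 \cdots x_L)^{t}}{\displaystyle\prod_{i:\,Q_{i,L}\text{ vacant}} v(Q_{i,L})},
\end{equation*}
which follows once one observes that $\prod_{i:\,Q_{i,L}\text{ vacant}} v(Q_{i,L}) = \prod_{j<L} x_j^{z_{L,j}} \cdot x_L^{t - \sum_{j<L} z_{L,j}}$, splitting the vacancies according to whether they are $j$-covered for some $j<L$ or uncovered. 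Substituting this identity into the product of the two ratios yields
\begin{equation*}
F_w^u \;=\; \frac{\zwt(Q)}{\zwt(Q')} \;=\; (x_1 x_2 \cdots x_L)^{t}\,\prod_{i:\,Q_{i,L}\text{ occupied}} z_i\,\prod_{i:\,Q_{i,L}\text{ vacant}} \frac{z_i - v(Q_{i,L})}{v(Q_{i,L})},
\end{equation*}
as claimed. No step is a genuine obstacle, as the quantities $V_i,\,z_{r,i},\,v(Q_{i,r})$ are defined by local rules on the bottom row and its $j$-covering structure, so the full verification is mechanical.
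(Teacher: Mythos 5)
Your proposal is correct and is exactly the direct computation the paper has in mind (the paper states the lemma "follows directly from the definitions" and omits the details). Your bookkeeping of the ratio $\wt(Q)/\wt(Q')=(x_1\cdots x_{L-1})^{t}\prod_{i<L}(x_L/x_i)^{z_{L,i}}$, the bottom-row $z$-factors, and the identification $\prod_{i:\,Q_{i,L}\text{ vacant}} v(Q_{i,L}) = \prod_{j<L} x_j^{z_{L,j}}\, x_L^{\,t-\sum_{j<L} z_{L,j}}$ all check out.
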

\begin{Ex}
The multiline queue $Q$ in \cref{fig0} is in $MLQ(u,w)$ for $u=(3,2,2,1,3,2,3)$ and $w=(2,2,1,4,4,4,2,3)$. By \cref{lemma fuw formula}, we have
\begin{equation*}
     F_{w}^{u}=(x_1 x_2 x_3)^3   z_1 z_2 z_3 \frac{(z_4-x_1)}{x_1} \frac{(z_5-x_1)}{x_1} \frac{(z_6-x_3)}{x_3} z_7 z_8.
\end{equation*}
\end{Ex}

\begin{Rem}
By construction, for fixed $i$, 
we can write $F_w(\z)$ as 
\begin{equation}
    F_w(\z)=\sum\limits_{u}F_{w}^{u}F_u(\z)=\sum\limits_{u,u_i>u_{i+1}}(F_{w}^{u}F_u(\z)+F_{w}^{s_i u}F_{s_i u}(\z))+\sum\limits_{u,u_i=u_{i+1}}F_{w}^{u}F_u(\z).
	\label{eq:F expand}
\end{equation}
\end{Rem}

Our next goal is to analyze the quantities $F_w^{u}$, $F_w^{s_i u}$, $F_{s_i w}^{u}$, $F_{s_i w}^{s_i u}$, case by case.

\begin{Lemma}\label{lemma: mlq violation}
Let $u=(u_1,\ldots, u_n)$ and $w=(w_1,\ldots,w_n)$ be compositions.
	Then we have the following (where indices $i$ of columns
	are considered modulo $n$).
\begin{enumerate}
    \item If there exists $i$ such that $u_i<w_i<max(w)$ then 
	    $MLQ(u,w)=\emptyset$.
    \item If there exists $i$ such that $u_{i+1}<w_i<w_{i+1}=max(w)$ 
	   then $MLQ(u,w)=\emptyset$.
    \item  If there exists $i$ such that $w_i<w_{i+1}<max(w)$ 
	   and $u_{i}\neq w_{i}$,  $MLQ(u,w)=\emptyset$.
\end{enumerate}
\end{Lemma}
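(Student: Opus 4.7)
The plan for all three parts is the same: assume $Q\in MLQ(u,w)$ and trace a single bully path from row $L-1$ down to row $L$, exploiting that $j$-bully paths are processed in rounds $j=1,2,\ldots$. At the instant a $u$-labeled bully path transitions from $(c,L-1)$ to row $L$, every ball in row $L$ whose label exceeds $u$ is still unmatched, because its matching bully path is processed in a later round. The one indexing point to keep in mind is that, since row types are read right to left, column-index $i+1$ sits physically to the \emph{left} of column-index $i$, so ``travels right'' in the bully path definition sends column-index $i+1$ to column-index $i$ on the ring (wrapping past $1$ to $n$ as needed).

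For (1), the constraint $u_i<w_i<\max(w)\leq L+1$ first forces $u_i\leq L-1$ (else $u_i=L$ would give $w_i\geq L+1>\max(w)$), so $(i,L-1)$ is a ball carrying label $u_i$. The $u_i$-bully path through it drops directly onto $(i,L)$, a ball whose label $w_i>u_i$ makes it unmatched at this moment; the bully path matches it and stamps it with label $u_i$, contradicting $w_i\neq u_i$. For (2), the same opening shows $(i+1,L-1)$ is a ball of label $u_{i+1}$, and its bully path first probes $(i+1,L)$. If $w_{i+1}\leq L$, then $(i+1,L)$ is an unmatched ball of label $w_{i+1}>u_{i+1}$ and the same contradiction arises on the spot. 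If instead $w_{i+1}=L+1$, then $(i+1,L)$ is a vacancy and the bully path steps one column to the right, landing on $(i,L)$; this ball has label $w_i>u_{i+1}$, is unmatched, and would receive label $u_{i+1}$ upon matching, again a contradiction.

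For (3), I first apply (1) at indices $i$ and $i+1$: because $w_i<\max(w)$ and $w_{i+1}<\max(w)$, we may reduce to the case $u_i\geq w_i$ and $u_{i+1}\geq w_{i+1}>w_i$, and $u_i\neq w_i$ then gives $u_i>w_i$ and $u_{i+1}>w_i$. The ball $(i,L)$, of label $w_i$, is matched by a $w_i$-bully path emerging from some $(k,L-1)$ with $u_k=w_i$; these inequalities exclude $k\in\{i,i+1\}$. Whether or not the bully path wraps around the ring, going right from $(k,L)$ to $(i,L)$ it must traverse $(i+1,L)$ as an intermediate position. But $(i+1,L)$ is a ball (since $w_{i+1}\leq L$) carrying the label $w_{i+1}>w_i$, hence unmatched at the moment of processing, so the $w_i$-bully path would stop at $(i+1,L)$ rather than reaching $(i,L)$, a contradiction. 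The main obstacle is verifying this last passing-through statement while carefully handling the wrap-around case on the ring uniformly with the non-wrap case.
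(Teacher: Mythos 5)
Your proof is correct and is essentially the paper's argument in expanded form: the paper simply asserts that all three parts "follow directly from the bully path algorithm" (glossing part (1) as "no ball sits directly above a ball with a larger label"), and your case analysis — a lower-labeled bully path arriving in the bottom row must capture the first still-unmatched ball it probes, which forces the stated constraints on $u$ — is exactly the intended justification. Your handling of the ring indexing (column $i+1$ immediately to the left of column $i$, so "travel right" probes $(i+1,L)$ just before $(i,L)$) and the reduction in part (3) via part (1) are both sound.
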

\begin{proof}
The first statement above says that 
in a multiline queue there is no ball which is directly above another
ball with a larger label.
This statement and the other two follow directly from the bully path algorithm.
%
%
\end{proof}

The following lemma also follows directly 
from the bully path algorithm.

\begin{Lemma}\label{lemma mlq si validity}
Let $u=(u_1,\ldots, u_n)$ and $w=(w_1,\ldots,w_n)$ be compositions.
	Then we have the following (where indices $i$ of columns
	are considered modulo $n$).
\begin{enumerate}
    \item If $u_i\geq u_{i+1}$, $w_i\geq w_{i+1}$ and $u_{i+1}=w_{i+1}$ then 
	   $MLQ(u,w) \neq \emptyset$ if and only if $MLQ(s_i u,s_i w) \neq \emptyset$. When they are both nonempty, the 
	matchings of balls between the bottom two rows are preserved.
    \item If $w_{i+1}=max(w)$ or $w_{i+1}<min(u_i,u_{i+1})$ 
	   then $MLQ(u,w) \neq \emptyset$ if and only if $MLQ(s_i u,w) \neq \emptyset$. When they are both nonempty, the 
		matchings of balls between the bottom two rows are preserved.
    \item If $w_{i}=max(w)$ and $w_{i+1}<min(u_i,u_{i+1})$ 
	   then $MLQ(u,w) \neq \emptyset$ if and only if $MLQ(u,s_i w) \neq \emptyset$. When they are both nonempty, the matchings of balls between the bottom two rows are preserved.
\end{enumerate}
\end{Lemma}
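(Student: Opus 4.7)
The plan is to verify each of the three statements by tracing the bully path matching procedure at the transition from row $L-1$ (whose type is $u$) to the bottom row $L$ (whose type is $w$) in a multiline queue $Q \in MLQ(u,w)$. Recall that in this transition, the balls in row $L-1$ are processed by increasing label: a ball of type $j$ at column $p$ in row $L-1$ is matched to the first unmatched position in row $L$ encountered when one scans column $p$ downward and then rightward (with cyclic wraparound); the matched position in row $L$ then inherits label $j$. The set $MLQ(u,w)$ is nonempty iff this procedure produces exactly the prescribed type $w$ for row $L$, and the ``matching of balls between the bottom two rows'' refers to the resulting column pairs.

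For case (1), the hypotheses $u_i \geq u_{i+1}$, $w_i \geq w_{i+1}$, and $u_{i+1} = w_{i+1}$ force a vertical pairing at column $i+1$: the entry (ball or vacancy) at $(L-1,i+1)$ must match directly down to the entry at $(L,i+1)$, since $u_i \geq u_{i+1}$ prevents any ball from column $i$ of row $L-1$ from being bullied rightward into column $i+1$ of row $L$, and $w_i \geq w_{i+1}$ rules out earlier bully paths occupying that slot. Applying $s_i$ to both $u$ and $w$ simply relabels columns $i$ and $i+1$ at both levels simultaneously, so every bully path of $Q$ transports to a bully path of the swapped configuration via that column relabeling. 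This gives a bijection $MLQ(u,w) \leftrightarrow MLQ(s_i u, s_i w)$ preserving the bottom-row matching.

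For cases (2) and (3), the hypotheses on $w$ decouple the behavior at columns $i, i+1$ from the rest of the ring. In case (2), either $w_{i+1} = \max(w)$ (so column $i+1$ in row $L$ is effectively a vacancy or a terminal position not reached by any bully path from row $L-1$) or $w_{i+1} < \min(u_i, u_{i+1})$ (so any ball descending from column $i$ or $i+1$ of row $L-1$ is too heavy to stop at column $i+1$ of row $L$ and must bully past). In either subcase the local matching at columns $i, i+1$ depends only on the unordered multiset $\{u_i, u_{i+1}\}$, so $s_i u$ produces the same set of matchings; since the rows above are unaffected, we get a weight-preserving bijection $MLQ(u,w) \leftrightarrow MLQ(s_i u, w)$. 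Case (3) is the dual statement with the roles of the two rows reversed: when $w_i = \max(w)$ and $w_{i+1}$ is smaller than both $u_i, u_{i+1}$, the matching does not distinguish which of the two columns of row $L$ receives the $\max(w)$ label, so swapping to $s_i w$ yields the same pairings.

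The main obstacle is justifying the local independence rigorously in the cyclic setting of cases (2) and (3): one must rule out that a light bully path starting far from position $i$ could wrap around the ring and detect the swap at columns $i, i+1$. This is handled by observing that the bully path algorithm processes labels in strictly increasing order, and that the hypothesis on $w_{i+1}$ (respectively $w_i$) combined with the inequalities on $u_i, u_{i+1}$ guarantees that any light path reaching column $i+1$ from the left has already claimed an unmatched slot strictly before column $i+1$; consequently the local pairings at columns $i, i+1$ are entirely determined by the entries at those two columns in the two rows, making the swap invariance manifest.
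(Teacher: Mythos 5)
Your overall strategy is sound and matches the spirit of what the paper implicitly assumes: the paper itself offers no proof, saying only that the lemma ``follows directly from the bully path algorithm,'' and the correct way to fill that gap is exactly to analyze how the matching between the bottom two rows depends only on the type $u$ of row $L-1$ and the ball positions prescribed by $w$ in row $L$. However, there are a few real problems in the write-up. First, the bully path matches a ball in row $L-1$ to the first \emph{unmatched ball} in row $L$, not the first unmatched position; vacancies are always skipped, and since the whole point of the lemma's hypotheses (e.g.\ $w_{i+1}=\max(w)$) is to arrange that a vacancy or already-claimed ball sits at the critical spot, this is not a cosmetic slip. Second, the phrase ``too heavy to stop at column $i+1$'' misdescribes the algorithm: weights play no role in where a bully path stops. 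The correct reason is that when $w_{i+1}<\min(u_i,u_{i+1})$ the ball at $(L,i+1)$ is claimed by a path of strictly smaller label, which is processed before the labels $u_i,u_{i+1}$, so both of those balls skip past column $i+1$.

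The most serious issue is the claim that ``the rows above are unaffected,'' giving a ``weight-preserving bijection $MLQ(u,w)\leftrightarrow MLQ(s_iu,w)$'' (and similarly for case~(1)). The rows above row $L$ are \emph{not} unaffected: an element of $MLQ(u,w)$ restricts to an $(L-1)$-row multiline queue of type $u$, whereas an element of $MLQ(s_iu,w)$ restricts to one of type $s_iu$, and there is no canonical weight-preserving bijection between $MLQ(u)$ and $MLQ(s_iu)$ --- indeed the whole content of the exchange relations in \cref{theorem: zweight} is that $F_u(\z)$ and $F_{s_iu}(\z)$ are related nontrivially. Fortunately the lemma asserts only an equivalence of nonemptiness (together with preservation of the bottom-row matching), which is weaker than a bijection. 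To argue it correctly you should separate the two requirements: $MLQ(u,w)\neq\emptyset$ iff (i) $MLQ(u)\neq\emptyset$ and (ii) the bully procedure applied to labels $u$ and the ball positions dictated by $w$ reproduces the labels $w$. Condition (i) holds simultaneously for $u$ and $s_iu$ because they have the same content, and condition (ii) is what your local analysis of columns $i,i+1$ should address --- that part of your reasoning is essentially right once the two misstatements above are corrected, though you should also say a word about why the left-to-right processing order within a given label does not affect which row-$L$ balls and vacancies receive each label (a fact you are tacitly using throughout). Finally, case~(3) is not simply case~(2) with the roles of the two rows reversed: it has the conjunction $w_i=\max(w)$ \emph{and} $w_{i+1}<\min(u_i,u_{i+1})$ as hypothesis and swaps $w$ rather than $u$, so it needs its own short argument that the ball moving from $(L,i+1)$ to $(L,i)$ is claimed by the same path and the displaced vacancy is traversed by the same set of paths.
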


\begin{Lemma}\label{Lemma: fuw analysis1}
Let $u=(u_1,\ldots, u_n)$ and $w=(w_1,\ldots,w_n)$ be compositions
	such that $u_i>u_{i+1}$ and $w_i>w_{i+1}$.  Then we
	have the following.
\begin{enumerate}
	\item 
		If $w_i<max(w)$ and $u_{i+1}<w_{i+1}$ then 
       $ F_{w}^{u}=F_{w}^{s_i u}=F_{s_i w}^{u}=F_{s_i w}^{s_i u}=0.$
    \item If $w_i=max(w)$ and $u_{i+1}<w_{i+1}$ then 
$	    F_{w}^{u}=F_{s_i w}^{u}=F_{s_i w}^{s_i u}=0, $
    \begin{equation*}
	    \text{and  }
         F_{w}^{s_i u}=F' \cdot (z_i-x_{u_{i+1}})z_{i+1}
    \end{equation*}
    for some $F'$ that does not depend on $z_i$ and $z_{i+1}$.
    \item If $w_i<max(w)$ and $u_{i+1}=w_{i+1}$ then we have $F_{s_i w}^{u}=F_{w}^{s_i u}=0$ and we can write
    \begin{equation*}
        F_{w}^{u}=F_{s_i w}^{s_i u}=F' z_i z_{i+1}
    \end{equation*}
    for some $F'$ that does not depend on $z_i$ and $z_{i+1}$.
    \item If $w_i<max(w)$ and $u_{i+1}>w_{i+1}$ then we have $F_{s_i w}^{u}=F_{s_i w}^{s_i u}=0$ and we can write
    \begin{equation*}
        F_{w}^{u}=F_{ w}^{s_i u}=F' z_i z_{i+1}
    \end{equation*}
    for some $F'$ that does not depend on $z_i$ and $z_{i+1}$.
    \item If $w_i=max(w)$ and $u_{i+1}=w_{i+1}$ then we can write 
\begin{align*}
	F_{w}^{u}&=\frac{(z_i-x_c)z_{i+1} F'}{x_c}, 
	&F_{s_i w}^{s_i u}=\frac{(z_{i+1}-x_c)z_{i} F'}{x_c}\\
	F_{s_i w}^{u}&=\frac{(z_{i+1}-x_{w_{i+1}})z_{i}F'}{x_{w_{i+1}}},   
	&F_{w}^{s_i u}=F'' \cdot (z_i-x_{w_{i+1}})z_{i+1}.
\end{align*}
 for some integer $c$ and quantities $F', F''$ that do not depend on $z_i$ or $z_{i+1}$.
 
 \item If $w_i=max(w)$ and $u_{i+1}>w_{i+1}$ then we can write
\begin{equation*}
	F_{w}^{u}=F_{w}^{s_i u}=\frac{(z_i-x_c)z_{i+1} F'}{x_c} \text{ and } F_{s_i w}^{u}=F_{s_i w}^{s_i u}=\frac{(z_{i+1}-x_{w_{i+1}})z_{i}F'}{x_{w_{i+1}}} 
\end{equation*}
 for some integer $c$ and some $F'$ that does not depend on $z_i$ or $z_{i+1}$.
\end{enumerate}
\end{Lemma}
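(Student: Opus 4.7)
The approach is a case-by-case analysis of the six cases, relying on Lemma \ref{lemma: mlq violation} to identify empty $MLQ$ sets, Lemma \ref{lemma mlq si validity} to produce matching-preserving bijections between nonempty $MLQ$ sets, and Lemma \ref{lemma fuw formula} to extract explicit rational factors. For each case my procedure is: (a) test the three emptiness criteria of Lemma \ref{lemma: mlq violation} at columns $i$ and $i+1$, for each of the four ordered pairs in $\{u, s_iu\}\times\{w, s_iw\}$, to decide which of the four $F$'s vanish; (b) for the nonvanishing $F$'s, apply Lemma \ref{lemma fuw formula} to write the quantity as a product over all columns of row $L$, isolating the contribution of columns $i$ and $i+1$ (which depends only on the local ball-or-vacancy structure and, for vacancies, on the covering label) and collecting the remaining column contributions into a factor $F'$ (or $F''$) that is manifestly free of $z_i$ and $z_{i+1}$; and (c) use Lemma \ref{lemma mlq si validity} to justify that when two $MLQ$ sets are in bijection with the bottom-two-row matchings preserved, their resulting $F'$ factors coincide.

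Cases (1)--(4) are mostly mechanical. In case (1), the chain $u_{i+1} < w_{i+1} < w_i < \max(w)$ triggers Lemma \ref{lemma: mlq violation}(1) at column $i+1$ to give $MLQ(u,w) = \emptyset$, and an analogous application at column $i$ or $i+1$ to the three swapped pairs yields the other three vanishings. In case (2), the same lemma rules out $MLQ(u,w)$, $MLQ(u, s_iw)$, and $MLQ(s_iu, s_iw)$; for the surviving $MLQ(s_iu, w)$, the vacancy at $Q_{i,L}$ (since $w_i = \max(w)$) is $u_{i+1}$-covered by the bully path descending from the $u_{i+1}$-labeled ball at $(s_iu)_i$, giving the factor $z_i - x_{u_{i+1}}$, while column $i+1$ contributes $z_{i+1}$ for its ball. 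Cases (3) and (4) involve two vanishings plus two nonvanishing $F$'s that must be equated: the equality comes from the matching-preserving bijections of Lemma \ref{lemma mlq si validity}(1) (respectively (2)), which identify the $F'$ factors and leave the local contribution at columns $i, i+1$ unchanged.

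Case (5), where $w_i = \max(w)$ and $u_{i+1} = w_{i+1}$, is the combinatorial heart of the argument and the main obstacle: all four $MLQ$ sets are generically nonempty, and the four resulting expressions share a common $F'$ (except for $F_w^{s_iu}$, which has its own $F''$). Lemma \ref{lemma mlq si validity}(1) supplies a matching-preserving bijection $MLQ(u,w) \leftrightarrow MLQ(s_iu, s_iw)$, and tracking how the swap at columns $i, i+1$ changes the local factor -- from $(z_i - x_c) z_{i+1}$ to $(z_{i+1} - x_c) z_i$ -- yields both the identity of $F'$ factors and the integer $c$, which is precisely the bully-path label that descends from above onto the vacancy at $Q_{i,L}$ in $MLQ(u,w)$. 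For $F_{s_iw}^u$ I would argue that, even though the hypothesis of Lemma \ref{lemma mlq si validity}(3) is not met strictly (since $w_{i+1} = u_{i+1}$ rather than strictly less), a direct bijective argument still identifies $MLQ(u,w)$ with $MLQ(u, s_iw)$ outside columns $i, i+1$, because the only possible reshuffling of bully paths caused by swapping $w_i$ and $w_{i+1}$ takes place at those two columns. The separate $F''$ for $F_w^{s_iu}$ reflects that swapping $u$ to $s_iu$ can propagate changes higher up in the queue. Case (6) is analogous but with $u_{i+1} > w_{i+1}$, so both Lemma \ref{lemma mlq si validity}(2) and (3) apply cleanly and produce the pairwise equalities. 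The main technical difficulty throughout is case (5): verifying that the label $c$ and the factor $F'$ really are shared across the three formulas that claim them, which demands careful tracking of how bully-path labels propagate under each of the relevant bijections.
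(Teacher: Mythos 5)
Your proposal is correct and follows essentially the same route as the paper: vanishing via Lemma \ref{lemma: mlq violation}, explicit local factors at columns $i,i+1$ via Lemma \ref{lemma fuw formula}, and identification of the residual factors $F'$ via the matching-preserving bijections of Lemma \ref{lemma mlq si validity}, including the key observation in cases (2) and (5) that the vacancy below the $u_{i+1}$-labeled ball is $u_{i+1}$-covered. The one spot where you improvise (relating $MLQ(u,w)$ to $MLQ(u,s_iw)$ in case (5), where Lemma \ref{lemma mlq si validity}(3) does not literally apply) can also be handled by chaining parts (1) and (2) of that lemma through $MLQ(s_iu,s_iw)$, since $(s_iw)_{i+1}=\max(w)$.
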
 
\begin{proof}
Part (1) follows from \cref{lemma: mlq violation} (1). We denote $max(w)=L$.

(2) 
     We have $F_{w}^{u}=F_{s_i w}^{u}=F_{s_i w}^{s_i u}=0$ by  \cref{lemma: mlq violation} (1) and (2). Then pick $Q^{1}\in MLQ(s_i u, w)$ (if it is nonempty). Then $Q^{1}_{i,L}$ is a vacancy and there is a ball with  label $u_{i+1}$ right above it. So a $u_{i+1}$-bully path traverses $Q^{1}_{i,L}$. Since $Q^{1}_{i+1,L}$ is a ball with  label $w_{i+1}$, no bully path with a label smaller than $u_{i+1}$ traverses $Q^{1}_{i,L}$. Therefore $v(Q^{1}_{i,L})=x_{u_{i+1}}$. The conclusion for $F^{s_i u}_w$ follows from \cref{lemma fuw formula}.
    
(3) 
	By \cref{lemma: mlq violation} (1) 
	we have $F^{u}_{s_i w}=F^{s_i u}_{w}=0$. And by 
	\cref{lemma mlq si validity}, 
	$MLQ(u,w)$ is nonempty if and only if $MLQ(s_i u,s_i w)$ nonempty. When they are both nonempty (if they are both empty then we can simply take $F'=0$) take $Q^{1}\in MLQ(u,w)$ and $Q^{2}\in MLQ(s_i u,s_i w)$. Then positions $Q^{1}_{i,L}, Q^{1}_{i+1,L}, Q^{2}_{i,L}, Q^{2}_{i+1,L}$ are all occupied by balls, so by \cref{lemma fuw formula}, we can write $F_{w}^{u}=F_{s_i w}^{s_i u}=F' z_i z_{i+1}$ for some $F'$ that does not depend on $z_i$ and $z_{i+1}$.

(4) The proof is similar to part (3)
	but uses \cref{lemma: mlq violation} (3) and 
	\cref{lemma mlq si validity} (2).

(5) The conclusion for $F^{s_i u}_w$ follows similarly as in part (2).

The other three cases are straightforward using 
 \cref{lemma mlq si validity} (1), (2), 
and 	\cref{lemma fuw formula}.

(6) The proof of this part is straightforward and 
	similar to the previous proofs.

\end{proof}
The next two lemmas are very similar 
to \cref{Lemma: fuw analysis1},
but with slightly different hypotheses.  The proofs are similar so we omit them.

\begin{Lemma} \label{Lemma: fuw analysis2}
Let $u=(u_1,\ldots, u_n)$ and $w=(w_1,\ldots,w_n)$ be compositions 
	such that $u_i=u_{i+1}$ and $w_i>w_{i+1}$.  Then 
	we have the following.
\begin{enumerate}
     \item If $w_i<max(w)$ and $u_{i+1}\leq w_{i+1}$ then 
    $    F_{w}^{u}=F_{s_i w}^{u}=0.$
    \item If $w_i=max(w)$ and $u_{i+1}<w_{i+1}$ then 
    $    F_{w}^{u}=F_{s_i w}^{u}=0.$
    \item If $w_i<max(w)$ and $u_{i+1}>w_{i+1}$ then  $F_{s_i w}^{u}=0$ and we can write 
    \begin{equation*}
        F_{w}^{u}=F' z_i z_{i+1}
    \end{equation*}
    for some $F'$ that does not depend on $z_i$ and $z_{i+1}$
    \item If $w_i=max(w)$ and $u_{i+1}\geq w_{i+1}$ then we can write
        \begin{equation*}
		F_{w}^{u}=\frac{(z_{i}-x_{c})z_{i+1}F'}{x_{c}} \text{ and } F_{s_i w}^{u}=\frac{(z_{i+1}-x_{w_{i+1}})z_{i}F'}{x_{w_{i+1}}}
 \end{equation*}
     for some integer $c$ and quantity $F'$ that does not depend on $z_i$ or $z_{i+1}$.
\end{enumerate}
\end{Lemma}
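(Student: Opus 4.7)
The plan is to prove \cref{Lemma: fuw analysis2} by direct case analysis, mirroring the structure of the proof of \cref{Lemma: fuw analysis1} but with the hypothesis $u_i > u_{i+1}$ replaced by $u_i = u_{i+1}$. The three core tools are the emptiness criteria of \cref{lemma: mlq violation}, the matching-preserving bijections of \cref{lemma mlq si validity}, and the factor-extraction formula of \cref{lemma fuw formula}.

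For Cases (1) and (2), the vanishing statements follow from \cref{lemma: mlq violation}. Concretely, in Case (1) the chain $u_i = u_{i+1} \le w_{i+1} < w_i < \max(w)$ yields $u_i < w_i < \max(w)$, so part (1) of \cref{lemma: mlq violation} forces $F_w^u = 0$; to show $F_{s_iw}^u = 0$, I would apply the same criterion at index $i$ for $s_iw$ when $u_{i+1} < w_{i+1}$, and in the edge subcase $u_{i+1}=w_{i+1}$ a direct bully-path argument suffices: the ball at $(i+1,L)$ in $s_iw$ has target label $w_i > w_{i+1}$ and cannot be pre-matched by any path of label below $w_{i+1}$ (such a match would assign the wrong label), so the $w_{i+1}$-path from $(i+1,L-1)$ descends and matches it, assigning the label $w_{i+1}$ instead of $w_i$. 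Case (2) is likewise dispatched by applying \cref{lemma: mlq violation}(1) at index $i+1$ for $w$ (using $u_{i+1} < w_{i+1} < \max(w)$) and at index $i$ for $s_iw$.

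In Case (3), the vanishing $F_{s_iw}^u = 0$ follows from \cref{lemma: mlq violation}(3) applied to $s_iw$, since $(s_iw)_i = w_{i+1} < w_i = (s_iw)_{i+1} < \max(s_iw)$ and $u_i = u_{i+1} > w_{i+1} = (s_iw)_i$. For $F_w^u$, both $(i,L)$ and $(i+1,L)$ are occupied by balls (as $w_i, w_{i+1} < \max(w)$), so \cref{lemma fuw formula} immediately produces the factor $z_i z_{i+1}$, with the remaining contribution absorbed into $F'$.

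The crux is Case (4). Here $(i,L)$ is a vacancy in the $w$-configuration (since $w_i = \max(w)$), while in $s_iw$ the vacancy migrates to $(i+1,L)$ and the $w_{i+1}$-ball sits at $(i,L)$. When $u_{i+1} > w_{i+1}$ strictly, \cref{lemma mlq si validity}(3) provides a weight-preserving bijection $MLQ(u,w) \leftrightarrow MLQ(u,s_iw)$; under it, the $w_{i+1}$-bully path that matched $(i+1,L)$ in $w$ is rerouted one step further to match $(i,L)$ in $s_iw$, and in doing so its trajectory in row $L$ now traverses the new vacancy at $(i+1,L)$, making $(i+1,L)$ a $w_{i+1}$-covered vacancy. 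For the equality subcase $u_{i+1} = w_{i+1}$, where \cref{lemma mlq si validity}(3) no longer applies directly, I would supply the analogous bijection by direct bully-path analysis: the $w_{i+1}$-ball at $(i+1,L-1)$ descends to the vacancy $(i+1,L)$ and travels right in row $L$, again covering $(i+1,L)$ with label $w_{i+1}$ before eventually matching the ball at $(i,L)$. In both subcases \cref{lemma fuw formula} then delivers $F_w^u = (z_i - x_c)z_{i+1}F'/x_c$, where $c$ is the covering label of $(i,L)$ in $w$ (determined by the algorithm), and $F_{s_iw}^u = (z_{i+1} - x_{w_{i+1}})z_iF'/x_{w_{i+1}}$, with a common factor $F'$ recording the identical bully-path behavior outside columns $i$ and $i+1$. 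The main obstacle will be verifying this common $F'$ in the equality subcase: one must check that the swap of ball and vacancy between columns $i$ and $i+1$ only reroutes the single $w_{i+1}$-path that terminates at that region, leaving every other bully-path trajectory (and hence every other weight contribution) untouched; this is precisely the $u_i = u_{i+1}$ analog of the corresponding verification in \cref{Lemma: fuw analysis1} case (5).
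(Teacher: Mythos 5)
Your proposal is correct and follows exactly the route the paper intends: the paper omits the proof of this lemma, stating only that it is "similar" to that of the preceding lemma, and your argument reproduces that proof's toolkit (the emptiness criteria, the matching-preserving bijections, and the factor-extraction formula) with the hypothesis $u_i>u_{i+1}$ replaced by $u_i=u_{i+1}$, supplying direct bully-path arguments precisely where the equality $u_{i+1}=w_{i+1}$ puts you outside the scope of the cited bijection lemma. The only cosmetic remark is that in Case (1) applying the emptiness criterion at index $i+1$ of $s_iw$ (using $u_{i+1}\le w_{i+1}<w_i=(s_iw)_{i+1}<\max(w)$) handles both subcases at once, making your separate edge-case argument unnecessary.
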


\begin{Lemma}\label{Lemma: fuw analysis3}
Let $u=(u_1,\ldots, u_n)$ and $w=(w_1,\ldots,w_n)$ be compositions 
	such that $u_i>u_{i+1}$ and $w_i=w_{i+1}$.  Then we have the following.
\begin{enumerate}
   \item If $w_i<max(w)$ then  $F^{u}_{w}=F^{s_i u}_{w}$ and both are symmetric in  $z_i$ and $z_{i+1}$.
    \item If $w_i=max(w)$ then either we have
\begin{equation*}
	F_{w}^{u}=F_w^{s_i u} = \frac{(z_{i}-x_c)(z_{i+1}-x_c)F'}{x^{2}_{c}} 
 \end{equation*}
for some integer $c$ and quantity $F'$ that does not depend on $z_i$ or $z_{i+1}$, OR
     \begin{equation*}
	     F_{w}^{u}=\frac{(z_{i}-x_{u_{i+1}})(z_{i+1}-x_{u_{i+1}})F'}{x^{2}_{u_{i+1}}} \text{ and } F_{w}^{s_i u}=\frac{(z_{i}-x_{u_{i+1}})(z_{i+1}-x_{c})F'}{x_{u_{i+1}}x_c}
 \end{equation*}
	for some integer $c$ and quantity 
 $F'$ that does not depend on $z_i$ or $z_{i+1}$.
\end{enumerate}
\end{Lemma}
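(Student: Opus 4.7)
The plan is to apply \cref{lemma fuw formula} to express $F^u_w$ and $F^{s_i u}_w$ as column-wise contributions from the bottom row, localizing the analysis to columns $i, i+1$ while packaging the remaining columns into a common factor $F'$. The case distinctions then track how the coverages $v(Q_{i,L})$ and $v(Q_{i+1,L})$ behave under the swap $u \leftrightarrow s_i u$.

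For (1), both $Q_{i,L}$ and $Q_{i+1,L}$ are occupied balls (since $w_i = w_{i+1} < \max(w)$), so \cref{lemma fuw formula} contributes the symmetric factor $z_i z_{i+1}$ to each quantity. I split on the relationship between $u_{i+1}$ and $w_{i+1}$. If $u_{i+1} < w_{i+1}$, then $u_{i+1} < w_{i+1} < \max(w)$ and also $(s_i u)_i = u_{i+1} < w_i < \max(w)$, so \cref{lemma: mlq violation}(1) forces both $MLQ(u,w)$ and $MLQ(s_i u, w)$ to be empty and the identity holds trivially. If $u_{i+1} = w_{i+1}$, then \cref{lemma mlq si validity}(1) applies (using $s_i w = w$) and supplies a matching-preserving bijection $MLQ(u,w) \cong MLQ(s_i u, w)$. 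If $u_{i+1} > w_{i+1}$, then $w_{i+1} < \min(u_i, u_{i+1})$, and \cref{lemma mlq si validity}(2) again supplies such a bijection. In each nontrivial case the coverage of every row-$L$ vacancy is preserved, giving $F^u_w = F^{s_i u}_w = z_i z_{i+1} \cdot F'$ with $F'$ independent of $z_i, z_{i+1}$, which is manifestly symmetric.

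For (2), both $Q_{i,L}$ and $Q_{i+1,L}$ are vacancies. Let $\ell$ denote the minimum label of a bully path covering column $i+1$ of row $L$ in a queue $Q \in MLQ(u,w)$, so $v(Q_{i+1,L}) = x_\ell$; define $\ell'$ analogously for column $i$. Because the $u_{i+1}$-bully path from column $i+1$ of row $L-1$ travels rightward through row-$L$ vacancies, $\ell \leq u_{i+1}$. The analysis splits according to $\ell = \ell'$ or $\ell \neq \ell'$. In the equal case the same minimum-label bully path covers both columns; since this path either has label strictly less than $u_{i+1}$ (and is untouched by the swap) or is the $u_{i+1}$-path itself (which merely relocates its starting column between $i$ and $i+1$), the same coverage persists under $u \to s_i u$, yielding the first formula with $c = \ell$. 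In the unequal case I claim $\ell = u_{i+1}$: otherwise a path of label $\ell < u_{i+1}$ covering column $i+1$ would also traverse column $i$ on its way, forcing $\ell' \leq \ell$ and hence $\ell = \ell'$. Granting the claim, column $i$ in the $u$-configuration is $u_{i+1}$-covered via wrap-around of the same $u_{i+1}$-path or via a distinct $u_{i+1}$-path originating elsewhere in row $L-1$. After the swap, $u_{i+1}$ sits at column $i$, so the $u_{i+1}$-path originates there and $u_{i+1}$-covers column $i$, while column $i+1$ is now covered by a different path of some label $c$ inherited from the surrounding structure, giving the second formula.

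The main obstacle lies in the unequal-case analysis of (2): rigorously verifying the dichotomy $\ell = u_{i+1}$, identifying the label $c$ covering column $i+1$ after the swap, and confirming that no column outside $\{i, i+1\}$ changes its coverage (so $F'$ is genuinely common to $F^u_w$ and $F^{s_i u}_w$). The cyclic wrap-around behavior of bully paths makes this delicate, but the key observation is that swapping $u_i, u_{i+1}$ only relocates the starting columns of the $u_i$- and $u_{i+1}$-bully paths between $i$ and $i+1$, leaving all other paths, their matchings, and their coverages unchanged.
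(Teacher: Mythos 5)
Your part (1) is correct and uses exactly the ingredients the paper intends (the paper omits this proof, saying it is "similar" to \cref{Lemma: fuw analysis1}): \cref{lemma: mlq violation}(1) disposes of the case $u_{i+1}<w_{i+1}$, and \cref{lemma mlq si validity}(1),(2) give matching-preserving correspondences in the remaining cases, so \cref{lemma fuw formula} yields $F^u_w=F^{s_iu}_w=F'z_iz_{i+1}$, which is symmetric.

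Part (2), however, has a genuine error in the case analysis. Since $(i+1,L)$ is a vacancy, every bully path traversing it continues rightward through $(i,L)$, and the only path through $(i,L)$ avoiding $(i+1,L)$ is the one entering row $L$ at column $i$, whose label $u_i$ exceeds $u_{i+1}\geq \ell$. Hence $\ell'=\ell$ \emph{always}, so your ``unequal case'' is vacuous --- yet it is the only place you derive the second alternative of the lemma. Worse, in the ``equal case'' you assert that when the minimal covering path is the $u_{i+1}$-path entering at column $i+1$, the swap ``merely relocates its starting column'' and the coverage persists. It does not: after the swap that path enters row $L$ at column $i$ and no longer traverses $(i+1,L)$, so $v(Q_{i+1,L})$ changes from $x_{u_{i+1}}$ to $x_c$ for some $c>u_{i+1}$ determined by the remaining paths (or $c=L$ if none remain), while $v(Q_{i,L})=x_{u_{i+1}}$ is unchanged. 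This is exactly the situation producing the second displayed alternative, which your argument never reaches under a non-vacuous hypothesis. The correct dichotomy is: either some bully path of label $\leq u_{i+1}$ enters row $L$ at a column cyclically preceding $i+1$ and traverses $(i+1,L)$ (then both vacancies are $c$-covered with the same $c$ before and after the swap, giving the first alternative), or every such passing-through path has label $>u_{i+1}$ (then in the $u$-configuration both vacancies are $u_{i+1}$-covered while in the $s_iu$-configuration only $(i,L)$ is, giving the second alternative). With that repair, together with the observation via \cref{lemma mlq si validity}(2) that the passing-through paths and all columns outside $\{i,i+1\}$ are unaffected by the swap (so $F'$ is common to both expressions), the proof goes through.
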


The following lemma is obvious.
\begin{Lemma}\label{Lemma: fuw analysis4}
Let $u=(u_1,\ldots, u_n)$ and $w=(w_1,\ldots,w_n)$ be compositions 
such that $u_i=u_{i+1}$ and $w_i=w_{i+1}$.  Then 
$F_{w}^{u}$ is symmetric in variables $z_i$ and $z_{i+1}$.
\end{Lemma}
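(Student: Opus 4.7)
The plan is to apply the explicit product formula for $F_w^u$ from \cref{lemma fuw formula} and observe that, under the hypotheses $u_i = u_{i+1}$ and $w_i = w_{i+1}$, the only factors depending on $z_i$ or $z_{i+1}$ come from the columns $i$ and $i+1$ in the bottom row. Denote by $L$ the number of rows of $Q \in MLQ(u,w)$. It therefore suffices to show that the combined contribution of these two columns is symmetric under the swap $z_i \leftrightarrow z_{i+1}$, since every other factor in the formula is $z_i$- and $z_{i+1}$-free.

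Because $w_i = w_{i+1}$, the positions $Q_{i,L}$ and $Q_{i+1,L}$ are either both occupied (by balls carrying the same label) or both vacant. In the occupied case the combined contribution is $z_i z_{i+1}$, which is manifestly symmetric. In the vacant case the combined contribution is
\begin{equation*}
\frac{(z_i-v(Q_{i,L}))(z_{i+1}-v(Q_{i+1,L}))}{v(Q_{i,L})\,v(Q_{i+1,L})},
\end{equation*}
so the task reduces to proving $v(Q_{i,L}) = v(Q_{i+1,L})$.

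For this remaining equality I will argue that the set of labels of bully paths covering $(i,L)$ and the set covering $(i+1,L)$ have the same minimum. Because $Q_{i,L}$ is a vacancy, every bully path that reaches column $i$ in row $L$ continues to column $i+1$, so every bully path covering $(i,L)$ also covers $(i+1,L)$. Conversely, any bully path that covers $(i+1,L)$ but not $(i,L)$ must originate at column $i+1$ in row $L-1$, since any path starting weakly to the left of column $i$ would pass through column $i$ first. By $u_i = u_{i+1}$, such a path (if it exists) carries the same label as the bully path starting at column $i$ in row $L-1$, which does cover $(i,L)$; and if $u_i = u_{i+1} = L$ (both entries above are vacancies), no bully path starts at either column, so the two covering sets coincide outright. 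In every case the two minima agree, yielding $v(Q_{i,L}) = v(Q_{i+1,L})$. The one step that really demands care is this bully path book-keeping; everything else is a direct read-off of \cref{lemma fuw formula}, which is why the lemma is labelled obvious.
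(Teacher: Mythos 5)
Your argument is correct in substance; the paper itself declares this lemma obvious and supplies no proof, so there is no official argument to compare against, and your write-up is a legitimate way to fill in the details. The reduction via \cref{lemma fuw formula} to the two factors coming from columns $i$ and $i+1$ of the bottom row is exactly right, as is the observation that $w_i=w_{i+1}$ forces both positions to be simultaneously occupied (contribution $z_iz_{i+1}$) or simultaneously vacant, leaving only the equality $v(Q_{i,L})=v(Q_{i+1,L})$ to check. The one slip is directional: the paper indexes columns from the \emph{right} and bully paths travel physically rightward, i.e.\ in the direction of \emph{decreasing} column index (compare the proof of \cref{lemma: mlq violation}(2)). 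So in fact every path covering $(i+1,L)$ continues on to cover $(i,L)$, and the only path that can cover $(i,L)$ without covering $(i+1,L)$ is the one dropping straight down from a ball at $(i,L-1)$ — the mirror image of what you wrote. Since your hypotheses and conclusion are symmetric in $i$ and $i+1$, the argument survives the swap verbatim: the exceptional path carries label $u_i=u_{i+1}$, which (when $u_{i+1}\le L-1$) already belongs to the covering set of the other column via the ball at $(i+1,L-1)$, so the two covering sets have the same minimum and the two $v$-values agree. With the direction corrected, the proof is complete.
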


\cref{prop:almost} will be used together with \eqref{eq:F expand}
to prove
\cref{theorem: zweight}.

\begin{Prop}\label{prop:almost}
Let $u=(u_1,\ldots, u_n)$ and $w=(w_1,\ldots,w_n)$ be compositions, 
	and assume that 
	\eqref{eq:exchange1} 
	or \eqref{eq:exchange2} holds
	for $u$ (as appropriate). Then we have the following.

		If $w_i>w_{i+1}$ and $u_i>u_{i+1}$, we have
		\begin{equation} 
		(1-s_i)(F_{w}^{u}F_u(\z)+F_{w}^{s_i u}F_{s_i u}(\z)) = 
			\frac{x_{w_{i+1}} (z_i-z_{i+1})}{z_i (z_{i+1}-x_{w_{i+1}})} 
			(F_{s_i w}^{u}F_u(\z)+F_{s_i w}^{s_i u}F_{s_i u}(\z))
		\label{eq:case1}
		\end{equation}

	     If $w_i>w_{i+1}$ and $u_i=u_{i+1}$, we have
\begin{equation}
	(1-s_i)F_{w}^{u}F_u(\z) = 
	\frac{x_{w_{i+1}} (z_i-z_{i+1})}{z_i (z_{i+1}-x_{w_{i+1}})} 
	(F_{s_i w}^{u}F_u(\z)).
\label{eq:case2}\end{equation}

		If $w_i=w_{i+1}$ and $u_i>u_{i+1}$, we have
\begin{equation}
     (1-s_i) (F_{w}^{u}F_u(\z)+F_{w}^{s_i u}F_{s_i u}(\z))=0. \label{eq:case3}\end{equation}

	     If $w_i=w_{i+1}$ and $u_i=u_{i+1}$, we have
\begin{equation}
    (1-s_i)F_{w}^{u}F_u(\z)=0. \label{eq:case4}\end{equation}  
\end{Prop}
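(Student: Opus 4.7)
The plan is to prove Proposition~\ref{prop:almost} by a four-way case analysis matching the hypotheses on $(w_i,w_{i+1})$ and $(u_i,u_{i+1})$. In each case I apply the appropriate structural lemma (Lemma~\ref{Lemma: fuw analysis1}, \ref{Lemma: fuw analysis2}, \ref{Lemma: fuw analysis3}, or \ref{Lemma: fuw analysis4}), which gives an explicit rational factorization of $F_w^u,\,F_w^{s_i u},\,F_{s_i w}^u,\,F_{s_i w}^{s_i u}$ in which the dependence on $z_i,z_{i+1}$ is isolated from a ``base'' factor independent of $z_i,z_{i+1}$. These factorizations are then combined with the inductive hypothesis that $u$ satisfies the exchange equations: if $u_i>u_{i+1}$, then \eqref{eq:exchange1} gives $F_{s_i u}=\tfrac{z_i(z_{i+1}-x_{u_{i+1}})}{x_{u_{i+1}}(z_i-z_{i+1})}(1-s_i)F_u$, while if $u_i=u_{i+1}$, then \eqref{eq:exchange2} gives $(1-s_i)F_u=0$. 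Each target identity then reduces to a short rational identity in the two variables $z_i,z_{i+1}$.

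Cases 4, 2, and 3 are quick. Case~4 is immediate: by Lemma~\ref{Lemma: fuw analysis4}, $F_w^u$ is symmetric in $z_i,z_{i+1}$, and $(1-s_i)F_u=0$ forces \eqref{eq:case4}. For Case~2, I would check each subcase of Lemma~\ref{Lemma: fuw analysis2}; subcases (1)--(3) are trivial by vanishing or symmetry, while subcase~(4) is a two-line computation showing $(1-s_i)F_w^u = (z_i-z_{i+1})F'$, which upon multiplication by the symmetric factor $F_u$ matches the right-hand side of \eqref{eq:case2} using the explicit expression for $F_{s_i w}^u$. For Case~3, the symmetric subcases of Lemma~\ref{Lemma: fuw analysis3} (part (1) and the first option of part (2)) have $F_w^u=F_w^{s_i u}$ symmetric, so \eqref{eq:case3} reduces to $F_w^u(1-s_i)(F_u+F_{s_i u})=0$, which follows from Remark~\ref{rem:symmetry}. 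The asymmetric second option of Lemma~\ref{Lemma: fuw analysis3}(2) needs a brief algebraic manipulation: after substituting the exchange equation into $F_w^{s_i u}F_{s_i u}$, the symmetric factor $(z_i-x_{u_{i+1}})(z_{i+1}-x_{u_{i+1}})$ appears in both terms, the remaining $z_i,z_{i+1}$-dependence telescopes, and the two contributions $(1-s_i)(F_w^uF_u)$ and $(1-s_i)(F_w^{s_i u}F_{s_i u})$ cancel exactly.

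Case~1 will be the main obstacle: Lemma~\ref{Lemma: fuw analysis1} splits into six subcases depending on whether $w_i=\max(w)$ and on the relation between $u_{i+1}$ and $w_{i+1}$. The approach in every subcase mirrors the pattern above: substitute the explicit factorization together with $F_{s_i u}=G(1-s_i)F_u$ for $G=\tfrac{z_i(z_{i+1}-x_{u_{i+1}})}{x_{u_{i+1}}(z_i-z_{i+1})}$, and verify \eqref{eq:case1} by expanding $(1-s_i)$ and collecting the common $z_i,z_{i+1}$-independent factors. Subcases (1)--(4) are resolved either by simultaneous vanishing of both sides or by the $z_iz_{i+1}$-symmetry of the surviving factors, and subcase~(6) uses the same rational identification as subcase~(4) of Case~2. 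The hardest subcase is~(5), where all four of $F_w^u, F_w^{s_i u}, F_{s_i w}^u, F_{s_i w}^{s_i u}$ are nonzero and the base factors on the two sides do not coincide (three of them share a common $F'$ while $F_w^{s_i u}$ carries an a priori unrelated $F''$); the identity there hinges on recognizing that the target prefactor $\tfrac{x_{w_{i+1}}(z_i-z_{i+1})}{z_i(z_{i+1}-x_{w_{i+1}})}$ is exactly the ratio of the explicit linear ``$(z-x)$'' factors appearing in $F_w^{(\cdot)}$ versus $F_{s_i w}^{(\cdot)}$, and that the required consistency $F''=(\text{explicit ratio})\cdot F'$ is forced by applying Remark~\ref{rem:symmetry} (symmetry of $F_u+F_{s_i u}$) to the $u$-level. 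After clearing denominators and substituting the exchange equation, the identity reduces to a polynomial identity in $z_i,z_{i+1}$ that can be verified by direct expansion.
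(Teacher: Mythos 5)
Your overall architecture is the same as the paper's: reduce each of the four cases to the factorizations in \cref{Lemma: fuw analysis1}--\cref{Lemma: fuw analysis4}, substitute the exchange equation for $u$, and check a rational identity in $z_i,z_{i+1}$. Your handling of Cases 2--4 and of subcases (1)--(4) and (6) of Case 1 is essentially correct and matches the paper (e.g.\ your computation $(1-s_i)F_w^u=F'(z_i-z_{i+1})$ in subcase (4) of Case 2, and the cancellation in the asymmetric option of \cref{Lemma: fuw analysis3}(2), are exactly right).

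However, your treatment of the subcase you correctly identify as the hardest one --- Case 1, subcase (5) of \cref{Lemma: fuw analysis1} --- contains a genuine gap. You propose to establish a consistency relation $F''=(\text{explicit ratio})\cdot F'$ by ``applying \cref{rem:symmetry} at the $u$-level,'' and you assert that the prefactor $\tfrac{x_{w_{i+1}}(z_i-z_{i+1})}{z_i(z_{i+1}-x_{w_{i+1}})}$ is the ratio of the linear factors in $F_w^{(\cdot)}$ versus $F_{s_iw}^{(\cdot)}$. Neither claim holds: \cref{rem:symmetry} constrains $F_u(\z)+F_{s_iu}(\z)$, not the transfer coefficients, and the ratio $F_w^u/F_{s_iw}^u=\tfrac{x_{w_{i+1}}(z_i-x_c)z_{i+1}}{x_c(z_{i+1}-x_{w_{i+1}})z_i}$ is not the target prefactor since $(z_i-x_c)z_{i+1}\neq z_i-z_{i+1}$. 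Moreover, $F''$ is independent of $z_i,z_{i+1}$ while the two factorizations $F_w^u=\tfrac{(z_i-x_c)z_{i+1}F'}{x_c}$ and $F_w^{s_iu}=F''(z_i-x_{w_{i+1}})z_{i+1}$ carry different linear factors, so no such constant relation between $F''$ and $F'$ can exist in general. The correct key observation (and the one the paper uses) is that no relation is needed: since $u_{i+1}=w_{i+1}$ here, substituting \eqref{eq:exchange1} gives
\begin{equation*}
F_w^{s_iu}F_{s_iu}(\z)=F''\,(z_i-x_{u_{i+1}})z_{i+1}\cdot\frac{z_i(z_{i+1}-x_{u_{i+1}})}{x_{u_{i+1}}}\cdot\frac{(1-s_i)F_u(\z)}{z_i-z_{i+1}},
\end{equation*}
which is symmetric in $z_i,z_{i+1}$, so $(1-s_i)\bigl(F_w^{s_iu}F_{s_iu}(\z)\bigr)=0$ and the $F''$-term drops out of the left-hand side entirely. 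One is then left to show $\tfrac{z_i(z_{i+1}-x_{w_{i+1}})}{x_{w_{i+1}}}\tfrac{(1-s_i)(F_w^uF_u(\z))}{z_i-z_{i+1}}=F_{s_iw}^uF_u(\z)+F_{s_iw}^{s_iu}F_{s_iu}(\z)$, which involves only $F'$ and follows from an add-and-subtract (telescoping) manipulation together with one more application of \eqref{eq:exchange1}. Without this step your subcase (5) does not close.
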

\begin{proof} We may assume that $max(w)=max(u)+1$ otherwise there is nothing to prove as $F_{w}^{u}=F_{w}^{s_i u}=F_{s_i w}^{u}=F_{s_i w}^{s_i u}=0$. We will prove 
		\eqref{eq:case1}
	 using \cref{Lemma: fuw analysis1}. The proofs of the other three equations  are similar using \cref{Lemma: fuw analysis2}, \cref{Lemma: fuw analysis3} and \cref{Lemma: fuw analysis4} respectively so we omit them.
	We divide the proof of \eqref{eq:case1} into six possible cases.

-Case 1: Suppose $w_i<max(w)$ and $u_{i+1}<w_{i+1}$. Then by \cref{Lemma: fuw analysis1} (1), we have $F_{w}^{u}=F_{s_i w}^{u}=F_{w}^{s_i u}=F_{s_i w}^{s_i u}=0$ so both sides of \eqref{eq:case1} are zero.

-Case 2: Suppose $w_i=max(w)$ and $u_{i+1}<w_{i+1}$. 
By \cref{Lemma: fuw analysis1} (2), 
		the right-hand side of \eqref{eq:case1} is zero.
Using 
 \cref{Lemma: fuw analysis1} (2)
	and \eqref{eq:exchange1}, the left-hand side of \eqref{eq:case1} is 
\begin{align}\label{eq: FWSSIU SYM}
    (1-s_i)(F_{w}^{s_i u}F_{s_i u}(\z))&=(1-s_i)\big(F' (z_i-x_{u_{i+1}})z_{i+1}
	\frac{z_i(z_{i+1}-x_{u_{i+1}})}{x_{u_{i+1}}}\frac{(1-s_i)F_u(\z)}{z_i-z_{i+1}}\big)\\&=(1-s_i)\big(F' z_i z_{i+1}\frac{(z_{i}-x_{u_{i+1}})(z_{i+1}-x_{u_{i+1}})}{x_{u_{i+1}}}\frac{(1-s_i)F_u(\z)}{z_i-z_{i+1}}\big),\nonumber 
\end{align}
	which is  zero 
	since we are applying 
	$(1-s_i)$ to 
	a quantity 
	symmetric in $z_i, z_{i+1}$.

-Case 3: Suppose $w_i<max(w)$ and $u_{i+1}=w_{i+1}$.  By \cref{Lemma: fuw analysis1} (3), \eqref{eq:case1} becomes
\begin{align*}
	F' z_i z_{i+1}F_{s_i u}(\z)&=\frac{z_i(z_{i+1}-x_{w_{i+1}})}{x_{w_{i+1}}}\frac{(1-s_i)(F' z_i z_{i+1}F_{u}(\z) )}{z_i-z_{i+1}}, \text{ or equivalently}\\   
     F_{s_i u}(\z)&=\frac{z_i(z_{i+1}-x_{w_{i+1}})}{x_{w_{i+1}}}\frac{(1-s_i)F_{u}(\z) }{z_i-z_{i+1}},
\end{align*}
which is \eqref{eq:exchange1} for $u$ and hence true by hypothesis.

-Case 4: Suppose $w_i<max(w)$ and $u_{i+1}>w_{i+1}$.  
The left-hand side of \eqref{eq:case1} is 
\begin{equation}\label{eq: fu fsiu sym}
	(1-s_i) (F_{w}^{u} F_u(\z)+F_{w}^{s_i u} F_{s_i u}(\z))=F' z_{i} z_{i+1}
	(1-s_i)(F_u(\z)+F_{s_i u}(\z)).
\end{equation}  
	By \cref{rem:symmetry}, 
	$(1-s_i)(F_u(\z)+F_{s_i u}(\z)) = 0$, so 
	the left-hand side is zero.
	And by \cref{Lemma: fuw analysis1} (4), the right-hand side of \eqref{eq:case1} is also zero. 

-Case 5: Suppose $w_i=max(w)$ and $u_{i+1}=w_{i+1}$. 
Then 
as shown in \eqref{eq: FWSSIU SYM}, 
we have $(1-s_i)(F_{w}^{s_i u}F_{s_i u}(\z))=0$. 
Therefore, multiplying the left-hand side of 
\eqref{eq:case1} by a constant, and using 
 \cref{Lemma: fuw analysis1} (5), and 
	\eqref{eq:exchange1},
we get 
\begin{align*}
	&\frac{z_i(z_{i+1}-x_{w_{i+1}})}{x_{w_{i+1}}}\frac{(1-s_i)(F_{w}^{u}F_u(\z)+F_{w}^{s_i u}F_{s_i u}(\z))}{z_i-z_{i+1}}
	=\frac{z_i(z_{i+1}-x_{w_{i+1}})}{x_{w_{i+1}}}\frac{(1-s_i)(F_{w}^{u}F_{ u}(\z))}{z_i-z_{i+1}}\\
	&=\frac{z_i(z_{i+1}-x_{w_{i+1}})}{x_{w_{i+1}}}\frac{(\frac{(z_i-x_c)z_{i+1}F'}{x_c}F_u(\z)-\frac{(z_{i+1}-x_c)z_{i}F'}{x_c}s_i F_u(\z))}{z_i-z_{i+1}} \\
	&=\frac{z_i(z_{i+1}-x_{w_{i+1}})}{x_{w_{i+1}}}\frac{(\frac{(z_{i+1}-x_c)z_{i}F'}{x_c}F_u(\z)-\frac{(z_{i+1}-x_c)z_{i}F'}{x_c}s_i F_u(\z))}{z_i-z_{i+1}}+\frac{(z_{i+1}-x_{w_{i+1}})z_{i}F'}{x_{w_{i+1}}}F_u(\z)\\
	&=\frac{(z_{i+1}-x_c)z_{i}F'}{x_c} \frac{z_i(z_{i+1}-x_{w_{i+1}})}{x_{w_{i+1}}}\frac{(1-s_i)F_u(\z)}{z_i-z_{i+1}}+\frac{(z_{i+1}-x_{w_{i+1}})z_{i}F'}{x_{w_{i+1}}}F_u(\z)\\
	&= \frac{(z_{i+1}-x_c)z_{i}F'}{x_c} F_{s_i u}(\z)+\frac{(z_{i+1}-x_{w_{i+1}})z_{i}F'}{x_{w_{i+1}}}F_u(\z)= F_{s_i w}^{u}F_u(\z)+F_{s_i w}^{s_i u}F_{s_i u}(\z).     
\end{align*}

-Case 6: Suppose $w_i=max(w)$ and $u_{i+1}>w_{i+1}$. Then by \cref{Lemma: fuw analysis1} (6), we have
\begin{align*}
 &\frac{z_i(z_{i+1}-x_{w_{i+1}})}{x_{w_{i+1}}}\frac{(1-s_i)(F_{w}^{u}F_u(\z)+F_{w}^{s_i u}F_{s_i u}(\z))}{z_i-z_{i+1}}\\&=\frac{z_i(z_{i+1}-x_{w_{i+1}})}{x_{w_{i+1}}}\frac{(1-s_i)(\frac{(z_i-x_c)z_{i+1}F'}{x_c}(F_u(\z)+F_{s_i u}(\z)))}{z_i-z_{i+1}}\\&=\frac{z_i(z_{i+1}-x_{w_{i+1}})}{x_{w_{i+1}}}   F'(F_u(\z)+F_{s_i u}(\z))=F_{s_i w}^{u}F_u(\z)+F_{s_i w}^{s_i u}F_{s_i u}(\z),
\end{align*}
where we used the fact that $(F_u(\z)+F_{s_i u}(\z))$ is symmetric in $z_i$ and $z_{i+1}$ (\cref{rem:symmetry}).
\end{proof}

Now we are ready to prove \cref{theorem: zweight}.

\begin{proof}[Proof of \cref{theorem: zweight}]
	For a composition $w=(w_1,\ldots,w_n)$, it is enough to show that $w$ satisfies \eqref{eq:exchange1} or \eqref{eq:exchange2}. We use induction on $max(w)$.

Consider the base case $max(w)=2$. There is only one multiline queue (consisting of one row) of type $w$, so we have
\begin{equation*}
    F_w(\z)=\prod\limits_{i, w_i=1}(z_i)\prod\limits_{i, w_i=2}(z_i-x_1).
\end{equation*}
If $w_i=w_{i+1}$,  $\eqref{eq:exchange2}$ is immediate. If $w_i>w_{i+1}$,   we can write $F_w(\z)=F' \cdot (z_i-x_1)z_{i+1}$ and $F_{s_i w}(\z)=F' z_i(z_{i+1}-x_1)$ for some $F'$ that does not depend on $z_i$ and $z_{i+1}$. Therefore the right-hand side of \eqref{eq:exchange1} becomes
\begin{align*}
    \frac{z_i (z_{i+1}-x_1)}{x_1}\frac{(1-s_i)(F' (z_i-x_1)z_{i+1})}{z_i-z_{i+1}}= \frac{z_i (z_{i+1}-x_1)}{x_1}F' x_1=F' z_i(z_{i+1}-x_1)
\end{align*}
which is $F_{s_i w}(\z)$.

Now consider the case $max(w)>2$ and assume that \eqref{eq:exchange1} and \eqref{eq:exchange2} are true for $u$ such that $max(u)=max(w)-1$. If $w_i>w_{i+1}$ then \eqref{eq:exchange1} follows from \eqref{eq:F expand}, \eqref{eq:case1} and \eqref{eq:case2}. If $w_i=w_{i+1}$ then \eqref{eq:exchange2} follows from \eqref{eq:F expand}, \eqref{eq:case3} and \eqref{eq:case4}.
\end{proof}

\section{The proof of \cref{MFC}, the monomial factor conjecture}\label{sec:monomial}

In this section we will prove \cref{MFC}, which gives  a formula for the 
largest monomial in $\x$ that divides $\psi_w$ in the setting where $y_i=0$.
We will first give an algebraic argument using the isobaric divided difference operators
to show that 
$\eta(w) = \prod_{i=1}^{n-2} x_i^{\alpha_i(w)+\cdots+\alpha_{n-2}(w)}$ always
divides $\psi_w$, see 
\cref{prop:divides}.
  We will then use 
our combinatorial formula for $\mathbf{z}$-deformed steady state probabilities
in terms of multiline queues 
(\cref{theorem: zweight}) to show that no greater monomial in $\x$
 divides $\psi_w$.

\begin{Def}
For a (multivariable) polynomial $p$ and a variable $x$, we write $x^d\mid\mid p$ if $x^d\mid p$ and $x^{d+1}\nmid p$, that is, 
$d$ is the highest power of $x$ that divides $p$. 
\end{Def}

Recall from \cref{def:alpha}
that for $w=(w_1,\ldots,w_n)\in \St(n)$, 
$\alpha_i(w)$ is the number of integers greater than $i+1$ 
among $\{i+1=w_r, w_{r+1}, \ldots , w_{s-1}, w_s=i\}$, where the subscripts
are taken modulo $n$.  
To prove \cref{MFC}, we will show that 
\begin{equation*}
    x_i^{\alpha_i(w)+\cdots+\alpha_{n-2}(w)} \mid\mid \psi_w, \hspace{5mm} \text{for $1\leq i\leq n-2$}.
\end{equation*}

\begin{Def}\label{permutation indexing}
Given an integer vector $(b_1,\ldots,b_{n-2})$ such that $0\leq b_i\leq i$, we define $S(b_1,\ldots,b_{n-2})\in\St(n)$ as follows.  
Let $w^{0}=(1,2,\ldots,n)\in\St(n)$.  Then for $1\leq i \leq n-2$, we recursively 
	construct from $w^{i-1}$ a state $w^i$ whose first $n-i-1$ letters
	are $1,2,\ldots, n-i-1$, by taking the last $b_i$ letters of $w^{i-1}$
	and inserting them after $1,2,\ldots, n-i-1$.
	That is, $$w^i = (1,2,\ldots,n-i-1, w^{i-1}_{n-b_i+1}, \ldots, w^{i-1}_{n-1},
	w^{i-1}_n, n-i, w_{n-i+1}^{i-1},\ldots, w_{n-b_i}^{i-1}).$$
We set $S(b_1,\ldots,b_{n-2})=w^{n-2}$.
\end{Def}
\begin{Ex}\label{ex:1}
Using \cref{permutation indexing}, 
we obtain $S(1,1,2,3) = w^4 \in \St(6)$ as follows. 
$$	\begin{array}{lll}
		w^0=(1,2,3,4,5,6) & 
	 w^1=(1,2,3,4,6,5)
	 & w^2=(1,2,3,5,4,6) \\
	 w^3 =(1,2, 4,6,3,5)
	& w^4=(1,6,3,5,2,4) & 
\end{array} $$
\end{Ex}
\begin{Rem}\label{rem:alpha}
It follows from \cref{permutation indexing} that $\alpha_{n-1-i}(w^{i})=\alpha_{n-1-i}(w^{i-1})-b_i$ and $\alpha_{j}(w^{i-1})=\alpha_{j}(w^{i})$ if $j\neq n-1-i$. 
\end{Rem}
\begin{Lemma}\label{lemma: construction of ST(N)}
Let $w=S(b_1,\ldots,b_{n-2})\in \St(n)$. Then we have
$$\alpha_i(w)=n-1-i-b_{n-1-i}.$$
	And for any $w\in\St(n)$, there exists  $(b_1,\ldots,b_{n-2})\in \Z^{n-2}$ such that $0\leq b_i\leq i$ and $w\sim S(b_1,\ldots,b_{n-2})$.
\end{Lemma}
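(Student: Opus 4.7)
The plan is to prove the two assertions in sequence, using the Remark preceding the lemma as the main input, and closing with an injectivity/counting argument for the second half.

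To prove $\alpha_i(w)=n-1-i-b_{n-1-i}$, I would start with the base $w^0=(1,2,\ldots,n)$: the cyclic window from $w^{-1}(j+1)=j+1$ to $w^{-1}(j)=j$ visits the values $j+1,j+2,\ldots,n,1,\ldots,j$, and those exceeding $j+1$ are $j+2,\ldots,n$, so $\alpha_j(w^0)=n-1-j$. Iterating the Remark, the step $w^{k-1}\to w^k$ lowers $\alpha_{n-1-k}$ by exactly $b_k$ and leaves every other $\alpha_j$ unchanged; in particular the only step affecting $\alpha_i$ is $k=n-1-i$, so
$$\alpha_i(w)=\alpha_i(w^{n-1-i})=(n-1-i)-b_{n-1-i}.$$

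For the second assertion I would argue as follows. Each $w^i$ in the recursion begins with $1,2,\ldots,n-i-1$, so every $S(b_1,\ldots,b_{n-2})$ begins with $1$; thus $S$ lands inside the set $\St(n)_1:=\{w\in\St(n):w_1=1\}$, which has cardinality $(n-1)!$. The domain $\{(b_1,\ldots,b_{n-2}):0\le b_i\le i\}$ also has cardinality $\prod_{i=1}^{n-2}(i+1)=(n-1)!$. Injectivity of $S$ is immediate from the first assertion, since $b_{n-1-i}=(n-1-i)-\alpha_i(S(b))$ recovers every entry from the image, so $S(b)=S(b')$ forces $b=b'$. An injection between finite equinumerous sets is a bijection, so $S$ is a bijection onto $\St(n)_1$. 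Finally, every $w\in\St(n)$ has a unique cyclic rotation in $\St(n)_1$ (rotate to place $1$ in position one), and that rotation equals $S(b_1,\ldots,b_{n-2})$ for a unique valid vector, yielding $w\sim S(b_1,\ldots,b_{n-2})$. The constraints $0\le b_i\le i$ are automatic because $0\le\alpha_i(w)\le n-1-i$ follows directly from the definition of $\alpha_i$ (counting values $>i+1$ in a cyclic window, of which there are at most $n-i-1$ total).

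The only genuine work is hidden inside the Remark: one must check that cyclically rotating the tail positions $n-i,\ldots,n$ by $b_i$ changes $\alpha_{n-1-i}$ by exactly $-b_i$ while fixing every other $\alpha_j$. For $j<n-1-i$ the cyclic window for $\alpha_j$ contains the entire shifted tail, so the multiset of values in the window, and hence the count exceeding $j+1$, is invariant. For $j\ge n-1-i$ both endpoints of the window lie in the tail, the outside positions contribute only values bounded by $n-i-1<j+1$, and a short direct computation on the tail shows that the count of large values drops by exactly $b_i$ when $j=n-1-i$ and by $0$ otherwise. With the Remark in hand, everything in the lemma itself reduces to a telescoping identification and a one-line pigeonhole.
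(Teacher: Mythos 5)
Your proof is correct and follows essentially the same route as the paper: the formula for $\alpha_i(w)$ is obtained by iterating \cref{rem:alpha} from $\alpha_i(w^0)=n-1-i$, and surjectivity onto cyclic classes is a pigeonhole count of $(n-1)!$ objects. The only (cosmetic) difference is that you pigeonhole into the $(n-1)!$ permutations starting with $1$ using injectivity of $S$, whereas the paper pigeonholes into the $(n-1)!$ cyclic equivalence classes using the cyclic invariance of the $\alpha_i$'s; both are valid.
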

\begin{proof}
Since	$\alpha_i(w^0)=n-1-i$, 
	 \cref{rem:alpha} implies that
	 $\alpha_i(w)=n-1-i-b_{n-1-i}.$

There are a total of $(n-1)!$ possible states $S(b_1,\ldots,b_{n-2})$, which are all cyclically different since the $\alpha_i(w)$'s are cyclically invariant. 
So they cover every element in $\St(n)$ up to a cyclic equivalence.
\end{proof}

\begin{Ex}
	Continuing \cref{ex:1}, 
	if $w:=S(1,1,2,3) = (1,6,3,5,2,4)$ then
$\alpha_1(w)=1$, $\alpha_2(w)=1$, $\alpha_3(w)=1$ and $\alpha_4(w)=0$, as  claimed in
 \cref{lemma: construction of ST(N)}.
\end{Ex}

\begin{Lemma}\label{lem:divisibility}
	If $\psi_{w'}(\z)=\pi_k(a,b) \psi_w(\z)$,
	and if a monomial $M$ in $x_1,\ldots,x_{n-1}$  divides $\psi_w(\z)$, then $\frac{M}{x_b}$ divides $\psi_{w'}(\z)$. 
\end{Lemma}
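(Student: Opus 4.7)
The plan is to specialize the formula for $\pi_k(a,b)$ from \cref{CantiniProp} to the setting $y_i=0$ for all $i$ (the setting of \cref{MFC}), under which the denominator of the isobaric divided difference operator simplifies to the single monomial $x_b$. Explicitly,
\[
\psi_{w'}(\z)\;=\;\pi_k(a,b)\psi_w(\z)\;=\;\frac{z_k\,(z_{k+1}-x_b)}{x_b}\,\partial_k\psi_w(\z),
\]
where $\partial_k G=(G-s_kG)/(z_k-z_{k+1})$ is the ordinary divided difference, which is a polynomial whenever $G$ is.

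The central observation is that $\partial_k$ acts only on the $z$-variables, so it commutes with multiplication by any monomial in $\x=(x_1,\dots,x_{n-1})$. In particular, if $M$ is a monomial in $\x$ with $M\mid\psi_w(\z)$, writing $\psi_w(\z)=M\cdot R(\z,\x,\y)$ for a polynomial $R$ yields $\partial_k\psi_w(\z)=M\cdot\partial_kR(\z,\x,\y)$, so $M\mid\partial_k\psi_w(\z)$. Clearing the denominator in the previous display then gives
\[
x_b\,\psi_{w'}(\z)\;=\;z_k(z_{k+1}-x_b)\,\partial_k\psi_w(\z),
\]
and since $M$ divides the right-hand side, we deduce $M\mid x_b\,\psi_{w'}(\z)$ in the polynomial ring.

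To extract the stated conclusion, write $M=x_b^e\,\tilde M$ with $x_b\nmid\tilde M$. If $e\ge 1$, then $M/x_b=x_b^{e-1}\tilde M$ is a monomial and cancelling the factor $x_b$ from the divisibility $M\mid x_b\,\psi_{w'}(\z)$ gives $M/x_b\mid\psi_{w'}(\z)$, as claimed. If $e=0$, then $\gcd(M,x_b)=1$, which forces the even stronger conclusion $M\mid\psi_{w'}(\z)$, and a fortiori $M/x_b\mid\psi_{w'}(\z)$ in any reasonable interpretation. There is no real obstacle here: the lemma is essentially the observation that, when $y_i=0$, the only source of an $x$-variable in the denominator of $\pi_k(a,b)$ is the single factor $1/x_b$, and everything else is compatible with $x$-monomial divisibility because $\partial_k$ touches only $z$-variables.
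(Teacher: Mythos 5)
Your proposal is correct and is essentially the paper's own argument: the paper likewise observes that when $y_i=0$ the operator reduces to $\pi_k(a,b)f(\z)=\frac{z_k(z_{k+1}-x_b)}{x_b}\,\frac{f(\z)-s_kf(\z)}{z_k-z_{k+1}}$, that the divided difference touches only the $\z$-variables (so $M$ survives it), and that $\psi_{w'}(\z)$ is a polynomial, which together yield $M/x_b\mid\psi_{w'}(\z)$. You have simply spelled out the bookkeeping (including the harmless edge case $x_b\nmid M$) in more detail.
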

\begin{proof}
This follows from the fact that 
	$\pi_k(a,b)f(\z)=\frac{z_k(z_{k+1}-x_b)}{x_b}\frac{f(\z)-s_k f(\z)}{z_k-z_{k+1}}$, and each $\psi_w(\z)$ is a polynomial (\cref{CantiniProp}).
\end{proof}

\begin{Lemma}\label{lemma: pioperator monomial factor}
Consider $w, w'\in St(n)$ such that 
	$w=(1,\ldots, i, w_{i+1},\ldots, w_n)$
	and 
	$w'=(2,\ldots, i, w_n, w_{i+1},\ldots, w_{n-1}, 1)$
	for  $1 \leq i \leq n-1$.
	Let $w''=(1,\ldots i, w_n, w_{i+1},\ldots, w_{n-1})$.
	If a monomial $M$ in $x_1,\ldots, x_{n-1}$ divides 
	$\psi_w(\z)$, 
	 $\frac{M}{x_1\cdots x_i}$ divides 
	both $\psi_{w'}(\z)$ and 
	$\psi_{w''}(\z)$.
\end{Lemma}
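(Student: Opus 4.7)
The first step is to reduce the two-target conclusion to the single claim about $w''$. Since $\sigma(w'') = (2,3,\ldots,i,w_n,w_{i+1},\ldots,w_{n-1},1) = w'$, equation \eqref{eq:cyclic} gives $\psi_{w'}(\z) = \psi_{w''}(\sigma(\z))$, and the cyclic shift on $\z$ does not alter which monomials in $\x$ divide the polynomial. So it suffices to show that $M/(x_1 \cdots x_i)$ divides $\psi_{w''}(\z)$.

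The core idea is to pass from $w$ to $w''$ via a cyclic shift followed by $i$ successive isobaric divided differences. Consider $\sigma^{-1}(w) = (w_n, 1, 2, \ldots, i, w_{i+1}, \ldots, w_{n-1})$ and define $u^0 = \sigma^{-1}(w)$ together with $u^k = s_k u^{k-1}$ for $1 \leq k \leq i$. An easy induction shows
\[
u^k = (1, 2, \ldots, k, w_n, k+1, \ldots, i, w_{i+1}, \ldots, w_{n-1}),
\]
so in particular $u^i = w''$. Because $w_n \in \{i+1, \ldots, n\}$, the descent $u^{k-1}_k = w_n > k = u^{k-1}_{k+1}$ holds at every stage, so \cref{CantiniProp} applies and yields $\psi_{u^k}(\z) = \pi_k(w_n, k)\, \psi_{u^{k-1}}(\z)$.

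Finally, $\psi_{u^0}(\z) = \psi_w(\sigma^{-1}(\z))$ is divisible by $M$, since $\sigma^{-1}$ acts only on $\z$-variables. Applying \cref{lem:divisibility} once per step, each operator $\pi_k(w_n, k)$ strips exactly one factor of $x_k$ from the $\x$-monomial factor. Iterating over $k = 1, 2, \ldots, i$ therefore gives $M/(x_1 x_2 \cdots x_i) \mid \psi_{w''}(\z)$, and combining with the first reduction yields the same divisibility for $\psi_{w'}(\z)$.

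There is no real obstacle here: the argument is a short chain of already-established tools. The only point requiring care is the verification that every swap in the sequence $u^0 \to u^1 \to \cdots \to u^i$ is a genuine descent, and this is automatic because throughout the chain we are always swapping $w_n$ (a value exceeding $i$) past one of $1, 2, \ldots, i$.
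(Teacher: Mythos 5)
Your proof is correct and follows essentially the same route as the paper: a chain of isobaric divided difference operators $\pi_k(w_n,\cdot)$ combined with \cref{lem:divisibility} to strip one $x_k$ per step, plus the cyclic-equivalence relation \eqref{eq:cyclic}. The only (cosmetic) difference is that the paper applies the wrap-around operator $\pi_n(w_n,1)$ first and then $\pi_1(w_n,2),\ldots,\pi_{i-1}(w_n,i)$ to reach $w'$ directly, deducing the claim for $w''$ by cyclic equivalence, whereas you pre-shift cyclically to avoid $s_n$, reach $w''$ via $\pi_1(w_n,1),\ldots,\pi_i(w_n,i)$, and deduce the claim for $w'$.
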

\begin{proof}
Using \cref{lem:divisibility} and the fact that 
\begin{equation*}
	\psi_{w'}(\z)=\pi_{i-1}(w_{n},i)\cdots \pi_2(w_n,3) \pi_{1}(w_{n},2)\pi_{n}(w_{n},1)\psi_w(\z),
\end{equation*}
we conclude that $\frac{M}{x_1\cdots x_i}$ divides $\psi_{w'}(\z)$. 
	The fact that $\frac{M}{x_1\cdots x_i}$ divides $\psi_{w''}(\z)$
 follows from \cref{CantiniProp}
	since $w'$ and $w''$ are cyclically equivalent.
\end{proof}

\begin{Prop} \label{prop:divides}
For $w\in\St(n)$, we have
\begin{equation*}
    x_i^{\alpha_i(w)+\cdots+\alpha_{n-2}(w)} \mid \psi_w, \hspace{5mm} \text{for $1\leq i\leq n-2$}.
\end{equation*}
\end{Prop}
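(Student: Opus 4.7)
My plan is to use cyclic equivalence together with a careful iterated application of Lemma \ref{lemma: pioperator monomial factor} starting from the identity permutation. Since $\psi_w$ is cyclically invariant (by the ring symmetry of the TASEP) and the quantities $\alpha_i(w)$ are cyclic invariants by construction, Lemma \ref{lemma: construction of ST(N)} lets me assume without loss of generality that $w = S(b_1,\ldots,b_{n-2})$ for a unique vector $(b_1,\ldots,b_{n-2})$ with $0 \leq b_i \leq i$, in which case $\alpha_i(w) = n-1-i-b_{n-1-i}$.

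For the base case, with $y_j = 0$, the normalization \eqref{eq:normalization} specializes to
\[
\psi_{w^0} = \psi_{(1,2,\ldots,n)} = \prod_{1 \leq i < j \leq n} x_i^{\,j-i-1} = \prod_{k=1}^{n-1} x_k^{\binom{n-k}{2}},
\]
so the monomial $M^0 := \prod_{k=1}^{n-1} x_k^{\binom{n-k}{2}}$ divides $\psi_{w^0}$. The inductive step is to pass from $w^{i-1}$ to $w^i$ via $b_i$ successive applications of Lemma \ref{lemma: pioperator monomial factor} with parameter $j = n-i-1$. At the start of the $i$th round, the first $n-i-1$ letters of $w^{i-1}$ are already $1,\ldots,n-i-1$; each application of the lemma peels off the last letter and inserts it into position $n-i$, and after $b_i$ such moves the resulting permutation is precisely $w^i$ as described in Definition \ref{permutation indexing}. (I will verify this combinatorial identity carefully, though it is essentially an unwinding of definitions.) Crucially, each application divides the currently known monomial divisor by $x_1 x_2 \cdots x_{n-i-1}$, so round $i$ removes a total factor of $(x_1 \cdots x_{n-i-1})^{b_i}$.

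After running all rounds $i=1,\ldots,n-2$, the power of $x_k$ in the resulting monomial is
\[
\binom{n-k}{2} - \sum_{i=1}^{n-k-1} b_i = \sum_{\ell=1}^{n-1-k}(\ell - b_\ell) = \sum_{j=k}^{n-2}\bigl(n-1-j - b_{n-1-j}\bigr) = \sum_{j=k}^{n-2}\alpha_j(w),
\]
using the reindexing $\ell = n-1-j$ and $\binom{n-k}{2} = \sum_{\ell=1}^{n-1-k}\ell$. This is exactly the exponent claimed in Proposition \ref{prop:divides}, completing the proof.

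The only mild obstacle is the bookkeeping in the inductive step: one must check that after each single application of Lemma \ref{lemma: pioperator monomial factor} the hypothesis (that the first $n-i-1$ entries are $1,\ldots,n-i-1$) continues to hold, so that the lemma can be reapplied, and that the cumulative effect of the $b_i$ moves produces the permutation $w^i$ defined by the block-insertion in Definition \ref{permutation indexing}. This is a direct unwinding — no operator computation beyond what Lemma \ref{lemma: pioperator monomial factor} already gives — and the rest reduces to the arithmetic identity above.
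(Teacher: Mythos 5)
Your proposal is correct and follows essentially the same route as the paper: reduce to $w=S(b_1,\ldots,b_{n-2})$ via cyclic invariance, then apply \cref{lemma: pioperator monomial factor} $b_i$ times in round $i$ starting from the identity permutation, the only difference being that the paper tracks exponents through the invariant $\prod_j (x_1\cdots x_j)^{\alpha_j(w^i)} \mid \psi_{w^i}(\z)$ together with \cref{rem:alpha}, whereas you do the bookkeeping in one closed-form sum at the end. The one small adjustment: \cref{lemma: pioperator monomial factor} is a statement about the $\z$-deformed probabilities, so the induction must run on $\psi_{w^i}(\z)$ — take the base case from the explicit formula for $\psi_{(1,2,\ldots,n)}(\z)$ in \cref{CantiniProp} rather than from \eqref{eq:normalization}, and recover divisibility of $\psi_w$ at the end by passing to the leading coefficient.
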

\begin{proof}
By \cref{CantiniProp}, 
it is enough to show this for any cyclic shift of $w$,
so we may assume $w=S(b_1,\ldots,b_{n-2})$ for some vector $(b_1,\ldots,b_{n-2})$. 
Using the notation in \cref{permutation indexing}, we will use induction on $i$ to show
that
\begin{equation}\label{whatever}
 x_1^{\alpha_1(w^{i})} (x_1x_2)^{\alpha_2(w^{i})} \cdots (x_1\cdots x_{n-2})^{\alpha_{n-2}(w^{i})} \mid \psi_{w^{i}}(\z)
\end{equation}
 
	Since $w^{0}$ is the identity permutation, we know from the expression
	for $\psi_{(1,2,\ldots,n)}(\z)$ in 
	\cref{CantiniProp} 
	 that $x_1^{n-2} (x_1x_2)^{n-3} \cdots (x_1\cdots x_{n-2})^{1}$ divides $\psi_{w^{0}}(\z)$. Now assume \eqref{whatever} holds for $i-1$. By repeatedly
	 applying \cref{lemma: pioperator monomial factor}, we have that 
	 $$\frac{x_1^{\alpha_1(w^{i-1})} (x_1x_2)^{\alpha_2(w^{i-1})} \cdots (x_1\cdots x_{n-2})^{\alpha_{n-2}(w^{i-1})}}{(x_1\cdots x_{n-1-i})^{b_i}}$$ 
	 divides $\psi_{{w}^{i}}(\z)$. Now 
	 \cref{rem:alpha} implies that
	 \begin{equation*}
    \frac{x_1^{\alpha_1(w^{i-1})} (x_1x_2)^{\alpha_2(w^{i-1})} \cdots (x_1\cdots x_{n-2})^{\alpha_{n-2}(w^{i-1})}}{(x_1\cdots x_{n-1-i})^{b_i}}= x_1^{\alpha_1(w^{i})} (x_1x_2)^{\alpha_2(w^{i})} \cdots (x_1\cdots x_{n-2})^{\alpha_{n-2}(w^{i})}.\end{equation*}
    Thus we conclude \eqref{whatever} is true for $i$. And \eqref{whatever} for $i=n-2$ gives the claim.
\end{proof}
Having proved \cref{prop:divides}, our next goal is to show that 
no larger monomial in $\x$ divides $\psi_w$.

\begin{Lemma}\label{lem: psipr}
If $x_i^{d}\mid\mid \psi_w$ then $x_i^{d} \mid\mid \psi_{w}(\z)$.
\end{Lemma}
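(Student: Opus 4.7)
The plan is to read off the $x_i$-adic valuations of $\psi_w$ and $\psi_w(\z)$ directly from the multiline queue expansion of \cref{theorem: zweight}, and compare them. One inequality is immediate: since $\psi_w=\LC(\psi_w(\z))$ appears as one of the coefficients in the $\z$-expansion of $\psi_w(\z)$, any power of $x_i$ dividing $\psi_w(\z)$ must divide $\psi_w$. Hence $x_i^{d+1}\nmid\psi_w$ forces $x_i^{d+1}\nmid\psi_w(\z)$, giving one half of the conclusion.

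For the other inequality, $x_i^d\mid\psi_w(\z)$, I would write
\[
\psi_w(\z)=\sum_{Q\in MLQ(w)}\zwt(Q)=\sum_{Q\in MLQ(w)}\wt(Q)\prod_{(c,r)}(z_c-v(Q_{c,r}))
\]
and analyze each summand. For each $Q$, set $e_i(Q):=\deg_{x_i}\wt(Q)$; this is a nonnegative integer, because $\wt(Q)$ is genuinely a monomial in $x_1,\ldots,x_{L-1}$ (the $x_r/x_i$ factors in \cref{def: weight of a multiline queue} are absorbed by the $x_i^{V_i}$ prefactor, since the number of $i$-covered vacancies in rows below row $i$ cannot exceed the total number $V_i$ of vacancies there). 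Since each $v(Q_{c,r})$ lies in $\{0,x_1,x_2,\ldots\}$, expanding $\prod_{(c,r)}(z_c-v(Q_{c,r}))$ yields a polynomial in $\x,\z$ whose monomials all have nonnegative $x$-exponents. Therefore every monomial appearing in $\zwt(Q)$ has $x_i$-exponent at least $e_i(Q)$, so $x_i^{e_i(Q)}\mid\zwt(Q)$, and consequently $x_i^{m}\mid\psi_w(\z)$, where $m:=\min_{Q\in MLQ(w)}e_i(Q)$.

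It remains to show $m\geq d$. Here I would invoke \cref{thm:MLQ}: the identity $\psi_w=\sum_Q\wt(Q)$ exhibits $\psi_w$ as a sum of monomials in $\x$ with strictly positive integer coefficients, admitting no cancellation whatsoever. Hence the $x_i$-adic valuation of $\psi_w$ is exactly $m$. The hypothesis $x_i^d\mid\mid\psi_w$ then forces $d=m$, so $x_i^d\mid\psi_w(\z)$; combined with the upper bound this gives $x_i^d\mid\mid\psi_w(\z)$. The main conceptual step — and the place to be most careful — is this no-cancellation argument: it is precisely what transfers the combinatorial handle on $\psi_w$ into a handle on the seemingly less rigid polynomial $\psi_w(\z)$.
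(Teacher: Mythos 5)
Your proof is correct and takes essentially the same route as the paper's: one direction follows from $\psi_w=\LC(\psi_w(\z))$, and the other from \cref{thm:MLQ} (positivity, hence no cancellation, forces $x_i^d\mid\wt(Q)$ for every $Q$) combined with \cref{theorem: zweight} and the fact that $\wt(Q)$ divides $\zwt(Q)$. The paper compresses your exponent bookkeeping into the single observation ``$\wt(Q)\mid\zwt(Q)$,'' but the content is identical.
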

\begin{proof}
If  $x_i^{d} \mid \psi_{w}(\z)$ then $x_i^{d}$ also divides $\psi_w=\LC(\psi_{w}(\z))$. For the converse, assume $x_i^{d} \mid \psi_{w}$. Then by \cref{thm:MLQ}, $x_i^{d} \mid \wt(Q)$ for every multiline queue $Q\in MLQ(w)$. Since $\wt(Q) \mid \zwt(Q)$ we have $x_i^{d} \mid \zwt(Q)$ for every $Q\in MLQ(w)$, which implies $x_i^{d} \mid \psi_{w}(\z)$ by \cref{theorem: zweight}.
\end{proof}
\begin{Lemma}\label{lem: MLQ MF}
Given a multiline queue $Q\in MLQ(w_1,\ldots,w_{n},1)$, 
	there exists  $\hat{Q}\in MLQ(w_1+1,\ldots,w_{n}+1,2,1)$ such that  $x_1^d \mid \mid \wt(Q)$ if and only if  $x_1^d \mid \mid \wt(\hat{Q})$. 
\end{Lemma}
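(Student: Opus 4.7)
The plan is to construct $\hat{Q}$ from $Q$ in a way that preserves the $x_1$-degree of the weight. Since $\hat{w}=(w_1+1,\ldots,w_n+1,2,1)$ has maximal label $L+1$ and introduces two extra particles (a "$2$" at position $n+1$ and a "$1$" at position $n+2$) relative to $(w_1,\ldots,w_n,1)$, the queue $\hat{Q}$ will have $L+1$ rows and $n+2$ columns. Row $1$ of $\hat{Q}$ contains a single ball (since $\hat{m}_1=1$), which forces a unique $1$-bully path that must terminate at column $1$ of row $L+1$, where the "$1$" of $\hat{w}$ resides.

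First, I would set up an embedding of $Q$ into $\hat{Q}$: the rows of $Q$ determine, after shifting down by one row and accommodating an extra column for the inserted "$2$" particle, the lower $L$ rows of $\hat{Q}$. A choice must still be made for the single row-$1$ ball of $\hat{Q}$, placed in some column $c^*$. I would verify that under this embedding the bully paths of $\hat{Q}$ with labels $\geq 2$ are in natural correspondence with the bully paths of $Q$ with labels $\geq 1$ (label $i$ in $\hat{Q}$ corresponding to label $i-1$ in $Q$), and that the $i$-covered vacancies of $\hat{Q}$ for $i \geq 2$ biject with the $(i-1)$-covered vacancies of $Q$.

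Next, I would compare $x_1$-degrees. The $x_1$-degree of $\wt(Q)$ equals the number of vacancies in rows $2,\ldots,L$ of $Q$ that are not $1$-covered, and similarly for $\hat{Q}$ over rows $2,\ldots,L+1$. Using that $\hat{m}_1=1$ and $\hat{m}_k$ satisfies a shifted relation with the $m_{k-1}$ of $Q$ for $k\geq 2$, a direct count shows that rows $2,\ldots,L+1$ of $\hat{Q}$ contain exactly $|\{i : w_i \geq 2\}|$ more vacancies than rows $2,\ldots,L$ of $Q$; these "extra" vacancies appear in row $2$ of $\hat{Q}$ as the inheritance of the vacancies of row $1$ of $Q$. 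The column $c^*$ is then selected so that the unique $1$-bully path of $\hat{Q}$ covers precisely these $|\{i : w_i \geq 2\}|$ extra vacancies (and no others), which keeps the $x_1$-degree invariant.

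The main obstacle is to show that such a $c^*$ can always be chosen and that the resulting $\hat{Q}$ is a valid multiline queue of type $\hat{w}$. I expect this to reduce to verifying that the extra vacancies in row $2$ of $\hat{Q}$ form a contiguous "corridor" reachable by going right with wraparound from $c^*$ to column $1$, containing no ball before the match at column $1$. The balls of row $1$ of $Q$ at columns $>1$ correspond exactly to positions $i$ with $w_i=1$; these transform into label-$2$ balls in row $2$ of $\hat{Q}$ whose $2$-bully paths inherit correctly from the corresponding $1$-bully paths of $Q$, while the newly inserted "$2$" of $\hat{w}$ is accommodated by the extra column of $\hat{Q}$. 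Carrying out this bookkeeping should confirm both that $\hat{Q}$ has the prescribed type and that its $x_1$-degree matches that of $Q$.
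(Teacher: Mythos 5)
Your proposed choice of $c^*$ does not actually make the $x_1$-degree invariant, and the discrepancy is precisely the total number of $1$-covered vacancies of $Q$. Write $v_r$ (resp.\ $\hat v_r$) for the number of vacancies in row $r$ of $Q$ (resp.\ $\hat Q$), and $z_{r,1}$ (resp.\ $\hat z_{r,1}$) for the number of $1$-covered vacancies. As you note, $\deg_{x_1}\wt(Q)=\sum_{r\ge 2}v_r-\sum_{r\ge 2}z_{r,1}$, and the vacancy counts satisfy $\hat v_{r+1}=v_r$ for $r\ge 1$. Under your plan the unique $1$-bully path of $\hat Q$ covers exactly the $v_1$ vacancies of row $2$ of $\hat Q$, so $\sum_{r\ge 2}\hat z_{r,1}=v_1$ and
\[
\deg_{x_1}\wt(\hat Q)=\sum_{r\ge 2}\hat v_r-\sum_{r\ge 2}\hat z_{r,1}=\sum_{r\ge 1}v_r-v_1=\sum_{r\ge 2}v_r,
\]
which overshoots $\deg_{x_1}\wt(Q)$ by $\sum_{r\ge 2}z_{r,1}$. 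For the example $Q\in MLQ(4,3,2,1)$ from the paper's figure this quantity is $3$: your $\hat Q$ would have $x_1$-degree $3$, whereas the correct value is $0$.

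The missing idea is that the $1$-covered vacancies of $Q$ must \emph{remain} $1$-covered in $\hat Q$, not migrate to $2$-covered: if a vacancy in row $r$ formerly hit by a $1$-bully path of $Q$ becomes $2$-covered in $\hat Q$, its weight contribution changes from $x_2\cdots x_r$ to $x_1x_3\cdots x_r$ and so it acquires an $x_1$ factor. The bijection you propose (``$i$-covered of $\hat Q$ for $i\ge 2$ $\leftrightarrow$ $(i-1)$-covered of $Q$'') would produce exactly this migration and is incompatible with degree invariance; for $i=2$ the correct set must be empty, not the full set of $1$-covered vacancies of $Q$. The paper's construction avoids this by inserting the ``$2,1$'' pair immediately adjacent to the $1$-ball of each row and interleaving cyclic shifts, so that the unique $1$-bully path of $\hat Q$ sweeps all of row $2$ \emph{and} then retraces, one row lower, the vacancies that $Q$'s own $1$-bully paths covered: one gets $\hat z_{2,1}=v_1$ and $\hat z_{r+1,1}=z_{r,1}$ for $r\ge 2$, whence $\sum\hat z_{r,1}=v_1+\sum_{r\ge 2}z_{r,1}$ and the degrees agree. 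A single free choice of $c^*$ in a fixed embedding is unlikely to steer the new $1$-bully path through the right vacancies in every row; the paper instead builds $\hat Q$ row by row with this alignment as the guiding constraint.
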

\begin{proof}
Recall the notion of the type of a row of a multiline queue
from \cref{def:MLQ}.
We will construct $\hat{Q}$ so that if the $r$th row of $Q$ has type $(u_1,\ldots,u_n,1)$ (up to a cyclic shift), the $(r+1)$st row of $\hat{Q}$ has type $(u_1+1,\ldots,u_n+1,2,1)$ (up to a cyclic shift). 

Let $\hat{Q}^{0}$  be the two-row multiline queue with $n+2$ columns, 
whose
first and second rows have types $(2,\ldots,2,1,2,2)$ and $(3,\ldots,3,2,1)$. For $1 \leq i \leq n-1$, we inductively construct
$\hat{Q}^i$ as follows:
\begin{itemize}
	\item Cyclically shift the columns
		of $Q$ so that the leftmost entry in the $i$th row of the resulting
		multiline queue $Q'$
		is a ball labeled $1$.  Let $(v_1,\ldots,v_k, 1, v_{k+1},\ldots,v_n)$ be the type of the $(i+1)$st row of $Q'$.
		Add a new $(i+2)$nd row of type 
	 $(v_1+1,\ldots,v_k+1,2,1,v_{k+1}+1,\ldots,v_n+1)$ to $\hat{Q}^{i-1}$,
		 then cyclically shift the columns of the multiline queue
		 so that the leftmost entry in the  $(i+2)$nd row is 
	 a ball labeled $1$. Denote  the resulting multiline queue by $\hat{Q}^{i}$. 
\end{itemize}
We set $\hat{Q}=\hat{Q}^{n-1}$.
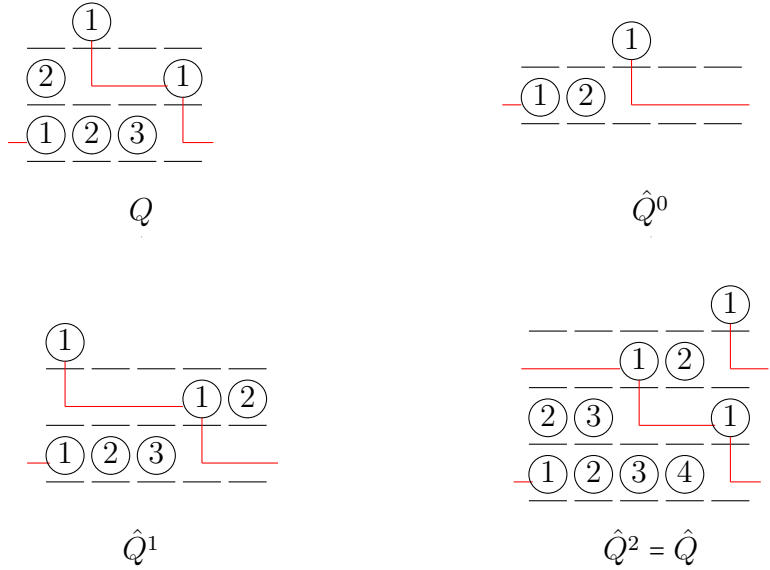
\begin{figure}[h]\centering
\begin{tikzpicture}[scale=0.5]
\draw[-] (-2.2,1) --(-3.2,1);
\draw[-] (-3.4,1) --(-4.4,1);
\draw[-] (-4.6,1) --(-5.6,1);
\draw[-] (-5.8,1) --(-6.8,1);

\begin{scope}[shift={(0,1.5)}]
\draw[-] (-2.2,1) --(-3.2,1);
\draw[-] (-3.4,1) --(-4.4,1);
\draw[-] (-4.6,1) --(-5.6,1);
\draw[-] (-5.8,1) --(-6.8,1);
\end{scope}

\begin{scope}[shift={(0,3)}]
\draw[-] (-2.2,1) --(-3.2,1);
\draw[-] (-3.4,1) --(-4.4,1);
\draw[-] (-4.6,1) --(-5.6,1);
\draw[-] (-5.8,1) --(-6.8,1);
\end{scope}

\begin{scope}[shift={(0,1.5)}]
\draw (-6.3,1.7) circle (0.5);
\filldraw[black] (-6.3,1.2) circle (0.000001pt) node[anchor=south] {$2$};
\draw (-2.7,1.7) circle (0.5);
\filldraw[black] (-2.7,1.2) circle (0.000001pt) node[anchor=south] {$1$};
\draw[red] (-2.7,1.2)--(-2.7,0);
\draw[red] (-2.7,0)--(-1.9,0);
\draw[red] (-7.3,0)--(-6.8,0);
\end{scope}

\begin{scope}[shift={(0,3)}]
\draw (-5.1,1.7) circle (0.5);
\filldraw[black] (-5.1,1.2) circle (0.000001pt) node[anchor=south] {$1$};
\draw[red] (-5.1,1.2)--(-5.1,0);
\draw[red] (-5.1,0)--(-3.1,0);

\end{scope}

\draw (-6.3,1.7) circle (0.5);
\filldraw[black] (-6.3,1.2) circle (0.000001pt) node[anchor=south] {$1$};

\draw (-3.9,1.7) circle (0.5);
\filldraw[black] (-3.9,1.2) circle (0.000001pt) node[anchor=south] {$3$};
\draw (-5.1,1.7) circle (0.5);
\filldraw[black] (-5.1,1.2) circle (0.000001pt) node[anchor=south] {$2$};

\begin{scope}[shift={(13,-2)}]

\begin{scope}[shift={(0,3)}]
\draw[-] (-1,1) --(-2,1);
\draw[-] (-2.2,1) --(-3.2,1);
\draw[-] (-3.4,1) --(-4.4,1);
\draw[-] (-4.6,1) --(-5.6,1);
\draw[-] (-5.8,1) --(-6.8,1);
\end{scope}

\begin{scope}[shift={(0,4.5)}]
\draw[-] (-1,1) --(-2,1);
\draw[-] (-2.2,1) --(-3.2,1);
\draw[-] (-3.4,1) --(-4.4,1);
\draw[-] (-4.6,1) --(-5.6,1);
\draw[-] (-5.8,1) --(-6.8,1);
\end{scope}

\begin{scope}[shift={(0,3)}]
\draw (-5.1,1.7) circle (0.5);
\filldraw[black] (-5.1,1.2) circle (0.000001pt) node[anchor=south] {$2$};
\draw (-6.3,1.7) circle (0.5);
\filldraw[black] (-6.3,1.2) circle (0.000001pt) node[anchor=south] {$1$};
\end{scope}

\begin{scope}[shift={(0,4.5)}]
\draw (-3.9,1.7) circle (0.5);
\filldraw[black] (-3.9,1.2) circle (0.000001pt) node[anchor=south] {$1$};
\draw[red] (-3.9,1.2)--(-3.9,0);
\draw[red] (-3.9,0)--(-0.8,0);
\draw[red] (-7.3,0)--(-6.8,0);
\end{scope}

\end{scope}

\begin{scope}[shift={(0.5,-10)}]

\begin{scope}[shift={(0,1.5)}]
\draw[-] (-1,1) --(-2,1);
\draw[-] (-2.2,1) --(-3.2,1);
\draw[-] (-3.4,1) --(-4.4,1);
\draw[-] (-4.6,1) --(-5.6,1);
\draw[-] (-5.8,1) --(-6.8,1);
\end{scope}

\begin{scope}[shift={(0,3)}]
\draw[-] (-1,1) --(-2,1);
\draw[-] (-2.2,1) --(-3.2,1);
\draw[-] (-3.4,1) --(-4.4,1);
\draw[-] (-4.6,1) --(-5.6,1);
\draw[-] (-5.8,1) --(-6.8,1);
\end{scope}

\begin{scope}[shift={(0,4.5)}]
\draw[-] (-1,1) --(-2,1);
\draw[-] (-2.2,1) --(-3.2,1);
\draw[-] (-3.4,1) --(-4.4,1);
\draw[-] (-4.6,1) --(-5.6,1);
\draw[-] (-5.8,1) --(-6.8,1);
\end{scope}

\begin{scope}[shift={(0,3)}]
\draw (-1.5,1.7) circle (0.5);
\filldraw[black] (-1.5,1.2) circle (0.000001pt) node[anchor=south] {$2$};
\draw (-2.7,1.7) circle (0.5);
\filldraw[black] (-2.7,1.2) circle (0.000001pt) node[anchor=south] {$1$};
\draw[red] (-2.7,1.2)--(-2.7,0);
\draw[red] (-2.7,0)--(-0.7,0);
\draw[red] (-7.3,0)--(-6.7,0);
\end{scope}

\begin{scope}[shift={(0,4.5)}]
\draw (-6.3,1.7) circle (0.5);
\filldraw[black] (-6.3,1.2) circle (0.000001pt) node[anchor=south] {$1$};
\draw[red] (-6.3,1.2)--(-6.3,0);
\draw[red] (-6.3,0)--(-3.2,0);
\end{scope}

\begin{scope}[shift={(0,1.5)}]
\draw (-6.3,1.7) circle (0.5);
\filldraw[black] (-6.3,1.2) circle (0.000001pt) node[anchor=south] {$1$};
\draw (-5.1,1.7) circle (0.5);
\filldraw[black] (-5.1,1.2) circle (0.000001pt) node[anchor=south] {$2$};
\draw (-3.9,1.7) circle (0.5);
\filldraw[black] (-3.9,1.2) circle (0.000001pt) node[anchor=south] {$3$};

\end{scope}

\end{scope}

\begin{scope}[shift={(12,-9)}]
\draw[-] (0.2,1) --(-0.8,1);
\draw[-] (-1,1) --(-2,1);
\draw[-] (-2.2,1) --(-3.2,1);
\draw[-] (-3.4,1) --(-4.4,1);
\draw[-] (-4.6,1) --(-5.6,1);

\begin{scope}[shift={(0,1.5)}]
\draw[-] (0.2,1) --(-0.8,1);
\draw[-] (-1,1) --(-2,1);
\draw[-] (-2.2,1) --(-3.2,1);
\draw[-] (-3.4,1) --(-4.4,1);
\draw[-] (-4.6,1) --(-5.6,1);
\end{scope}

\begin{scope}[shift={(0,3)}]
\draw[-] (0.2,1) --(-0.8,1);
\draw[-] (-1,1) --(-2,1);
\draw[-] (-2.2,1) --(-3.2,1);
\draw[-] (-3.4,1) --(-4.4,1);
\draw[-] (-4.6,1) --(-5.6,1);
\end{scope}

\begin{scope}[shift={(0,4.5)}]
\draw[-] (0.2,1) --(-0.8,1);
\draw[-] (-1,1) --(-2,1);
\draw[-] (-2.2,1) --(-3.2,1);
\draw[-] (-3.4,1) --(-4.4,1);
\draw[-] (-4.6,1) --(-5.6,1);
\end{scope}

\begin{scope}[shift={(0,3)}]
\draw (-1.5,1.7) circle (0.5);
\filldraw[black] (-1.5,1.2) circle (0.000001pt) node[anchor=south] {$2$};
\draw (-2.7,1.7) circle (0.5);
\filldraw[black] (-2.7,1.2) circle (0.000001pt) node[anchor=south] {$1$};
\draw[red] (-2.7,1.2)--(-2.7,0);
\draw[red] (-2.7,0)--(-0.7,0);

\end{scope}

\begin{scope}[shift={(0,4.5)}]
\draw (-0.3,1.7) circle (0.5);
\filldraw[black] (-0.3,1.2) circle (0.000001pt) node[anchor=south] {$1$};
\draw[red] (-0.3,1.2)--(-0.3,0);
\draw[red] (-0.3,0)--(0.7,0);
\draw[red] (-5.8,0)--(-3.2,0);
\end{scope}

\begin{scope}[shift={(0,1.5)}]
\draw (-0.3,1.7) circle (0.5);
\filldraw[black] (-0.3,1.2) circle (0.000001pt) node[anchor=south] {$1$};
\draw (-5.1,1.7) circle (0.5);
\filldraw[black] (-5.1,1.2) circle (0.000001pt) node[anchor=south] {$2$};
\draw (-3.9,1.7) circle (0.5);
\filldraw[black] (-3.9,1.2) circle (0.000001pt) node[anchor=south] {$3$};
\draw[red] (-0.3,1.2)--(-0.3,0);
\draw[red] (-0.3,0)--(0.5,0);
\draw[red] (-6,0)--(-5.5,0);
\end{scope}

\draw (-1.5,1.7) circle (0.5);
\filldraw[black] (-1.5,1.2) circle (0.000001pt) node[anchor=south] {$4$};
\draw (-2.7,1.7) circle (0.5);
\filldraw[black] (-2.7,1.2) circle (0.000001pt) node[anchor=south] {$3$};
\draw (-3.9,1.7) circle (0.5);
\filldraw[black] (-3.9,1.2) circle (0.000001pt) node[anchor=south] {$2$};
\draw (-5.1,1.7) circle (0.5);
\filldraw[black] (-5.1,1.2) circle (0.000001pt) node[anchor=south] {$1$};
\end{scope}

\filldraw[black] (-3.8,-1) circle (0.000001pt) node[anchor=south] {$Q$};
\filldraw[black] (9.6,-1) circle (0.000001pt) node[anchor=south] {$\hat{Q}^{0}$};
\filldraw[black] (-3.8,-10) circle (0.000001pt) node[anchor=south] {$\hat{Q}^{1}$};

\filldraw[black] (9.6,-10) circle (0.000001pt) node[anchor=south] {$\hat{Q}^{2}=\hat{Q}$};

\end{tikzpicture}
\caption{An example of the construction in \cref{lem: MLQ MF} with $n=3$.} \label{fig7}
\end{figure}

By \cref{def: weight of a multiline queue}, the exponent of 
$x_1$ in $\wt(Q)$ is the number of vacancies in rows $2,\ldots,n$ minus
the number of $1$-covered vacancies  in rows $2,\ldots,n$.
By construction, the number of vacancies and $1$-covered vacancies in
the $r$th row of $Q$ agrees with the corresponding number of vacancies
in the $(r+1)$st row of $\hat{Q}$ for $r \geq 2$.
All  the vacancies in row $2$ of $\hat{Q}$ are $1$-covered, so 
the exponents of $x_1$ in $\wt(Q)$ and $\wt(\hat{Q})$ are the same.  
\end{proof}
\begin{Ex}
 \cref{fig7} shows a multiline queue $Q$ of type $(4,3,2,1)$ as well
	as $\hat{Q}^0$, $\hat{Q}^1$, and $\hat{Q}^2=\hat{Q}$, as constructed in  the proof of \cref{lem: MLQ MF}. We have
\begin{align*}
    &\wt(Q)=(x_1)^{1+2}x_2^{1}(\frac{x_2}{x_1})^2(\frac{x_3}{x_1})=x_2^{3}x_3\\
    &\wt(\hat{Q})=(x_1)^{1+2+3}x_2^{1+2}x_3^{1}(\frac{x_2}{x_1})^3(\frac{x_3}{x_1})^2(\frac{x_4}{x_1})=x_2^{6}x_3^{3}x_4.
\end{align*}
Note that the exponents of $x_1$ in $\wt(Q)$ and $\wt(\hat{Q})$ are both zero.
\end{Ex}

\begin{Prop} \label{prop: x1}
For the state $w\in \St(n)$, we have
\begin{equation*}
    x_1^{\alpha_1(w)+\cdots+\alpha_{n-2}(w)} \mid\mid \psi_{w}.
\end{equation*}
\end{Prop}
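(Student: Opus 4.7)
By \cref{prop:divides} we already know that $x_1^{d} \mid \psi_w$, where $d := \alpha_1(w) + \cdots + \alpha_{n-2}(w)$, so the whole task reduces to showing $x_1^{d+1} \nmid \psi_w$. My plan is to read off the $x_1$-content of $\psi_w$ directly from the multiline queue formula. By \cref{thm:MLQ}, $\psi_w = \sum_{Q \in MLQ(w)} \wt(Q)$, and every $\wt(Q)$ is a positive monomial in $x_1, \ldots, x_{n-1}$; since positive monomials cannot cancel, the $x_1$-exponent of $\psi_w$ is exactly $\min_{Q \in MLQ(w)} \bigl(x_1\text{-exp of }\wt(Q)\bigr)$. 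Combined with the lower bound from \cref{prop:divides}, it therefore suffices to exhibit a single $Q^{\ast} \in MLQ(w)$ whose weight has $x_1$-exponent equal to $d$.

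I would produce such a $Q^{\ast}$ by induction on $n$, with trivial base cases $n \leq 2$. For the inductive step, cyclic invariance of both $\psi_w$ and $\sum_i \alpha_i(w)$ lets me assume $w_n = 1$. Set $p := w^{-1}(2)$. If $p = n-1$, so that $w$ ends in $(2,1)$, then \cref{lem: MLQ MF}, read in reverse, provides a bijection between $MLQ(w)$ and $MLQ(w')$, where $w' := (w_1 - 1, \ldots, w_{n-2} - 1, 1) \in \St(n-1)$, and this bijection preserves the $x_1$-exponent of the weight. A direct check from the definitions gives $\alpha_1(w) = 0$ (since $2$ lies immediately before $1$ in $w$) and $\alpha_{i+1}(w) = \alpha_i(w')$ for all $i \geq 1$, so $d$ agrees with $d' := \alpha_1(w') + \cdots + \alpha_{n-3}(w')$. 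Pulling a minimizer in $MLQ(w')$ (supplied by the inductive hypothesis) back through the bijection then produces the required $Q^{\ast}$.

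If instead $p < n-1$, no cyclic rotation of $w$ ends in $(2,1)$, and the above reduction no longer applies. I would construct $Q^{\ast}$ by hand, placing the lone row-$1$ ball in the column immediately to the right of the column of $w^{-1}(2)$, and determining the balls of each lower row iteratively so that the $1$-bully path covers the largest possible number of vacancies in each row consistent with ending at the unique $1$-ball in the bottom row. Tracking row-by-row the contribution $V_r - z_{r,1}$ to the $x_1$-exponent of $\wt(Q^{\ast})$, one expects each row $r$ to pick up exactly the share of $\sum_i \alpha_i(w)$ attributable to letters $>r+1$ lying in the cyclic segment from $r+1$ to $r$ in $w$. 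The main obstacle is exactly this second case: verifying that the prescribed placement yields a legitimate multiline queue of type $w$ whose weight has $x_1$-exponent precisely $d$ requires a direct combinatorial analysis of the bully-path geometry and seems to call for an analogue of \cref{lem: MLQ MF} in the situation where $2$ and $1$ are not cyclically adjacent, rather than a ready-made appeal to it.
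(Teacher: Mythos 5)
Your first two steps are sound and match the paper's strategy: positivity of the multiline queue weights does reduce the problem to exhibiting one $Q^{\ast}$ whose weight has $x_1$-exponent exactly $d$, and your treatment of the case where $w$ (up to cyclic shift) ends in $(2,1)$ — via \cref{lem: MLQ MF}, the identities $\alpha_1(w)=0$ and $\alpha_{i+1}(w)=\alpha_i(w')$, and induction on $n$ — is exactly what the paper does. The gap is the case $p<n-1$, which you correctly flag as the obstacle: no direct combinatorial construction of $Q^{\ast}$ for such $w$ is given in the paper, and attempting one would indeed require a new analogue of \cref{lem: MLQ MF} for non-adjacent $2$ and $1$, which you do not supply.

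The paper avoids this case entirely by arguing by contradiction through the $\mathbf{z}$-deformation rather than by constructing a minimizing queue for $w$ itself. Suppose $x_1^{d+1}\mid\psi_w$. By \cref{lem: psipr} the same power divides $\psi_w(\z)$. Writing $w=(w_1,\ldots,w_{n-k-1},2,w_{n-k},\ldots,w_{n-1},1)$, one applies the chain of isobaric operators $\pi_{n-k+1}(w_{n-k},1)\cdots\pi_n(w_{n-1},1)$ to move the letter $1$ until it sits immediately after the $2$; by \cref{lem:divisibility} each application costs at most one factor of $x_1$, so $x_1^{d+1-k}$ divides the deformed probability of the resulting state $w'$, which ends in $(2,1)$. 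Since $\alpha_1(w')=\alpha_1(w)-k$ while the other $\alpha_i$ are unchanged, this says $x_1$ divides $\psi_{w'}$ to order strictly greater than $\alpha_1(w')+\cdots+\alpha_{n-1}(w')$ — and that is precisely what your first case (the explicit queue built from \cref{lem: MLQ MF} and the inductive hypothesis) rules out, because an extra power of $x_1$ dividing $\psi_{w'}$ would have to divide every individual $\wt(Q')$. So the missing ingredient in your argument is not a cleverer queue for the non-adjacent case but the observation that the exchange relations of \cref{CantiniProp}, combined with \cref{lem:divisibility}, transfer the (hypothetical) excess divisibility from $w$ to the adjacent-case state $w'$, with the loss of exactly $k$ powers of $x_1$ matching the drop in $\alpha_1$.
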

\begin{proof}
By \cref{prop:divides}, we know that 
    $x_1^{\alpha_1(w)+\cdots+\alpha_{n-2}(w)} \mid \psi_{w}.$
So we need to show that no greater power of $x_1$ divides $\psi_w$.
We use induction, and suppose this is true for states in $\St(n)$.
For the sake of contradiction,
suppose there exists a state $w\in\St(n+1)$ such that  $x_1^{\alpha_1(w)+\cdots+\alpha_{n-1}(w)+1} \mid \psi_{w}$. By \cref{lem: psipr}, $x_1^{\alpha_1(w)+\cdots+\alpha_{n-1}(w)+1} \mid \psi_{w}(\z)$. Cyclically shifting the state if necessary, we can assume the last component of $w$ is a $1$; 
	so write $w=(w_1,\ldots,w_{n-k-1},2,w_{n-k},\ldots,w_{n-1},1).$\footnote{It is possible that $w=(w_1,\ldots,w_{n-1},2,1)$, in which case the following
	 argument simplifies: there are no $\pi$ operators in the following 
	 line, and we take $w'=w$.} 
	 Then we have
\begin{equation*}
     \psi_{(w_1,\ldots,w_{n-k-1},2,1,w_{n-k},\ldots,w_{n-1})}(\z)=\pi_{n-k+1}(w_{n-k},1)\cdots\pi_{n}(w_{n-1},1)\psi_w(\z),
\end{equation*}
which implies 
 that 
	$x_1^{\alpha_1(w)+\cdots+\alpha_{n-1}(w)+1-k} \mid \psi_{(w_1,\ldots,w_{n-k-1},2,1,w_{n-k},\ldots,w_{n-1})}(\z)$ 
	(by \cref{lem:divisibility}).

Applying a cyclic shift, we let 
	$w'=(w_{n-k},\ldots,w_{n-1},w_1,\ldots,w_{n-k-1},2,1);$
we also have $x_1^{\alpha_1(w)+\cdots+\alpha_{n-1}(w)+1-k} \mid \psi_{w'}$. 
	Since $\alpha_1(w')=\alpha_1(w)-k$ and $\alpha_2(w')=\alpha_2(w),\ldots,\alpha_{n-1}(w')=\alpha_{n-1}(w)$, we have 
	\begin{equation}\label{eq:s}x_1^{a_1(w')+\cdots+a_{n-1}(w')+1} \mid \psi_{w'} \text{ for }
		w'=(w_{n-k},\ldots,w_{n-1},w_1,\ldots,w_{n-k-1},2,1) \in \St(n+1).
	\end{equation} 
		
		Consider the state $w''=(w_{n-k}-1,\ldots,w_{n-1}-1,w_1-1,\ldots,w_{n-k-1}-1,1) \in \St(n)$. 
By the induction hypothesis, 
	$x_1^{\alpha_1(w'')+\cdots +\alpha_{n-2}(w'')}\mid\mid \psi_{w''},$ so
	there exists a multiline queue $Q$ of type $w''$ such that 
	$$x_1^{\alpha_1(w'')+\cdots +\alpha_{n-2}(w'')}\mid\mid \wt(Q).$$
By  \cref{lem: MLQ MF}, there is a multiline queue $Q'$ of type $w'$ such that $$x_1^{\alpha_1(w'')+\cdots+\alpha_{n-2}(w'')}\mid\mid \wt(Q').$$
Since $\alpha_1(w')=0$ and $\alpha_2(w')=\alpha_1(w''),\ldots,\alpha_{n-1}(w')=\alpha_{n-2}(w'')$, we have $$x_1^{\alpha_1(w')+\cdots+\alpha_{n-1}(w')}\mid\mid \wt(Q').$$ This contradicts  \eqref{eq:s}.
\end{proof}

\begin{Lemma}\label{lemma: trivial bully path insertion}
For $w\in\St(n)$ and  $1\leq i\leq n-3$, let  $w'\in \St(n-i)$ be obtained by erasing $1,\ldots,i$ in $w$ and decreasing the other integers by $i$. Given $Q'\in MLQ(w')$, there exists $Q\in MLQ(w)$ such that  $x_1^d \mid \mid \wt(Q')$ if
	and only if $x_{i+1}^d \mid \mid \wt(Q)$.
\end{Lemma}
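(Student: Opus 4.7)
The plan is to construct, for each $Q' \in MLQ(w')$, an explicit $Q \in MLQ(w)$ obtained by placing $i$ new rows on top of $Q'$ and threading $i$ trivial bully paths through the columns prescribed by the positions of the labels $1, 2, \ldots, i$ in $w$. The weight matching will then follow from showing that the vacancies of $Q$ that contribute to the exponent of $x_{i+1}$ biject with the vacancies of $Q'$ that contribute to the exponent of $x_1$.

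First, denote by $c_j$ the column (reading right to left) at which label $j$ sits in the bottom row of any queue in $MLQ(w)$, for $1\le j\le i$; call $c_1,\ldots,c_i$ the \emph{trivial columns} and the other $n-i$ columns the \emph{non-trivial columns}, identified in the order-preserving way with the $n-i$ columns of $Q'$. Given $Q'$, define $Q$ by: in row $j$ of $Q$ for $1 \le j \le i$, put balls exactly at columns $c_1, \ldots, c_j$; in row $r$ of $Q$ for $i+1 \le r \le n-1$, put balls at all of $c_1, \ldots, c_i$ together with the ball/vacancy configuration of row $r-i$ of $Q'$ on the non-trivial columns.

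Next, I would verify that $Q \in MLQ(w)$. Running the bully-path algorithm on $Q$, the $j$-bully path for $1\le j\le i$ is the trivial straight path down column $c_j$: the ball at $(c_j,j)$ is the unique unmatched ball in row $j$ because $c_1,\ldots,c_{j-1}$ are already matched by earlier trivial paths, and at every row below, position $(c_j,\cdot)$ is occupied. Once the $i$ trivial paths are laid down, the matched balls at the trivial columns act as harmless ``skipped'' positions for any subsequent travel-right step, so for $1 \le j \le n-i-1$ the $(i+j)$-bully path of $Q$ is a direct mirror of the $j$-bully path of $Q'$ on the non-trivial columns. Reading the bottom row then recovers $w$.

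Finally, using the formula
\[
\wt(Q)=\prod_{a=1}^{L-1}x_a^{V_a}\prod_{1\le a<r\le L}(x_r/x_a)^{z_{r,a}},
\]
the exponent of $x_{i+1}$ in $\wt(Q)$ equals $V_{i+1}(Q)+\sum_{a=1}^{i}z_{i+1,a}(Q)-\sum_{r=i+2}^{n-1}z_{r,i+1}(Q)$. From the construction: $V_{i+1}(Q)=V_1(Q')$, since vacancies in rows $\ge i+2$ of $Q$ all lie in non-trivial columns and biject with vacancies in rows $\ge 2$ of $Q'$; $\sum_{a=1}^{i}z_{i+1,a}(Q)=0$, since no trivial bully path traverses any vacancy; and $z_{r,i+1}(Q)=z'_{r-i,1}(Q')$ for $r\ge i+2$, by the mirror argument of the previous step. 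Combining, the exponent of $x_{i+1}$ in $\wt(Q)$ equals the exponent of $x_1$ in $\wt(Q')$, which is exactly the claim.

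The main obstacle is the bully-path mirroring, where one must verify carefully, including for wrap-around moves, that inserting the trivial columns $c_1,\ldots,c_i$ between the non-trivial columns never causes a later bully path in $Q$ to acquire or lose a vacancy relative to its counterpart in $Q'$. Arguments in the spirit of \cref{Cor: MLQ TRIVIALPATH} and \cref{lem: MLQ MF} handle this; once established, the weight bookkeeping collapses to the three identities above and the conclusion is immediate.
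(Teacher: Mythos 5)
Your construction is exactly the paper's proof, which is stated in one sentence ("insert trivial bully paths of type $1,\ldots,i$ in the appropriate positions"); you have simply filled in the details of the column identification, the verification that the inserted columns of matched balls are skipped harmlessly by later bully paths, and the weight bookkeeping. The argument is correct as written.
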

\begin{proof}
We increase the label of each ball in $Q'$ by $i$, and insert trivial bully paths
	of type $1,\ldots,i$ in the appropriate positions to get a multiline queue of type $w$.  
\end{proof}
\begin{Ex}
 Let $w=(1,4,2,7,6,5,3)\in\St(7)$ and $i=3$.  Then $w'=(1,4,3,2)$.
	 \cref{fig: inserting trivial bully path} shows $Q'\in MLQ(w')$ on the left
	 and $Q$ on the right.
	 The weights are
 \begin{align*}
     \wt(Q')&=x_1^{1+2}x_2^{1}(\frac{x_2}{x_1})^2(\frac{x_3}{x_1})=x_2^{3}x_3\\
     \wt(Q)&=x_1^{1+2+3+4+5}x_2^{1+2+3+4}x_3^{1+2+3}x_4^{1+2}x_5^{1}(\frac{x_5}{x_4})^2(\frac{x_6}{x_4})=x_1^{15}x_2^{10}x_3^{6}x_5^{3}x_6.
 \end{align*}
The exponent of $x_1$ in $\wt(Q')$ and the exponent of $x_4$ in $\wt(Q)$ are both zero.  
\end{Ex}
\vspace{-.6cm}
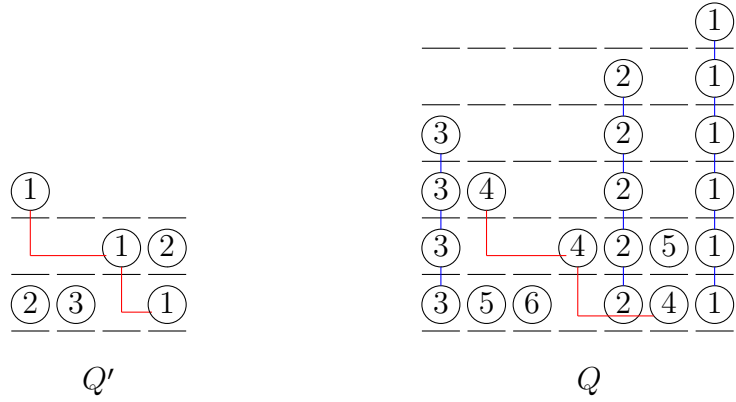
\begin{figure}[h]\centering
\begin{tikzpicture}[scale=0.5]
\draw[-] (-1,1) --(-2,1);
\draw[-] (-2.2,1) --(-3.2,1);
\draw[-] (-3.4,1) --(-4.4,1);
\draw[-] (-4.6,1) --(-5.6,1);

\begin{scope}[shift={(0,1.5)}]
\draw[-] (-1,1) --(-2,1);
\draw[-] (-2.2,1) --(-3.2,1);
\draw[-] (-3.4,1) --(-4.4,1);
\draw[-] (-4.6,1) --(-5.6,1);
\end{scope}

\begin{scope}[shift={(0,3)}]
\draw[-] (-1,1) --(-2,1);
\draw[-] (-2.2,1) --(-3.2,1);
\draw[-] (-3.4,1) --(-4.4,1);
\draw[-] (-4.6,1) --(-5.6,1);
\end{scope}

\begin{scope}[shift={(0,1.5)}]
\draw (-1.5,1.7) circle (0.5);
\filldraw[black] (-1.5,1.2) circle (0.000001pt) node[anchor=south] {$2$};
\draw (-2.7,1.7) circle (0.5);
\filldraw[black] (-2.7,1.2) circle (0.000001pt) node[anchor=south] {$1$};
\draw[red] (-2.7,1.2)--(-2.7,0);
\draw[red] (-2.7,0)--(-1.9,0);
\end{scope}

\begin{scope}[shift={(0,3)}]
\draw (-5.1,1.7) circle (0.5);
\filldraw[black] (-5.1,1.2) circle (0.000001pt) node[anchor=south] {$1$};
\draw[red] (-5.1,1.2)--(-5.1,0);
\draw[red] (-5.1,0)--(-3.1,0);
\end{scope}

\draw (-1.5,1.7) circle (0.5);
\filldraw[black] (-1.5,1.2) circle (0.000001pt) node[anchor=south] {$1$};

\draw (-3.9,1.7) circle (0.5);
\filldraw[black] (-3.9,1.2) circle (0.000001pt) node[anchor=south] {$3$};
\draw (-5.1,1.7) circle (0.5);
\filldraw[black] (-5.1,1.2) circle (0.000001pt) node[anchor=south] {$2$};

\filldraw[black] (-3.3,-1) circle (0.000001pt) node[anchor=south] {$Q'$};
\filldraw[black] (9.6,-1) circle (0.000001pt) node[anchor=south] {$Q$};

\begin{scope}[shift={(12,0)}]
\draw[-] (1.4,1) --(0.4,1);
\draw[-] (0.2,1) --(-0.8,1);
\draw[-] (-1,1) --(-2,1);
\draw[-] (-2.2,1) --(-3.2,1);
\draw[-] (-3.4,1) --(-4.4,1);
\draw[-] (-4.6,1) --(-5.6,1);
\draw[-] (-5.8,1) --(-6.8,1);

\begin{scope}[shift={(0,1.5)}]
\draw[-] (1.4,1) --(0.4,1);
\draw[-] (0.2,1) --(-0.8,1);
\draw[-] (-1,1) --(-2,1);
\draw[-] (-2.2,1) --(-3.2,1);
\draw[-] (-3.4,1) --(-4.4,1);
\draw[-] (-4.6,1) --(-5.6,1);
\draw[-] (-5.8,1) --(-6.8,1);
\end{scope}

\begin{scope}[shift={(0,3)}]
\draw[-] (1.4,1) --(0.4,1);
\draw[-] (0.2,1) --(-0.8,1);
\draw[-] (-1,1) --(-2,1);
\draw[-] (-2.2,1) --(-3.2,1);
\draw[-] (-3.4,1) --(-4.4,1);
\draw[-] (-4.6,1) --(-5.6,1);
\draw[-] (-5.8,1) --(-6.8,1);
\end{scope}

\begin{scope}[shift={(0,4.5)}]
\draw[-] (1.4,1) --(0.4,1);
\draw[-] (0.2,1) --(-0.8,1);
\draw[-] (-1,1) --(-2,1);
\draw[-] (-2.2,1) --(-3.2,1);
\draw[-] (-3.4,1) --(-4.4,1);
\draw[-] (-4.6,1) --(-5.6,1);
\draw[-] (-5.8,1) --(-6.8,1);
\end{scope}

\begin{scope}[shift={(0,6)}]
\draw[-] (1.4,1) --(0.4,1);
\draw[-] (0.2,1) --(-0.8,1);
\draw[-] (-1,1) --(-2,1);
\draw[-] (-2.2,1) --(-3.2,1);
\draw[-] (-3.4,1) --(-4.4,1);
\draw[-] (-4.6,1) --(-5.6,1);
\draw[-] (-5.8,1) --(-6.8,1);
\end{scope}

\begin{scope}[shift={(0,7.5)}]
\draw[-] (1.4,1) --(0.4,1);
\draw[-] (0.2,1) --(-0.8,1);
\draw[-] (-1,1) --(-2,1);
\draw[-] (-2.2,1) --(-3.2,1);
\draw[-] (-3.4,1) --(-4.4,1);
\draw[-] (-4.6,1) --(-5.6,1);
\draw[-] (-5.8,1) --(-6.8,1);
\end{scope}

\draw (0.9,1.7) circle (0.5);
\filldraw[black] (0.9,1.2) circle (0.000001pt) node[anchor=south] {$1$};
\draw (-1.5,1.7) circle (0.5);
\filldraw[black] (-1.5,1.2) circle (0.000001pt) node[anchor=south] {$2$};
\draw (-0.3,1.7) circle (0.5);
\filldraw[black] (-0.3,1.2) circle (0.000001pt) node[anchor=south] {$4$};
\draw (-3.9,1.7) circle (0.5);
\filldraw[black] (-3.9,1.2) circle (0.000001pt) node[anchor=south] {$6$};
\draw (-5.1,1.7) circle (0.5);
\filldraw[black] (-5.1,1.2) circle (0.000001pt) node[anchor=south] {$5$};
\draw (-6.3,1.7) circle (0.5);
\filldraw[black] (-6.3,1.2) circle (0.000001pt) node[anchor=south] {$3$};

\begin{scope}[shift={(0,1.5)}]
\draw (0.9,1.7) circle (0.5);
\filldraw[black] (0.9,1.2) circle (0.000001pt) node[anchor=south] {$1$};
\draw (-1.5,1.7) circle (0.5);
\filldraw[black] (-1.5,1.2) circle (0.000001pt) node[anchor=south] {$2$};
\draw (-6.3,1.7) circle (0.5);
\filldraw[black] (-6.3,1.2) circle (0.000001pt) node[anchor=south] {$3$};
\draw (-0.3,1.7) circle (0.5);
\filldraw[black] (-0.3,1.2) circle (0.000001pt) node[anchor=south] {$5$};
\draw (-2.7,1.7) circle (0.5);
\filldraw[black] (-2.7,1.2) circle (0.000001pt) node[anchor=south] {$4$};

\draw[blue] (-6.3,1.2)--(-6.3,0.7);
\draw[blue] (-1.5,1.2)--(-1.5,0.7);
\draw[blue] (0.9,1.2)--(0.9,0.7);
\draw[red] (-2.7,1.2)--(-2.7,-0.1);
\draw[red] (-2.7,-0.1)--(-0.7,-0.1);
\end{scope}

\begin{scope}[shift={(0,3)}]
\draw (0.9,1.7) circle (0.5);
\filldraw[black] (0.9,1.2) circle (0.000001pt) node[anchor=south] {$1$};
\draw (-1.5,1.7) circle (0.5);
\filldraw[black] (-1.5,1.2) circle (0.000001pt) node[anchor=south] {$2$};
\draw (-6.3,1.7) circle (0.5);
\filldraw[black] (-6.3,1.2) circle (0.000001pt) node[anchor=south] {$3$};
\draw (-5.1,1.7) circle (0.5);
\filldraw[black] (-5.1,1.2) circle (0.000001pt) node[anchor=south] {$4$};

\draw[blue] (-6.3,1.2)--(-6.3,0.7);
\draw[blue] (-1.5,1.2)--(-1.5,0.7);
\draw[blue] (0.9,1.2)--(0.9,0.7);
\draw[red] (-5.1,1.2)--(-5.1,0);
\draw[red] (-5.1,0)--(-3,0);
\end{scope}

\begin{scope}[shift={(0,4.5)}]
\draw (0.9,1.7) circle (0.5);
\filldraw[black] (0.9,1.2) circle (0.000001pt) node[anchor=south] {$1$};
\draw (-6.3,1.7) circle (0.5);
\filldraw[black] (-6.3,1.2) circle (0.000001pt) node[anchor=south] {$3$};
\draw (-1.5,1.7) circle (0.5);
\filldraw[black] (-1.5,1.2) circle (0.000001pt) node[anchor=south] {$2$};

\draw[blue] (-6.3,1.2)--(-6.3,0.7);
\draw[blue] (-1.5,1.2)--(-1.5,0.7);
\draw[blue] (0.9,1.2)--(0.9,0.7);
\end{scope}

\begin{scope}[shift={(0,6)}]
\draw (0.9,1.7) circle (0.5);
\filldraw[black] (0.9,1.2) circle (0.000001pt) node[anchor=south] {$1$};
\draw (-1.5,1.7) circle (0.5);
\filldraw[black] (-1.5,1.2) circle (0.000001pt) node[anchor=south] {$2$};

\draw[blue] (-1.5,1.2)--(-1.5,0.7);
\draw[blue] (0.9,1.2)--(0.9,0.7);
\end{scope}

\begin{scope}[shift={(0,7.5)}]
\draw (0.9,1.7) circle (0.5);
\filldraw[black] (0.9,1.2) circle (0.000001pt) node[anchor=south] {$1$};

\draw[blue] (0.9,1.2)--(0.9,0.7);
\end{scope}
\end{scope}
\end{tikzpicture}
\caption{An example of \cref{lemma: trivial bully path insertion}, with 
$w=(1,4,2,7,6,5,3)$, $i=3$, and $w'=(1,4,3,2)$.} \label{fig: inserting trivial bully path}
\end{figure}
\vspace{-.3cm}
\textit{Proof of \cref{MFC}.} For $w\in \St(n)$ and a number $1\leq i\leq n-3$, consider the state $w'\in\St(n-i)$ obtained by erasing $1,\ldots,i$ in $w$ and decreasing other integers by $i$. By \cref{prop: x1}, 
    $x_1^{\alpha_1(w')+\cdots+\alpha_{n-i-2}(w')} \mid\mid \psi_{w'}$, hence by 
\cref{thm:MLQ}, 
there exists a multiline queue $Q'$ of type $w'$ such that $$x_1^{\alpha_1(w')+\cdots+\alpha_{n-i-2}(w')}\mid\mid \wt(Q').$$
By \cref{lemma: trivial bully path insertion}, there exists $Q\in MLQ(w)$ such that 
\begin{equation}\label{eq:divides1}
x_{i+1}^{\alpha_{1}(w')+\cdots+\alpha_{n-i-2}(w')}\mid\mid \wt(Q).
\end{equation}
By the construction of $w'$ we have 
	\begin{equation*}
		\alpha_{i+1}(w)=\alpha_1(w'), \hspace{.4cm}
		\alpha_{i+2}(w)=\alpha_2(w'), \hspace{.4cm}
		\ldots \hspace{.4cm}
    \alpha_{n-2}(w)=\alpha_{n-i-2}(w'), 
\end{equation*}
	so \eqref{eq:divides1} implies
\begin{equation*} 
	x_{i+1}^{\alpha_{i+1}(w)+\cdots+\alpha_{n-2}(w)} \mid\mid \wt(Q).
\end{equation*} 
By 
\cref{prop:divides} we have that
$$ x_{i+1}^{\alpha_{i+1}(w)+\cdots+\alpha_{n-2}(w)} \mid\mid \psi_w$$
for $1\leq i\leq n-3$. Combining this with \cref{prop: x1}, we conclude that 
 the largest monomial that can be factored out of  $\psi_w$
 is $\prod\limits_{i=1}^{n-2}x_i^{\alpha_i(w)+\cdots+\alpha_{n-2}(w)}$, as
 desired.  Recall from \cref{lemma: construction of ST(N)} that if  two states $w,w'\in\St(n)$ have the same $\alpha_i(w)=\alpha_i(w')$ for all $1\leq i\leq n-2$ then $w \sim w'$.   This completes the proof.
\qed

\section{Future questions}\label{sec:future}

There are various natural questions that arise from this work.
\begin{Prob}
Find a combinatorial formula for steady state probabilities of the 
inhomogeneous TASEP 
(e.g. using multiline queues)  in the case where $y_i \neq 0$.
\end{Prob}

\begin{Prob}
Find a purely combinatorial proof of \cref{MFC}.
\end{Prob}

\begin{Prob}\label{prob:geom}
Prove that for any permutation $w$, 
the steady state probability $\psi_w$ in the inhomogeneous
TASEP can be written
as a positive sum of double
Schubert polynomials $\mathfrak{S}_{w}(x_1,\ldots,x_n; y_1,\ldots,y_n)$
multiplied by some  linear factors $(x_i-y_j)$.
\end{Prob}
\cref{prob:geom} is open even 
when $y_i=0$ for all $i$.
One  way to approach this problem would be to find a 
{geometric interpretation}
of each steady state probability.
Recall that Schubert polynomials represent cohomology 
classes of Schubert varieties
in the complete flag variety. 
Can one 
associate a variety to each
state whose cohomology class is represented by the corresponding
(unnormalized) steady state probability?

\appendix
\section{Technical results for the proof of \cref{thm:maintechnical}}\label{sec:technical}
In this section, we collect some technical results (\cref{prop 115} and \cref{prop 116}) which are used
for the proof of \cref{thm:maintechnical}.

\begin{Lemma}\cite[Lemma 22]{C}\label{CantiniLemma}
Let $K(x_1;x_2,\ldots,x_{m+1})$ be a rational expression in $x_1,\ldots, x_{m+1}$ which is symmetric in the variables $x_2,\ldots,x_{m+1}$. We have
\begin{equation*}
    \partial_m\cdots\partial_1 K=\sum_{i=1}^{m+1}\frac{K(x_i;x_1,\ldots,\hat{x_{i}},\ldots,x_{m+1})}{\prod\limits_{\substack{j=1\\j\neq i}}^{m+1}(x_i-x_j)}.
\end{equation*}
\end{Lemma}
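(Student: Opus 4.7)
The plan is to prove the identity by induction on $m$. The base case $m=0$ is trivial: there are no operators to apply and the right-hand side consists of a single term $K(x_1)$ divided by an empty product. The case $m=1$ is a direct rewriting:
$$\partial_1 K(x_1;x_2) = \frac{K(x_1;x_2) - K(x_2;x_1)}{x_1-x_2} = \frac{K(x_1;x_2)}{x_1-x_2} + \frac{K(x_2;x_1)}{x_2-x_1},$$
using that symmetry of $K$ in its last arguments makes $s_1 K(x_1;x_2) = K(x_2;x_1)$.

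For the inductive step, I would set $P_i := K(x_i; x_1,\ldots,\hat{x_i},\ldots,x_{m+1})$ for $1\le i\le m+1$, and observe that by the assumed symmetry of $K$ in its last $m$ slots, each $P_i$ is symmetric in the variables $\{x_j : j\ne i\}$. Treating $x_{m+1}$ as an inert parameter (the symmetry in $x_2,\ldots,x_{m+1}$ trivially implies symmetry in $x_2,\ldots,x_m$), the induction hypothesis applied in $m$ variables gives
$$\partial_{m-1}\cdots\partial_1 K \;=\; \sum_{i=1}^{m} T_i, \qquad T_i := \frac{P_i}{\prod_{\substack{j=1\\j\ne i}}^{m}(x_i-x_j)}.$$

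The key computation is then to apply $\partial_m$ to each $T_i$ and check that the result is exactly the $i$-th term (or the $i$-th and $(i{+}1)$-th terms, for $i=m$) of the claimed formula. Here the cases split naturally:

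\smallskip
\noindent For $1\le i\le m-1$, both $x_m$ and $x_{m+1}$ sit among the symmetric arguments of $P_i$, so $s_m P_i = P_i$. Only the denominator is affected by $s_m$ (the factor $x_i-x_m$ becomes $x_i-x_{m+1}$), and a short direct simplification yields
$$\partial_m T_i = \frac{P_i}{\prod_{\substack{j=1\\j\ne i}}^{m+1}(x_i-x_j)},$$
which is exactly the $i$-th term of the RHS.

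\smallskip
\noindent For $i=m$, one checks that $s_m P_m = K(x_{m+1}; x_1,\ldots,x_{m-1},x_m) = P_{m+1}$, again by the symmetry of $K$ in its last $m$ slots. Expanding $\partial_m T_m = (T_m - s_m T_m)/(x_m-x_{m+1})$ and regrouping produces two terms, namely the $m$-th and $(m+1)$-th terms of the claimed right-hand side; the minus sign from the swap of $x_m$ and $x_{m+1}$ in the denominator of the second term is exactly what is needed to match $1/\prod_{j\ne m+1}(x_{m+1}-x_j)$.

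The main (and only) obstacle is bookkeeping: making sure the symmetry of $K$ is invoked correctly so that $s_m P_i = P_i$ for $i\le m-1$ and $s_m P_m = P_{m+1}$, and that the denominators recombine to the claimed product $\prod_{j\ne i}^{m+1}(x_i-x_j)$. No deeper combinatorial input is needed; the identity is essentially a telescoping partial-fractions computation driven by the structure of the divided difference operators.
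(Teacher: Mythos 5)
Your proof is correct. Note that the paper itself gives no proof of this lemma---it is quoted verbatim from Cantini \cite[Lemma 22]{C}---so there is nothing in-paper to compare against; your induction on $m$, with the key observations that $s_m P_i = P_i$ for $i\le m-1$ and $s_m P_m = P_{m+1}$ so that the partial fractions recombine into $\prod_{j\neq i}(x_i-x_j)$, is the standard argument and all the steps check out.
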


Now we rewrite \eqref{eq: def zschubert}. In what follows,
we let $[m]$ denote $\{1,2,\ldots,m\}$.
\begin{Lemma}
	\label{eq:zschubert def sym}
For $\lambda\in\Val(n)$, let $\mul(\lambda)\geq k$ for some $k$. Let $\tilde{\lambda}$ denote the partition obtained by deleting the first $k$ parts of $\lambda$. Then 
    $\Sym_{\lambda}^{n}(\z;\x;\y)$ is equal to 
\begin{equation}
	\sum\limits_{\substack{I\subseteq[n-\lambda_1-\mul(\lambda)+k] \\|I|=k}}
	\frac{ \displaystyle{ \Sym_{\tilde{\lambda}}^{n-k}(\sigma^{\lambda_1-\lambda_{k+1}+k}(\z);\x_{\hat{I}};\y)\prod\limits_{\substack{1\leq l\leq n-\mul(\lambda)\\i\in I}}(x_i-y_l)\prod\limits_{\substack{1\leq i\leq \lambda_1-\lambda_{k+1}+k\\j \in [n-\lambda_1-\mul(\lambda)+k]\setminus I }}(z_i-x_j)}  }
	{ \displaystyle{   \prod\limits_{\substack{i\in I\\ j \in [n-\lambda_1-\mul(\lambda)+k]\setminus I}}(x_i-x_j)}  }
	\nonumber
\end{equation} 
\end{Lemma}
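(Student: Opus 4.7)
The plan is to prove the lemma by induction on $k\geq 1$, assuming throughout that $\mul(\lambda)\geq k$. The base case $k=1$ will follow by applying \cref{CantiniLemma} to the definition \eqref{eq: def zschubert}: that identity writes $\Sym_\lambda^n(\z;\x;\y)$ as $\partial_{n-\lambda_1-\mul(\lambda)}\cdots\partial_1$ of an inner product $E$, and \cref{CantiniLemma} converts this iterated divided difference into the desired sum over $i\in[n-\lambda_1-\mul(\lambda)+1]$, provided $E$ is symmetric in $x_2,\ldots,x_{n-\lambda_1-\mul(\lambda)+1}$. The factor $\prod_l(x_1-y_l)$ uses only $x_1$, and $\prod_{i,m}(z_i-x_m)$ is visibly symmetric in the relevant range, so the required symmetry reduces to that of the $\mathbf{z}$-Schubert factor $\Sym_{\lambda'}^{n-1}(\sigma^{\lambda_1-\lambda_2+1}(\z);\x_{\hat 1};\y)$. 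This in turn follows from a short auxiliary claim, proved by induction on $|\mu|$: $\Sym_\mu^N(\z;\x;\y)$ is symmetric in $x_1,\ldots,x_{N-\mu_1-\mul(\mu)+1}$, which is immediate because its definition ends with the symmetrizing operator $\partial_{N-\mu_1-\mul(\mu)}\cdots\partial_1$.

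For the inductive step, I will apply the hypothesis with $k-1$ and then apply the just-established $k=1$ case to each summand. Write $\tilde\lambda^{(k-1)}$ for the partition obtained by deleting the first $k-1$ parts of $\lambda$; the assumption $\mul(\lambda)\geq k$ gives $(\tilde\lambda^{(k-1)})_1=\lambda_1$ and $\mul(\tilde\lambda^{(k-1)})=\mul(\lambda)-k+1\geq 1$, so the base case applies to each inner factor $\Sym_{\tilde\lambda^{(k-1)}}^{n-k+1}(\sigma^{\lambda_1-\lambda_k+k-1}(\z);\x_{\hat{I'}};\y)$ that the hypothesis leaves behind. Expanding this factor as a sum over a single extra index and converting that index (relative to $\x_{\hat{I'}}$) into an absolute index $j^\star\in[n-\lambda_1-\mul(\lambda)+k]\setminus I'$ reorganizes the resulting double sum into a sum indexed by pairs $(I',j^\star)$ with $I=I'\cup\{j^\star\}\subseteq[n-\lambda_1-\mul(\lambda)+k]$ of size $k$. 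The shifts compose as $\sigma^{\lambda_1-\lambda_k+k-1}\circ\sigma^{\lambda_k-\lambda_{k+1}+1}=\sigma^{\lambda_1-\lambda_{k+1}+k}$, the numerators $\prod_l(x_i-y_l)$ accumulate over $I=I'\cup\{j^\star\}$, and the factors $\prod_{i,m}(z_i-x_m)$ combine over the complementary index set.

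The main obstacle will be the combinatorial step of verifying that each $k$-subset $I$ contributes exactly one term to the final sum, with the symmetric denominator $\prod_{i\in I,\,j\in[n-\lambda_1-\mul(\lambda)+k]\setminus I}(x_i-x_j)$. A priori the pair-summation $(I',j^\star)$ writes a fixed $I$ in $k$ ways (one per choice of distinguished element $j^\star\in I$), and the $k$ resulting contributions carry different numerator and denominator structures coming from different $\x_{\hat{I'}}$-indexings and different partial Vandermonde factors. Collapsing these $k$ ordered contributions into the single symmetric summand asserted by the lemma — essentially a multi-step reduction of an iterated divided-difference identity — will likely require either identifying a telescoping pattern across the $k$ choices, or establishing a stronger structural symmetry of the intermediate $\mathbf{z}$-Schubert polynomials (so that the $k$ summands coincide up to a combinatorial factor that cancels in the final accounting). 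This bookkeeping will be the bulk of the work; once handled, the inductive step follows by routine algebraic manipulation.
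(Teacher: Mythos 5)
Your overall architecture --- \cref{CantiniLemma} for the case $k=1$, then an induction to reach general $k$ --- matches the paper's, but two essential steps are missing or wrong.

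First, the symmetry input to \cref{CantiniLemma}. You claim that $\Sym_\mu^N(\z;\x;\y)$ is symmetric in $x_1,\ldots,x_{N-\mu_1-\mul(\mu)+1}$ and that this is ``immediate because its definition ends with the symmetrizing operator $\partial_{N-\mu_1-\mul(\mu)}\cdots\partial_1$.'' That operator is not a symmetrizer on arbitrary input: $\partial_m\cdots\partial_1 K$ is symmetric in $x_1,\ldots,x_{m+1}$ only when $K$ is already symmetric in $x_2,\ldots,x_{m+1}$ (this is precisely the hypothesis of \cref{CantiniLemma}; for instance $\partial_2\partial_1(x_2x_3^2)=x_2+x_3$ is not symmetric in $x_1,x_2,x_3$). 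Moreover, the symmetry range you propose to establish is too weak to propagate through the recursion: for the inner factor $\Sym_{\mu'}^{N-1}(\cdot;\x_{\hat 1};\y)$ one needs symmetry in $x_2,\ldots,x_{N-\mu_1-\mul(\mu)+1}$, while your claim applied to $\mu'$ only gives $x_2,\ldots,x_{N-\mu'_1-\mul(\mu')+1}$; when $\mul(\mu)=1$ and $\mul(\mu')$ is large (e.g.\ $\mu=(3,1,1,1,1)$, $\mu'=(1,1,1,1)$, where $\mu_1+\mul(\mu)=4<5=\mu'_1+\mul(\mu')$) the latter range is strictly smaller. The paper avoids this by running the induction on $n$ and extracting from the $k=\mul(\lambda)$ case of the lemma the stronger symmetry of $\Sym_\lambda^n$ in $x_1,\ldots,x_{n-\lambda_1}$, which does propagate.

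Second, and more seriously, the recombination step you call ``the main obstacle'' is the actual mathematical content of the inductive step, and you do not carry it out. After the two expansions you have a double sum over pairs $(I',j^\star)$ in which each $k$-subset $I$ arises $k$ times with different partial Vandermonde denominators, and you only gesture at ``a telescoping pattern'' or ``a stronger structural symmetry.'' The paper resolves exactly this point with the identity \eqref{eq:mid}: for a fixed $I_0=\{i_1<\cdots<i_k\}$, the sum of the surviving contributions (only those $i_h\le n-\lambda_1-\mul(\lambda)+1$ occur in the outer sum) equals the single symmetric summand, proved by noting that both sides are polynomials in $z_1$ of degree at most $n-\lambda_1-\mul(\lambda)$ and agree at the $n-\lambda_1-\mul(\lambda)+1$ points $z_1=x_m$. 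Without an argument of this kind your inductive step is a plan rather than a proof. (Your composition of the shifts and the accumulation of the $(x_i-y_l)$ and $(z_i-x_j)$ factors are fine; reversing the order of expansion --- one index first, then $k-1$, as the paper does within its induction on $n$ --- makes the bookkeeping slightly cleaner, but an interpolation identity of the above type is needed either way.)
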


\begin{proof}
Note that the expression in \cref{eq:zschubert def sym} implies that $\Sym_{\lambda}^{n}(\z;\x;\y)$ is symmetric in variables $x_1,\ldots,x_{n-\lambda_1-\mul(\lambda)+k}$. Since we can take $k=\mul(\lambda)$, \cref{eq:zschubert def sym} implies that $\Sym_{\lambda}^{n}(\z;\x;\y)$ is symmetric in variables $x_1,\ldots,x_{n-\lambda_1}$. Now we use induction on $n$. Suppose the statement holds
	for partitions in $\Val(n-1)$. 

Let $\lambda'$ to be the partition obtained by deleting the first part of $\lambda$. By the induction hypothesis, $\Sym_{\lambda'}^{n-1}(\z;\x;\y)$ is symmetric in variables $x_1,\ldots,x_{(n-1)-\lambda'_1}.$ Note that we have $(n-1-\lambda'_1)\geq n-\lambda_1-\mul(\lambda)$. Thus we rewrite \eqref{eq: def zschubert} using \cref{CantiniLemma} as follows
\begin{equation}\label{eq: zschuert 1 expansion}
	\Sym_{\lambda}^{n}(\z;\x;\y)=\sum\limits_{i=1}^{n-\lambda_1-\mul(\lambda)+1} \frac{\Sym_{\lambda'}^{n-1}(\sigma^{\lambda_1-\lambda_{2}+1}(\z);\x_{\hat{i}};\y)\prod\limits_{l=1}^{n-\mul(\lambda)}(x_i-y_l)\prod\limits_{l=1}^{(\lambda_1-\lambda_2+1)}\prod\limits_{\substack{j=1\\j\neq i}}^{n-\lambda_1-\mul(\lambda)+1}(z_l-x_j)}{\prod\limits_{\substack{j=1\\ j\neq i}}^{n-\lambda_1-\mul(\lambda)+1}(x_i-x_j)},
\end{equation}
which implies the $k=1$ case of \cref{eq:zschubert def sym}.

Assume $k>1$. In this case we have $\mul(\lambda)>1$, which implies $\mul(\lambda')=\mul(\lambda)-1$ and $\lambda'_1=\lambda_2=\lambda_1$. Applying \cref{eq:zschubert def sym} to $\Sym_{\lambda'}^{n-1}(\z;\x;\y)$ and $(k-1)$ gives
\begin{align}\label{eq:inductionzschubert def sym}
    &\Sym_{\lambda'}^{n-1}(\z;\x;\y)=\\&\sum\limits_{\substack{I\subseteq[n-1-\lambda_1-\mul(\lambda)+k] \\|I|=k-1}}\frac{\Sym_{\tilde{\lambda}}^{n-k}(\sigma^{\lambda_1-\lambda_{k+1}+k-1}(\z);\x_{\hat{I}};\y)\prod\limits_{\substack{1\leq l\leq n-\mul(\lambda)\\i\in I}}(x_i-y_l)\prod\limits_{\substack{1\leq i\leq \lambda_1-\lambda_{k+1}+k-1\\j \in [n-1-\lambda_1-\mul(\lambda)+k]\setminus I }}(z_i-x_j)}{\prod\limits_{\substack{i\in I\\ j \in [n-1-\lambda_1-\mul(\lambda)+k]\setminus I}}(x_i-x_j)},\nonumber
\end{align}
and rewriting \eqref{eq: zschuert 1 expansion} using the fact $\lambda_1=\lambda_2$ gives
\begin{align}\label{eq: zschubert in the proof}
        \Sym_{\lambda}^{n}(\z;\x;\y)=\sum\limits_{j=1}^{n-\lambda_1-\mul(\lambda)+1}\frac{\Sym_{\lambda'}^{n-1}(\sigma(\z);\x_{\hat{j}};\y)\prod\limits_{l=1}^{n-\mul(\lambda)}(x_j-y_l)\prod\limits_{\substack{m=1\\m\neq j}}^{n-\lambda_1-\mul(\lambda)+1}(z_1-x_m)}{\prod\limits_{\substack{m=1 \\m\neq j}}^{n-\lambda_1-\mul(\lambda)+1}(x_j-x_m)}.
    \end{align}
	Plugging \eqref{eq:inductionzschubert def sym} into \eqref{eq: zschubert in the proof} gives
\begin{align}\label{eq:plugin}
     \Sym_{\lambda}^{n}(\z;\x;\y)=\sum\limits_{\substack{I\subseteq[n-1-\lambda_1-\mul(\lambda)+k] \\|I|=k-1}}\sum\limits_{j=1}^{n-\lambda_1-\mul(\lambda)+1}f_I(\sigma^{\lambda_1-\lambda_{k+1}+k}(\z);\x_{\hat{j}};\y)M_{I}(\z;\x_{\hat{j}};\y)\prod\limits_{l=1}^{n-\mul(\lambda)}(x_j-y_l)
\end{align}
where \begin{align*}
	f_{I}(\z;\x;\y)&=\Sym_{\tilde{\lambda}}^{n-k}(\z;\x_{\hat{I}};\y)\\
	M_{I}(\z;\x;\y)&=\frac{\prod\limits_{\substack{1\leq l\leq n-\mul(\lambda)\\i\in I}}(x_i-y_l)\prod\limits_{\substack{2\leq i\leq \lambda_1-\lambda_{k+1}+k\\m \in [n-1-\lambda_1-\mul(\lambda)+k]\setminus I }}(z_i-x_m)\prod\limits_{m=1}^{n-\lambda_1-\mul(\lambda)}(z_1-x_m)}{\prod\limits_{\substack{i\in I\\ m \in [n-1-\lambda_1-\mul(\lambda)+k]\setminus I}}(x_i-x_m)\prod\limits_{m=1}^{n-\lambda_1-\mul(\lambda)}(x_j-x_m)}.
\end{align*}
For a fixed $I_0=\{i_1<\cdots<i_{k}\}\subseteq[n-\lambda_1-\mul(\lambda)+k]$, to have
\begin{equation*}
    f_I(\sigma^{\lambda_1-\lambda_{k+1}+k}(\z);\x_{\hat{j}};\y)=\Sym_{\tilde{\lambda}}^{n-k}(\sigma^{\lambda_1-\lambda_{k+1}+k}(\z);\x_{\hat{I_0}};\y)
\end{equation*} we need to take $I=I_h=\{i_1<\cdots<i_{h-1}<i_{h+1}-1<\cdots<i_{k}-1\}$ and $j=i_h\leq n-\lambda_1-\mul(\lambda)+1$. So taking the coefficient of $\Sym_{\tilde{\lambda}}^{n-k}(\sigma^{\lambda_1-\lambda_{k+1}+k}(\z);\x_{\hat{I_0}};\y)$ in \eqref{eq:plugin} gives
\begin{equation*}
    \sum_{\substack{h\geq1\\i_h\leq n-\lambda_1-\mul(\lambda)+1}} M_{I_h}(\z;\x_{\hat{i_h}};\y)\prod\limits_{l=1}^{n-\mul(\lambda)}(x_{i_h}-y_l).
\end{equation*}
Note that we have
\begin{align*}
     &M_{I_h}(\z;\x_{\hat{i_h}};\y)\prod\limits_{l=1}^{n-\mul(\lambda)}(x_{i_h}-y_l)\\&=\frac{\prod\limits_{\substack{1\leq l\leq n-\mul(\lambda)\\i\in I_0}}(x_i-y_l)\prod\limits_{\substack{2\leq i\leq \lambda_1-\lambda_{k+1}+k\\m \in [n-\lambda_1-\mul(\lambda)+k]\setminus I_0 }}(z_i-x_m)\prod\limits_{\substack{m=1\\m\neq i_h}}^{n-\lambda_1-\mul(\lambda)+1}(z_1-x_m)}{\prod\limits_{\substack{i\in I_0, i\neq i_h\\ m \in [n-\lambda_1-\mul(\lambda)+k]\setminus I_0}}(x_i-x_m)\prod\limits_{\substack{m=1  \\m\neq i_h}}^{n-\lambda_1-\mul(\lambda)+1}(x_{i_h}-x_m)}.
\end{align*}
We claim
\begin{equation}\label{eq:mid}
    \frac{\prod\limits_{\substack{1\leq l\leq n-\mul(\lambda)\\i\in I_0}}(x_i-y_l)\prod\limits_{\substack{1\leq i\leq \lambda_1-\lambda_{k+1}+k\\m \in [n-\lambda_1-\mul(\lambda)+k]\setminus I_0 }}(z_i-x_m)}{\prod\limits_{\substack{i\in I_0\\ m \in [n-\lambda_1-\mul(\lambda)+k] \setminus I_0}}(x_i-x_m)}=\sum_{\substack{h\geq1\\i_h\leq n-\lambda_1-\mul(\lambda)+1}} M_{I_h}(\z;\x_{\hat{i_h}};\y)\prod\limits_{l=1}^{n-\mul(\lambda)}(x_{i_h}-y_l).
\end{equation}
We view both sides as polynomials in $z_1$ of degree at most $(n-\lambda_1-\mul(\lambda))$. So it is enough to show that they coincide when we plug in $z_1=x_1,\ldots,x_{n-\lambda_1-\mul(\lambda)+1}$. If we plug in $z=x_m$ for $m\notin I_0$ then both sides are zero. If we plug in $z_1=x_{i_{t}}$ for some $i_t\in I_0$ then only $h=t$ on the right hand side does not vanish and we have
\begin{equation*}
   \big(M_{I_t}(\z;\x_{\hat{i_t}};\y)\prod\limits_{l=1}^{n-\mul(\lambda)}(x_{i_t}-y_l)\big)\vert_{z_1=x_{i_{t}}}=\frac{\prod\limits_{\substack{1\leq l\leq n-\mul(\lambda)\\i\in I_0}}(x_i-y_l)\prod\limits_{\substack{2\leq i\leq \lambda_1-\lambda_{k+1}+k\\m \in [n-\lambda_1-\mul(\lambda)+k]\setminus I_0 }}(z_i-x_m)}{\prod\limits_{\substack{i\in I_0, i\neq i_t\\ m \in [n-\lambda_1-\mul(\lambda)+k]\setminus I_0}}(x_i-x_m)}.
\end{equation*}
This is the same as the left hand side of \eqref{eq:mid} evaluated at $z_1=x_{i_t}$.

In conclusion \eqref{eq:plugin} becomes
\begin{align*}
    &\Sym_{\lambda}^{n}(\z;\x;\y)=\\&\sum\limits_{\substack{I_0\subseteq[n-\lambda_1-\mul(\lambda)+k] \\|I_0|=k}}\frac{\Sym_{\tilde{\lambda}}^{n-k}(\sigma^{\lambda_1-\lambda_{k+1}+k}(\z);\x_{\hat{I_0}};\y)\prod\limits_{\substack{1\leq l\leq n-\mul(\lambda)\\i\in I_0}}(x_i-y_l)\prod\limits_{\substack{1\leq i\leq \lambda_1-\lambda_{k+1}+k\\m \in [n-\lambda_1-\mul(\lambda)+k]\setminus I_0 }}(z_i-x_m)}{\prod\limits_{\substack{i\in I_0\\ m \in [n-\lambda_1-\mul(\lambda)+k]\setminus I_0}}(x_i-x_m)},
\end{align*}
which completes the proof. 
\end{proof}

\begin{Prop}\label{prop113} Choose $\lambda\in\Val(n)$ and $a$ such that
	$1\leq a\leq n-\lambda_1$.
	If we specialize  $z_1:=x_a$ in the 
 $\mathbf{z}$-Schubert polynomial $\Sym_{\lambda}^{n}(\z;\x;\y)$, we obtain
\begin{align}\label{eq:zschubert specialization}
     &\Sym_{\lambda}^{n}(\z;\x;\y))\vert_{z_1=x_a}=\\&\Sym_{\lambda'}^{n-1}(\sigma^{\lambda_1-\lambda_2+1}(\z);\x_{\hat{a}};\y))\prod\limits_{l=1}^{n-\mul(\lambda)}(x_a-y_l)\prod\limits_{i=2}^{(\lambda_1-\lambda_2+1)}\prod\limits_{\substack{m=1\\m\neq a}}^{n-\lambda_1-\mul(\lambda)+1}(z_i-x_m),\nonumber
\end{align}
where $\lambda'$ is the partition obtained by deleting the first part of $\lambda$. If $\length(\lambda)=1$ then we regard $\lambda_2=0$.
\end{Prop}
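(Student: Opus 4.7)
The plan is to use the expansion of $\Sym_{\lambda}^{n}(\z;\x;\y)$ furnished by \cref{eq:zschubert def sym} taken with $k=1$. Setting $N := n-\lambda_1-\mul(\lambda)$, this expansion reads
\[
\Sym_{\lambda}^{n}(\z;\x;\y) = \sum_{i=1}^{N+1} \frac{\Sym_{\lambda'}^{n-1}(\sigma^{\lambda_1-\lambda_2+1}(\z); \x_{\hat{i}}; \y) \prod_{l=1}^{n-\mul(\lambda)}(x_i-y_l) \prod_{l=1}^{\lambda_1-\lambda_2+1}\prod_{\substack{j=1 \\ j \neq i}}^{N+1}(z_l - x_j)}{\prod_{\substack{j=1 \\ j \neq i}}^{N+1}(x_i-x_j)}.
\]
I will substitute $z_1 = x_a$ and split the argument according to whether $a$ falls in $\{1, \ldots, N+1\}$ or $\{N+2, \ldots, n-\lambda_1\}$.

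First suppose $1 \leq a \leq N+1$. The $l=1$ factor $\prod_{j \neq i, j \leq N+1}(z_1 - x_j)$ evaluated at $z_1 = x_a$ contains the factor $(x_a-x_a)=0$ unless $i = a$. So only the $i = a$ term survives; the surviving product $\prod_{j \neq a, j \leq N+1}(x_a - x_j)$ exactly cancels the denominator, yielding
\[
\Sym_{\lambda'}^{n-1}(\sigma^{\lambda_1-\lambda_2+1}(\z); \x_{\hat{a}}; \y) \prod_{l=1}^{n-\mul(\lambda)}(x_a-y_l) \prod_{l=2}^{\lambda_1-\lambda_2+1}\prod_{\substack{j=1 \\ j \neq a}}^{N+1}(z_l-x_j),
\]
which is the claimed right-hand side.

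Now suppose $N+1 < a \leq n-\lambda_1$. This forces $\mul(\lambda) \geq 2$, hence $\lambda_1 = \lambda_2$, so $\lambda_1-\lambda_2+1 = 1$ and the outer product $\prod_{i=2}^{\lambda_1-\lambda_2+1}$ in the proposition is empty. The strategy is to exploit the symmetry of $\Sym_{\lambda}^{n}$ in $x_1, \ldots, x_{n-\lambda_1}$, which follows from \cref{eq:zschubert def sym} applied with $k=\mul(\lambda)$. After swapping $x_1 \leftrightarrow x_a$, the substitution $z_1 = x_a$ becomes the substitution $z_1 = $ ``the variable in position $1$'', to which the previous case applies. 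The resulting expression has $\Sym_{\lambda'}^{n-1}$ evaluated at the reordered tuple $(x_2, x_3, \ldots, x_{a-1}, x_1, x_{a+1}, \ldots)$ rather than at $\x_{\hat{a}}$; the two tuples differ only in positions $1$ through $a-1$, and since $a-1 \leq n-\lambda_1-1 \leq n-1-\lambda_2$, the symmetry of $\Sym_{\lambda'}^{n-1}$ in its first $n-1-\lambda_2$ positions (again provided by \cref{eq:zschubert def sym}) identifies them.

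The main obstacle is the bookkeeping of variable orderings in the second case and the correct application of the symmetry of $\Sym_{\lambda'}^{n-1}$. The crucial observation that the range $N+1 < a \leq n-\lambda_1$ forces $\lambda_1 = \lambda_2$, thereby collapsing the outer product on the right-hand side to an empty product, is what makes the reduction via symmetry match the stated formula cleanly.
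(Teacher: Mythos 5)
Your proof is correct and follows essentially the same route as the paper's: both rest on the $k=1$ expansion \eqref{eq: zschuert 1 expansion} (where setting $z_1=x_a$ kills all but one term and cancels the denominator) together with the symmetry of $\Sym_{\lambda}^{n}$ in $x_1,\ldots,x_{n-\lambda_1}$ coming from \cref{eq:zschubert def sym} with $k=\mul(\lambda)$. The only difference is organizational — the paper uses symmetry to reduce every $a$ to $a=1$, while you apply the expansion directly for $a\leq n-\lambda_1-\mul(\lambda)+1$ and invoke symmetry only for the remaining range; your explicit tracking of how $\x_{\hat{a}}$ transforms under the swap is a point the paper leaves implicit.
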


\begin{proof}
Taking $k=\mul(\lambda)$ in \eqref{eq:zschubert def sym}, we see that $\Sym_{\lambda}^{n}(\z;\x;\y)$ is symmetric in variables $x_1,\ldots,x_{n-\lambda_1}$. So it is enough to show \eqref{eq:zschubert specialization} for $a=1$,
	which follows from  \eqref{eq: zschuert 1 expansion}.
\end{proof}

Note that in \cref{prop 115} and \ref{prop 116}, the operator $\partial_i^{\mathbf{z}}$ \emph{acts on $\mathbf{z}$-variables}.

\begin{Prop}\label{prop 115}
Let $\lambda\in\Val(n)$ such that $\lambda_1>\lambda_2$, and write $\lambda=(\lambda_1,\lambda')$ for some $\lambda'$. Then
\begin{align}\label{eq: prop11-5}
    &\Sym_{(\lambda_1,\lambda')}^{n}(\z;\x;\y)\prod\limits_{i=3}^{\lambda_1-\lambda_2+1}(z_{i}-x_{n+1-\lambda_1})\\&=\partial^{\mathbf{z}}_1\big(\Sym_{(\lambda_1-1,\lambda')}^{n}(\sigma(\z);\x;\y)\prod\limits_{i=1}^{n-\lambda_1}(z_1-x_i)\prod\limits_{i=1}^{\mul((\lambda_1-1,\lambda'))-1}(z_2-y_{n-i})\big)\nonumber.
\end{align}

\end{Prop}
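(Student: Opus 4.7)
The plan is to view both sides as polynomials in $z_1$ (with coefficients in $\x, \y, z_2, z_3, \ldots$) and verify agreement at $n - \lambda_1$ distinct specializations of $z_1$, using \cref{prop113} as the key tool.

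First I would check that both sides are polynomials of degree at most $n-\lambda_1 - 1$ in $z_1$. For the LHS, \cref{eq:zschubert def sym} (with $k=1$, since $\lambda_1 > \lambda_2$ forces $\mul(\lambda) = 1$) shows that $\Sym_{(\lambda_1,\lambda')}^n$ depends on $z_1$ only through $\prod_{m \in [n-\lambda_1] \setminus \{i\}}(z_1 - x_m)$, of degree $n-\lambda_1-1$; the factor $\prod_{i=3}^{\lambda_1-\lambda_2+1}(z_i - x_{n+1-\lambda_1})$ does not involve $z_1$. For the RHS, neither $\Sym_{(\lambda_1-1,\lambda')}^n(\sigma(\z))$ nor $\prod_{i=1}^{\mul(\mu)-1}(z_2-y_{n-i})$ involves $z_1$, so the argument of $\partial_1^\mathbf{z}$ has $z_1$-degree $n-\lambda_1$ (coming entirely from $\prod_{i=1}^{n-\lambda_1}(z_1-x_i)$), and applying $\partial_1^\mathbf{z}$ lowers this by one. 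Hence it suffices to verify equality at the $n - \lambda_1$ values $z_1 = x_1, \ldots, x_{n-\lambda_1}$.

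Fix $1 \leq a \leq n-\lambda_1$ and write $\mu := (\lambda_1-1, \lambda')$. On the LHS, \cref{prop113} gives an explicit expression for $\Sym_{(\lambda_1,\lambda')}^n\vert_{z_1 = x_a}$, and the extra factor $\prod_{i=3}^{\lambda_1-\lambda_2+1}(z_i - x_{n+1-\lambda_1})$ is absorbed using the identity $\prod_{m \in [n-\lambda_1] \setminus \{a\}}(z_j - x_m) \cdot (z_j - x_{n+1-\lambda_1}) = \prod_{m \in [n-\lambda_1+1] \setminus \{a\}}(z_j - x_m)$, which holds since $a \leq n-\lambda_1$ implies $a \neq n+1-\lambda_1$. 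On the RHS, $\prod_{i=1}^{n-\lambda_1}(z_1 - x_i)\vert_{z_1 = x_a} = 0$ kills the first half of $\partial_1^\mathbf{z} = (1 - s_1)/(z_1 - z_2)$, leaving
\begin{equation*}
\frac{\Sym_\mu^n(x_a, z_3, z_4, \ldots;\x;\y) \cdot \prod_{i=1}^{n-\lambda_1}(z_2 - x_i) \cdot \prod_{i=1}^{\mul(\mu)-1}(x_a - y_{n-i})}{z_2 - x_a}.
\end{equation*}
Cancelling $(z_2 - x_a)$ against the matching factor in $\prod_{i=1}^{n-\lambda_1}(z_2 - x_i)$, applying \cref{prop113} a second time to $\Sym_\mu^n$ evaluated at the sequence $(x_a, z_3, z_4, \ldots)$, and invoking the re-indexing $\prod_{i=1}^{\mul(\mu)-1}(x_a - y_{n-i}) = \prod_{l = n-\mul(\mu)+1}^{n-1}(x_a - y_l)$ to telescope the $y$-products into $\prod_{l=1}^{n-1}(x_a-y_l)$ then matches the RHS expression to the LHS expression.

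The main technical obstacle is splitting the final comparison into the two subcases $\lambda_1-1 > \lambda_2$ and $\lambda_1-1 = \lambda_2$. In the former, $\mul(\mu) = 1$ and $\mu_1 - \mu_2 + 1 = \lambda_1 - \lambda_2$, so the shift $\sigma^{\mu_1-\mu_2+1}$ applied to $(x_a, z_3, z_4, \ldots)$ coincides with $\sigma^{\lambda_1-\lambda_2+1}(\z)$ from the LHS, and the residual $(z_j, x_m)$-products align column-by-column for $j = 3, \ldots, \lambda_1-\lambda_2+1$. In the latter, $\mul(\mu) = 1 + \mul(\lambda')$, the extra product on the LHS is empty, the shift collapses to $\sigma(x_a, z_3, z_4, \ldots) = \sigma^2(\z) = \sigma^{\lambda_1-\lambda_2+1}(\z)$, and the matching is carried entirely by the $y$-product bookkeeping. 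In both cases the $\Sym_{\lambda'}^{n-1}$ factors coincide, completing the proof.
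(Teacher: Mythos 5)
Your proposal is correct and follows essentially the same route as the paper's proof: bound the $z_1$-degree of both sides by $n-\lambda_1-1$, interpolate at $z_1=x_a$ for $1\leq a\leq n-\lambda_1$, and match the two specializations via two applications of \cref{prop113} (the paper absorbs your two subcases into a single remark that when $\mul((\lambda_1-1,\lambda'))>1$ one has $\lambda_1-\lambda_2=1$, so the relevant product is vacuous). No gaps.
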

\begin{proof} We will refer to the left and right-hand sides of 
	\eqref{eq: prop11-5} as LHS and RHS. To prove they are equal, 
	we will view each side as a polynomial in $z_1$ and analyze its degree to use interpolation. Note that $\Sym_{(\lambda_1-1,\lambda')}^{n}(\sigma(\z);\x;\y)$ does not depend on $z_1$ so exchanging variables $z_1$ and $z_2$ is the same as plugging in $z_1$ in  place of $z_2$. So we get
\begin{align*}
	RHS=\frac{1}{z_1-z_2}& \big[\Sym_{(\lambda_1-1,\lambda')}^{n}(\sigma(\z);\x;\y)\prod\limits_{i=1}^{n-\lambda_1}(z_1-x_i)\prod\limits_{i=1}^{\mul((\lambda_1-1,\lambda'))-1}(z_2-y_{n-i})\\
    &-\Sym_{(\lambda_1-1,\lambda')}^{n}(\sigma(\z);\x;\y)\vert_{z_2=z_1}\prod\limits_{i=1}^{n-\lambda_1}(z_2-x_i)\prod\limits_{i=1}^{\mul((\lambda_1-1,\lambda'))-1}(z_1-y_{n-i})\big].
\end{align*}
By \eqref{eq: def zschubert}, $\Sym_{(\lambda_1-1,\lambda)}^{n}(\z;\x;\y)$ has degree at most $(n-(\lambda_1-1)-\mul((\lambda-1,\lambda')))$ in $z_1$. So the numerator of $RHS$ has degree at most $(n-\lambda_1)$ in $z_1$, which implies 
	that $RHS$ has degree at most $(n-\lambda_1-1)$ in $z_1$. 
	Meanwhile, LHS 
	has degree at most $(n-\lambda_1-\mul(\lambda))=(n-\lambda_1-1)$ in $z_1$ by \eqref{eq: def zschubert}. So it is enough to show that LHS and RHS
	 coincide  when we plug in $z_1=x_h$ for any $1\leq h\leq n-\lambda_1$. 
	 We have
\begin{align*}
    RHS\vert_{z_1=x_h}&=\frac{-\Sym_{(\lambda_1-1,\lambda')}^{n}(\sigma(\z);\x;\y)\vert_{z_2=x_h}\prod\limits_{i=1}^{n-\lambda_1}(z_2-x_i)\prod\limits_{i=1}^{\mul((\lambda_1-1,\lambda'))-1}(x_h-y_{n-i})}{x_h-z_2}\\&=\Sym_{(\lambda_1-1,\lambda')}^{n}(\sigma(\z);\x;\y)\vert_{z_2=x_h}\prod\limits_{\substack{i=1\\i\neq h}}^{n-\lambda_1}(z_2-x_i)\prod\limits_{i=1}^{\mul((\lambda_1-1,\lambda'))-1}(x_h-y_{n-i}).
\end{align*}

And by \cref{prop113} we have
\begin{align*}
     &\Sym_{(\lambda_1-1,\lambda')}^{n}(\z;\x;\y))\vert_{z_1=x_h}\\&=\Sym_{\lambda'}^{n-1}(\sigma^{\lambda_1-\lambda_2}(\z);\x_{\hat{h}};\y))\prod\limits_{l=1}^{n-\mul((\lambda_1-1,\lambda'))}(x_h-y_l)\prod\limits_{i=2}^{(\lambda_1-\lambda_2)}\prod\limits_{\substack{m=1\\m\neq h}}^{n-\lambda_1-\mul((\lambda_1-1,\lambda'))+2}(z_i-x_m)\\&=\Sym_{\lambda'}^{n-1}(\sigma^{\lambda_1-\lambda_2}(\z);\x_{\hat{h}};\y))\prod\limits_{l=1}^{n-\mul((\lambda_1-1,\lambda'))}(x_h-y_l)\prod\limits_{i=2}^{(\lambda_1-\lambda_2)}\prod\limits_{\substack{m=1\\m\neq h}}^{n-\lambda_1+1}(z_i-x_m),
\end{align*}
where the last equality uses the fact that when $\mul((\lambda_1-1,\lambda'))>1$ we have $\lambda_1-\lambda_2=1$ so the product over $i=2$ to $\lambda_1-\lambda_2$ is vacuous. Shifting variables by $\z\rightarrow \sigma(\z)$ we deduce
\begin{align*}
   \Sym_{(\lambda_1-1,\lambda)}^{n}(\sigma(\z);\x;\y)\vert_{z_2=x_h}=\Sym_{\lambda'}^{n-1}(\sigma^{\lambda_1-\lambda_2+1}(\z);\x_{\hat{h}};\y)\prod\limits_{l=1}^{n-\mul((\lambda_1-1,\lambda'))}(x_h-y_l)\prod\limits_{i=3}^{(\lambda_1-\lambda_2+1)}\prod\limits_{\substack{m=1\\m\neq h}}^{n-\lambda_1-1}(z_i-x_m).
\end{align*}
Plugging this into $RHS\vert_{z_1=x_h}$ gives
\begin{align*}
    RHS\vert_{z_1=x_h}&=\Sym_{\lambda'}^{n-1}(\sigma^{\lambda_1-\lambda_2+1}(\z);\x_{\hat{h}};\y)\prod\limits_{i=1}^{n-1}(x_h-y_{i})\prod\limits_{\substack{i=1\\i\neq h}}^{n-\lambda_1}(z_2-x_i)\prod\limits_{i=3}^{(\lambda_1-\lambda_2+1)}\prod\limits_{\substack{m=1\\m\neq h}}^{n-\lambda_1+1}(z_i-x_m)\\&=\Sym_{\lambda'}^{n-1}(\sigma^{\lambda_1-\lambda_2+1}(\z);\x_{\hat{h}};\y)\prod\limits_{i=1}^{n-1}(x_h-y_{i})\prod\limits_{i=2}^{(\lambda_1-\lambda_2+1)}\prod\limits_{\substack{m=1\\m\neq h}}^{n-\lambda_1}(z_i-x_m)\prod\limits_{i=3}^{\lambda_1-\lambda_2+1}(z_i-x_{n+1-\lambda_1})\\&=\Sym_{(\lambda_1,\lambda')}^{n}(\z;\x;\y)\vert_{z_1=x_h}\prod\limits_{i=3}^{\lambda_1-\lambda_2+1}(z_i-x_{n+1-\lambda_1})=LHS\vert_{z_1=x_h}.\end{align*}

\end{proof}

\begin{Prop}\label{prop 116} Choose $\lambda\in\Val(n)$ with $\mul(\lambda)=b>1$. Write $\lambda=((\lambda_1)^b,\tilde{\lambda})$ for some $\tilde{\lambda}$. We have
\begin{align}
	\Sym_{((\lambda_1)^{b},\tilde{\lambda})}^{n}&(\z;\x;\y) 
	\prod\limits_{i=1}^{b-1}(z_i-y_{n+1-b})\prod\limits_{i=b+2}^{b+\lambda_1-\tilde{\lambda}_1}(z_i-x_{n+1-\lambda_1})\\&=\partial^{\mathbf{z}}_b\left(\Sym_{((\lambda_1)^{b-1},\lambda_1-1,\tilde{\lambda})}^{n}(\z;\x;\y)\prod\limits_{i=1}^{\mul((\lambda_1-1,\tilde{\lambda}))-1}(z_{b+1}-y_{n+1-b-i})\right)\nonumber
\end{align}

\end{Prop}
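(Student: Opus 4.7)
The plan is to mirror the interpolation strategy of Proposition~\ref{prop 115}, viewing both sides as polynomials in $z_b$. Applying Lemma~\ref{eq:zschubert def sym} with $k=\mul(\lambda)=b$ shows that $z_b$ enters $\Sym_{((\lambda_1)^b,\tilde\lambda)}^n(\z;\x;\y)$ only through the product $\prod_{j\in[n-\lambda_1]\setminus I}(z_b-x_j)$, since the inner shifted Schubert polynomial $\Sym_{\tilde\lambda}^{n-b}(\sigma^{\lambda_1-\tilde\lambda_1+b}(\z);\ldots)$ depends only on $z_i$ with $i\geq \lambda_1-\tilde\lambda_1+b+1>b+1$. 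The extra factors on the LHS are independent of $z_b$, so the LHS has degree at most $n-\lambda_1-b$ in $z_b$. On the RHS, an analogous expansion of $\Sym_{((\lambda_1)^{b-1},\lambda_1-1,\tilde\lambda)}^n$ via Lemma~\ref{eq:zschubert def sym} with $k=b-1$ gives degree $n-\lambda_1-b+1$ in $z_b$ for the expression inside $\partial_b^{\z}$, and the divided difference lowers the degree by one. Both sides are automatically symmetric in $z_b$ and $z_{b+1}$: for the RHS this is the defining property of $\partial_b^{\z}$, while for the LHS it follows from the same observation, since the shifted inner Schubert polynomial also does not involve $z_{b+1}$.

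The heart of the argument is to expand both $\mathbf{z}$-Schubert polynomials via Lemma~\ref{eq:zschubert def sym} with $k=b-1$, turning them into sums indexed by $(b-1)$-element subsets $I$ whose inner polynomials have shape $(\lambda_1,\tilde\lambda)$ (for the LHS) and $(\lambda_1-1,\tilde\lambda)$ (for the RHS)---both of multiplicity one, so that Proposition~\ref{prop 115} applies at ambient dimension $N=n-b+1$ with $\z$ shifted by $\sigma^{b-1}$ and $\x$ replaced by $\x_{\hat{I}}$. After pulling factors independent of $z_b,z_{b+1}$ out of $\partial_b^{\z}$ on the RHS and invoking Proposition~\ref{prop 115} term-by-term, the RHS becomes a sum whose summands involve $\Sym_{(\lambda_1,\tilde\lambda)}^{n-b+1}(\sigma^{b-1}(\z);\x_{\hat{I}};\y)$ multiplied by the factor $\prod_{i=b+2}^{b+\lambda_1-\tilde\lambda_1}(z_i-x_{n+1-\lambda_1})$, which arises from the identification that the $(N+1-\lambda_1)$-th entry of $\x_{\hat{I}}$ equals $x_{n+1-\lambda_1}$ for every $I\subseteq[n-\lambda_1]$ of size $b-1$.

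The main obstacle is that the two expansions are indexed by different sets: $I\subseteq[n-\lambda_1-1]$ on the LHS (where $\mul=b$) versus $I\subseteq[n-\lambda_1]$ on the RHS (where $\mul=b-1$). After the reduction above, the identity becomes a summation identity in which the sum over $(b-1)$-subsets of $[n-\lambda_1]$ must equal the sum over $(b-1)$-subsets of $[n-\lambda_1-1]$ multiplied by the extra factor $\prod_{i=1}^{b-1}(z_i-y_{n+1-b})$. Splitting the larger sum according to whether $n-\lambda_1\in I$ produces, for each $J\subseteq[n-\lambda_1-1]$, a natural candidate match with the LHS term for $J$, together with a correction coming from subsets $J\cup\{n-\lambda_1\}$ in which $x_{n-\lambda_1}$ is removed from the inner Schubert polynomial. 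Proving the resulting rational identity is the main technical work; I would attack it either by induction on $b$ (unfolding the recursive definition of $\Sym_{((\lambda_1)^b,\tilde\lambda)}^n$ through one layer of $x$-divided differences and applying the induction hypothesis to $\Sym_{((\lambda_1)^{b-1},\tilde\lambda)}^{n-1}$ at ambient dimension $n-1$ and multiplicity $b-1$), or by a direct interpolation in the single variable $x_{n-\lambda_1}$ in the spirit of Lemma~\ref{CantiniLemma}, using the bounded degree of both sides in $x_{n-\lambda_1}$ and the natural specializations $x_{n-\lambda_1}=x_j$ for $j\in[n-\lambda_1-1]$.
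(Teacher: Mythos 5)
Your reduction is set up correctly as far as it goes: the expansions via \cref{eq:zschubert def sym} with $k=b-1$, the identification of $\prod_{i=1}^{N-\lambda_1}(z_b-(\x_{\hat I})_i)$ with $\prod_{j\in[n-\lambda_1]\setminus I}(z_b-x_j)$, and the appearance of the factor $\prod_{i=b+2}^{b+\lambda_1-\tilde\lambda_1}(z_i-x_{n+1-\lambda_1})$ after applying \cref{prop 115} term-by-term all check out. But the proof is not complete. After the common factor cancels, what remains is a genuinely nontrivial rational identity: a sum over $(b-1)$-subsets $I$ of $[n-\lambda_1]$, in which each $i\in I$ carries an extra factor $(x_i-y_{n+1-b})$ (coming from $n-\mul=n-b+1$ on the right versus $n-b$ on the left), must equal the sum over $(b-1)$-subsets of $[n-\lambda_1-1]$ multiplied by $\prod_{i=1}^{b-1}(z_i-y_{n+1-b})$, with the inner polynomials $\Sym_{(\lambda_1,\tilde\lambda)}^{n-b+1}(\sigma^{b-1}(\z);\x_{\hat I};\y)$ varying with $I$ and the Vandermonde-type denominators indexed by different complements. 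You explicitly defer this as ``the main technical work'' and offer only two candidate strategies without executing either; that is the missing core of the argument, not a routine verification. Your opening interpolation-in-$z_b$ framing is also left dangling: both sides have degree at most $n-\lambda_1-b$ in $z_b$, so you would need $n-\lambda_1-b+1$ specialization points, and the paper provides no analogue of \cref{prop113} for specializing $z_b$ (only $z_1$), so you never identify usable interpolation points in that variable.

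For contrast, the paper's proof interpolates in $z_1$ rather than $z_b$: both sides have degree at most $n-\lambda_1-b+1$ in $z_1$, and since $b>1$ the $n-\lambda_1$ specializations $z_1=x_a$ suffice. Each such specialization is evaluated by \cref{prop113}, which strips off the outermost part of the partition and shifts $\x\to\x_{\hat a}$, $\z\to\sigma(\z)$; the resulting identity is exactly the statement for $b-1$ (or, in the base case $b=2$, exactly \cref{prop 115}). This turns the proposition into a clean induction on $b$ with no residual summation identity to verify. If you want to salvage your approach, the most promising of your two fallbacks is the induction on $b$, but you should run it through the $z_1=x_a$ specialization of \cref{prop113} rather than by unfolding the $x$-divided-difference definition, since that is what makes the inductive step a literal restatement of the case $b-1$.
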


\begin{proof}
We view both sides as polynomials in $z_1$ and analyze their degrees to use interpolation. By \eqref{eq: def zschubert}, $\Sym_{((\lambda_1)^{b},\tilde{\lambda})}^{n}(\z;\x;\y)$ is of degree at most $(n-\lambda_1-b)$ in $z_1$ so the right hand side is of  degree at most $(n-\lambda_1-b+1)$ in $z_1$. Likewise, $\Sym_{((\lambda_1)^{b-1},\lambda_1-1,\tilde{\lambda})}^{n}(\z;\x;\y)$ is of degree at most $(n-\lambda_1-b+1)$ in $z_1$ so the right hand side is of  degree at most $(n-\lambda_1-b+1)$ in $z_1$. Since $b>1$, it is enough to show that they coincide when we plug in $z_1=x_a$ for $1 \leq a\leq n-\lambda_1$.

We use induction on $b$. First assume $b=2$. Setting $z_1:=x_a$ on both sides, \cref{prop113} gives
\begin{align*}
    LHS\vert_{z_1=x_a}&=\Sym_{(\lambda_1,\tilde{\lambda})}^{n-1}(\z;\x_{\hat{a}};\y)\prod\limits_{l=1}^{n-1}(x_a-y_l)\prod\limits_{i=4}^{2+\lambda_1-\tilde{\lambda}_1}(z_i-x_{n+1-\lambda_1})\\
    RHS\vert_{z_1=x_a}&=\partial^{\mathbf{z}}_2\left(\Sym_{(\lambda_1-1,\tilde{\lambda})}^{n-1}(\sigma(\z);\x_{\hat{a}};\y)\prod\limits_{l=1}^{n-1}(x_a-y_l)\prod\limits_{\substack{m=1\\m\neq a}}^{n-\lambda_1}(z_2-x_m)\prod\limits_{i=1}^{\mul((\lambda_1-1,\tilde{\lambda}))-1}(z_{3}-y_{n-1-i})\right).
\end{align*}
So the equality $LHS\vert_{z_1=x_a}=RHS\vert_{z_1=x_a}$ comes from 
\cref{prop 115}, with the variables shifted by $\x\rightarrow\x_{\hat{a}}$ and  $\z\rightarrow\sigma(\z)$.

For $b>2$, assume the statement holds for $(b-1)$. Setting $z_1:=x_a$ on both sides, \cref{prop113} gives
\begin{align*}
    LHS\vert_{z_1=x_a}&=\Sym_{((\lambda_1)^{b-1},\tilde{\lambda})}^{n-1}(\z;\x_{\hat{a}};\y)\prod\limits_{l=1}^{n-b+1}(x_a-y_l)\prod\limits_{i=2}^{b-1}(z_i-y_{n+1-b})\prod\limits_{i=b+2}^{b+\lambda_1-\tilde{\lambda}_1}(z_i-x_{n+1-\lambda_1})\\
    RHS\vert_{z_1=x_a}&=\partial^{\mathbf{z}}_b\left(\Sym_{((\lambda_1)^{b-2},\lambda_1-1,\tilde{\lambda})}^{n-1}(\z;\x_{\hat{a}};\y)\prod\limits_{l=1}^{n-b+1}(x_a-y_l)\prod\limits_{i=1}^{\mul((\lambda_1-1,\tilde{\lambda}))-1}(z_{b+1}-y_{n+1-b-i})\right).
\end{align*}
The equality $LHS\vert_{z_1=x_a}=RHS\vert_{z_1=x_a}$ comes from  the
induction hypothesis with the variables shifted by $\x\rightarrow\x_{\hat{a}}$ and  $\z\rightarrow\sigma(\z)$.
\end{proof}




\bibliographystyle{alpha}
\bibliography{bibliography}

\end{document}